\definecolor{blue}{rgb}{0.00,0.00,1.00}
\definecolor{red}{rgb}{1.00,0.00,0.00}
\renewcommand{\baselinestretch}{1.2}
\def\bq{\begin{equation}}
\def\eq{\end{equation}}
\def\ba{\begin{array}{ccc}}
\def\bal{\begin{array}{lll}}
\def\ea{\end{array}}
 \def\lt#1{\left#1}\def\rt#1{\right#1}
\def\({\left(}\def\){\right)}
\def\[{\left[}\def\]{\right]}
    \def \U   {\mathcal{U}}
    \def \W   {\mathcal{W}}
    \def \RR   {\mathcal{R}}
    \def \C   {\mathbb{C}}
    \def \O   {\mathbb{O}}
    \def \R   {\mathbb{R}}
    \def\M    {\mathbb{M}}
    \def\Q    {\mathbb{Q}}
    \def\y    {\mathbf{y}}
    \def\z    {\mathbf{z}}
    \def\e    {\mathbf{e}}
    \def\S    {\mathbb{S}}
    \def\eps  {\epsilon}
    \def\intr {\int_{\R^3}}
    \def\intra {\int_{\R}}
    \def\ints {\int_{\S^2}}
    \def\intt {\int^t_0}
    \def \Z    {\mathrm{Z}}
    \def \Y    {\mathrm{Y}}
    \def \X    {\mathrm{X}}
    \def \V    {\mathrm{V}}
    \def \N    {\mathbb{N}}
    \def \Dt   {\frac{\rm d}{{\rm d}t}}
    \def \dt    {\partial_t}
    \def \dx    {\partial_x}
    \def \dxa   {\partial^{\alpha}_x}
    \def \dv    {\partial_v}
    \def \dvb   {\partial^{\beta}_v}
    \def\Tdx   {\nabla_x}
    \def\Tdv   {\nabla_v}
       \def\be{\begin{equation}}
       \def\ee{\end{equation}}
       \def\bma#1\ema{{\allowdisplaybreaks\begin{align}#1\end{align}}}
       \def\bmas#1\emas{{\allowdisplaybreaks\begin{align*}#1\end{align*}}}
       \def\bln#1\eln{{\allowdisplaybreaks\begin{aligned}#1\end{aligned}}}
       \def\nnm{\notag}
       \def\bgr#1\egr{\allowdisplaybreaks\begin{gather}#1\end{gather}}
       \def\bgrs#1\egrs{\allowdisplaybreaks\begin{gather*}#1\end{gather*}}
       \theoremstyle{plain}
       \newtheorem{lem}{\bf Lemma}[section]
       \newtheorem{thm}[lem]{\textbf{Theorem}}
       \newtheorem{cor}[lem]{\textbf{Corollary}}
\begin{document}


\title{Green's Function and Pointwise Behavior of the One-Dimensional Vlasov-Maxwell-Boltzmann System}

\author{ Hai-Liang Li$^{1,2}$,\, Tong Yang$^3$,\, Mingying Zhong$^4$\\[2mm]
 \emph
    {\small\it  $^1$School of  Mathematical Sciences, Capital Normal University, } \\
    {\small\it $^2$Academy For Multidisciplinary Studies, Capital Normal University, } \\
    {\small\it Beijing 100048, P.R. China.}\\
    {\small\it E-mail:\ hailiang.li.math@gmail.com}\\
    {\small\it $^3$Department of Applied Mathematics, The Hong Kong Polytechnic University,} \\
    {\small\it Hung Hom, Hong Kong, P.R.  China.}\\
    {\small\it E-mail: t.yang@polyu.edu.hk} \\
    {\small\it  $^4$School of  Mathematics and Information Sciences, Guangxi University,} \\
    {\small\it Nanning 530004, P.R. China.}\\
    {\small\it E-mail:\ zhongmingying@sina.com}\\[5mm]
    }
\date{ }

\pagestyle{myheadings}
\markboth{Vlasov-Maxwell-Boltzmann System}%
{H.-L. Li, T. Yang,  M.-Y. Zhong}

 \maketitle

 \thispagestyle{empty}

\begin{abstract}\noindent
 The pointwise space-time behavior of the Green's function of the one-dimensional Vlasov-Maxwell-Boltzmann (VMB) system  is studied in this paper. It is shown that  the Green's function  consists of  the macroscopic  diffusive waves and Huygens waves with the speed $\pm \sqrt{\frac53}$ at low-frequency, the  hyperbolic waves  with the speed $\pm 1$ at high-frequency, the singular kinetic  and  leading short waves, and the remaining term decaying exponentially in space and time.  Note that these high-frequency hyperbolic waves are completely new and can not be observed for the Boltzmann equation and the Vlasov-Poisson-Boltzmann system. In addition, we establish the pointwise space-time estimate of the global solution to the nonlinear VMB system based on the Green's function. Compared to the Boltzmann equation and the Vlasov-Poisson-Boltzmann system, some new ideas  are introduced to overcome the difficulties caused by the coupling effects of the transport of particles and the rotating of electro-magnetic fields, and investigate the new hyperbolic waves and singular leading short waves.

\medskip
 {\bf Key words}. Vlasov-Maxwell-Boltzmann system, Green's function, pointwise behavior, spectrum analysis.

\medskip
 {\bf 2010 Mathematics Subject Classification}. 76P05, 82C40, 82D05.
\end{abstract}

%
\tableofcontents

\section{Introduction}
\label{sect1}
\setcounter{equation}{0}

The Vlasov-Maxwell-Boltzmann (VMB) system is a fundamental model in plasma physics for the describing the time evolution of dilute charged particles,
such as electrons and ions,  under the influence of the self-induced Lorentz forces governed by Maxwell equations, cf. \cite{ChapmanCowling,Markowich}
for derivation and the physical background. In this paper, we consider the two-species Vlasov-Maxwell-Boltzmann system that describes both the time evolution of ions and electrons
\be
 \left\{\bln     \label{VMB1z}
 &  \dt F_++v_1\dx F_++(E+v\times B)\cdot\Tdv F_+ =Q(F_+,F_+)+Q(F_+,F_-), \\
 &  \dt F_-+v_1\dx F_--(E+v\times B)\cdot\Tdv F_- =Q(F_-,F_-)+Q(F_-,F_+), \\
 & \dt E_1=-\intr (F_+-F_-)v_1dv,\quad \dx E_1=\intr (F_+-F_-)dv,\\
 &\dt E_2=-\dx B_3-\intr (F_+-F_-)v_2dv,\\
 & \dt E_3=\dx B_2-\intr (F_+-F_-)v_3dv,\\
 & \dt B_2=\dx E_3,\quad  \dt B_3=-\dx E_2,\quad B_1\equiv0,
 \eln\right.
\ee
where $F_{\pm}=F_{\pm}(t,x,v)$ are number
density distribution functions of ions $(+)$ and electrons $(-)$ having position $x\in \R$ and velocity $v=(v_1,v_2,v_3)\in \R^3$ at time $t\in \R^+$, and $E(t,x)=(E_1,E_2,E_3)$, $B(t,x)=(B_1,B_2,B_3)$ denote the electro and magnetic fields, respectively. The operator $Q(f,g)$ describing
 the binary elastic collisions  for
the hard-sphere model is given by
 \bq
 Q(f,g)=\intr\ints  |(v-v_*)\cdot\omega|(f(v'_*)g(v')-f(v_*)g(v))dv_*d\omega,
 \eq
where
$$
 v'=v-[(v-v_*)\cdot\omega]\omega,\quad v'_*=v_*+[(v-v_*)\cdot\omega]\omega,\quad \omega\in\S^2.
$$

The Vlasov-Maxwell-Boltzmann system has been intensively studied and important progress has been made, cf.  \cite{Duan4,Duan5,Guo4,Jang,Jiang,Li1,Strain} and the references therein. For instance, the global existence of unique strong solution with initial data near the normalized global Maxwellian was obtained in spatial period domain~\cite{Guo4} and  in spatial 3D whole space  \cite{Strain} for hard sphere collision, and then in \cite{Duan1,Duan2} for general collision kernels with or without angular cut-off assumption.
The  long time behavior of the global solution near a global Maxwellian was studied  in \cite{Duan4,Duan5}.
The spectrum analysis and the optimal decay rate of the global solution to the VMB systems for both one-spices and two-spices were studied in \cite{Li1}. The fluid dynamic limit for the VMB system  was investigated in \cite{Jang,Jiang,Guo1}. And the pointwise estimates of the Green's functions  for the Boltzmann equation were given in the important papers \cite{Liu1,Liu2}. In addition, the
Green's function   for the Vlasov-Poisson-Boltzmann (VPB) system was studied in our previous work \cite{Li4}.

In this paper, we study the pointwise space-time behaviors of the Green's function  and the global solution to the VMB system~\eqref{VMB1z}  based on the spectral analysis~\cite{Li1}.

Let $ F_1=F_++F_-$ and $ F_2=F_+-F_-$. Then the system \eqref{VMB1z} becomes
\be
 \left\{\bln     \label{VMB2}
&\dt F_1+v_1\dx F_1+(E+v\times B)\cdot\Tdv F_2
=Q(F_1,F_1),\\
&\dt F_2+v_1\dx F_2+(E+v\times B)\cdot\Tdv F_1
=Q(F_2,F_1),\\
& \dt E=\dx \mathbb{O} B-\intr F_2v dv,\quad \dx E_1=\intr F_2dv,\\
 & \dt B=-\dx \mathbb{O} E,\quad B_1\equiv0,
\eln\right.
 \ee
 where $\O$ is a $3\times 3$ matrix defined by
 \be
 \O=\left(\ba 0 & 0 & 0\\ 0 & 0 & -1 \\ 0 & 1 & 0 \ea\right).
 \ee
 To study the pointwise space-time behavior of both the Green's function and the global solution to  the VMB system~\eqref{VMB2}, let us consider the Cauchy problem for the system  with  initial data
\be
F_i(0,x,v)=F_{i,0}(x,v),\,\,\, i=1,2, \quad  E(0,x)=E_0(x),\quad B(0,x)=B_0(x), \label{VMB3}
\ee
with the compatibility  condition
\be \dx E^1_0(x)=\intr F_{2,0}dv,\quad B^1_0(x)=0. \label{com}\ee
Note that the VMB system \eqref{VMB2} has a steady state $(F^*_1,F^*_2,E^*,B^*)=(M(v),0,0,0)$, where the normalized Maxwellian $M(v)$ is given by
$$
 M=M(v)=\frac1{\sqrt{2\pi}}e^{-\frac{|v|^2}2},\quad v\in\R^3.
$$

Set
$$F_1=M+\sqrt M f_1,\quad F_2=\sqrt Mf_2.$$
Then the  system \eqref{VMB2}, \eqref{VMB3} and \eqref{com} for $(F_1,F_2,E,B)$ can be written as the following system for $(f_1,f_2,E,B)$:
\bma
&\dt f_1+v_1\dx f_1-Lf_1
  =\frac12(v\cdot E)f_2-(E+v\times B)\cdot \Tdv f_2+\Gamma(f_1,f_1),\label{VMB3a}
  \\
&\dt f_2+v_1\dx f_2-L_1f_2-v\sqrt M\cdot E
  =\frac12(v\cdot E)f_1-(E+v\times B)\cdot \Tdv f_1+\Gamma(f_2,f_1),  \label{VMB4a}
  \\
&\dt E=\dx \O B-\intr f_2v\sqrt Mdv,\quad \dx E_1=\intr f_2\sqrt Mdv,   \label{VMB3b}
\\
&\dt B=-\dx \O E,\quad B_1\equiv0,    \label{VMB3d}
  \ema
 with the  initial data
\be
\left\{\bln
&f_i(0,x,v)=f_{i,0}(x,v),\,\,\,  i=1,2, \quad E(0,x)=E_0(x),\quad  B(0,x)=B_0(x),\label{VMB3e}\\
&\dx E^1_0(x)=\intr f_{2,0}\sqrt{M}dv,\quad B^1_0(x)=0,   
\eln\right.
\ee
where the linearized collision operators $L$ and $L_1$  and the nonlinear operator $\Gamma $ are defined by
 \bma
&Lf=\frac1{\sqrt M}[Q(M,\sqrt{M}f)+Q(\sqrt{M}f,M)], \label{VMB3x}         \\
&L_1f=\frac1{\sqrt M}Q(M,\sqrt{M}f),                \label{VMB3y}
\\
&\Gamma(f,g)=\frac1{\sqrt M}Q(\sqrt{M}f,\sqrt{M}g).  \label{VMB3z}
 \ema
It is well-known that (cf. \cite{Yu,Cercignani})
 \be \left\{\bln
 (Lf)(v)&=(Kf)(v)-\nu(v) f(v), \quad
 (Kf)(v)=\intr k(v,u)f(u)du,          \label{VMB3v}             \\
 (L_1f)(v)&=(K_1f)(v)-\nu(v) f(v), \quad
 (K_1f)(v)=\intr k_1(v,u)f(u)du,  \\
\nu(v)&= \sqrt{2\pi} \bigg(e^{-\frac{|v|^2}2}+ \(|v|+\frac1{|v|}\)\int^{|v|}_0e^{-\frac{|u|^2}2}du\bigg),\\
k(v,u)&=\frac2{\sqrt{2\pi}|v-u|}e^{-\frac{(|v|^2-|u|^2)^2}{8|v-u|^2}-\frac{|v-u|^2}8}-\frac{|v-u|}{2\sqrt{2\pi}}e^{-\frac{|v|^2+|u|^2}4},\\
k_1(v,u)&=\frac2{\sqrt{2\pi}|v-u|}e^{-\frac{(|v|^2-|u|^2)^2}{8|v-u|^2}-\frac{|v-u|^2}8},
\eln\right.
 \ee
where $\nu(v)$, the collision frequency, is a real function, $K$ and $K_1$ are self-adjoint compact operators on $L^2(\R^3_v)$ with real symmetric integral kernels $k(v,u)$ and $k_1(v,u)$.
The null space of the operator $L$, denoted by $N_0$, is a subspace
spanned by the orthonoraml basis $\{\chi_j,\ j=0,1,\cdots,4\}$  given by
\bq \chi_0=\sqrt{M},\quad \chi_j=v_j\sqrt{M} \ (j=1,2,3), \quad
\chi_4=\frac{(|v|^2-3)\sqrt{M}}{\sqrt{6}};\label{basis}\eq
and the null space of the operator $L_1$, denoted by $N_1$, is
spanned only by $\sqrt{M}$.

In the followings,
 $L^2(\R^3)$ denotes the Hilbert space of complex valued functions
on $\R^3$ with the inner product and the norm given by
$$
(f,g)=\intr f(v)\overline{g(v)}dv,\quad \|f\|=\(\intr |f(v)|^2dv\)^{1/2}.
$$
And denote by $P_0,P_d$ the projection operators from $L^2(\R^3_v)$ to the subspace $N_0, N_1$ with
\bma
 &P_0f=\sum_{j=0}^4(f,\chi_j)\chi_j,\quad P_1=I-P_0, \label{P10}
 \\
 &P_df=(f,\chi_0)\chi_0,   \quad P_r=I-P_d. \label{Pdr}
 \ema

From the Boltzmann's H-theorem, the linearized collision operators $L$ and $L_1$ are non-positive, precisely,  there are  constants $\mu_1, \,\,\mu_2>0$ such that \bma
 (Lf,f)&\leq -\mu_1 \| P_1f\|^2, \quad  \ f\in D(L),\\
 (L_1f,f)&\leq -\mu_2 \|P_rf\|^2, \quad  \ f\in D(L_1),\label{L_4}
 \ema
where $D(L)$ and $D(L_1)$ are the domains of $L$ and $L_1$ given by
$$ D(L)=D(L_1) =\left\{f\in L^2(\R^3)\,|\,\nu(v)f\in L^2(\R^3)\right\}.$$

In addition, for the hard sphere model, $\nu(v)$ satisfies
 \be
\nu_0(1+|v|)\leq\nu(v)\leq \nu_1(1+|v|).  \label{nuv}
 \ee
Without loss of generality, we assume $\mu_1=\mu_2=\mu$ and $\nu(0)\ge \nu_0\ge \mu>0$ in this paper.

For any vectors $U=(f,E,B), V=(g,X,Y)\in L^3(\R^3_v)\times \R^3\times \R^3$, define the  $L^2$ inner product and norm  by
$$ (U,V)=(f,g)+(E,X)+(B,Y),\quad \|U\|=\sqrt{(U,U)}. $$
Define the solution space $\mathcal{X}_1$ as
\be
\mathcal{X}_1=\{U=(f,E,B)\in L^2(\R^3_v)\times \R^3\times \R^3\,|\,\dx E_1=(f,\chi_0),\, B_1=0 \},
\ee
with the norm
\be  \|U\|^2_{\mathcal{X}_1} =\|P_rf\|^2+|\dx E_1|^2+|E|^2+|B|^2=\|U\|^2. \ee

From the system \eqref{VMB3a}--\eqref{VMB3d},
we have the following linearized  system for $(f_1,f_2, E, B)$:
\bma
&\dt f_1+v_1\dx f_1-Lf_1=0,\label{VMB5a}\\
&\dt f_2+v_1\dx f_2-L_1f_2-v\sqrt M\cdot E=0,\label{VMB6a}\\
&\dt E=\dx \O B-( f_2,v\chi_0),\quad \dx E_1=( f_2,\chi_0),\label{VMB7a}\\
& \dt B=-\dx \O E,\quad B_1\equiv0. \label{VMB8a}
\ema
\def\BB{\mathbb{B}}
\def\AA{\mathbb{A}}
%
%
%
For convenience of notations, we rewrite the linearized Boltzmann equation \eqref{VMB5a} for $f_1 $ as
\bq   \label{LVMB0}
 \left\{\bln
 &\dt f_1=\BB_0f_1, \quad t>0,\\
 &f_1(0,x,v)=f_{1,0},
\eln\right.
 \eq
where the operator $\BB_0$ defined on $L^2(\R^3_x\times\R^3_v)$ is given by
\begin{equation}
	\BB_0=L-v_1\dx,     \label{LVMB0z}
\end{equation}
and represent the linear  modified  Vlasov-Maxwell-Boltzmann system \eqref{VMB6a}--\eqref{VMB8a} for $U=(f_2,E,B)^T $ as
 \bq
 \left\{\bln            \label{LVMB1b}
 &\dt U=\AA_0U, \quad t>0,\\
 & \dx E_1=( f_2,\chi_0),\quad B_1=0,
 \\
 &U(0,x,v)=U_0 =(f_{2,0},E_0,B_0),
 \eln\right.
 \eq
where the operator  $\AA_0$ defined on $L^2(\R^3_x\times\R^3_v)\times L^2(\R^3_x)\times L^2(\R^3_x)$ is given by
 \begin{align}
&\AA_0 =\left( \ba
   L_1-v_1\dx
 &v\chi_0 &0\\
-P_m &0 &\dx \O\\
0 &-\dx \O &0
\ea\right),  \label{A_0}\\
& P_mf=(f,v\chi_0),\quad \forall f\in L^2(\R^3_v).
 \end{align}

Since we investigate the pointwise behavior with respect to the space-time variable $(t,x)$, it is convenient to regard the Green's functions $G_b(t,x)$ and $G(t,x)=(G^{ij}(t,x))_{1\le i,j\le 3}$ as the operators in $L^2(\R^3_v)$ and $\mathcal{X}_1$ defined by
\be   \label{LB}
 \left\{\bln
 &\dt G_b =\BB_0 G_b ,\quad t>0, \\
 &G_b(0,x)= \delta(x)I_v,
 \eln\right.
 \ee
 and
\be   \label{LVMB}
 \left\{\bln
 &\dt G =\AA_0 G ,\quad t>0, \\
 & \dx G^{2j}_1=( G^{1j},\chi_0),\quad G^{3j}_1=0,\\
 &G(0,x)=   G_0(x),
 \eln\right.
 \ee
where $I_v$ is the identity in $L^2(\R^3_v)$, $(G^{2j}_k$, $G^{3j}_k)$, $k=1,2,3$ are the $k^{th}$ line of $(G^{2j}$, $G^{3j})$, and
\be
 G_0(x)=\left( \ba
\delta(x)P_r &\dx\delta(x) \mathbb{I}_0 &0\\
0 & \delta(x)\mathbb{I}_1 & 0\\
0 & 0 &\delta(x)\mathbb{I}_2
\ea\right).
\ee
Here, $\mathbb{I}_0=(\chi_0,0,0)$  and $\mathbb{I}_1,\mathbb{I}_2$ are $3\times 3$ matrices given by
\be
\mathbb{I}_1=\left(\ba 1 & 0 & 0\\ 0 & 1 & 0 \\ 0 & 0 & 1 \ea\right),\quad \mathbb{I}_2=\left(\ba 0 & 0 & 0\\ 0 & 1 & 0 \\ 0 & 0 & 1 \ea\right).
 \ee
In addition, it holds for any $U=(g,E,B)\in \mathcal{X}_1$  that
 $$G_0(x)\ast U=(P_rg+\dx E_1\chi_0,E,B)=U,$$
namely, $G_0(x)=\delta(x)I_{\mathcal{X}_1}$.

Then, the solution $f_1$ to the Boltzmann equation \eqref{LVMB0} and $U=(f_2,E,B)$ to  the Vlasov-Maxwell-Boltzmann system \eqref{LVMB1b} can be represented by the Green's functions $G_b(t,x)$ and $G(t,x)$ respectively as
\bma
f_1(t,x)&=G_b(t)\ast f_{1,0}=\intra G_b(t,x-y)f_{1,0}(y)dy, \\
U(t,x)&=G(t)\ast U_0=\intra G(t,x-y)U_0(y)dy,
\ema
where $f_{1,0}(y)=f_{1,0}(y,v)$ and $U_0(y)=(f_{2,0}(y,v),E_0(y),B_0(y)).$

For any $(t,x)$, $f\in L^2(\R^3_v)$ and $U=(g,E,B)\in \mathcal{X}_1$, we define the $L^2$ norms of $G_b(t,x)$ and $G(t,x)$ by
\be
\|G_b(t,x)\|=\sup_{f\in L^2_v}\frac{\|G_b(t,x)f\|}{\|f\|},\quad
\|G(t,x)\|=\sup_{U\in \mathcal{X}_1}\frac{\|G(t,x)U\|}{\|U\|}.
\ee


\noindent\textbf{Notations:} \ \ Before state the main results in this paper, we list some notations.
The Fourier transform of $f=f(x,v)$
is denoted by
$$\hat{f}(\xi,v)=(\mathcal{F}f)(\xi,v)=\frac1{\sqrt{2\pi}}\int_{\R} f(x,v)e^{-  i x\xi}dx.$$

Set a weight function $w(v)$ by
$$
w(v)=(1+|v|^2)^{1/2},
$$
so that the weighted Sobolev space $ H^N_k$ $(H^N=H^N_0)$ is given by
$
 H^N_k=\{\,f\in L^2(\R_x\times \R^3_v)\,|\,\|f\|_{H^N_k}<\infty\,\}
$
equipped with the norm
$$
 \|f\|_{H^N_k}=\sum_{ |\alpha|+|\beta|\le N}\|w^k\dxa\dvb f\|_{L^2(\R_x\times \R^3_v)}.
$$
For $q\ge1$, denote
$$
L^{2,q}=L^2(\R^3_v,L^q(\R_x)),\quad
\|f\|_{L^{2,q}}=\bigg(\intr\bigg(\intra|f(x,v)|^q dx\bigg)^{2/q}dv\bigg)^{1/2}.
$$
For $\gamma\ge 0$, define
$$ \|f(x)\|_{L^\infty_{v,\gamma}}=\sup_{v\in \R^3}(1+|v|)^{\gamma}|f(x,v)|.
$$
For any vector $U=(f,E,B)\in L^2_v\times \mathbb{R}^3\times \mathbb{R}^3$,
define $Y^\infty_{\gamma}$, $\gamma\ge 0$ norm  of $U$ by
$$
\|U(x)\|_{Y^\infty_{\gamma}}=\|f(x)\|_{L^\infty_{v,\gamma}}+|E(x)|+|B(x)|,
$$
and  define $Z^q$, $q\ge 1$ norm of $U$ by
$$ \|U\|_{Z^q}=\|f\|_{L^{2,q}}+\|E\|_{L^q_x}+\|B\|_{L^q_x}.$$

In the following,  denote by $\|\cdot\|_{L^2_{x,v}}$  the norm of the function spaces $L^2(\R_x\times \R^3_v)$, and by $\|\cdot\|_{L^2_x}$ and $\|\cdot\|_{L^2_v}$  the norms of the function spaces $L^2(\R_x)$  and $L^2(\R^3_v)$ respectively.
For any integer $m\ge1$, denote by $\|\cdot\|_{H^m_x}$ and $\|\cdot\|_{L^2_v(H^m_x)}$ the norms in the spaces $H^m(\R_x)$ and $L^2(\R^3_v,H^m(\R_x))$ respectively.


In addition, denote the wave speeds $\sigma_j$, $-2\le j\le 2$ as
\be \sigma_{\pm 2}=\pm \mathbf{c}=\pm\sqrt{\frac53},\quad \sigma_{\pm 1}=\pm 1, \quad \sigma_0=0. \label{speed}\ee

With the above preparation, we are ready to state  the  main results in this paper.
Firstly, we have the pointwise space-time estimates on the Green's function for the linearized VMB system~\eqref{LVMB}.

\begin{thm}\label{green1}
Let $G(t,x)=(G^{ij}(t,x))_{3\times 3}$ be the Green's function for the VMB system defined by \eqref{LVMB}.
Then,  the Green's function $G(t,x)$  has the following decomposition:
$$ G(t,x)= W_{0}(t,x)+  G_{F,0}(t,x)+  G_{F,1}(t,x)+ G_{R}(t,x),$$
where  $W_{0}(t,x)$ is the singular  part, $ G_{F,0}(t,x)$ and $G_{F,1}(t,x)$ are the fluid parts corresponding to  low frequency and  high frequency respectively, and  $G_R(t,x)$ is the remaining part.
In particular, there exist  positive constants $C,D$ such that   $G_{F,0}(t,x)$ is smooth and contains only diffusion waves as
\be \label{in1}
\left\{\bln
&\|\dxa G^{1j}_{F,0}(t,x)\|+\|\dxa G^{2j}_{F,0}(t,x)\|
\le C\((1+t)^{-\frac{3+ \alpha}2}e^{-\frac{|x|^2}{D(1+t)}}  + e^{-\frac{|x|+t}{D}}\),
\\
&\|\dxa G^{3j}_{F,0}(t,x)\|+\|\dxa G^{j3}_{F,0}(t,x)\|
\le C\((1+t)^{-\frac{2+ \alpha}2}e^{-\frac{|x|^2}{D(1+t)}} + e^{-\frac{|x|+t}{D}}\),
\\
&\|\dxa G^{33}_{F,0}(t,x)\| \le C\((1+t)^{-\frac{1+ \alpha}2}e^{-\frac{|x|^2}{D(1+t)}} + e^{-\frac{|x|+t}{D}}\),
\eln\right.
\ee
for $\alpha\ge 0$ and $j=1,2$.
The high-frequency fluid part $G_{F,1}(t,x)$ contains hyperbolic waves as
\be
  G_{F,1}(t,x)=\dxa F_{\alpha}(t,x),\quad \alpha\ge 0, \label{in2a}
\ee
where $F_{\alpha}(t,x)$ is bounded and satisfies
\be \label{in2}
\left\{\bln
&\| F^{11}_{\alpha}(t,x)\|\le C\bigg(\sum_{l=\pm 1}(1+t)^{-\alpha-1}\ln^2(2+t)e^{-\frac{\nu_0|x-lt|}2} + e^{-\frac{|x|+t}{D}}\bigg),
\\
&\| F^{1j}_{\alpha}(t,x)\|+\| F^{j1}_{\alpha}(t,x)\|\le C\bigg(\sum_{l=\pm 1}(1+t)^{-\alpha-\frac12}\ln^2(2+t)e^{-\frac{\nu_0|x-lt|}2}  + e^{-\frac{|x|+t}{D}}\bigg),
\\
&\| F^{2j}_{\alpha}(t,x)\|+\| F^{3j}_{\alpha}(t,x)\|\le C\bigg(\sum_{l=\pm 1}(1+t)^{-\alpha}\ln^2(2+t)e^{-\frac{\nu_0|x-lt|}2} + e^{-\frac{|x|+t}{D}}\bigg),
\eln\right.
\ee
for $j=2,3.$ The remaining part $G_{R}(t,x)$ is bounded and satisfies
\be
\| G_{R}(t,x)\|\le Ce^{-\frac{|x|+t}{D}}. \label{in3}
\ee
And the singular part $W_{0}(t,x)$ can be decomposed into
$$
W_{0}(t,x)= W_{0,1}(t,x)+W_{0,2}(t,x) ,
$$ where $ W_{0,1}$ is the singular kinetic wave and  $W_{0,2}$ is the singular leading short wave defined by
\be
\hat{W}_{0,1}(t,\xi)=\sum^6_{n=0}\hat{\U}_{n,1}(t,\xi),\quad \hat{W}_{0,2}(t,\xi)=\hat{G}_2(t,\xi). \label{W_0}
\ee
Here $\hat{\U}_{n,1}$ and $\hat{G}_2$ are defined by \eqref{w1} and \eqref{G_b} respectively.
\end{thm}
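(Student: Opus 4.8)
\emph{Strategy of the proof.}\quad The plan is to pass to the Fourier variable in $x$ and reduce everything to a careful spectral analysis of the symbol $\hat{\AA}_0(\xi)$, and then to convert frequency‑domain decay into pointwise space‑time estimates by treating the regions $|x|\lesssim t$ and $|x|\gtrsim t$ separately. Taking the Fourier transform of \eqref{LVMB} in $x$ gives the ODE $\dt\hat{G}(t,\xi)=\hat{\AA}_0(\xi)\hat{G}(t,\xi)$ with $\hat G(0,\xi)=\hat G_0(\xi)$, hence $\hat G(t,\xi)=e^{t\hat{\AA}_0(\xi)}\hat G_0(\xi)$, and similarly $\hat G_b(t,\xi)=e^{t\hat{\BB}_0(\xi)}\hat G_0$ for \eqref{LB}. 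I would then split the frequency line into a low‑frequency piece $|\xi|\le r_0$, a high‑frequency piece $|\xi|\ge r_1$, and a bounded middle piece $r_0\le|\xi|\le r_1$, the thresholds $r_0,r_1>0$ being fixed from the spectral analysis of \cite{Li1}.

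For the middle‑frequency part, the spectrum of $\hat{\AA}_0(\xi)$ stays in a half‑plane $\{\,\mathrm{Re}\,\lambda\le-\sigma\,\}$ for some $\sigma>0$ uniformly on $r_0\le|\xi|\le r_1$, so $e^{t\hat{\AA}_0(\xi)}$ decays like $e^{-\sigma t}$ there; combined with analyticity of the symbol in a strip about the real axis this yields a contribution bounded by $Ce^{-(|x|+t)/D}$, which is absorbed into $G_R$. The singular part $W_0$ is extracted \emph{before} any spectral decomposition: writing the Duhamel (Picard) expansion of $e^{t\hat{\AA}_0(\xi)}$ around the damped free‑transport semigroup $e^{-t(\nu(v)+iv_1\xi)}$, together with the analogous expansion of $\hat G_b$, the first seven terms carry the genuine singularities in $(t,x)$ and are collected into the singular kinetic wave $\hat W_{0,1}=\sum_{n=0}^6\hat\U_{n,1}$ and the singular leading short wave $\hat W_{0,2}=\hat G_2$ through \eqref{w1} and \eqref{G_b}; since each iteration gains a power of $w(v)^{-1}$ through the smoothing operators $K,K_1$, after seven steps the remaining kernel is regular enough in $v$ that its tail can be merged into the fluid and remainder parts. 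This establishes the decomposition \eqref{W_0}.

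The low‑frequency fluid part $G_{F,0}$ comes from the finitely many eigenvalues of $\hat{\AA}_0(\xi)$ that bifurcate from $0$ as $\xi\to0$. By \cite{Li1} these admit expansions $\lambda_j(\xi)=i\sigma_j\xi-a_j\xi^2+O(|\xi|^3)$ with speeds $\sigma_j\in\{0,\pm\mathbf{c}\}$ and $a_j>0$ for the dissipative modes, with spectral projections $P_j(\xi)$ analytic near $\xi=0$. Substituting the truncated modes $e^{t\lambda_j(\xi)}P_j(\xi)\hat G_0(\xi)\chi_{|\xi|\le r_0}$ into the inverse Fourier transform and using the complex‑analytic shift and scaling arguments of Liu–Yu type (cf. \cite{Liu1,Liu2,Li4}) produces, for each $j$, a Gaussian diffusion wave centered at $x=\sigma_j t$; the three algebraic rates in \eqref{in1} for the blocks $G^{1j},G^{2j}$, for $G^{3j},G^{j3}$, and for $G^{33}$ are read off from the order of vanishing at $\xi=0$ of the corresponding matrix entries of $P_j(\xi)$ — the electromagnetic rotation forces extra factors of $\xi$ in the distribution‑ and $E$‑components, while the $B_3$–$B_3$ entry is of order one — and the errors from truncating the $O(|\xi|^3)$ remainder and from the cutoff at $|\xi|=r_0$ contribute only the exponentially small term $e^{-(|x|+t)/D}$.

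The genuinely new object, and the main obstacle, is the high‑frequency fluid part $G_{F,1}$. Here one must analyze the eigenvalues of $\hat{\AA}_0(\xi)$ bifurcating from $\pm i\xi$ — the light‑speed characteristics of the Maxwell block $\dt E=\dx\O B$, $\dt B=-\dx\O E$ — as $|\xi|\to\infty$; because the Maxwell system carries no intrinsic dissipation, the damping of these modes is produced only through the weak coupling to the Boltzmann operator, so $\mathrm{Re}\,\lambda_\pm(\xi)$ tends to $0$ (rather than to $-\infty$) and the precise rate, together with the asymptotics of the projections $P_\pm(\xi)$, must be tracked carefully and uniformly in $|\xi|$. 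Writing $G_{F,1}=\dxa F_\alpha$ as in \eqref{in2a} and estimating $\widehat{F_\alpha}(t,\xi)\approx(i\xi)^{-\alpha}e^{t\lambda_\pm(\xi)}P_\pm(\xi)\hat G_0(\xi)\chi_{|\xi|\ge r_1}$ by deforming the $\xi$‑contour into the strip of analyticity about the rays $\mathrm{Re}\big(ix\xi+t\lambda_\pm(\xi)\big)$, one obtains waves localized near the light cones $x=\pm t$ with the exponential factor $e^{-\nu_0|x\mp t|/2}$ off the cone and the $(1+t)^{-\alpha-\cdots}\ln^2(2+t)$ algebraic–logarithmic decay along it, the logarithm being the typical signature of the borderline oscillatory integral $\int_{|\xi|\ge r_1}e^{t\lambda_\pm(\xi)}(\cdots)\,d\xi$ in this regime; the distinct exponents for $F^{11}$, for $F^{1j},F^{j1}$, and for $F^{2j},F^{3j}$ again reflect the vanishing orders of the entries of $P_\pm(\xi)$. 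Controlling this coupling uniformly in $|\xi|$, and handling the loss of $v$‑decay coming from the streaming term $v_1\dx$ at high frequency, is precisely where the new ideas mentioned in the introduction enter. Once the Fourier‑side bounds on $W_0$, $G_{F,0}$, $G_{F,1}$ and the middle‑frequency piece are in hand, assembling them and splitting into $|x|\le\delta t$ versus $|x|\ge\delta t$ yields the pointwise estimates \eqref{in1}–\eqref{in3}.
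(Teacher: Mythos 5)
Your overall skeleton (Fourier analysis, three frequency regimes, Picard iteration for singularities, contour deformation for the high‑frequency hyperbolic waves, and assembling inside/outside a Mach cone) is in the spirit of the paper, but there are several genuine structural errors and gaps that would prevent this sketch from closing.

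First, the low‑frequency description is wrong for this system. You write that the eigenvalues bifurcating from $0$ carry speeds $\sigma_j\in\{0,\pm\mathbf{c}\}$, i.e.\ Huygens waves. That is the spectral structure of the Boltzmann symbol $\BB_0(\xi)$ (Theorem \ref{spect2}), not of $\AA_0(\xi)$. For the VMB operator $\AA_0(\xi)$, Theorem \ref{eigen_3} gives exactly two low‑frequency eigenvalues, both \emph{even} functions of $\xi$ with $\lambda_1(\xi)=\lambda_2(\xi)=-a_1\xi^2+O(\xi^4)$: there is no linear‑in‑$\xi$ term, hence no propagating Huygens wave in $G_{F,0}$, and the Gaussians in \eqref{in1} are centered at $x=0$ only. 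The Huygens waves with speed $\pm\mathbf c$ show up only in $G_b$ (Theorem \ref{green2}). Mixing these two spectral pictures would lead you to predict the wrong wave pattern for $G_{F,0}$.

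Second, your treatment of the singular part conflates two distinct constructions. The singular kinetic wave $W_{0,1}=\sum_{n=0}^{6}\U_{n,1}$ does indeed come from the Picard iteration, but the singular leading short wave $W_{0,2}=G_2$ is \emph{not} a Picard iterate: $\hat G_2$ in \eqref{G_b} is the zeroth‑order truncation of the high‑frequency projector $\hat G_{H,0}$ in powers of $\xi^{-1}$, built from the eigenvalue/eigenvector expansions of Theorem \ref{eigen_4}. The crucial and nontrivial step that your proposal skips is Lemma \ref{W_1}: showing that the "fluid" piece of the Picard iteration, $Y_{n,2}=\sum_{k\le 3n}\U_{k,2}$, approaches $G_2$ exponentially for $|x|\ge 2t$. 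This bridge between the Picard picture and the spectral picture is what makes the two decompositions \eqref{L-R2} and \eqref{W-R1} agree on $G_R$ and deliver the clean $e^{-(|x|+t)/D}$ bound. Related to this, you say each Picard iteration "gains a power of $w(v)^{-1}$"; for the pointwise estimates what actually matters is the gain of \emph{spatial} regularity, $(1+|\xi|)^{-1}$ per three iterations, which is the content of the mixture lemmas (Lemmas \ref{mix1a}–\ref{mix1b}). In particular the paper needs a genuinely new mixture lemma for the VMB coupling, built on a curl‑divergence splitting of $(E,B)$ and the orthogonality \eqref{orth}, and your sketch does not touch this.

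Third, you do not explain how the exterior region $|x|\ge 2t$ is controlled. The Fourier‑side spectral decomposition plus contour deformation does not by itself give exponential decay there; what the paper does is set up a weighted $L^2$ energy estimate for the Picard remainder $Z_n=G-Y_n$ with weight $e^{\eps(|x|-\theta t)}$ (Lemma \ref{W_2}). That argument is essential to the bound \eqref{in3} on $G_R$ outside the Mach cone, and a Fourier‑only approach cannot replace it. A complete proof would need this ingredient explicitly.
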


 In \cite{Liu1,Liu2}, the authors firstly made systematic and pioneering contribution on the pointwise estimates of the Green's function to the linearized Boltzmann equation with  initial data $f_0$ being  compactly supported  in $L^\infty(\R^3_x\times \R^3_v)$. For the purpose of study
 in this paper, the following theorem basically puts the results  in \cite{Liu1,Liu2}
 on the pointwise estimates of Green's function to the linearized Boltzmann equation~\eqref{LB}
 in the setting when  the initial data $f_0=\delta(x)I_v$.

\begin{thm}\label{green2}
Let $G_b(t,x)$ be the Green's function for the Boltzmann equation defined by \eqref{LB}.
Then, the Green's function $G_b(t,x)$ can be decomposed into
$$ G_b(t,x)=W_{1}(t,x)+ G_{b,0}(t,x)+G_{b,1}(t,x),$$
where  $W_{1}(t,x)$ is the singular kinetic wave, $ G_{b,0}(t,x)$ is the fluid part at low frequency, and $ G_{b,1}(t,x)$ is the remaining part.
There exist positive constants $C,D$ such that   $G_{b,0}(t,x)$ is smooth and satisfies
\be \label{in1b}
\left\{\bln
&\|\dxa G_{b,0}(t,x)\|
\le C\bigg(\sum^1_{j=-1}(1+t)^{-\frac{1+\alpha}2}e^{-\frac{(x-j\mathbf{c}t)^2}{D(1+t)}} + e^{-\frac{|x|+t}{D}}\bigg), \\
&\|\dxa G_{b,0}(t,x)P_1\|
\le C\bigg(\sum^1_{j=-1}(1+t)^{-\frac{2+\alpha}2}e^{-\frac{(x-j\mathbf{c}t)^2}{D(1+t)}} + e^{-\frac{|x|+t}{D}}\bigg),
\\
&\|\dxa P_1G_{b,0}(t,x)P_1\| \le C\bigg(\sum^1_{j=-1}(1+t)^{-\frac{3+\alpha}2}e^{-\frac{(x-j\mathbf{c}t)^2}{D(1+t)}} + e^{-\frac{|x|+t}{D}}\bigg),
\eln\right.     \ee
and $G_{b,1}(t,x)$ is bounded and satisfies
\be
\| G_{b,1}(t,x)\|\le Ce^{-\frac{|x|+t}{D}}.
\ee
The singular wave $W_1(t,x)$ is given by
\be
W_1(t,x)= \sum^{6}_{k=0}J_k(t,x) , \label{W-1}
\ee where
$$
\left\{\bln
J_0(t,x)&=S^t\delta(x)I=e^{\nu(v)t}\delta(x-v_1t)I_v,\\
J_k(t,x)&=\intt S^{t-s}KJ_{k-1}ds,\quad k\ge 1.
\eln\right.
$$
Here $I_v$ is an identity operator in $L^2_v$ and the operator $S^t$ is defined by
\be S^th(x,v)=e^{-\nu(v)t}h(x-v_1t,v). \label{S-t}\ee
\end{thm}

The final result gives  the pointwise estimate of the global solution to the nonlinear VMB system~\eqref{VMB3a}--\eqref{VMB3e} as follows.

\begin{thm} \label{thm1}There exists a small constant $\delta_0>0$ such that if the initial data $U_0=(f_{1,0},f_{2,0},E_0,B_0)
$ satisfies $\|f_{1,0}\|_{H^8_3}+\|f_{2,0}\|_{H^8_3}+\|E_0\|_{H^8_x}+\|B_0\|_{H^8_x}\le \delta_0$ and
\be
\|\dxa f_{j,0}(x)\|_{L^\infty_{v,3}}+\|\Tdv f_{j,0}(x)\|_{L^\infty_{v,3}}+|\dxa  E_0(x)|+|\dxa  B_0(x)|\le C\delta_0(1+|x|^2)^{-\frac{\gamma}2} \label{initial}
\ee
for $j=1,2$, $\alpha\le 3$ and $\gamma>1$,
then the VMB system~\eqref{VMB3a}--\eqref{VMB3e} admits a unique globally solution $ U=(f_1,f_2,E,B) $ satisfying
\bma
\| f_1(t,x)\|_{L^\infty_{v,3}}+\|\Tdv  f_1(t,x)\|_{L^\infty_{v,3}} +| B(t,x)|&\le C\delta_0 \sum_{-2\le j\le 2}(1+t)^{-\frac12 }B_{\frac12}(t,x-\sigma_jt),  \label{t1}
\\
\|\dx f_1(t,x)\|_{L^\infty_{v,3}}  +|\dx  B(t,x)|&\le C\delta_0 \sum_{-2\le j\le 2}(1+t)^{-1 }B_{\frac12}(t,x-\sigma_jt),  \label{t3}
\\
\| f_2(t,x)\|_{L^\infty_{v,3}}+\|\Tdv  f_2(t,x)\|_{L^\infty_{v,3}}+| E(t,x)|&\le C\delta_0 \sum_{-2\le j\le 2}(1+t)^{-1 }B_{\frac12}(t,x-\sigma_jt),  \label{t2}
\ema
where $\sigma_j,$ $-2\le j\le 2$ is wave speed defined by \eqref{speed}, and the parabolic profile $B_{\frac{\beta}2}(t,x-\lambda t)$ is defined by
\be
B_{\frac{\beta}2}(t,x-\lambda t)= \(1+\frac{|x-\lambda t|^2}{1+t}\)^{-\frac{\beta}2},\quad \beta>0,\,\,\lambda\in \R.
\ee
\end{thm}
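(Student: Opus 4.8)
The plan is to run a standard nonlinear-iteration / bootstrap argument built on the pointwise Green's function estimates of Theorems \ref{green1} and \ref{green2}, combined with a priori energy estimates in the weighted Sobolev space $H^8_3$ that guarantee global existence and control of the regularity needed to close the pointwise bounds. First I would set up the Duhamel (mild) formulation: writing $U=(f_2,E,B)$ and treating $f_1$ separately, the solution is represented as $f_1(t)=G_b(t)\ast f_{1,0}+\int_0^t G_b(t-s)\ast N_1(s)\,ds$ and $U(t)=G(t)\ast U_0+\int_0^t G(t-s)\ast N(s)\,ds$, where $N_1,N$ collect the quadratic nonlinear terms $\frac12(v\cdot E)f_i$, $(E+v\times B)\cdot\Tdv f_j$ and $\Gamma(f_i,f_1)$ appearing in \eqref{VMB3a}--\eqref{VMB4a}. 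The key structural point, which must be used to get the faster $(1+t)^{-1}$ decay for $f_2,E,B$ in \eqref{t2}, is that every nonlinear term in the $f_2$-equation is a product involving $f_2$, $E$ or $B$ (so it carries at least one "fast" factor), while the leading source in the $f_1$-equation likewise involves the faster-decaying quantities; this is exactly the cancellation structure that the decomposition $F_1=F_++F_-$, $F_2=F_+-F_-$ was designed to expose.

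Next I would introduce the solution norm: for $\beta\in\{1/2,1\}$ set
\be
M(t)=\sup_{0\le s\le t}\Big\{\sum_{\alpha\le1}(1+s)^{\frac{1+\alpha}2}\frac{\|\dxa f_1(s,x)\|_{L^\infty_{v,3}}+\|\dxa\Tdv f_1(s,x)\|_{L^\infty_{v,3}}+|\dxa B(s,x)|}{\sum_{-2\le j\le2}B_{\frac12}(s,x-\sigma_js)}\Big\},
\ee
together with an analogous quantity $M_2(t)$ carrying weight $(1+s)^{1}$ for $f_2,E,B$ and their $v$-derivatives. The goal is to prove $M(t)+M_2(t)\le C\delta_0+C(M(t)+M_2(t))^2$, which by continuity and smallness of $\delta_0$ yields the theorem. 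The linear terms $G_b(t)\ast f_{1,0}$, $G(t)\ast U_0$ are handled directly: convolving the pointwise kernel bounds \eqref{in1}--\eqref{in3}, \eqref{in1b}--\eqref{in4b} against the algebraically decaying data \eqref{initial} produces exactly the wave profiles $\sum_j(1+t)^{-\cdots}B_{1/2}(t,x-\sigma_jt)$ via the standard convolution lemma $\big[(1+t)^{-a}e^{-(x-\sigma t)^2/D(1+t)}\big]\ast(1+|y|^2)^{-\gamma/2}\lesssim(1+t)^{-a}B_{\gamma\wedge 1}(t,x-\sigma t)$ for $\gamma>1$, and analogously for the Huygens/hyperbolic pieces $e^{-\nu_0|x-lt|/2}$ and the exponentially localized remainders. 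The singular parts $W_0,W_1$ act on the nonlinear source through the explicit transport-plus-$K$ expansion $J_k$; here I would use that each $K$-iterate gains a power of $(1+t)^{-1/2}$ decay and a factor of $w^{-1}$ in velocity, so after finitely many steps the singular contribution is absorbed into the smooth profile, and the genuinely singular residue only hits data/nonlinearity that is itself bounded in $L^\infty_{v,3}$ — the gain of the $v$-weight in $k(v,u),k_1(v,u)$ keeps the weighted $L^\infty_v$ norm finite.

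For the Duhamel integral I would split $\int_0^t=\int_0^{t/2}+\int_{t/2}^t$. On $[0,t/2]$ the Green's function is evaluated at large time $t-s\ge t/2$, so its decay is favorable and one estimates $\|N(s,y)\|$ by $(M+M_2)^2$ times a product of two wave profiles, then uses the nonlinear convolution inequality (the analogue of the lemma above for two interacting wave families with speeds $\sigma_i,\sigma_j$) to reproduce a single wave profile with the correct decay; the cross terms with $\sigma_i\ne\sigma_j$ decay faster and are harmless. On $[t/2,t]$ the source is evaluated at large $s\sim t$ so the quadratic factor is already $(1+t)^{-1}$ or better, and one only needs the $L^1_x\to L^\infty_x$ (in $x$) integrability of the kernel in $t-s$ near $s=t$, which the kinetic singular wave $J_0$ and the short-wave part supply after the velocity integration — this is where the hard-sphere lower bound \eqref{nuv} on $\nu(v)$ and the $L^\infty_{v,3}$ setting are essential to control $\int_{t/2}^t e^{-\nu(v)(t-s)}(\cdots)\,ds$ uniformly in $v$. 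The a priori $H^8_3$ energy estimate, proved by the usual macro-micro decomposition and weighted energy method for the VMB system (as in \cite{Duan4,Strain}), runs in parallel and supplies the $L^2$-type bounds and the control of up-to-eighth-order $x$-derivatives and first-order $v$-derivatives needed to justify the $L^\infty$ manipulations (Sobolev embedding in $x$) and to bound those nonlinear terms that are not purely pointwise-closable.

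The main obstacle, as the authors flag in the abstract, is the high-frequency hyperbolic wave $G_{F,1}$ with speed $\pm1$: unlike the Boltzmann or VPB case it decays only like $(1+t)^{-\alpha}\ln^2(2+t)$ along its characteristic (with an extra half-power lost for the mixed components and none for $G^{33}$), so closing the $(1+t)^{-1}$ rate for $f_2,E,B$ requires that every nonlinear term hitting this part of the kernel carry \emph{two} fast factors, or one fast factor plus an $x$-derivative, to beat the logarithm and the weak time decay. Verifying this term-by-term — in particular that the Lorentz-force terms $(v\times B)\cdot\Tdv f_1$ and the field-coupling $v\sqrt M\cdot E$ interact with $G_{F,1}$ only through components with enough decay, and that the $\ln^2(2+t)$ factors do not accumulate across the Duhamel iteration — is the delicate core of the argument; everything else is the routine (if lengthy) convolution bookkeeping described above.
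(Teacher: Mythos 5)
Your overall architecture --- the Duhamel representation through $G_b$ and $G$, a bootstrap norm encoding the profiles $\sum_{j}(1+t)^{-\cdot}B_{\frac12}(t,x-\sigma_jt)$, wave-interaction convolution lemmas for the diffusive, hyperbolic and singular kernels, and a parallel weighted energy estimate supplying the derivatives needed for the $L^\infty$ closure --- coincides with the paper's. However, you are missing the one genuinely non-routine step, and you have misplaced the main difficulty.

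The critical term is not the high-frequency hyperbolic wave $G_{F,1}$. It is the macroscopic projection $P_0H_1$ of the Lorentz-force term in the $f_1$-equation, which contains $(m_2\times B)\cdot v\chi_0$ with $m_2=(f_2,v\chi_0)$. Under the ansatz $m_2\sim(1+t)^{-1}$ but $B\sim(1+t)^{-\frac12}$ only, so this source decays merely like $(1+t)^{-\frac32}$; convolving it against $\dxa G_{b,0}$ (whose macroscopic-to-macroscopic component carries no extra decay) yields, via Lemma \ref{couple1a}, the factor $(1+t)^{-\frac{1+\alpha}2}\Gamma_{2}(t)=(1+t)^{-\frac{1+\alpha}2}\ln(1+t)$, so the bootstrap fails by a logarithm. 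Your observation that every nonlinear term carries at least one fast factor is therefore not sufficient: one fast factor times $B$ is exactly borderline against the $(1+t)^{-\frac12}$ heat kernel. The paper's resolution is structural: using the Maxwell equations $\dt E=\dx\O B-m_2$ and $\dt B=-\dx\O E$, one rewrites $m_2\times B$ as a perfect $x$-derivative $\pm\frac12(\dx|B|^2,0,0)$ (the derivative is then moved onto $G_{b,0}$, gaining $(1+t)^{-\frac12}$), plus a perfect time derivative $-\dt(E\times B)$ (integrated by parts against $\dt G_{b,0}$, which decays like $(1+t)^{-\frac{2+\alpha}2}$), plus $-E\times\dx\O E$, which carries two fast factors. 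Without this cancellation the rates in \eqref{t1} and \eqref{t3} cannot be closed. By contrast, the interaction of $G_{F,1}$ and of the singular short wave with the nonlinearity $H_3$ --- which you flag as the delicate core --- is comparatively routine: $H_3$ decays like $(1+t)^{-2}$, and the $\ln^2(2+t)$ in \eqref{in2} is absorbed by giving up an arbitrarily small $\eps$ in the time exponent, so no accumulation of logarithms occurs across the Duhamel integral. A secondary gap: your norm $M(t)+M_2(t)$ does not close by itself; the paper must carry a time-weighted energy functional (essentially $\mathbb{H}_{5,3}(U)\lesssim(1+t)^{-\frac32}$, obtained from the time-weighted energy inequalities) inside the bootstrap quantity in order to control $\Tdv^2f_j$, $\dx^2H_3$ and the other second-order terms entering the pointwise nonlinear estimates through interpolation.
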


We now outline the main ideas in  the proof of the above theorems. The results in Theorem~\ref{green1} about the pointwise behavior of the Green's function $G(t,x)$ to the VMB system \eqref{LVMB} are obtained
 based on the spectral analysis \cite{Li1} and the ideas inspired by \cite{Li4,Liu1,Liu2}. Yet new observations and ideas are made to deal with the influence of the lorentz force. First, we estimate the Green's function  $G(t,x)$ inside the Mach region $|x|\le 2t$  based on the spectral analysis. Indeed, we  decompose the Green's function  $G$  into the lower frequency part  $G_{L}$, the middle frequency part  $G_{M}$ and the high frequency part  $G_{H}$,
and further split  $G_L$ and $G_H$ into the fluid parts $G_{L,0}$, $G_{H,0}$, and the non-fluid parts $G_{L,1}$, $G_{H,1}$ respectively, namely
$$
\left\{\bln
&G=G_L+G_M+G_H,\\
&G_L=G_{L,0}+G_{L,1},\,\,\, G_H=G_{H,0}+G_{H,1}.
\eln\right.
$$
By using Fourier analysis techniques, we can show that the low-frequency fluid part $G_{L,0}(t,x)$ is smooth and contains only diffusion waves for $|x|\le 2t$ since the Fourier transform of the linear VMB operator $\AA_0(\xi)$ has two eigenvalues $\{\lambda_j(\xi),\,j=1,2\}$ at the low frequency region $|\xi|\le r_0$ satisfying
$$
\lambda_1(\xi)=\lambda_2(\xi)=-a_1\xi^2+O(\xi^4),\quad a_1>0.
$$
However, it is difficult to estimate the high-frequency fluid part $G_{H,0}(t,x)$ since $G_{H,0} $ contains both the singular waves and hyperbolic waves, 
where the singular waves are constructed as the non-integral part of $\hat{G}_{H,0}(t,\xi)$ in $L^1(\R^3_\xi)$,
and the structures of these waves depend on the eigenvalues and eigenfunctions of $\AA_0(\xi)$  at high frequency.
To overcome this difficulty, we carefully analyze the spectrum of  $\AA_0(\xi)$ to find that due to the influence of the electro-magnetic field,
 $\AA_0(\xi)$ admits four eigenvalues $\{\beta_j(\xi),\,j=1,2,3,4\}$ at the high frequency region $|\xi|\ge r_1$ satisfying
\be\label{h-eigen}
\left\{\bln
&\beta_1(\xi)=\beta_2(\xi)=-i\xi+\gamma_1(\xi)+O(|\xi|^{-2}\ln^2|\xi|),\\
&\beta_3(\xi)=\beta_4(\xi)=i\xi+\gamma_3(\xi)+O(|\xi|^{-2}\ln^2|\xi|),\\
 &C_1(1+|\xi|)^{-1}\le  -{\rm Re}\gamma_j(\xi)\le C_2(1+|\xi|)^{-1}.
\eln\right.
\ee
Therefore,  we can extract the singular part  $\hat{G}_2$ as the zero order expansion of $\hat{G}_{H,0}$ with respect to $\xi^{-1}$, and show that for $|x|\le 2t$, the remaining part $G_{H,0}-G_2$ is bounded and contains the hyperbolic waves with the speeds $\sigma_{\pm1}=\pm1$. Furthermore, since the eigenvalues $\beta_j(\xi)$ have no spectral gaps and satisfy \eqref{h-eigen} so that for $\alpha\ge 0$ and $|\xi|\ge r_1$,
$$
|\xi|^{-\alpha}e^{{\rm Re}\beta_j(\xi)t}\le C(1+t)^{-\alpha},\quad j =1,2,3,4,
$$
we can prove that $G_{H,0}-G_2$ gains more time-decay rate by taking the negative $x-$derivative.  We would like to mention that these behaviors of $G_{H,0}$  are completely new and can not be observed from the Boltzmann equation and VPB system which admit the spectral gaps at high frequency.

Then, we apply  the Picard's iteration  to  estimate the Green's function $G (t,x)$ outside the Mach region $|x|\ge 2t$. Instead of the linear VMB system \eqref{LVMB}, we construct the Picard's iteration on its Fourier transform \eqref{l2}.  It is interesting to observe that based on the curl-div decomposition of electro-magnetic fields and the  orthogonal structure \eqref{orth},
 the VMB system \eqref{l2} for $(\hat{f},\hat{E},\hat{B})$ can be decomposed into two systems \eqref{l3} and \eqref{l4}.  To be more precisely, \eqref{l3} is a self-consistent VMB type system for $(\hat{g}_1,\hat{E}_r,\hat{B}_r)$ which describes the transport of particles affected by 
 the curl part of the electro-magnetic fields. Meanwhile,  \eqref{l4} is a VPB type system  for $(\hat{g}_2,\hat{E}_1 )$ which describes the transport of particles in terms of 
 the divergence part of electric field. 
 Therefore,  we can construct an approximate sequence $ \hat{\U}_n(t,\xi)  $ of $ \hat{G}(t,\xi) $ such that $\hat{\U}_n(t,\xi) $ consists of the curl part $\hat{\mathcal{H}}^c_n(t,\xi)$  and the divergence part $ \hat{\mathcal{H}}^d_n(t,\xi)$ corresponding to \eqref{l3} and \eqref{l4} respectively:
\be
\hat{\U}_n(t,\xi) =  \hat{\mathcal{H}}^c_n(t,\xi) +  \hat{\mathcal{H}}^d_n(t,\xi). \label{div-curl}
\ee
In particular,
 $\hat{\mathcal{H}}^d_n$ can be represented by the combination of the mixture operator of Boltzmann equation $\hat{\M}^t_n(\xi)$ similarly to the VPB system \cite{Li4}, and  $\hat{\mathcal{H}}^c_n$
can be represented by the mixture operator of VMB system $\hat{\Q}^t_n(\xi)$.
From \cite{Li4,Liu1,Liu2}, $\hat{\M}^t_n(\xi)$ satisfies the following mixture lemma  (refer to Lemma  \ref{mix1a})
\be
 \|\hat{\M}^t_{3n}(\xi)\| \le C_n(1+t)^{3n}e^{-\nu_0t}(1+|\xi|)^{-n}.  \label{Q-4a}
\ee
And in terms of the mixed effects of the transport of particle and the rotation of the electro-magnetic fields, $\hat{\Q}^t_n(\xi)$ satisfies the following new mixture lemma (refer to Lemma \ref{mix1b}) 
\be
\hat{\Q}^t_n(\xi)=\hat{\Q}^t_{n,1}(\xi)+\hat{\Q}^t_{n,2}(\xi), \label{Q-1a}
\ee
where $\hat{\Q}^t_{n,1}(\xi)$ and $\hat{\Q}^t_{n,2}(\xi)$ represent the kinetic part and the fluid part respectively,  and they satisfy
\bma
 \|\hat{\Q}^t_{3n,1}(\xi)\|&\le C_n(1+t)^{3n}e^{-\nu_0t}(1+|\xi|)^{-n}\ln^n(2+|\xi|), \label{Q-2a}\\
  \|\hat{\Q}^t_{3n,2}(\xi)\|&\le C_n(1+t)^{2n-1}e^{|{\rm Im}\xi| t} (1+|\xi|)^{-n}\ln^n(2+|\xi|). \label{Q-3}
\ema
By \eqref{div-curl} and  the mixture lemmas \eqref{Q-4a}--\eqref{Q-3} for $\M^t_n(\xi)$ and $\Q^t_n(\xi)$,  the approximate solutions $\hat{\U}_n(t,\xi)$ can also be decomposed into (refer to Lemma \ref{mix3})
$$
\hat{\U}_n (t,\xi)=\hat{\mathcal{H}}^c_n(t,\xi)+\hat{\mathcal{H}}^d_n(t,\xi)=\hat{\U}_{n,1}(t,\xi)+\hat{\U}_{n,2}(t,\xi),
$$
where $\hat{\U}_{n,1}(t,\xi)$ denotes the kinetic part satisfying \eqref{Q-2a}, and $\hat{\U}_{n,2}(t,\xi)$ is the fluid part satisfying \eqref{Q-3}. These implies that both $\hat{\U}_{n,1}$ and $\hat{\U}_{n,2}$ can gain spatial regularities.
Then, we can define the $n^{th}$ degree kinetic wave $Y_n$ as follows:
$$
Y_n =Y_{n,1}+Y_{n,2} \quad{\rm with} \quad Y_{n,1} =\sum_{k=0}^{3n}\U_{k,1} , \quad Y_{n,2} =\sum_{k=0}^{3n}\U_{k,2},
$$
where
$\hat{Y}_{n,1}$  consists of the singular kinetic waves $\hat{W}_{0,1}$ and approaches $\hat{G}_{H,1}$  exponentially in time (refer to  Lemma \ref{l-high2})
 $$
 \|Y_{n,1}(t,x)-G_{H,1}(t,x)\|\le Ce^{-\eta_0t},\quad n\ge 2,
 $$
and $\hat{Y}_{n,2}$ approximates the singular fluid waves $\hat{G}_2$ exponentially in $(t,x)$ for $|x|\ge 2t$ (refer to Lemma \ref{W_1})
$$
\|Y_{n,2}(t,x)-G_{2}(t,x)\|\le Ce^{-\frac{\nu_0(|x|+t)}8}, \quad n\ge 2.
$$
By the mixture lemma (refer to Lemma \ref{mix3}), $\U_2(t,x)$ is bounded with respect to $(t,x)$, which implies that the remaining term $Z_2=G-Y_2$ is also bounded. Thus, we can have the weighted energy estimates on the remaining term $Z_2(t,x)$  to show (refer to Lemma \ref{W_2})
$$
\| Z_{2}(t,x)\|\le Ce^{- \frac{|x|+t}{D}},\quad |x|\ge 2t.
$$
Combining the above decompositions and estimates, we can obtain the expected decomposition and space-time pointwise behavior for each part of the Green's function $G(t,x)$ as listed in Theorem \ref{green1}.

Moreover, we rewrite  the pointwise estimate on  the Green's function $G_b$ of the Boltzmann equation \eqref{LB} as shown in Theorem \ref{green2}. The main idea comes from the previous works \cite{Liu1,Liu2}.  Finally, by using of the  estimates of the Green's functions in Theorems \ref{green1}--\ref{green2} and the estimates of waves coupling  in Lemmas \ref{couple1a}--\ref{wc-1}, one can establish the pointwise space-time
estimate on the global solution to the nonlinear VMB system given in Theorem~\ref{thm1}.

The rest of this paper will be organized as follows. In  Section~\ref{spectrum-two}, we first improve  the spectrum analysis  for the  two-species VMB system in order to estimate the pointwise behavior of the fluid parts of the Green's function.
Based on the spectrum analysis 
 the  pointwise space-time estimate  on the Green's function to the  linearized VMB system \eqref{LVMB} will be
given in Section~\ref{fluid} and Sections~\ref{kinetic}. In Section~\ref{sect3}, we improve the  pointwise space-time estimate  of the Green's function to the  linearized Boltzmann equation \eqref{LB} based on the previous work \cite{Liu1,Liu2}.
In Section~\ref{behavior-nonlinear}, we prove the pointwise space-time estimates of the global solution to the  nonlinear VMB system.

\section{Spectral analysis}
\label{spectrum-two}
\setcounter{equation}{0}
In this section, we first recall the spectral structure of
 the linearized VMB  system~\eqref{LVMB1b} and then analyze the analyticity of the eigenvalues and eigenvectors in order to study the pointwise estimate of the fluid parts of the Green's functions $G(t,x)$.

First, we take the Fourier transform in \eqref{LVMB} with respect to $x$ to have
 \bq \label{LVMB-3}
 \left\{\bln
 &\dt \hat{G}=\AA_0 (\xi)\hat{G}, \quad t>0,\\
 & i\xi \hat{G}^{2j}_1=( \hat{G}^{1j},\chi_0),\quad \hat{G}^{3j}_1=0,\\
 & \hat{G}_0(0,\xi)=\hat{G}_0(\xi),
 \eln\right.
 \eq
where 
\bma
\AA_0 (\xi)&=\left( \ba
L_1-iv_1\xi &v\chi_0  &0\\
-  P_{m }&0 & i \xi \O\\
0 &- i \xi \O &0
\ea\right),\\
 \hat{G}_0(\xi)&=\left( \ba
 P_r &i\xi \mathbb{I}_0 &0\\
0 &  \mathbb{I}_1 & 0\\
0 & 0 & \mathbb{I}_2
\ea\right). \label{initial-1}
\ema
Define a subspace $\hat{\mathcal X}_1$ of $L^2(\R^3_v)\times \C^3\times \C^3$ as
\be \hat{\mathcal X}_1=\{U=(f,E,B)\in L^2(\R^3_v)\times \C^3\times \C^3\,|\, i\xi E_1=(f,\chi_0),\, B_1=0\}. \label{X11}\ee
Then $\hat{G}(t,\xi)$ and $\hat{G}_0(\xi) $ are operators in $\hat{\mathcal X}_1$, and $\hat{G}_0(\xi)U_0=U_0$ for any $U_0\in \hat{\mathcal X}_1$. 

\subsection{Eigenvalues in low frequency}
Take the Fourier transform in \eqref{LVMB1b} with respect to $x$ to have
 \bq  \label{LVMB1a}
 \left\{\bln
 &\dt \hat{U}=\AA_0(\xi) \hat{U}, \quad t>0,\\
  &i\xi \hat{E}_1=(\hat{f}_2,\sqrt M),\quad  \hat{B}_1\equiv0,\\
& \hat{U}_0(0,\xi)=\hat{U}_0(\xi).
 \eln\right.
 \eq
When we consider the spectrum structure of the VMB system \eqref{LVMB1a},
it is convenient to consider the equivalent linearized system for  $\hat{V}=(\hat{f}_2,\hat{E}_r,\hat{B}_r)^T $ with $\hat{E}_r=(\hat{E}_2,\hat{E}_3),\, \hat{B}_r=(\hat{B}_2,\hat{B}_3)$ given by
 \bq     \label{LVMB1}
 \left\{\bln
 &\dt \hat{V}=\AA_1 (\xi)\hat{V}, \quad t>0,\\
 & \hat{V}_0(0,\xi)=\hat{V}_0(\xi),
 \eln\right.
 \eq
 where $\AA_1 (\xi)$ is the operator on  $L^2(\R^3_v)\times \C^2\times \C^2$ defined by
\be
\AA_1 (\xi)=\left( \ba
\BB_1(\xi) &\bar{v}\chi_0  &0\\
-  P_{\bar{m} }&0 & i \xi \O_1\\
0 &- i \xi \O_1 &0
\ea\right).
\ee
Here,
 \bma
\BB_1(\xi) &=L_1- i v_1 \xi -\frac{ i v_1}{\xi}P_d,\quad \xi\ne 0,  \label{B(xi)1}\\
P_{\bar{m}}f&=(f,\bar{v}\chi_0),\quad \bar{v}=(v_2,v_3),\\
\O_1&=\left(\ba 0 & -1 \\  1 & 0 \ea\right). \label{O_1}
 \ema

Denote the weighted Hilbert space $L^2_{\xi}(\R^3_v)$ for $\xi\ne 0$ as
$$
 L^2_{\xi}(\R^3)=\{f\in L^2(\R^3_v)\,|\,\|f\|_{\xi}=\sqrt{(f,f)_{\xi}}<\infty\},
$$
equipped with the inner product
$$
 (f,g)_{\xi}=(f,g)+\frac1{\xi^2}(P_{d} f,P_{d} g).
$$

For any vectors $U=(f,E,B),V=(g,X,Y)\in L^2_{\xi}(\R^3_v)\times \mathbb{C}^3\times \mathbb{C}^3$,  define a weighted inner product and the corresponding
norm by
$$ (U,V)_{\xi}=(f,g)_{\xi}+(E,X)+(B,Y),\quad \|U\|_{\xi}=\sqrt{(U,U)_{\xi}}. $$

First of all, note that
 $P_d$ is a self-adjoint operator satisfying
$(P_d f,P_d g)=(P_d f, g)=( f,P_d g)$. Hence,
 \bq
  (f,g)_{\xi}=(f,g+\frac1{ \xi^2}P_dg)=(f+\frac1{ \xi^2}P_df,g).\label{C-1}
 \eq
By
\eqref{C-1}, we have for any $f,g\in L^2_{\xi}(\R^3_v)\cap D(\BB_1(\xi))$,
\be (\BB_1(\xi)
f,g)_{\xi}=(\BB_1(\xi) f,g+\frac1{ \xi^2}P_d g)=(f,\BB_1(-\xi)g)_{\xi}.\label{L_7}
\ee
Thus for any $U,V\in L^2_{\xi}(\R^3_v)\cap D(\BB_1(\xi))\times \C^2\times \C^2$,
\be (\mathbb{A}_1(\xi)U,V)_{\xi}=(U,\mathbb{A}^*_1(\xi)V)_{\xi}, \label{L_7a}\ee
where
\be
\AA^*_1 (\xi)=\left( \ba
\BB_1(-\xi) &-\bar{v}\chi_0  &0\\
  P_{\bar{m}} &0 & -i \xi \O_1\\
0 & i \xi \O_1 &0
\ea\right).
\ee
Since $L^2_{\xi}(\R^3_v)\times \mathbb{C}^2\times \mathbb{C}^2$ is an invariant subspace of the operator $\AA_1 (\xi)$,  $\AA_1 (\xi)$ can
be regarded as a linear operator on $L^2_{\xi}(\R^3_v)\times \mathbb{C}^2\times \mathbb{C}^2$.
Firstly, we recall the following lemma from \cite{Li1}.

 \begin{lem}[\cite{Li1}]\label{spectrum1}
(1) There exist two constants $b_0,r_0>0$ such that the spectrum $ \sigma(\AA_1 (\xi))\cap \{\lambda\in\mathbb{C}\,|\,\mathrm{Re}\lambda>-b_0\}$  consists of two points $\{\lambda_j(\xi),\ j=1,2\}$ for  $|\xi|\leq r_0$.  In particular, the eigenvalues $\lambda_j(\xi)$ are $C^\infty$ functions of $\xi$ and satisfy the following asymptotic expansion for $|\xi|\leq r_0$:
 $$
 \lambda_{1}(\xi) =\lambda_{2}(\xi) = -a_1\xi^2+O(\xi^3),
 $$
where $a_1>0$ is a constant.

(2) There exists a constant $r_1>0$ such that the spectrum $ \sigma(\AA_1 (\xi))\cap \{\lambda\in\mathbb{C}\,|\,\mathrm{Re}\lambda>-\mu/2\}$ consists of four eigenvalues $\{\beta_j(\xi),\ j=1,2,3,4\}$ for  $|\xi|> r_1$.  In particular, the eigenvalues $\beta_j(\xi)$ satisfy
 \bgrs
 \beta_1(\xi) = \beta_2(\xi) =- i \xi+O(|\xi|^{-\frac12}),
 \\
 \beta_3(\xi) = \beta_4(\xi) = i \xi+O(|\xi|^{-\frac12}),
 \\
\frac{C_1}{|\xi|}\le -{\rm Re}\beta_{j}(\xi)\le \frac{C_2}{|\xi|},
\egrs
for two positive constants $C_1$ and $C_2$.

(3) For any $r_1>r_0>0$, there
exists $\alpha =\alpha(r_0,r_1)>0$ such that for  $r_0\le |\xi|\le r_1$,
$$ \sigma(\AA_1 (\xi))\subset\{\lambda\in\mathbb{C}\,|\, \mathrm{Re}\lambda \leq-\alpha\} .$$

(4) The semigroup $\tilde{S}(t,\xi)=e^{t\AA_1(\xi)}$ with  $|\xi|\neq0$ satisfies
 $$
 \tilde{S}(t,\xi)U=\tilde{S}_1(t,\xi)U+\tilde{S}_2(t,\xi)U+\tilde{S}_3(t,\xi)U,
     \quad U\in L^2_{\xi}(\R^3_v)\times \mathbb{C}^2\times \mathbb{C}^2,
 $$
 where
 \bmas
 \tilde{S}_1(t,\xi)U&=\sum^2_{j=1}e^{t\lambda_j(\xi)} \(U, \psi^*_j(\xi)\) \psi_j(\xi) 1_{\{|\xi|\leq r_0\}},
               \\
 \tilde{S}_2(t,\xi)U&=\sum^4_{j=1} e^{t\beta_j(\xi)}  \(U,\phi^*_j(\xi)\) \phi_j(\xi)  1_{\{|\xi|\ge r_1\}}.
 \emas
Here, $(\lambda_j(\xi), \psi_j(\xi))$, $(\beta_j(\xi),\phi_j(\xi))$ are the eigenvalues and eigenvectors of the operator $\AA_1(\xi)$ for $|\xi|\le r_0$ and $|\xi|>r_1$ respectively, and $\tilde{S}_3(t,\xi) =: \tilde{S}(t,\xi)-\tilde{S}_1(t,\xi)-\tilde{S}_2(t,\xi)$ satisfies
that there exists  a constant $\kappa_0>0$ independent of $\xi$ such that
 $$
 \|\tilde{S}_3(t,\xi)U\|_{\xi}\leq Ce^{-\kappa_0t}\|U\|_{\xi},\quad t>0.
 $$
\end{lem}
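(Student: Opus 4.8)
The plan is a frequency-localized spectral analysis of $\AA_1(\xi)$ on the weighted space $L^2_\xi(\R^3_v)\times\C^2\times\C^2$, splitting $|\xi|$ into the low band $|\xi|\le r_0$, the compact middle band $r_0\le|\xi|\le r_1$ and the high band $|\xi|\ge r_1$; once the location of $\sigma(\AA_1(\xi))$ is pinned down in each regime, the semigroup decomposition in (4) follows from the associated Riesz projections together with a uniform resolvent bound. Two structural facts are used throughout: the quasi-self-adjointness $(\AA_1(\xi)U,V)_\xi=(U,\AA_1^*(\xi)V)_\xi$ from \eqref{L_7a}, and the $SO(2)$-symmetry of $\AA_1(\xi)$ under simultaneous rotation of $(v_2,v_3)$, $(E_2,E_3)$, $(B_2,B_3)$, together with the reflection symmetry $\AA_1(-\xi)=\overline{\AA_1(\xi)}$.

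For the low band I would apply Kato's analytic perturbation theory around $\xi=0$. The point $\lambda=0$ is an isolated eigenvalue of (the limiting form of) $\AA_1(0)$ with a two-dimensional eigenspace, namely the transverse magnetic directions $\{(0,0,B):B\in\C^2\}$: using $(L_1f,f)\le-\mu\|P_rf\|^2$ together with the vanishing of the flux $(f,\bar v\chi_0)$ on the relevant subspace, one checks that $0$ is not a generalized eigenvalue above any microscopic or electric direction, and that the rest of $\sigma(\AA_1(0))$ lies in $\{\mathrm{Re}\,\lambda\le-b_0\}$ for some $b_0>0$; the only fragile point, the apparent $\xi^{-1}P_d$-singularity of $\BB_1(\xi)$, is regular on the constraint subspace where $P_d\hat f_2$ is slaved to $\xi\hat E_1$. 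Perturbation theory then produces a rank-two Riesz projection $P(\xi)=\tfrac1{2\pi i}\oint(\lambda-\AA_1(\xi))^{-1}\,d\lambda$ analytic in $\xi$, and $\lambda_1(\xi),\lambda_2(\xi)$ are the eigenvalues of the analytic $2\times2$ matrix $\AA_1(\xi)P(\xi)|_{\mathrm{Ran}\,P(\xi)}$; the $SO(2)$-symmetry forces this matrix to be scalar to the orders that matter, so $\lambda_1=\lambda_2$, and the reflection symmetry kills the first-order term. The remaining computation is the second coefficient $-a_1$, obtained by a Chapman--Enskog expansion and identified as an effective transverse electro-magnetic diffusion coefficient, which is $<0$ because the $\xi$-induced transverse electric field feeds back into the dissipative $L_1$; this is (1). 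For the middle band in (3) I would use that $\sigma(\AA_1(\xi))\cap\{\mathrm{Re}\,\lambda>-\nu_0\}$ is discrete, exclude the imaginary axis by the same dissipativity/pairing argument, and conclude a uniform gap $\alpha$ on the compact annulus $r_0\le|\xi|\le r_1$ by continuity.

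For the high band I would fix $|\xi|\ge r_1$ and $\mathrm{Re}\,\lambda>-\mu/2$ and first show $(\lambda-\BB_1(\xi))^{-1}$ exists with a bound uniform in $(\lambda,\xi)$, by a Neumann series off the explicitly invertible streaming part $\lambda+\nu(v)+iv_1\xi$ (the operators $K_1$ and $\xi^{-1}P_d$ being relatively small there). Then the Schur complement reduces $\AA_1(\xi)\hat V=\lambda\hat V$ to the Maxwell system $\lambda\hat E_r=i\xi\O_1\hat B_r-R(\lambda,\xi)\hat E_r$, $\lambda\hat B_r=-i\xi\O_1\hat E_r$, where $R(\lambda,\xi)$ is the $2\times2$ matrix $E\mapsto P_{\bar m}(\lambda-\BB_1(\xi))^{-1}(\bar v\chi_0\cdot E)$, which by the reflection $v_2\mapsto-v_2$ equals $r(\lambda,\xi)I_2$; eliminating $\hat B_r=-\tfrac{i\xi}{\lambda}\O_1\hat E_r$ and using $\O_1^2=-I$ gives the scalar characteristic equation $\lambda^2+\xi^2=-\lambda\,r(\lambda,\xi)$. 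Solving this near the unperturbed roots $\pm i\xi$ by Rouch\'e's theorem and the implicit function theorem yields $\beta_1=\beta_2=-i\xi+O(|\xi|^{-1/2})$ and $\beta_3=\beta_4=i\xi+O(|\xi|^{-1/2})$, with $\mathrm{Re}\,\beta_j=-\tfrac12\mathrm{Re}\,r(\pm i\xi,\xi)+(\text{lower order})$; and the two-sided bound $C_1/|\xi|\le-\mathrm{Re}\,\beta_j\le C_2/|\xi|$ comes from the leading expression $\mathrm{Re}\,r(i\xi,\xi)=\int_{\R^3}\frac{\nu(v)\,v_2^2M(v)}{\xi^2(1+v_1)^2+\nu(v)^2}\,dv+O(|\xi|^{-2})$ (and its analogue near $-i\xi$, concentrated at $v_1=1$), a Lorentzian-type integral comparable to $|\xi|^{-1}$ from above and below. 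A resolvent bound in the remaining region $\{\mathrm{Re}\,\lambda>-\mu/2\}$ away from the two strips where $|\mathrm{Im}\,\lambda|$ is $O(1)$-close to $|\xi|$ shows there are no other eigenvalues, which is (2).

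Finally, for (4) I would define $\tilde S_1(t,\xi)$ and $\tilde S_2(t,\xi)$ as the Dunford integrals of $e^{t\lambda}(\lambda-\AA_1(\xi))^{-1}$ over small loops enclosing $\{\lambda_j(\xi)\}$ and $\{\beta_j(\xi)\}$ respectively — which reduce to the stated finite-rank projector formulas with eigenvectors $\psi_j,\phi_j$ — and set $\tilde S_3=\tilde S-\tilde S_1-\tilde S_2$. The uniform bound $\|\tilde S_3(t,\xi)U\|_\xi\le Ce^{-\kappa_0t}\|U\|_\xi$ then follows from a resolvent estimate $\sup_{\mathrm{Re}\,\lambda=-\kappa_0}\|(\lambda-\AA_1(\xi))^{-1}\|_\xi\le C$ uniform in $\xi\neq0$ — assembled from the low-band gap below $-b_0$ (away from $\lambda_1,\lambda_2$), the high-band Schur-complement resolvent bound, and the middle-band gap — together with the Gearhart--Pr\"uss theorem on Hilbert space (or an equivalent direct estimate of the inverse Laplace transform exploiting the dissipative structure of $\AA_1(\xi)$). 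The main obstacle is the high-band step: unlike the Boltzmann equation and the Vlasov--Poisson--Boltzmann system, whose high-frequency spectra keep a fixed distance from the imaginary axis, here the transverse electro-magnetic wave modes are only $O(|\xi|^{-1})$-damped, so one must extract the precise rate from the Lorentzian integral $\mathrm{Re}\,r(i\xi,\xi)$ and control the corrections in the characteristic equation $\lambda^2+\xi^2=-\lambda\,r(\lambda,\xi)$ uniformly as $|\xi|\to\infty$ — and it is exactly this absence of a high-frequency spectral gap, reflected in the refined expansion \eqref{h-eigen}, that is responsible for the new hyperbolic waves in Theorem~\ref{green1}.
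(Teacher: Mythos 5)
The paper offers no proof of this lemma: it is imported verbatim from \cite{Li1} (``we recall the following lemma from \cite{Li1}''), so the only internal material to compare against is Section~\ref{spectrum-two}, where the paper re-derives and sharpens parts (1) and (2). On the high-frequency band your outline coincides with what the paper actually does in Lemma~\ref{lem1}, Lemma~\ref{eigen_5} and Theorem~\ref{eigen_4}: invert the streaming part by a Neumann series, reduce by a Schur complement to the scalar dispersion relation $D(\lambda,\xi)=\lambda^2-((\BB_1(\xi)-\lambda)^{-1}\chi_2,\chi_2)\lambda+\xi^2=0$, solve near $\pm i\xi$ by a contraction/fixed-point argument, and extract the two-sided $|\xi|^{-1}$ damping from the Lorentzian integral after the change of variables $(v_1\pm1)\zeta\to u_1$. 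Your middle-band and semigroup arguments are also the standard route and are consistent with Lemma~\ref{spectrum1}(3)--(4) and Theorem~\ref{semigroup}.

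The one step that would not go through as written is the low-frequency analysis. You propose Kato analytic perturbation of $\AA_1(\xi)$ around $\xi=0$, but $\BB_1(\xi)=L_1-iv_1\xi-\frac{iv_1}{\xi}P_d$ is genuinely singular at $\xi=0$ and the ambient norm $\|\cdot\|_\xi$ itself degenerates there, so $\{\AA_1(\xi)\}$ is not an analytic (or even continuous) family on a fixed space near $\xi=0$; a Riesz projection $P(\xi)=\frac{1}{2\pi i}\oint(\lambda-\AA_1(\xi))^{-1}d\lambda$ ``analytic in $\xi$'' is exactly what needs proof, and flagging the $\xi^{-1}P_d$ term as ``regular on the constraint subspace'' does not supply it, since that subspace is not invariant. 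The paper's Theorem~\ref{eigen_3} (following \cite{Li1}) circumvents this by the macro--micro reduction \eqref{A_2}--\eqref{A_4}, which converts the eigenvalue problem into the two scalar relations $D_0(\lambda,\xi)=0$ and $D_1(\lambda,\xi)=\lambda^2-(R(\lambda,\xi)\chi_2,\chi_2)\lambda+\xi^2=0$ that \emph{are} jointly analytic in $(\lambda,\xi)$ near $(0,0)$; the implicit function theorem applied to $D_1$ then yields the double root $z(\xi)=-a_1\xi^2+O(\xi^4)$ with $a_1=-1/(L_1^{-1}\chi_2,\chi_2)$, while $D_0=0$ is shown to have no roots in $\{\mathrm{Re}\,\lambda>-b_0\}$. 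Note also that $\lambda_{1,2}$ arise as the small root of this quadratic Maxwell-type dispersion relation (a resistive damping $\lambda\approx\xi^2/(L_1^{-1}\chi_2,\chi_2)$), not as a Chapman--Enskog transport coefficient; your qualitative picture of the limiting eigenspace as the transverse magnetic directions is nevertheless correct. If you replace the Kato step by this dispersion-relation reduction, the rest of your outline assembles into the cited result.
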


We now turn to  study the analyticity and expansion of the eigenvalues and eigenvectors of $\AA_1(\xi)$ for low frequency and high frequency, respectively. The eigenvalue problem $\AA_1 (\xi)U=\lambda  U$ for $U=(f,X,Y)\in L^{2}_\xi(\R^3_v)\times \mathbb{C}^2\times \mathbb{C}^2$ can be written as
 \bma
  \lambda f &=\BB_1(\xi)f+\bar{v}\chi_0\cdot X,\label{L_2}\\
  \lambda X&=- (f,\bar{v}\chi_0)+ i \xi\O_1 Y,\label{L_2a}\\
  \lambda Y&=- i \xi\O_1 X,\quad  \xi\ne0.\label{L_2b}
 \ema

First, we show the analyticity and expansion of the eigenvalues $\lambda_j(\xi)$ and the corresponding eigenvectors $\psi_j(\xi)$ of $\AA_1 (\xi)$ at low frequency as follows.

\begin{thm}\label{eigen_3}
(1)
There exist two constants $b_0,r_0>0$ such that the spectrum $\sigma(\AA_1(\xi))\cap \{\lambda\in\mathbb{C}\,|\,\mathrm{Re}\lambda>-b_0\}$
consists of two points $\{\lambda_j(\xi),j=1,2\}$ for   $|\xi|\leq r_0$. The eigenvalues $\lambda_j(\xi)$  are  even, analytic functions of $\xi$ and satisfy the following expansion for $|\xi|\le r_0$
 \be                                   \label{specr0}
 \lambda_1(\xi)=\lambda_2(\xi)=-a_{1}\xi^{2}+O(\xi^4),
 \ee
where $a_1>0$ is a constant defined by
\bq
a_1=- \frac{1}{(L_1^{-1}\chi_2,\chi_2)} >0. \label{a_1}
\eq

(2) The eigenvectors $\psi_j(\xi)=(h_j(\xi),\y^2_j(\xi),\y^3_j(\xi))\in L^{2}_\xi(\R^3_v)\times \mathbb{C}^2\times \mathbb{C}^2$ are analytic functions of $\xi$ and satisfy
 \bq
  \left\{\bln                      \label{eigf1}
   &(\psi_i(\xi),\psi^*_j(\xi))=(h_i,\overline{h_j})-(\y^2_i,\overline{\y^2_j})-(\y^3_i,\overline{\y^3_j})=\delta_{ij}, \quad  i,j=1,2,\\
&h_j(\xi)=b_1(\xi)\(\lambda_1(\xi)-L_1+ i \xi P_rv_1P_r\)^{-1}(\bar{v}\cdot \e_j)\chi_0,\\
&\y^2_j(\xi)=b_1(\xi)\e_j,\quad \y^3_j(\xi)= \frac{ i \xi b_1(\xi)}{\lambda_1(\xi)}\O_1 \e_j,
  \eln\right.
  \eq
where $ \psi^*_j=(\overline{h_j},-\overline{\y^2_j},-\overline{\y^3_j})$, $\e_1=(1,0)$, $\e_2=(0,1)$ and $b_1(\xi)$ is an analytic and  odd function of $\xi$ for $|\xi|\le r_0$ satisfying
$$b_1(\xi)= a_1\xi+O(\xi^3).$$
\end{thm}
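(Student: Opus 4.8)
The plan is to treat $\AA_1(\xi)$ at low frequency as an analytic perturbation of the unperturbed operator $\AA_1(0) = \mathrm{diag}(L_1, 0, 0)$ restricted to the relevant subspace. First I would observe that the equations \eqref{L_2}--\eqref{L_2b} can be collapsed: from \eqref{L_2b} we get $Y = -\frac{i\xi}{\lambda}\O_1 X$ (for $\lambda \neq 0$, which holds in the regime we care about since $\lambda_j(\xi) \to 0$ like $\xi^2$ but stays nonzero for $\xi \neq 0$ — this needs a small argument, or better, one rescales), and substituting into \eqref{L_2a} gives $\lambda X + \frac{\xi^2}{\lambda}\O_1^2 X = -(f,\bar v\chi_0)$, i.e. since $\O_1^2 = -\mathbb{I}$ on $\C^2$, $\left(\lambda - \frac{\xi^2}{\lambda}\right) X = -(f,\bar v\chi_0)$. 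Combined with \eqref{L_2}, $\lambda f = \BB_1(\xi) f + \bar v\chi_0\cdot X$, this is a closed eigenvalue problem. Because $L_1$ has a spectral gap on $N_1^\perp = (\mathrm{span}\,\sqrt M)^\perp$ and the coupling $\bar v\chi_0 \perp N_1$, the standard Kato analytic perturbation theory (or a Lyapunov–Schmidt reduction onto the finite-dimensional near-kernel) applies: the total near-zero eigenspace has dimension $2$, matching the two copies coming from the two components of $X \in \C^2$, and one obtains $\lambda_j(\xi)$ and the spectral projection as analytic functions of $\xi$ on $|\xi|\le r_0$. The evenness follows from the symmetry $\AA_1(-\xi)$ being conjugate to $\AA_1(\xi)$ (via the parity operator $v_1 \mapsto -v_1$ together with $\xi \mapsto -\xi$, which is exactly the adjoint relation \eqref{L_7a} packaged differently), forcing the eigenvalues to be invariant under $\xi \mapsto -\xi$; since they are analytic and even, they are analytic in $\xi^2$.

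For the expansion \eqref{specr0}, I would insert the ansatz $\lambda(\xi) = \lambda^{(1)}\xi + \lambda^{(2)}\xi^2 + O(\xi^3)$ and $f(\xi) = f^{(0)} + \xi f^{(1)} + \cdots$, $X(\xi) = X^{(0)} + \cdots$ into the reduced system and match orders. At order $\xi^0$: $0 = L_1 f^{(0)}$, so $f^{(0)} \in N_1 = \mathrm{span}\,\sqrt M = \mathrm{span}\,\chi_0$; write $f^{(0)} = c\,\chi_0$. The relation $(\lambda - \xi^2/\lambda)X = -(f,\bar v\chi_0)$ at leading order forces a balance that pins $\lambda^{(1)}$; tracking the $\bar v\chi_0\cdot X$ term against $\BB_1(\xi) f$, whose singular part is $-\frac{iv_1}{\xi}P_d$, the $O(\xi^{-1})$ and $O(1)$ terms must cancel, which (as in the classical Boltzmann computation and in \cite{Li1,Li4}) yields $\lambda^{(1)} = 0$ — i.e. no Huygens-type first-order term survives in the $f_2$/electromagnetic block, consistent with the claimed $-a_1\xi^2$ leading behavior — and then at order $\xi^2$ the solvability (Fredholm) condition obtained by pairing with $\chi_0$ gives $\lambda^{(2)} = \lambda_1 = \lambda_2 = -a_1$ with $a_1 = -1/(L_1^{-1}\chi_2,\chi_2)$; positivity of $a_1$ is immediate from $(L_1 g, g) \le -\mu\|P_r g\|^2 < 0$ applied to $g = L_1^{-1}\chi_2$ (which lies in $N_1^\perp$ since $\chi_2 \perp \chi_0$). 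For part (2), I would construct $\psi_j(\xi)$ directly from the reduced system: choosing $X = b_1(\xi)\e_j$ as the $\C^2$-component and then solving \eqref{L_2} for $h_j$ gives $h_j(\xi) = b_1(\xi)(\lambda_1(\xi) - L_1 + i\xi P_r v_1 P_r)^{-1}(\bar v\cdot\e_j)\chi_0$ — the operator is invertible on $N_1^\perp$ for small $\xi$ by the spectral gap — and \eqref{L_2b} gives $\y^3_j(\xi) = \frac{i\xi b_1(\xi)}{\lambda_1(\xi)}\O_1\e_j$; analyticity of all pieces follows from analyticity of $\lambda_1(\xi)$ and the resolvent, once one checks the apparent pole $1/\lambda_1(\xi) = 1/(-a_1\xi^2 + \cdots)$ is compensated — this is where the precise choice $b_1(\xi) = a_1\xi + O(\xi^3)$ enters, making $\xi b_1(\xi)/\lambda_1(\xi)$ analytic, and $b_1$ odd because $\lambda_1$ is even while the overall eigenvector parity is fixed. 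The normalization \eqref{eigf1} with the $\xi$-weighted-looking indefinite pairing $(h_i,\overline{h_j}) - (\y^2_i,\overline{\y^2_j}) - (\y^3_i,\overline{\y^3_j}) = \delta_{ij}$ is then a single scalar equation determining $|b_1(\xi)|^2$, and expanding it to leading order ($\y^3$ contributes $-\xi^2 b_1^2/\lambda_1^2 \cdot |\O_1\e_j|^2$, which is $O(1)$, against $|\y^2_j|^2 = b_1^2$) gives the stated $b_1(\xi) = a_1\xi + O(\xi^3)$.

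The main obstacle I anticipate is handling the $1/\lambda$ and $1/\xi$ singularities simultaneously and cleanly: the reduced eigenvalue equation has the form $(\lambda - \xi^2/\lambda)X = \cdots$ with $\lambda \sim \xi^2$, so the coefficient $\lambda - \xi^2/\lambda \sim \xi^2 - 1 \to -1$, which is actually harmless — but $\BB_1(\xi)$ itself carries the $-\frac{iv_1}{\xi}P_d$ term, so the bookkeeping of which terms are $O(\xi^{-1})$, $O(1)$, $O(\xi)$, $O(\xi^2)$ in the coupled system requires care, and one must justify rigorously (not just formally) that the branch of eigenvalues stays analytic across $\xi = 0$ despite these apparent singularities. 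The clean way is to not work with the reduced scalar equation at all for the analyticity statement, but to apply Kato's theory to the full operator family $\AA_1(\xi)$ on $L^2_\xi \times \C^2 \times \C^2$ — noting $\AA_1(\xi)$ is an analytic family of type (A) after the change of inner product \eqref{C-1}, since the only $\xi$-singular term $-\frac{iv_1}{\xi}P_d$ is absorbed precisely by the weight — and then extract the expansions perturbatively afterward, using the reduced system only as a computational device. I would also need to double-check the evenness claim against the adjoint structure: \eqref{L_7a} says $\AA_1(\xi)^* = \AA_1^*(\xi)$ with $\AA_1^*(\xi)$ built from $\BB_1(-\xi)$ and sign-flipped off-diagonal blocks, so $\sigma(\AA_1(\xi)) = \overline{\sigma(\AA_1(-\xi))}$; combined with the reality structure (real symmetric kernels $k_1$) this forces $\sigma(\AA_1(\xi)) = \sigma(\AA_1(-\xi))$, hence evenness of the (here non-crossing, algebraically simple in the generic direction) branches.
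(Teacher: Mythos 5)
Your proposal follows essentially the same route as the paper: eliminate $Y$ via \eqref{L_2b}, solve for the microscopic part of $f$ with the resolvent $R(\lambda,\xi)=(L_1-\lambda P_r-i\xi P_rv_1P_r)^{-1}$, reduce to a finite-dimensional dispersion relation, solve it near $\lambda=0$ by the analytic implicit function theorem, and then construct and normalize the eigenvectors explicitly. Two points, however, need repair.

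First, there is a sign error in the reduction. From $\lambda Y=-i\xi\O_1X$ one gets $i\xi\O_1Y=\frac{\xi^2}{\lambda}\O_1^2X=-\frac{\xi^2}{\lambda}X$, so \eqref{L_2a} becomes $\bigl(\lambda+\frac{\xi^2}{\lambda}\bigr)X=-(f,\bar v\chi_0)$, i.e. $\lambda^2-(R(\lambda,\xi)\chi_2,\chi_2)\lambda+\xi^2=0$, which is the paper's $D_1(\lambda,\xi)=0$ in \eqref{D1}. Your version $\bigl(\lambda-\frac{\xi^2}{\lambda}\bigr)X=-(f,\bar v\chi_0)$ leads, by the same leading-order balance, to $\lambda\approx-\xi^2/(L_1^{-1}\chi_2,\chi_2)=+a_1\xi^2$, i.e. the wrong (growing) sign; the correct balance is $\xi^2/\lambda\to(L_1^{-1}\chi_2,\chi_2)<0$, giving $\lambda\approx-a_1\xi^2$. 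A similar slip occurs in your normalization count: the dominant contribution is $-(\y^3_j,\overline{\y^3_j})=+\xi^2b_1^2/\lambda_1^2\sim b_1^2/(a_1^2\xi^2)$, and it is this $O(\xi^{-2})$ coefficient in \eqref{C_2d} that forces $b_1(\xi)=a_1\xi+O(\xi^3)$.

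Second, your proposed ``clean'' alternative --- applying Kato's theory to $\AA_1(\xi)$ as an analytic family of type (A) near $\xi=0$ --- does not go through: $\BB_1(\xi)$ contains the term $-iv_1\xi^{-1}P_d$, and the weighted norm $\|\cdot\|_\xi$ itself degenerates as $\xi\to0$, so the family is not analytic through $\xi=0$ on any fixed space. The paper's (and your primary) reduction is the correct fix: the relevant eigenvectors have $P_df=0$ (the paper's choice $C_0=0$ in \eqref{A_6}), so the singular term never enters the dispersion relation $D_1(\lambda,\xi)$, which is jointly analytic; the density branch is governed by the separate relation $D_0(\lambda,\xi)=0$, which by the results quoted from \cite{Li1} has no roots in $\{\mathrm{Re}\,\lambda>-b_0\}$. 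You should also note explicitly that $D_1=0$ has a second root near $(L_1^{-1}\chi_2,\chi_2)<0$, lying outside the strip $\mathrm{Re}\,\lambda>-b_0$, so that only the doubled near-zero root $\lambda_1=\lambda_2$ survives; and the evenness of $\lambda_j$ is most directly seen from the evenness of $(R(\lambda,\xi)\chi_2,\chi_2)$ in $\xi$ under $v_1\to-v_1$, rather than through the adjoint identity \eqref{L_7a}.
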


\begin{proof}
Let $U=(f,X,Y)$ be the eigenvector of \eqref{L_2}--\eqref{L_2b}. We rewrite $f$ in the
form $f=f_0+f_1$, where $f_0=P_df=C_0\sqrt M$ and $f_1=(I-P_d)f=P_{r}f$.
Then  \eqref{L_2} gives
 \bma
 &\lambda f_0=- P_d[ i \xi v_1(f_0+f_1)],\label{A_2}
\\
&\lambda f_1=L_1f- P_{r}[ i \xi v_1(f_0+f_1)]-\frac{ i v_1}{\xi}f_0+\bar{v}\chi_0\cdot X.\label{A_3}
 \ema
By   \eqref{A_3}, the microscopic part $f_1$ can be represented  by
 \bq
 f_1=[L_1-\lambda - i\xi P_{r}v_1 P_{r}]^{-1} P_{r}\bigg( i v_1\bigg(\xi+\frac{1}{\xi}\bigg)f_0-\bar{v}\chi_0\cdot X\bigg),
 \quad   \text{Re}\lambda>-\mu. \label{A_4}
 \eq
Substituting \eqref{A_4} into \eqref{A_2} and \eqref{L_2a}, we obtain the eigenvalue problem  for  $(\lambda,C_0,X,Y)$ as
 \bma
 \lambda C_0=&(1+\xi^2)(R(\lambda,\xi)\chi_1,\chi_1)C_0, \label{A_6}\\
  \lambda X=& (R(\lambda,\xi)\chi_2,\chi_2)X+ i \xi\O_1 Y, \label{A_7}\\
  \lambda Y=&- i \xi\O_1 X, \label{A_8}
 \ema
where
$
 R(\lambda,\xi)=(L_1-\lambda - i \xi P_{r} v_1 P_{r})^{-1}.
$

Multiplying \eqref{A_7} by $\lambda$ and using \eqref{A_8}, we obtain
\bq (\lambda^2-(R(\lambda,\xi)\chi_2,\chi_2)\lambda+\xi^2)X=0.\eq

For ${\rm Re}\lambda>-\mu,$ denote
 \bma
 D_0(\lambda,\xi)&=:\lambda-(1+\xi^2)(R(\lambda,\xi)\chi_1,\chi_1),\label{D0}\\
 D_1(\lambda,\xi)&=:\lambda^2-(R(\lambda,\xi)\chi_2,\chi_2)\lambda+\xi^2.   \label{D1}
 \ema

From Lemmas 3.10--3.11 in \cite{Li1}, there exists constants $b_0,r_0,r_1>0$  such that  $D_0(z,\xi)=0$ has no solution for ${\rm Re}z\ge -b_0$ and $|\xi|\le r_0$,  and $D_1(z,\xi)=0$ with ${\rm Re}z\ge -b_0$ has a unique smooth solution $z(\xi)$  for $(\xi,z)\in[-r_0, r_0]\times B(0,r_1)$. Moreover, $z(\xi)$ is an even function of $\xi$  satisfying
$$z(0)=0,\quad z'(0)=0,\quad z''(0)=\frac2{(L_1^{-1}\chi_2,\chi_2)}=:2a_1.$$
Since $R(\lambda,\xi)$ is analytic in $(\lambda,\xi)$, it follows that $D_0(\lambda,\xi)$ and $D_1(\lambda,\xi)$ are analytic in $(\lambda,\xi)$. Thus, by the implicit function theorem (Cf. Section 8 of Chapter 0 in \cite{homo}), the solution $z(\xi)$ of  $D_1(z,\xi)=0$ is an analytic function for $|\xi|\le r_0$.
Thus, we obtain \eqref{specr0}.

The eigenvalues $\lambda_j(\xi)$ and the eigenvectors $\psi_j(\xi)=(h_j,\y^2_j,\y^3_j)(\xi)$ for $j=1,2$ can be constructed as follows.  We take $\lambda_1=\lambda_2=z(\xi)$ to be the solution of the equation  $D_1(z,\xi)=0$,  and choose $C_0=0$ in \eqref{A_6}. The corresponding eigenvectors $\psi_j(\xi)=(h_j,\y^2_j,\y^3_j)(\xi)$, $j=1,2$ are given by
 \bq
 \left\{\bln       \label{C_3d}
  &h_j(\xi)
 =-b_1(\xi)(L_1-\lambda_j - i \xi P_rv_1 P_r)^{-1}(\bar{v}\cdot \e_j)\chi_0,  \\
  &\y^2_j(\xi)=b_1(\xi)\e_j,\quad \y^3_j(\xi)= \frac{ i \xi b_1(\xi)}{\lambda_1(\xi)}\O_1 \e_j,
  \eln\right.
\eq where $\e_1=(1,0)$ and $\e_2=(0,1)$.
It is easy to verify that   $(\psi_1(\xi),  \psi_2^*(\xi))=0,$ where $ \psi^*_j=(\overline{h_j},-\overline{\y^2_j},-\overline{\y^3_j})$ is the eigenvector of $\AA^*_1(\xi)$ corresponding to the eigenvalue $\overline{\lambda_{j}(\xi)}$.
We can normalize them by taking
$$(\psi_j(\xi),\psi^*_j(\xi))=1, \quad j=1,2.$$
The coefficient $b_1(\xi)$   is determined by normalization   as
 \bq    \label{C_2d}
 b_1(\xi)^2\(D_1(\xi)-1+\frac{\xi^2}{\lambda_1(\xi)^2}\)=1,
 \eq
 where $D_1(\xi)=(R(\lambda_1(\xi),\xi)\chi_1, R(\overline{\lambda_1(\xi)},-\xi)  \chi_1).$ Substituting \eqref{specr0} into \eqref{C_2d}, we obtain $b_1(-\xi)=-b_1(\xi)$ and
 \bmas
b_1(\xi)^2&=\(D_1(\xi)-1+\frac{1}{a^2_1\xi^2+O(\xi^4)}\)^{-1}\\
&=a_1^2\xi^2\(1+O(\xi^2)+a_1^2\xi^2(D_1(\xi)-1)\)^{-1}\\
&=a_1^2\xi^2+O(\xi^4).
\emas This and \eqref{C_3d} give the expansion of $\psi_j(\xi)$ for $j=1,2$ given in \eqref{eigf1}. And then this completes
the proof of the theorem.
\end{proof}

\subsection{Eigenvalues in high frequency}
We  study the analyticity and the asymptotic expansions of the eigenvalues
and eigenvectors of $\AA_1(\xi)$ in the high frequency region in this subsection.
\begin{lem}\label{lem1}
Let $\lambda, \xi\in \C$ and
$ c(\xi)=-\nu(v)-iv_1\xi. $
If $\delta=\nu_0+{\rm Re}\lambda >0$ and $|{\rm Im} \xi|<\nu_0$, then we have
 \bma
 \|K_1(\lambda-c(\xi))^{-1}\|  &\leq C\delta^{-1/2}(1+|\xi|)^{-1/2},\label{T_1}
 \\
\|(\lambda-c(\xi))^{-1}K_1\|  &\leq C\delta^{-1/2}(1+|\xi|)^{-1/2},\label{T_2}
 \\
\|(\lambda-c(\xi))^{-1}\chi_j\|  &\leq C\delta^{-1/2}(1+|\xi|)^{-1/2}, \label{T_4}
 \ema
where $j=0,1,2,3,4.$ Furthermore, there exists  $R_0>0$ sufficiently large such that $\lambda-\BB_1(\xi)$ is invertible for $\delta=\nu_0+{\rm Re}\lambda >0$, $|{\rm Im}\xi|<\nu_0$ and $|{\rm Re} \xi|>R_0$, and satisfies
\bq \|(\lambda-\BB_1(\xi))^{-1}\chi_j\|   \leq C\delta^{-1/2}(1+|\xi|)^{-1/2}. \label{T_5}\eq
\end{lem}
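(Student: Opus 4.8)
The plan is to reduce all four bounds to a single weighted estimate for the multiplication operator $(\lambda-c(\xi))^{-1}$ (recall $c(\xi)=-\nu(v)-iv_1\xi$ acts by multiplication in $v$), and then to handle $\lambda-\BB_1(\xi)$ as a Neumann perturbation of $\lambda-c(\xi)$. First I would record the pointwise lower bound: writing $\xi=\xi_R+i\xi_I$ one has ${\rm Re}(\lambda-c(\xi)(v))=\delta-\nu_0+\nu(v)-v_1\xi_I$ and ${\rm Im}(\lambda-c(\xi)(v))={\rm Im}\lambda+v_1\xi_R$, so that $\nu(v)\ge\nu_0(1+|v|)$, $|v_1|\le|v|$ and $|\xi_I|<\nu_0$ give ${\rm Re}(\lambda-c(\xi)(v))\ge\delta+(\nu_0-|\xi_I|)|v|\ge\delta$, hence $|\lambda-c(\xi)(v)|\ge\delta$ and $|\lambda-c(\xi)(v)|^2\ge\tfrac12(\delta+|{\rm Im}\lambda+v_1\xi_R|)^2$ for every $v$.

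The core step is \eqref{T_4}. Since $\|(\lambda-c(\xi))^{-1}\chi_j\|^2=\int_{\R^3}|\chi_j(v)|^2\,|\lambda-c(\xi)(v)|^{-2}\,dv$ and $\chi_j$ is Schwartz, I would split $v=(v_1,v_2,v_3)$ and integrate in $v_1$ first. For $|\xi_R|\ge1$ I insert the second inequality above, substitute $s={\rm Im}\lambda+v_1\xi_R$, and bound $|\chi_j|^2\le\rho_1(v_1)^2\rho_2(v_2,v_3)^2$ with $\rho_1$ bounded and $\rho_2\in L^2(\R^2)$; since the $v_1$-integral of $(\delta+|s|)^{-2}$ is $2/\delta$, this gives $\int_\R|\chi_j|^2|\lambda-c(\xi)|^{-2}\,dv_1\le C\rho_2(v_2,v_3)^2\delta^{-1}|\xi_R|^{-1}$, and integrating in $(v_2,v_3)$ and comparing $|\xi_R|$ with $|\xi|$ (using $|\xi_I|<\nu_0$) yields \eqref{T_4} in this regime. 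For $|\xi_R|\le1$, where $1+|\xi|$ is bounded, I would instead use $|\lambda-c(\xi)(v)|\ge\delta$ together with ${\rm Re}(\lambda-c(\xi)(v))\ge\delta+(\nu_0-|\xi_I|)|v|$, distinguishing the cases $\delta\le1$ and $\delta>1$, to get $\|(\lambda-c(\xi))^{-1}\chi_j\|\le C\delta^{-1/2}$; this last point, and hence the uniformity in \eqref{T_1}--\eqref{T_4}, is where one uses that $\delta$ is bounded below (equivalently, $|{\rm Im}\xi|$ stays away from $\nu_0$), which holds in every application of the lemma.

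Given \eqref{T_4}, the kernel estimates \eqref{T_1}--\eqref{T_2} follow by a Schur-type reduction: expressing $K_1$ through its kernel $k_1(v,u)$ and using Cauchy--Schwarz in the integration variable bounds both $\|(\lambda-c(\xi))^{-1}K_1\|^2$ and, by the symmetry $k_1(v,u)=k_1(u,v)$, $\|K_1(\lambda-c(\xi))^{-1}\|^2$ by $\int\|k_1(v,\cdot)\|_{L^2_u}^2\,|\lambda-c(\xi)(v)|^{-2}\,dv$; the classical properties of the Grad kernel ($|v-u|^{-1}$ is square-integrable in $\R^3$ and $k_1$ is Gaussian-small off a neighborhood of the diagonal) make $v\mapsto\|k_1(v,\cdot)\|_{L^2_u}$ bounded with decay, so, after splitting $k_1$ into a near-diagonal piece and an exponentially small one if necessary, this integral is of the type treated above. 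For \eqref{T_5}, using $L_1=K_1-\nu(v)$ I would write $\BB_1(\xi)=K_1+c(\xi)-\tfrac{iv_1}{\xi}P_d$, hence $\lambda-\BB_1(\xi)=(\lambda-c(\xi))(I-A)$ with $A=(\lambda-c(\xi))^{-1}(K_1-\tfrac{iv_1}{\xi}P_d)$; since $v_1P_dg=(g,\chi_0)\chi_1$, the estimates \eqref{T_2} and \eqref{T_4} (applied to $\chi_1$) give $\|A\|\le C\delta^{-1/2}(1+|\xi|)^{-1/2}$, which is $\le\tfrac12$ once $|{\rm Re}\xi|>R_0$ for $R_0$ sufficiently large. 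Then $\lambda-\BB_1(\xi)$ is invertible, $\|(I-A)^{-1}\|\le2$, and $\|(\lambda-\BB_1(\xi))^{-1}\chi_j\|\le2\,\|(\lambda-c(\xi))^{-1}\chi_j\|$, which is \eqref{T_5}.

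The main obstacle is \eqref{T_4} itself: extracting the gain $(1+|\xi|)^{-1/2}$ from $(\lambda-c(\xi))^{-1}$ although ${\rm Re}(\lambda-c(\xi))$ is bounded below only by $\delta$, with no uniform spectral gap. This is exactly where the change of variables $s={\rm Im}\lambda+v_1\xi_R$ and the rapid decay of $\chi_j$ (and of $k_1$) in the transport direction $v_1$ are indispensable; all the remaining estimates are Schur-type reductions to it or a routine Neumann series. A secondary technical nuisance is the $\delta$-bookkeeping---matching the crude bound $\delta^{-1}$ available for bounded $|\xi|$ with the required $\delta^{-1/2}(1+|\xi|)^{-1/2}$---and the choice of $R_0$ in \eqref{T_5}.
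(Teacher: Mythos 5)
Your treatment of \eqref{T_4} and \eqref{T_5} coincides with the paper's: the pointwise lower bound ${\rm Re}(\lambda-c(\xi)(v))\ge\delta$, the substitution $s={\rm Im}\lambda+v_1{\rm Re}\,\xi$ in the $v_1$-integral producing the factor $\delta^{-1}|{\rm Re}\,\xi|^{-1}$ against the rapid decay of $\chi_j$, and the factorization $\lambda-\BB_1(\xi)=(\lambda-c(\xi))\bigl(I-(\lambda-c(\xi))^{-1}K_1+i(\lambda-c(\xi))^{-1}\frac{v_1}{\xi}P_d\bigr)$ followed by a Neumann series once $|{\rm Re}\,\xi|>R_0$, are exactly the paper's steps (the paper, like you, only carries out the computation in the regime $|{\rm Re}\,\xi|\gtrsim1$ that is used later).

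The gap is in your reduction of \eqref{T_1}--\eqref{T_2}. You bound both operator norms by $\int_{\R^3}\|k_1(v,\cdot)\|_{L^2_u}^2\,|\lambda-c(\xi)(v)|^{-2}dv$ and assert this is ``of the type treated above''. It is not, and in fact this integral is $+\infty$: for the hard-sphere kernel one has $\|k_1(v,\cdot)\|_{L^2_u}^2\simeq(1+|v|)^{-1}$ (writing $w=u-v$, the exponent equals $-\tfrac18(|w|+2v\cdot\hat w)^2-\tfrac18|w|^2$, which is small on the solid angle $|v\cdot\hat w|\lesssim1$ of measure $\sim|v|^{-1}$), while $|\lambda-c(\xi)(v)|\le C_{\lambda,\xi}(1+|v|)$, so the integrand is bounded below by $c(1+|v|)^{-3}$, which is not integrable on $\R^3$. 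No splitting of $k_1$ into a near-diagonal and an exponentially small piece repairs this, since the near-diagonal piece already carries the full $(1+|v|)^{-1}$; the point is that the $L^2_u$-norm of the Grad kernel decays too slowly in the transverse variables $(v_2,v_3)$ for a two-dimensional integration, in contrast with the Schwartz functions $\chi_j$ for which your factorization $\rho_1(v_1)\rho_2(v_2,v_3)$ is available. The correct reduction --- the one the paper uses --- is a Schur/$L^1$ test: by \eqref{K1}, $\int_{\R^2}k_1(v,u)\,du_2du_3\le Ce^{-|v_1-u_1|^2/8}$ uniformly, so after Cauchy--Schwarz with the splitting $k_1=\sqrt{k_1}\sqrt{k_1}$ one is left with $\sup\int_{\R}\bigl((\nu_0+{\rm Re}\lambda)^2+({\rm Im}\lambda+u_1{\rm Re}\,\xi)^2\bigr)^{-1}du_1\le C\delta^{-1}|{\rm Re}\,\xi|^{-1}$, i.e.\ precisely your one-dimensional substitution. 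With that replacement, the rest of your argument, including the use of \eqref{T_2} and \eqref{T_4} in the Neumann series for \eqref{T_5}, goes through.
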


\begin{proof}
Let $\lambda=x+iy$ and $\xi=\zeta+i \eta$ with $(x,y),(\zeta,\eta)\in \R\times \R$.
Then
\bma
&\|K_1(\lambda-c(\xi))^{-1}f\|^2\nnm\\
=&\intr\left|\intr k_1(v,u)(\nu(u)+\lambda+iu_1\xi)^{-1}f(u)du\right|^2dv\nnm\\
\le& \intr\(\intr k_1(v,u)|\nu(u)+\lambda+iu_1\xi|^{-2}du\)\(\intr k_1(v,u)|f(u)|^2 du\)dv\nnm\\
\le& C\sup_{v\in\R^3}\intr k_1(v,u)\frac1{(\nu(u)+x-u_1\eta)^2+(y+ u_1\zeta)^2}du\|f\|^2. \label{K2}
\ema

Since
\be |k_1(v,u)|\le C\frac{1}{|\bar{v}-\bar{u}|}e^{-\frac{|v-u|^2}8} ,\quad \bar{u}=(u_2,u_3),\label{K1}\ee
it follows from \eqref{K2} that for $\delta=\nu_0+x >0$, $|\zeta|>1$  and $|\eta|\le \nu_0$,
\bma
&\|K_1(\lambda-c(\xi))^{-1}f\|^2\nnm\\
\le& C\intra \frac1{(\nu_0+x)^2+(y+ u_1\zeta)^2}du_1\int_{\R^2}|k_1(v,u)|du_2du_3 \|f\|^2\nnm\\
\le& C\frac1{|\zeta|}\int_{\R} \frac1{(\nu_0+x)^2+u_1^2}  du_1\|f\|^2\le C\delta^{-1}|\xi|^{-1}\|f\|^2.
\ema
This gives \eqref{T_1}.
Similarly, for $\delta=\nu_0+x >0$, $|\zeta|>1$  and $|\eta|\le \nu_0$,
\bmas
&\|(\lambda+\nu(v)+i v_1\xi)^{-1}K_1f\|^2 \\
\le& \intr |\nu(v)+\lambda+iv_1\xi|^{-2}  \(\intr k_1(v,u)|f(u)|^2 du\)dv \\
\le& C\sup_{u\in\R^3}\intr k_1(v,u)\frac1{(\nu(v)+x-v_1\eta)^2+(y+ v_1\zeta)^2}dv\|f\|^2 \\
\le& C\delta^{-1}|\xi|^{-1}\|f\|^2,
\emas
and
\bmas
&\|(\lambda+\nu(v)+i v_1\xi)^{-1}\chi_j\|^2\\
\le& C\int_{\R} \frac1{(\nu(v)+x-v_1\eta)^2+(y+ v_1\zeta)^2} e^{-\frac{v_1^2}4}\(\int_{\R^2}e^{-\frac{v_2^2+v_3^2}4} dv_2dv_3\)dv_1\\
\le & C\frac1{|\zeta|}\int_{\R} \frac1{(\nu_0+x)^2+v_1^2}  dv_1\le C\delta^{-1}|\xi|^{-1},
\emas
which yield \eqref{T_2} and \eqref{T_4}.

Finally, we prove \eqref{T_5} as follows. For $x>-\nu_0$, we write 
$$\lambda-\BB_1(\xi) =(\lambda-c(\xi))\(I-(\lambda-c(\xi))^{-1}K_1+i(\lambda-c(\xi))^{-1}\frac{v_1}{\xi}P_d\).$$
By \eqref{T_2} and \eqref{T_4}, there exists $R_0>0$ sufficiently large such that  for $ \nu_0+x>0$, $|\eta|<\nu_0$ and $|\zeta|>R_0$,
$$\|(\lambda-c(\xi))^{-1}K_1\|\le 1/4,\quad \|(\lambda-c(\xi))^{-1}\frac{v_1}{\xi}P_d\|\le 1/4.$$
This implies that the operator
$I-(\lambda-c(\xi))^{-1}K_1+ i(\lambda-c(\xi))^{-1}\frac{v_1}{\xi}P_{d}$
is invertible on $L^2(\R^3_v)$.
Hence, $\lambda-\BB_1(\xi)$ is invertible $L^2(\R^3_v)$ for $\nu_0+x>0$, $|\eta|<\nu_0$ and $|\zeta|>R_0$, and satisfies
$$(\lambda-\BB_1(\xi))^{-1} =\(I-(\lambda-c(\xi))^{-1}K_1+i(\lambda-c(\xi))^{-1}\frac{v_1}{\xi}P_d\)^{-1}(\lambda-c(\xi))^{-1}.$$
This together with \eqref{T_4} gives \eqref{T_5}.
\end{proof}

We now consider the analyticity and expansion of the eigenvalues and eigenvectors of $\AA_1(\xi)$ for  high frequency. For any $a,b>0$, denote two strip domains in $\C$ as
\bma
D_{a,b}&=\{\xi\in \mathbb{C}\,|\, |{\rm Re} \xi|\ge a,\,|{\rm Im} \xi|\le b\}, \label{Dab}\\
D_{b}&=\{\xi\in \mathbb{C}\,|\, {\rm Re} \xi\in \R,\,|{\rm Im} \xi|\le b\}. \label{Db}
\ema
Let $U=(f,X,Y)$ be the eigenvector of \eqref{L_2}--\eqref{L_2b}. By \eqref{L_2} and Lemma \ref{lem1}, there exists $R_0>0$ sufficiently large such that
\bq f=-(\BB_1(\xi)-\lambda)^{-1}\bar{v}\chi_0\cdot X,\quad  {\rm Re}\lambda>-\nu_0,\,\,\,\xi\in D_{R_0,\nu_0}.\label{L_5}\eq
Substituting \eqref{L_5} into \eqref{L_2a},
we obtain
\bma
\lambda X&=((\BB_1(\xi)-\lambda)^{-1}\chi_2,\chi_2)X+ i \xi\O_1 Y,\label{A_12}\\
  \lambda Y&=- i \xi\O_1 X,\quad  \xi\in D_{R_0,\nu_0}.  \label{A_13}
\ema
Multiplying \eqref{A_12} by $\lambda$ and using \eqref{A_13}, we obtain
\bq (\lambda^2-((\BB_1(\xi)-\lambda)^{-1}\chi_2,\chi_2)\lambda+\xi^2)X=0, \quad  \xi\in D_{R_0,\nu_0}.\eq
For ${\rm Re}\lambda>-\nu_0$, denote
\bq D(\lambda,\xi)=\lambda^2-((\BB_1(\xi)-\lambda)^{-1}\chi_2,\chi_2)\lambda+\xi^2.\label{D3a}\eq

The equation $D(\lambda,\xi)=0$  can be solved as follows.

\begin{lem}\label{eigen_5}
There exists a large constant $r_1>0$ such that the equation $D(\lambda,\xi)=0$ has two solutions $\lambda_j(\xi)=ji\xi+y_j(\xi)+z_j(\xi)$ for $j=\pm1$ and $\xi\in D_{r_1,\frac{\nu_0}2}$, where $y_j(\xi)$ and $z_j(\xi)$ are analytic functions for   $\xi\in D_{r_1,\frac{\nu_0}2}$  satisfying
\be
  \frac{C_1}{|\xi|}\le -{\rm Re}y_j(\xi)\le \frac{C_2}{|\xi|}, \quad |{\rm Im}y_j(\xi)|\le C_3\frac{\ln |\xi|}{|\xi|},\quad | z_j(\xi)|\le C_4\frac{\ln^2 |\xi|}{|\xi|^2},\label{eigen_h1}
\ee
where $C_1,C_2,C_3,C_4>0$ are constants.
\end{lem}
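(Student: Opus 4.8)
The plan is to solve $D(\lambda,\xi)=0$ perturbatively around the leading ansatz $\lambda = ji\xi$, $j=\pm1$, using the structure $D(\lambda,\xi)=\lambda^2 + \xi^2 - m(\lambda,\xi)\lambda$ where $m(\lambda,\xi):=((\BB_1(\xi)-\lambda)^{-1}\chi_2,\chi_2)$. First I would establish quantitative bounds and analyticity of the symbol $m(\lambda,\xi)$ on the relevant region. From Lemma \ref{lem1} (estimate \eqref{T_5}) and the resolvent identity $(\BB_1(\xi)-\lambda)^{-1} = (c(\xi)-\lambda)^{-1} + (c(\xi)-\lambda)^{-1}(K_1 - i\frac{v_1}{\xi}P_d)(\BB_1(\xi)-\lambda)^{-1}$, one gets the expansion $m(\lambda,\xi) = ((c(\xi)-\lambda)^{-1}\chi_2,\chi_2) + O(|\xi|^{-1})$, and a more careful analysis of the one-dimensional integral $((c(\xi)-\lambda)^{-1}\chi_2,\chi_2) = \int_{\R^3}\frac{(v_2^2 M)(v)}{\nu(v)+\lambda+iv_1\xi}dv$ — the source of the logarithmic term — shows that on $\xi \in D_{r_1,\nu_0/2}$, $\lambda = ji\xi + w$ with $|w|$ small, one has ${\rm Re}\,m \asymp |\xi|^{-1}$, $|{\rm Im}\,m| \lesssim |\xi|^{-1}\ln|\xi|$ (the $\ln|\xi|$ coming from the near-resonance $\nu(v) + \lambda + iv_1\xi \approx \nu(v) + w + i(v_1-j)\xi$, which is small on a $v_1$-interval of width $\sim |\xi|^{-1}$ around $v_1=j$), together with the derivative bound $|\partial_\lambda m| \lesssim |\xi|^{-1}\ln|\xi|$; analyticity in $(\lambda,\xi)$ is inherited from analyticity of the resolvent.

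Next I would rewrite $D(\lambda,\xi)=0$ as a fixed-point equation. Factoring, $D(\lambda,\xi) = (\lambda - i\xi + \tfrac{m}{2} + \rho)(\lambda + i\xi + \tfrac{m}{2} - \rho)$ where $\rho = \sqrt{\xi^2 + m^2/4}$ differs from $i\xi$ by $O(|\xi|^{-1}\ln|\xi|)$; equivalently, for the root near $i\xi$ (the case $j=1$; $j=-1$ is symmetric) set $\lambda = i\xi + u$ and solve $u = \Phi(u,\xi) := \tfrac{1}{2}m(i\xi+u,\xi) - i\xi + \rho(i\xi+u,\xi)$. Using the bounds above, $\Phi$ maps the disk $\{|u| \le C_0 |\xi|^{-1}\ln|\xi|\}$ into itself and is a contraction there (the contraction constant is $\lesssim |\partial_\lambda m| + |\partial_\lambda \rho| \lesssim |\xi|^{-1}\ln|\xi| \ll 1$ once $r_1$ is large), so the contraction mapping principle gives a unique fixed point $\lambda_1(\xi) = i\xi + y_1(\xi) + z_1(\xi)$, where I would define $y_1(\xi) := \tfrac12 m(i\xi,\xi) - i\xi + \rho(i\xi,\xi)$ evaluated at the leading order (the "pure" first-order term, itself $O(|\xi|^{-1}\ln|\xi|)$ in imaginary part and $\asymp -|\xi|^{-1}$ in real part, using that ${\rm Re}(-\tfrac12 m) \asymp -|\xi|^{-1}$ dominates) and $z_1(\xi)$ the second-order remainder, which the fixed-point iteration controls by $|z_1(\xi)| \lesssim |\xi|^{-2}\ln^2|\xi|$. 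Analyticity of $\lambda_1(\xi)$, hence of $y_1$ and $z_1$, follows from the analytic implicit function theorem applied to $D$, since $\partial_\lambda D(\lambda_1(\xi),\xi) = 2\lambda_1 - m - \lambda_1 \partial_\lambda m \approx 2i\xi \ne 0$ on $D_{r_1,\nu_0/2}$.

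The verification that there are exactly these two roots in the strip — i.e. that $\{\lambda : {\rm Re}\,\lambda \ge -\nu_0/2\} \cap \sigma(\AA_1(\xi))$ contributes precisely the two branches $\lambda_{\pm1}(\xi)$ — I would get from a Rouché/argument-principle comparison of $D(\lambda,\xi)$ with $\lambda^2 + \xi^2$ on a suitable contour, using $|m(\lambda,\xi)\lambda| \lesssim \ln|\xi| \ll |\xi| \sim |\lambda^2+\xi^2|^{1/2}\cdot\ldots$ on the boundary; combined with part (2) of Lemma \ref{spectrum1} this pins down the count. The main obstacle I expect is the sharp logarithmic bookkeeping: extracting the precise $\ln|\xi|/|\xi|$ size of ${\rm Im}\,y_j$ and the $\ln^2|\xi|/|\xi|^2$ size of $z_j$ requires a careful asymptotic evaluation of the resonant velocity integral $\int \frac{v_2^2 M\,dv}{\nu(v)+\lambda+iv_1\xi}$ near $v_1 = j$ — splitting the $v_1$-integration into a resonant window of width $\sim |\xi|^{-1}$ and its complement, Taylor-expanding $\nu$ and the weight there, and tracking how this propagates through the square root $\rho$ and through one more iteration of the fixed-point map — rather than anything structural. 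Everything else (the contraction estimates, analyticity, root-counting) is routine once these symbol bounds are in hand.
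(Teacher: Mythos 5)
Your proposal takes essentially the same route as the paper: write $\lambda = ji\xi + \beta$, convert $D(\lambda,\xi)=0$ into a fixed-point equation for $\beta$ on a small disk, prove contraction using the smallness of the resolvent quantity $B_j(\beta,\xi) = ((\BB_1(\xi)-ji\xi-\beta)^{-1}\chi_2,\chi_2)$ supplied by Lemma~\ref{lem1}, and then split the fixed point into a free-transport piece $y_j$ plus a remainder $z_j$, with the logarithm extracted from the resonant one-dimensional velocity integral exactly as you describe. Two small corrections and one comparison: (i) your $\rho=\sqrt{\xi^2+m^2/4}$ should be (one branch of) $\frac12\sqrt{m^2-4\xi^2}\approx i\xi$, and the paper achieves the cancellation you want via the rationalization $\sqrt{B^2-4\xi^2}-2i\xi=\frac{B^2}{\sqrt{B^2-4\xi^2}+2i\xi}$, which is precisely what produces their $F^2_j(\beta)$ without ever introducing a separate $\rho$; (ii) the paper chooses $y_j=-\tfrac12(R_j(\xi)\chi_2,\chi_2)$ with $R_j=(\nu(v)+i(v_1+j)\xi)^{-1}$ the \emph{free}-transport resolvent — not $\Phi(0,\xi)$, which still contains the full resolvent of $\BB_1$ — so that $y_j$ (hence $\gamma_j$ in Theorem~\ref{eigen_4}) is manifestly analytic on the \emph{whole} strip $D_{\nu_0/2}$, a fact the paper needs later for the Cauchy contour shift in Lemma~\ref{gh1}; for Lemma~\ref{eigen_5} alone either choice works, but if you proceed with your definition of $y_j$ you would lose this extra analyticity for free.
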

\begin{proof}
For any fixed $\xi\in D_{R_0,\nu_0}$, we define a function of $\lambda$ by
\bq F^1_j(\lambda)=\frac12\Big(B_{0}(\lambda,\xi)+j\sqrt{B_{0}(\lambda,\xi)^2-4\xi^2}\Big),\quad j=\pm1, \label{fp1}\eq
where $B_{0}(\lambda,\xi)=((\BB_1(\xi)-\lambda)^{-1}\chi_2,\chi_2)$.
It is straightforward to verify that a solution of $D(\lambda,\xi)=0$ for any fixed $\xi\in D_{R_0,\nu_0}$ is a fixed point of $F^1_j(\lambda)$.

Consider an equivalent equation of \eqref{fp1} as
\bq F^2_j(\beta)=F^1_j(\lambda)-ji \xi=\frac12\bigg(B_j(\beta,\xi)+\frac{jB_j(\beta,\xi)^2}{\sqrt{B_j(\beta,\xi)^2-4\xi^2}+2i \xi}\bigg),\quad j=\pm1, \label{fp2}\eq
where
$$ B_j(\beta,\xi)=\((\BB_1(\xi)-ji \xi-\beta)^{-1}\chi_2,\chi_2\). $$
By a similar argument as Lemma \ref{lem1}, there exists $r_1>0$  sufficiently large such that for $\xi\in D_{ \frac{\nu_0}2}$,
\bma
\|(\nu(v)+i(v_1+j)\xi)^{-1}\chi_2\| &\le C (1+|\xi|)^{-\frac12},\label{T_6a}\\
\|(\nu(v)+i(v_1+j)\xi)^{-1}K_1\|&\le C (1+|\xi|)^{-\frac12},\label{T_6b}
\ema
and for $\xi\in D_{r_1,\frac{\nu_0}2}$ and ${\rm Re}\beta \ge -\nu_0/4$,
\be
\|(\BB_1(\xi)-ji \xi-\beta)^{-1}\chi_2\| \le C (1+|\xi|)^{-\frac12}. \label{T_6}
\ee
Thus,  there exists  $\delta>0$
sufficiently small such that for $\xi\in D_{r_1,\frac{\nu_0}2}$ and $|\beta|,|\beta_1|,|\beta_2|\le \delta$,
$$
|F^2_j(\beta)|\le \delta,\quad |F^2_j(\beta_1)-F^2_j(\beta_2)|\le \frac12|\beta_1-\beta_2|.
$$
This implies that $F^2_j(\beta)$ is a contraction mapping on $B (0,\delta)$ and hence there exists a unique fixed point $\beta_j(\xi)\in B (0,\delta)$ of $F^2_j(\beta)$. Thus,  $\lambda_j(\xi)=ji \xi+\beta_j(\xi)$ is the solution of $D(\lambda,\xi)=0$ for  $\xi\in D_{r_1,\frac{\nu_0}2}$. Moreover, since $F^2_j(\beta)$ is analytic with respect to $(\beta,\xi)\in B (0,\delta)\times D_{r_1,\frac{\nu_0}2}$, it follows that $ \beta_j(\xi)$ is an analytic function with respect to $\xi\in D_{r_1,\frac{\nu_0}2}.$ By \eqref{T_6} and \eqref{fp2}, we have
\be |\beta_j(\xi)|\le C|B_j(\beta_j,\xi)|\le C(1+|\xi|)^{-\frac12}\to 0,\quad |\xi|\to \infty. \label{beta}\ee

We now turn to prove \eqref{eigen_h1}.
For this, we decompose $\beta_j(\xi)$, $j=\pm1$, into
\be
\beta_j(\xi)=\frac12 B_j(\beta_j,\xi)+\frac12\frac{jB_j(\beta_j,\xi)^2}{\sqrt{B_j(\beta_j,\xi)^2-4\xi^2}+ 2i \xi} =:\frac12I_1+\frac12I_2.\label{ddd}
\ee
First, we estimate $I_1$.  Decompose
\be
-\(L_1-i(v_1+j)\xi- i \frac{v_1}{\xi} P_d-\beta_{j} \)^{-1}=R_{j}(\xi)+Z_{j}(\xi),\quad j=\pm1, \label{decompose}
\ee
where
\bma
R_{j}(\xi)&=(\nu(v)+i(v_1+j)\xi)^{-1},\nnm\\
 Z_{j}(\xi)&=(I-Y_{j}(\xi))^{-1}Y_{j}(\xi)R_{j}(\xi),\label{az} \\
Y_{j}(\xi)&=   R_{j}(\xi)\Big(K_1- i \frac{v_1}{\xi} P_d-\beta_{j}\Big) .\nnm
\ema
By \eqref{decompose}, we divide $I_1$ into
\be
I_1= -\(R_{j}(\xi)\chi_2,\chi_2\)- \(Z_{j}(\xi)\chi_2,\chi_2\)=:I_3+I_4. \label{I_1}
\ee
Let
$\xi=\zeta+i\eta$ with $(\zeta,\eta)\in \R\times \R$. We have
\bma
I_3
=&-\intr \frac{\nu(v)+(v_1\pm 1)\eta}{(\nu(v)+(v_1\pm 1)\eta)^2+(v_1\pm 1)^2\zeta^2}v^2_2M(v)dv\nnm\\
&-i\intr \frac{(v_1\pm 1)\zeta}{(\nu(v)+(v_1\pm 1)\eta)^2+(v_1\pm 1)^2\zeta^2}v^2_2M(v)dv\nnm\\
=&-{\rm Re}I_3-i {\rm Im}I_3. \label{aaa}
\ema
By changing variable $(v_1\pm1)|\zeta|\to u_1$, it holds that for $|\eta|\le \nu_0/2$ and $|\zeta|\ge 2$,
\bma
{\rm Re}I_3\le& C\intr \frac{1}{\nu_0^2+(v_1\pm 1)^2\zeta^2}e^{-\frac{|v|^2}4}dv\nnm\\
=& \frac{C}{|\zeta|}\intr \frac{1}{\nu_0^2+u_1^2}e^{-\frac14(\frac{u_1}{|\zeta|}\mp1)^2}e^{-\frac{v_2^2+v_3^2}4} du_1dv_2dv_3\nnm\\
\le& \frac{C}{|\zeta|}\int^\infty_0 \frac{1}{\nu_0^2+u_1^2}du_1\le \frac{C_1}{|\xi|}, \label{bbb}
\ema
and
\bma
{\rm Re}I_3&\ge C\intr \frac{1}{\nu_1^2(1+|v|^2)+(v_1\pm1)^2\zeta^2}v_2^2e^{-\frac{|v|^2}2}dv\nnm\\
&= \frac{C}{|\zeta|}\intr \frac{1}{\nu_1^2(1+(\frac{u_1}{|\zeta|}\mp1)^2+v^2_2+v^2_3)+u_1^2} v_2^2e^{-\frac12(\frac{u_1}{|\zeta|}\mp1)^2}
e^{-\frac{v_2^2+v_3^2}2} du_1dv_2dv_3\nnm\\
&\ge \frac{C}{|\zeta|}\intr \frac{1}{\nu_1^2(3+u_1^2+v^2_2+v^2_3)+u_1^2}v_2^2 e^{-\frac{u_1^2+v_2^2+v_3^2}2}du_1dv_2dv_3\ge \frac{C_2}{|\xi|}.
\ema
Similarly,
\bma
|{\rm Im}I_3|\le& \frac{C}{|\zeta|}\intr \frac{|u_1|}{\nu_0^2+u_1^2}v^2_2e^{-\frac12(\frac{u_1}{|\zeta|}\mp1)^2}e^{-\frac{v_2^2+v_3^2}2}du_1dv_2dv_3\nnm\\
\le& \frac{C}{|\zeta|}\int^{|\zeta|}_0 \frac{u_1}{\nu_0^2+u_1^2}du_1+\frac{ C}{|\zeta|^2}\int^\infty_{|\zeta|} e^{-\frac12\frac{u^2_1}{|\zeta|^2}}du_1\le C_3\frac{\ln |\xi|}{|\xi|}. \label{ddd-1}
\ema

To estimate  $I_4$, we first claim that
\be |\beta_j(\xi)|\le (C_1+C_3)|\xi|^{-1} \ln |\xi| ,\quad \xi\in D_{r_1,\frac{\nu_0}2}.\label{assume1}\ee
By changing variable $v_2\to-v_2$, it can be verified that $$P_dR_{j}(\xi)^n \chi_2 =P_d(R_{j}(\xi) K_1)^nR_{j}(\xi) \chi_2=0,\quad \forall n\ge 1.$$
This together with the fact that
$(I-Y_j(\xi))^{-1}=I+(I-Y_j(\xi))^{-1}Y_{j}(\xi)  $
yields
\be Z_{j}(\xi)\chi_2=R_{j} (K_1 -\beta_{j})R_{j} \chi_2+(I-Y_{j} )^{-1} [R_{j} (K_1 -\beta_{j})]^2R_{j} \chi_2. \label{Z1}\ee
By \eqref{K1}, \eqref{bbb} and \eqref{ddd-1}, we obtain
\bma
|K_1R_{j}(\xi)\chi_2|
&\le C\int_{\R} e^{-\frac{|v_1-u_1|^2}{8}}\frac{1}{\nu_0+|(u_1\pm1)\zeta|}e^{-\frac{u_1^2}{4}}du_1 \int_{\R^2}\frac{1}{|\bar{v}-\bar{u}|}e^{-\frac{|\bar{v}-\bar{u}|^2}{8}}e^{-\frac{|\bar{u}|^2}{4}}d\bar{u}\nnm\\
&\le Ce^{-\frac{|v|^2}{8}}\int_{\R} \frac{1}{\nu_0+|(u_1\pm1)\zeta|}e^{-\frac{u_1^2}{8}}du_1\le C\frac{\ln|\xi|}{|\xi|}e^{-\frac{|v|^2}{8}}, \label{ggg1}
\ema
which leads to
\bma
\|R_{j}K_1R_{j}\chi_2 \|^2_{L^2_v}&\le C\frac{\ln^2|\xi|}{|\xi|^2}\intr \frac{1}{\nu_0^2+(v_1\pm1)^2\zeta^2}e^{-\frac{|v|^2}{4}}dv\le C\frac{\ln^2|\xi|}{|\xi|^3}, \label{fff}\\
\|R_{j}K_1R_{j}\chi_2 \|_{L^1_v}&\le C\frac{\ln|\xi|}{|\xi|}\intr \frac{1}{\nu_0+|(v_1\pm1)\zeta|}e^{-\frac{|v|^2}{8}}dv\le C\frac{\ln^2|\xi|}{|\xi|^2}. \label{ggg}
\ema
By \eqref{beta} and Lemma \ref{lem1}, it holds that for $\xi\in D_{r_1,\frac{\nu_0}2}$ with $r_1>0$ large enough,
\be \|(I-Y_j(\xi))^{-1}\|\le 2, \quad j=\pm 1. \label{Yj}\ee
Thus, it follows from \eqref{bbb},  \eqref{assume1} and \eqref{fff}--\eqref{Yj} that
\bma
|I_4|\le &|(R_{j}(K_1 -\beta_{j}) R_{j}\chi_2, \chi_2)|+|([R_{j}(K_1 -\beta_{j})]^2 R_{j}\chi_2, (I-\overline{Y_j })^{-1}\chi_2)|\nnm\\
\le &C(\|R_{j}K_1R_{j}\chi_2\|_{L^1_v} +|\beta_{j}| \| R_{j}^2\chi_2\|_{L^1_v} )\nnm\\
&+C(\|R_{j}K_1\|+|\beta_{j}|)(\|R_{j}K_1R_{j}\chi_2\|_{L^2_v}+|\beta_{j}| \| R_{j}^2\chi_2\|_{L^2_v}) \nnm\\
\le &C_4\frac{\ln^2|\xi|}{|\xi|^2}+C_4\frac{\ln|\xi|}{|\xi|^2}. \label{I4}
\ema
Finally, we estimate $I_2$ as
\bq |I_2|\le \frac{C}{|\xi|}|B_{j}(\beta_{j},\xi)|^2\le C_5\frac{\ln^2|\xi|}{|\xi|^3}.\label{abb}\eq
Combining \eqref{ddd}, \eqref{I_1}--\eqref{ddd-1}, \eqref{I4} and \eqref{abb}, we obtain \eqref{assume1} and \eqref{eigen_h1}, where
$$
y_j(\xi)=\frac12I_3=-\frac12\(R_{j}(\xi)\chi_2,\chi_2\),\quad
z_j(\xi)=\frac12I_2+\frac12I_4.
$$
The proof of the lemma is then completed.
\end{proof}

With the help of Lemma \ref{eigen_5}, we have the anayticity and expansion of the eigenvalues $\beta_j(\xi)$
and eigenvectors $\phi_j(\xi)$ of $\AA_1(\xi)$ in the high frequency region.

\begin{thm}\label{eigen_4}
(1) There exists a constant $r_1>0$ such that the spectrum $\sigma(\AA_1(\xi))\cap \{\lambda\in\mathbb{C}\,|\,\mathrm{Re}\lambda>-\mu/2\}$  consists of four eigenvalues $\{\beta_j(\xi),\ j=1,2,3,4\}$ for  $\xi\in D_{r_1,\frac{\nu_0}2}$. In particular, the eigenvalues $\beta_j(\xi)$ are analytic in $D_{r_1,\frac{\nu_0}2}$ and have the following expansions
 \be   \label{specr1}
  \left\{\bln
 &\beta_j(\xi)= \alpha_j(\xi)+\gamma_j(\xi)+O\(\frac{\ln^2 |\xi|}{|\xi|^2}\),\\
 & \alpha_1(\xi)=\alpha_2(\xi)=-i\xi, \quad \alpha_3(\xi)=\alpha_4(\xi)=i\xi,\\
 & \gamma_j(\xi)=-\frac12\intr \frac{1}{\nu(v)+iv_1\xi+\alpha_j}v_2^2M(v)dv,
  \eln\right.
\ee
where  $\gamma_j(\xi)$ is an analytic function for $\xi\in D_{\frac{\nu_0}2}$ satisfying
\be \label{specr3}
 \frac{C_1}{1+|\xi|}\le -{\rm Re}\gamma_j(\xi)\le \frac{C_2}{1+|\xi|},\quad  |{\rm Im}\gamma_j(\xi)|\le C_3 \frac{\ln (2+|\xi|)}{1+|\xi|},
\ee
for   positive constants $C_1,C_2$ and $C_3$. Here, the domains $D_{r_1,\frac{\nu_0}2}$ and $D_{\frac{\nu_0}2}$ are defined by \eqref{Dab}--\eqref{Db}, respectively.

(2) The eigenvectors $\phi_j(\xi)=\(u_j(\xi),\z^2_j(\xi),\z^3_j(\xi)\)$, $j=1,2,3,4$ are analytic functions for $\xi\in D_{r_1,\frac{\nu_0}2}$  and satisfy
 \bq   \label{eigf2}
  \left\{\bln
  &(\phi_i(\xi),\phi^*_j(\xi))=(u_i,\overline{u_j})-(\z^2_i,\overline{\z^2_j})-(\z^3_i,\overline{\z^3_j})=\delta_{ij}, \quad 1\le i,j\le 4,\\
&u_j(\xi)=c_j(\xi)\Big(\beta_j(\xi)-L_1 + i v_1 \xi+i\frac{  v_1}{\xi} P_d\Big)^{-1}(\bar{v}\cdot \e_j\chi_0),\\
&\z^2_j(\xi)=c_j(\xi)\mathbf{e}_j,\quad \z^3_j(\xi)= -\frac{ i \xi c_j(\xi)}{\beta_{j}(\xi)}\O_1 \mathbf{e}_j,
  \eln\right.
  \eq
where  $ \phi^*_j=(\overline{u_j},-\overline{\z^2_j},-\overline{\z^3_j})$, $\e_1=\e_3=  (\sqrt{1/2},0)$, $\e_2=\e_4= (0,\sqrt{1/2})$, and $c_j(\xi)$ are analytic functions of $\xi$ for $\xi\in D_{r_1,\frac{\nu_0}2}$ satisfying
\be \label{eigfr3}
\left\{\bln
&c_j(\xi)= i+i\frac12 d_j(\xi) +O\(\frac{\ln|\xi|}{|\xi|^2}\),\\
&d_j(\xi)=\frac12\intr \frac{1}{(\nu(v)+iv_1\xi+\alpha_j)^{2}}v_2^2M(v)dv.
 \eln\right.
\ee
\end{thm}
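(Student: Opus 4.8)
\textbf{Proof proposal for Theorem \ref{eigen_4}.}

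The plan is to run the standard spectral-perturbation machinery, but organized around the scalar dispersion function $D(\lambda,\xi)$ from \eqref{D3a}, whose zeros have already been located by Lemma \ref{eigen_5}. First I would fix $r_1>0$ large enough that Lemma \ref{lem1} applies on $D_{r_1,\nu_0/2}$, so that for ${\rm Re}\lambda\ge-\mu/2$ the resolvent $(\BB_1(\xi)-\lambda)^{-1}$ exists and the reduction \eqref{L_5}--\eqref{A_13} is valid. The key observation is that the eigenvalue problem $\AA_1(\xi)U=\lambda U$ decouples: once $X\in\C^2$ is known, $Y=-\tfrac{i\xi}{\lambda}\O_1 X$ and $f=-(\BB_1(\xi)-\lambda)^{-1}\bar v\chi_0\cdot X$ are determined, and the compatibility condition collapses (after multiplying \eqref{A_12} by $\lambda$ and using \eqref{A_13}) to $D(\lambda,\xi)X=0$, a \emph{scalar} equation times the identity on $\C^2$. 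Hence for each root $\lambda$ of $D(\lambda,\xi)=0$ the eigenspace is two-dimensional, spanned by the choices $X=\e_1,\e_2$; this immediately gives the count of four eigenvalues $\beta_1=\beta_2$, $\beta_3=\beta_4$, and Lemma \ref{eigen_5} supplies their analyticity on $D_{r_1,\nu_0/2}$ together with the expansion $\beta_j(\xi)=\alpha_j(\xi)+y_j(\xi)+z_j(\xi)$. To get the stated form \eqref{specr1} I would identify $\gamma_j(\xi)=y_j(\xi)=-\tfrac12(R_j(\xi)\chi_2,\chi_2)=-\tfrac12\int_{\R^3}\frac{v_2^2 M(v)}{\nu(v)+iv_1\xi+\alpha_j}dv$ (this is exactly $\tfrac12 I_3$ in the proof of Lemma \ref{eigen_5}), absorb $z_j=\tfrac12 I_2+\tfrac12 I_4=O(\ln^2|\xi|/|\xi|^2)$ into the error, and read off \eqref{specr3} from \eqref{bbb}--\eqref{ddd-1}. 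That $\sigma(\AA_1(\xi))\cap\{{\rm Re}\lambda>-\mu/2\}$ contains \emph{no other} points follows because outside the region governed by $D(\lambda,\xi)=0$ the reduction shows $\lambda-\AA_1(\xi)$ is invertible; one argues, as in \cite{Li1}, that the essential spectrum lies in ${\rm Re}\lambda\le-\nu_0+|{\rm Im}\xi|\le-\nu_0/2$ and the only discrete eigenvalues to the right are the roots just found.

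For part (2), I would construct the eigenvectors by the formulas \eqref{eigf2} directly: set $\z^2_j(\xi)=c_j(\xi)\e_j$ with $\e_1=\e_3=(\sqrt{1/2},0)$, $\e_2=\e_4=(0,\sqrt{1/2})$, then define $\z^3_j$ and $u_j$ by back-substitution through \eqref{A_13} and \eqref{L_5} (noting $\BB_1(\xi)=L_1-iv_1\xi-\tfrac{iv_1}{\xi}P_d$, which accounts for the $i\tfrac{v_1}{\xi}P_d$ term in the resolvent). Analyticity of $\phi_j$ on $D_{r_1,\nu_0/2}$ is inherited from analyticity of $\beta_j$ (Lemma \ref{eigen_5}) and of the resolvent $(\BB_1(\xi)-\lambda)^{-1}$ in $(\lambda,\xi)$ on that region, since $\beta_j$ stays uniformly bounded away from $\sigma(\BB_1(\xi))$. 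The normalization $(\phi_i,\phi^*_j)_\xi=\delta_{ij}$ is imposed by solving for $c_j(\xi)$: pairing $\phi_j$ with $\phi_j^*=(\overline{u_j},-\overline{\z^2_j},-\overline{\z^3_j})$ gives $c_j(\xi)^2\big(\|(\BB_1(\xi)-\beta_j)^{-1}\bar v\cdot\e_j\chi_0\|^2\cdot(\text{sign})-\tfrac12-\tfrac12\tfrac{\xi^2}{\beta_j^2}|\cdot|\big)=1$, and using $u_j=O(|\xi|^{-1/2})$, $\z^3_j=c_j\cdot O(1)$ together with $\beta_j=\alpha_j+O(|\xi|^{-1/2})$ one extracts $c_j(\xi)^2=-1+O(\ln|\xi|/|\xi|^{1/2})$, i.e. $c_j(\xi)=i(1+\tfrac12 d_j(\xi))+O(\ln|\xi|/|\xi|^2)$ after expanding the resolvent one further order to pin down $d_j(\xi)=\tfrac12\int_{\R^3}\frac{v_2^2M(v)}{(\nu(v)+iv_1\xi+\alpha_j)^2}dv$ as in \eqref{eigfr3}. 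Cross-orthogonality $(\phi_i,\phi^*_j)=0$ for $i\ne j$ is automatic when $\beta_i\ne\beta_j$ (adjoint eigenvectors for distinct eigenvalues of $\AA_1(\xi)$ w.r.t. the $(\cdot,\cdot)_\xi$ pairing, using \eqref{L_7a}), and within a degenerate pair, say $j\in\{1,2\}$, it follows because $\e_1\perp\e_2$ forces $(\z^2_1,\overline{\z^2_2})=0$, $(\z^3_1,\overline{\z^3_2})=0$ and the $u$-components share the same scalar resolvent so $(u_1,\overline{u_2})\propto \e_1\cdot\e_2=0$.

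The main obstacle I anticipate is the bookkeeping of the error terms in the expansions \eqref{specr1} and \eqref{eigfr3} on the \emph{complex} strip $D_{r_1,\nu_0/2}$ rather than on the real line — in particular, keeping the logarithmic factor $\ln|\xi|/|\xi|$ in ${\rm Im}\gamma_j$ sharp (it comes from the region $|u_1|\lesssim|\xi|$ in the integral \eqref{ddd-1}) and verifying that $z_j(\xi)=\tfrac12 I_2+\tfrac12 I_4$ is genuinely $O(\ln^2|\xi|/|\xi|^2)$ uniformly for $|{\rm Im}\xi|\le\nu_0/2$, which relies on the a priori bound \eqref{assume1} on $\beta_j$ and on \eqref{Yj}. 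A secondary subtlety is confirming there is no spectral gap, i.e. that $-{\rm Re}\gamma_j(\xi)\sim|\xi|^{-1}$ genuinely degenerates as $|\xi|\to\infty$ so that the resolvent set estimate \eqref{specr3} is two-sided; this is the feature that distinguishes the VMB operator from the Boltzmann and VPB cases, and it is exactly the content of the lower bound in \eqref{bbb}, which I would carry through verbatim. Everything else — analyticity via the implicit function theorem, the $L^2_\xi$-adjoint structure, and the normalization algebra — is routine given Lemmas \ref{lem1} and \ref{eigen_5}.
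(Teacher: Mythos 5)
Your proposal is correct and follows essentially the same route as the paper: the eigenvalues are taken to be the two (doubly degenerate) roots of $D(\lambda,\xi)=0$ supplied by Lemma \ref{eigen_5}, with $\gamma_j=y_j=-\tfrac12(R_j(\xi)\chi_2,\chi_2)$ and the remainder $z_j=O(\ln^2|\xi|/|\xi|^2)$ absorbed into the error, while the eigenvectors are built by back-substitution and normalized via the $(\cdot,\cdot)_\xi$-adjoint pairing, yielding $c_j^2(D_j-1+\xi^2/\beta_j^2)=2$ and hence \eqref{eigfr3} after expanding $\xi^2/\beta_j^2=-1+O(\ln|\xi|/|\xi|^2)$ and $D_j=2d_j+O(|\xi|^{-2}\ln|\xi|)$. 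The only cosmetic discrepancy is the constant in your normalization identity (the paper's right-hand side is $2$, reflecting $|\e_j|^2=1/2$), which does not affect the final expansion.
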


\begin{proof}
The eigenvalues $\beta_j(\xi)$ and the eigenvectors $\phi_j(\xi)=(u_j(\xi),\z^2_j(\xi),\z^3_j(\xi))$  for $j=1,2,3,4$ can be constructed as follows. We take $\beta_1=\beta_2=\lambda_{-1}(\xi)$ and $\beta_3=\beta_4=\lambda_{1}(\xi)$ to be the solution of the equation  $D(\lambda,\xi)=0$ defined in Lemma \ref{eigen_5}.
The corresponding eigenvectors $\phi_j(\xi)=(u_j(\xi),\z^2_j(\xi),\z^3_j(\xi))$, $1\le j\le 4$  are given by
 \bq
 \left\{\bln       \label{C_3a}
  &u_j(\xi)=c_j(\xi)\Big(\beta_j(\xi)-L_1 + i v_1\xi+i\frac{  v_1}{\xi} P_d\Big)^{-1}
           (\bar{v}\cdot \e_j\chi_0),  \\
  &\z^2_j(\xi)=c_j(\xi)\e_j,\quad \z^3_j(\xi)=-\frac{ i \xi c_j(\xi)}{\beta_j(\xi)}\O_1 \e_j ,
  \eln\right.
\eq
where $\e_1=\e_3=(\sqrt{1/2},0)$ and $\e_2=\e_4=(0,\sqrt{1/2})$.  It is easy to verify that   $(\phi_1(\xi),  \phi_3^*(\xi))=(\phi_2(\xi),  \phi_4^*(\xi))=0,$ where $ \phi^*_j=(\overline{u_j},-\overline{\z^2_j},-\overline{\z^3_j})$ is the eigenvector of $\AA^*_1(\xi)$ corresponding to the eigenvalue $\overline{\beta_{j}(\xi)}$.

Rewrite the eigenvalue problem as
$$
 \AA_1 (\xi)\phi_j(\xi) =\beta_j(\xi)\phi_j(\xi), \quad j=1,2,3,4.
 $$
By taking the inner product
$(\cdot,\cdot)_{\xi}$ of above with $\phi^*_j(\xi)$, and using
the fact that
 \bmas
(\AA_1 (\xi) U,V)_{\xi}&=(U,\AA^*_1 (\xi)V)_{\xi},\quad U,V\in  D(\BB_1(\xi))\times \mathbb{C}^2\times \mathbb{C}^2,\\
\AA^*_1 (\xi)\phi^*_j(\xi)&=\overline{\beta_j(\xi)} \phi^*_j(\xi),
\emas
we have
 $$
(\beta_i(\xi)-\beta_{j}(\xi))(\phi_i(\xi),\phi^*_j(\xi))_{\xi}=0,\quad 1\le i,j\le 4.
$$
Since  $\beta_i(\xi)\neq \beta_{j}(\xi)$ for  $i=1,3,\, j=2,4$ and $P_du_j(\xi)=0$,
we have the orthogonal relation
 $$
(\phi_i(\xi),\phi^*_j(\xi))_{\xi}=(\phi_i(\xi),\phi^*_j(\xi))=0,\quad 1\leq i\neq j\leq 4.
 $$
 This can be normalized so that
$$(\phi_j(\xi),\phi^*_j(\xi))=1, \quad j=1,2,3,4.$$
Precisely, the coefficients $c_j(\xi)$  is determined by the normalization condition
 \bq    \label{C_2a}
 c_j(\xi)^2\(D_j(\xi)-1+\frac{\xi^2}{\beta_j(\xi)^2}\)=1,\quad j=1,2,3,4,
 \eq
 where $D_j(\xi)=(R_1(\beta_j(\xi),\xi)\chi_2, R_1(\overline{\beta_j(\xi)},-\xi) \chi_2).$

Then, we turn to show the expansion of $c_j(\xi)$. For simplicity, we only consider the term $c_3(\xi)$. By \eqref{specr1}, we can expand $ \beta^2_3(\xi)/\xi^2$ as
\be
\frac{\beta^2_3(\xi)}{\xi^2}=\(i+O\(\frac{\ln|\xi|}{|\xi|^2}\)\)^2=-1 +O\(\frac{\ln|\xi|}{|\xi|^2}\). \label{bk}
 \ee
By \eqref{decompose},  we obtain
\bma
D_3(\xi)=&(R_{1}(\xi)\chi_2,\overline{R_1(\xi)}\chi_2)+2(R_{1}(\xi)\chi_2,\overline{Z_1(\xi)}\chi_2)+(Z_1(\xi)\chi_2,\overline{Z_1(\xi)}\chi_2)\nnm\\
=&2d_3(\xi)+O(|\xi|^{-2} \ln|\xi| ), \label{dk}
\ema
where we have used (cf. \eqref{T_6a}, \eqref{T_6b}, \eqref{Z1} and \eqref{fff})
\be \|R_{1}(\xi)\chi_2\| \le C|\xi|^{-\frac12},\quad \|Z_{1}(\xi)\chi_2\| \le   C|\xi|^{-\frac32}\ln |\xi| . \label{I4a}\ee

By substituting \eqref{bk} and \eqref{dk} into \eqref{C_2a}, we obtain
\bmas
c_3^2(\xi)=&-\bigg(2- \bigg(\frac{\xi^2}{\beta_3^2(\xi)}+1\bigg)- D_3(\xi)\bigg)^{-1}\\
=&-\frac{1}{2} - \frac{1}{2}d_3(\xi)+O\(\frac{\ln|\xi|}{|\xi|^2}\).
\emas
Thus, we obtain \eqref{eigf2} and \eqref{eigfr3} so that
the proof of the theorem is completed.
\end{proof}

 With the help of Theorems \ref{spectrum1}, \ref{eigen_3} and \ref{eigen_4}, we have the estimates on the semigroup $e^{t\AA_0(\xi)}$ to the linear system \eqref{LVMB1a}.

\begin{thm} \label{semigroup}
The semigroup $S(t,\xi)=e^{t\AA_0(\xi)}$ satisfies
 $$
 S(t,\xi)U=S_1(t,\xi)U+S_2(t,\xi)U+S_3(t,\xi)U,
     \quad U\in \hat{\mathcal X}_1, \ \ t>0, 
 $$
 where
 \bma
 S_1(t,\xi)U&=\sum^2_{j=1}e^{t\lambda_j(\xi)} \(U, \Psi^*_j(\xi)\) \Psi_j(\xi) 1_{\{|\xi|\leq r_0\}}, \label{S_2a}
               \\
 S_2(t,\xi)U&=\sum^4_{j=1} e^{t\beta_j(\xi)}  \(U,\Phi^*_j(\xi)\) \Phi_j(\xi)  1_{\{|\xi|\ge r_1\}}.  \label{S_3a}
 \ema
Here, $(\lambda_j(\xi),\Psi_j(\xi))$ and $(\beta_j(\xi),\Phi_j(\xi))$ are the eigenvalues and eigenvectors of the operator $\AA_0(\xi)$ for $|\xi|\le r_0$ and $|\xi|>r_1$ respectively,
and $S_3(t,\xi) =: S(t,\xi)-S_1(t,\xi)-S_2(t,\xi)$ satisfies
that there exists  a constant $\kappa_0>0$ independent of $\xi$ such  that
 $$
 \|S_3(t,\xi)U\| \leq Ce^{-\kappa_0t}\|U\| ,\quad t>0.
 $$
\end{thm}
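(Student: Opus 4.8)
The plan is to reduce the semigroup $S(t,\xi)=e^{t\AA_0(\xi)}$ acting on $\hat{\mathcal X}_1$ to the semigroup $\tilde S(t,\xi)=e^{t\AA_1(\xi)}$ on the weighted space $L^2_\xi(\R^3_v)\times\C^2\times\C^2$, whose structure is already furnished by Lemma \ref{spectrum1}, and then to transport the three–part decomposition together with the exponential bound on the remainder, invoking Theorems \ref{eigen_3} and \ref{eigen_4} only to identify the eigenvectors $\Psi_j(\xi)$ and $\Phi_j(\xi)$ of $\AA_0(\xi)$. Throughout we take $\xi\neq0$.

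First I would set up the reduction. For $U=(f,E,B)\in\hat{\mathcal X}_1$ write $E=(E_1,E_r)$, $B=(B_1,B_r)$ with $E_r=(E_2,E_3)$, $B_r=(B_2,B_3)$. The constraints built into $\hat{\mathcal X}_1$, namely $i\xi E_1=(f,\chi_0)$ and $B_1=0$, express $E_1$ and $B_1$ through $f$, and a direct computation using $P_df=(f,\chi_0)\chi_0$ gives $v_1\chi_0E_1=-\tfrac{iv_1}{\xi}P_df$. Hence the first equation of the $\AA_0(\xi)$–system reads $\dt f=(L_1-iv_1\xi)f+v\chi_0\cdot E=\BB_1(\xi)f+\bar v\chi_0\cdot E_r$, and the equations for $E_r,B_r$ collapse to those with $\O_1$ in place of $\O$; moreover $B_1\equiv0$ is preserved since $(\O E)_1=0$, while the constraint $i\xi E_1=(f,\chi_0)$ is propagated by the flow because $(L_1f,\chi_0)=0$, $(v_j\chi_0,\chi_0)=0$ and $(\O B)_1=0$. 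Thus, with $V:=(f,E_r,B_r)$, the map $\Pi:U\mapsto V$ carries the $\AA_0(\xi)$–dynamics on $\hat{\mathcal X}_1$ onto the $\AA_1(\xi)$–dynamics, i.e. $\Pi S(t,\xi)=\tilde S(t,\xi)\Pi$. Since $|E_1|^2=\tfrac1{\xi^2}\|P_df\|^2$ on $\hat{\mathcal X}_1$,
\be
\|U\|^2=\|f\|^2+|E_1|^2+|E_r|^2+|B_r|^2=\|f\|^2_\xi+|E_r|^2+|B_r|^2=\|V\|^2_\xi ,
\ee
so $\Pi$ is an isometric isomorphism of $(\hat{\mathcal X}_1,\|\cdot\|)$ onto $(L^2_\xi(\R^3_v)\times\C^2\times\C^2,\|\cdot\|_\xi)$.

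Then I would transport the decomposition of Lemma \ref{spectrum1}(4). Setting $S_k(t,\xi):=\Pi^{-1}\tilde S_k(t,\xi)\Pi$ for $k=1,2,3$ gives $S=S_1+S_2+S_3$ at once, and by the isometry of $\Pi$,
\be
\|S_3(t,\xi)U\|=\|\tilde{S}_3(t,\xi)\Pi U\|_\xi\le Ce^{-\kappa_0t}\|\Pi U\|_\xi=Ce^{-\kappa_0t}\|U\| ,
\ee
which in particular absorbs the intermediate band $r_0\le|\xi|\le r_1$, where $\tilde{S}_1=\tilde{S}_2=0$ and Lemma \ref{spectrum1}(3) is used. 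To reach the explicit forms \eqref{S_2a}--\eqref{S_3a}, recall from the proofs of Theorems \ref{eigen_3} and \ref{eigen_4} that the $f$–components of the eigenvectors of $\AA_1(\xi)$ are purely microscopic, $P_dh_j=0$ and $P_du_j=0$; hence $\Psi_j(\xi):=\Pi^{-1}\psi_j(\xi)$ has vanishing $E_1$–component (as $(h_j,\chi_0)/(i\xi)=0$), so $\Psi_j(\xi)=(h_j,(0,\y^2_j),(0,\y^3_j))$, and likewise $\Phi_j(\xi)=(u_j,(0,\z^2_j),(0,\z^3_j))$, these being eigenvectors of $\AA_0(\xi)$ for $\lambda_j(\xi)$ and $\beta_j(\xi)$ respectively. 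Since $\psi^*_j=(\overline{h_j},-\overline{\y^2_j},-\overline{\y^3_j})$ and $\phi^*_j$ likewise have no $P_d$–part, the weighted pairings $(\,\cdot\,,\psi^*_j)_\xi$, $(\,\cdot\,,\phi^*_j)_\xi$ agree with the unweighted ones, so that $(U,\Psi^*_j(\xi))=(\Pi U,\psi^*_j(\xi))$ with $\Psi^*_j:=\Pi^{-1}\psi^*_j$, and similarly for $\Phi^*_j$; substituting this into the formulas for $\tilde{S}_1,\tilde{S}_2$ yields \eqref{S_2a} and \eqref{S_3a}.

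The only point requiring real care is the reduction itself: one must verify that the affine constraint $i\xi E_1=(f,\chi_0)$, $B_1=0$ is genuinely invariant under the $\AA_0(\xi)$–flow, so that $\Pi^{-1}$ is well defined on the invariant subspace and the intertwining holds exactly, and one must keep the weight $\|\cdot\|_\xi$ consistent with the sign conventions of the adjoint eigenvectors. All the hard analytic content — the location of the spectrum in the three frequency regimes, the exponential bound on $\tilde{S}_3$, and the analyticity and asymptotic expansions of $\lambda_j,\beta_j,\psi_j,\phi_j$ — is already provided by Lemma \ref{spectrum1} and Theorems \ref{eigen_3}--\ref{eigen_4}, so no further estimates are needed.
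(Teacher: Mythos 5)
Your proposal is correct and follows essentially the same route as the paper: the paper's proof also defines $e^{t\AA_0(\xi)}U_0$ by solving the $\AA_1(\xi)$-system for $V_0=(f_0,E_0^2,E_0^3,B_0^2,B_0^3)$, reconstructing $E_1=\frac{1}{i\xi}(V_1,\chi_0)$, using the isometry $\|U_0\|=\|V_0\|_\xi$, lifting the eigenvectors via $(h_k,\chi_0)=(u_j,\chi_0)=0$, and transporting the bound on $\tilde S_3$ from Lemma \ref{spectrum1}. Your added verifications (invariance of the constraint under the flow, agreement of the weighted and unweighted pairings) are correct and only make explicit what the paper leaves implicit.
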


\begin{proof}
For any $U_0=(f_0,E_0,B_0)\in \hat{\mathcal X}_1$,
define
 \be
  e^{t\AA_0(\xi)}U_0=\(V_1,\frac{1}{i\xi}(V_1,\chi_0), V_2,0,V_3\)\in \hat{\mathcal X}_1, \label{solution}
  \ee
where
\bgrs
e^{t\AA_1(\xi)}V_0=(V_1,V_2,V_3)\in L^2_{\xi}(\R^3_v)\times \mathbb{C}^2\times \mathbb{C}^2,\\
V_0=( f_0, E^r_0, B^r_0),\,\,\ E^r_0=(E^2_0,E^3_0),\,\,\ B^r_0=(B^2_0,B^3_0),
\egrs
with $e^{t\AA_1(\xi)}=\tilde{S}(t,\xi)$ given by Lemma \ref{spectrum1}.
By \eqref{LVMB1a} and \eqref{LVMB1}, we conclude that $e^{t\AA_0(\xi)}$ is the semigroup generated by the operator $\AA_0(\xi)$ satisfying
$$\|e^{t\AA_0(\xi)}U_0\|=\|e^{t\AA_1(\xi)}V_0\|_{\xi}\le \| V_0\|_{\xi}=\|U_0\|.$$
Set
\be \label{eigenvector}
\left\{\bal
\Psi_k(\xi)=(h_k,\mathrm{Y}^2_k,\Y^3_k)(\xi), & \Y^n_k(\xi)=(0,\y^n_k(\xi))\in \C^3, \quad n=2,3,\,\,k=1,2,\\
\Phi_j(\xi)=(u_j,\mathrm{Z}^2_j,\Z^3_j)(\xi), & \Z^n_j(\xi)=(0,\z^n_j(\xi))\in \C^3,  \quad n=2,3,\,\,j=1,2,3,4,
\ea\right.
\ee
where 
$(h_k,\y^2_k,\y^3_k)(\xi)$ and $(u_j,\z^2_j,\z^3_j)(\xi)$ are the eigenvectors of $\AA_1(\xi)$ for $|\xi|\le r_0$ and $|\xi|\ge r_1$ given in Lemma \ref{eigen_3} and Lemma \ref{eigen_4} respectively.
Since $(h_k,\chi_0)=(u_j,\chi_0)=0$, it follows that $(\lambda_k(\xi),\Psi_k(\xi))$ and $(\beta_j(\xi),\Phi_j(\xi))$ are the eigenvalues and the corresponding eigenvectors of $\AA_0(\xi)$ for $|\xi|\le r_0$ and $|\xi|\ge r_1$ respectively.
And set
 $$\Psi^*_k(\xi)=(\overline{h_k},-\overline{\mathrm{Y}^2_k},-\overline{\Y^3_k})(\xi),  \quad \Phi^*_j(\xi)=(\overline{u_j},-\overline{\mathrm{Z}^2_j},-\overline{\Z^3_j})(\xi).$$
Then $(\overline{\lambda_k(\xi)},\Psi^*_k(\xi))$ and $(\overline{\beta_j(\xi)},\Phi^*_j(\xi))$ are the eigenvalues and the corresponding eigenvectors of $\AA^*_0(\xi)$ for  $|\xi|\le r_0$ and $|\xi|\ge r_1$ respectively.

Thus, we can decompose $S(t,\xi)=e^{t\AA_0(\xi)}$ into
$$
 S(t,\xi)U_0=S_1(t,\xi)U_0+S_2(t,\xi)U_0+S_3(t,\xi)U_0,
 $$
where $S_1(t,\xi)$ and $S_2(t,\xi)$ are given by \eqref{S_2a} and \eqref{S_3a} respectively. And $S_3(t,\xi) =: S(t,\xi)-S_1(t,\xi)-S_2(t,\xi)$ satisfies
$$\|S_3(t,\xi)U_0\|=\|\tilde{S}_3(t,\xi)V_0\|_{\xi}\le Ce^{-\kappa_0t}\|V_0\|_{\xi}=Ce^{-\kappa_0t}\|U_0\|.$$
The proof of the theorem is completed.
\end{proof}

\section{Green's function:  fluid part}
\label{fluid}
\setcounter{equation}{0}

In this section, we will show the pointwise estimates of the fluid part of  Green's function $G(t,x)$  based on the spectral analysis given  in Section 2.
First, we decompose the operator $G(t,x)$ in $\mathcal{X}_1$ into the low-frequency part, middle-frequency part and high frequency part:
\be \label{L-R}
\left\{\bln
&G(t,x)=G_L(t,x)+G_M(t,x)+G_H(t,x),\\
&G_L(t,x)=\frac1{\sqrt{2\pi}}\int_{|\xi|\le \frac{r_0}{2}}e^{ i x \xi +t\AA_0(\xi)}d\xi,
\\
&G_M(t,x)=\frac1{\sqrt{2\pi}}\int_{\frac{r_0}{2}\le |\xi|\le r_1}e^{ i x \xi +t\AA_0(\xi)}d\xi,
\\
&G_H(t,x)=\frac1{\sqrt{2\pi}}\int_{|\xi|\ge r_1}e^{ i x \xi +t\AA_0(\xi)}d\xi.
\eln\right.
\ee

Furthermore, the operators $G_L(t,x)$ and $G_H(t,x)$ can be decomposed into the fluid parts and the non-fluid parts \cite{Li1}:
\bmas
G_L(t,x)&=G_{L,0}(t,x)+G_{L,1}(t,x),\\
G_H(t,x)&=G_{H,0}(t,x)+G_{H,1}(t,x),
\emas
where
\be \label{GL0}
\left\{\bln
\hat G_{L,0}(t,\xi)&=\sum_{j=1,2}e^{\lambda_j(\xi)t}\Psi_j(\xi)\otimes \langle\Psi^*_j(\xi)|,
\\
\hat G_{H,0}(t,\xi)&=\sum_{1\le j\le 4}e^{\beta_j(\xi)t}\Phi_j(\xi)\otimes \langle\Phi^*_j(\xi)|,
\\
\hat G_{L,1}(t,\xi)&=\hat G_{L}(t,\xi)-\hat G_{L,0}(t,\xi),\\
\hat G_{H,1}(t,\xi)&=\hat G_{H}(t,\xi)-\hat G_{H,0}(t,\xi),
\eln\right.
\ee
and $(\lambda_k(\xi),\Psi_k(\xi))$ $(k=1,2)$ and $(\beta_j(\xi),\Phi_j(\xi))$ $(j=1,2,3,4)$ are the eigenvalues and the eigenvectors of $\AA_0(\xi)$ given in Theorem \ref{semigroup}.
Precisely,
\bma
\hat{G}_{L,0} &=\sum^2_{j=1}e^{\lambda_j(\xi)t }\left( \ba
h_j\otimes\langle h_j| & -h_j\otimes\langle \Y^2_j| & -h_j\otimes\langle \Y^3_j|\\
 \Y^2_j\otimes \langle h_j| & -\Y^2_j\otimes\langle \Y^2_j| &  -\Y^2_j\otimes\langle \Y^3_j|\\
  \Y^3_j\otimes \langle h_j| & -\Y^3_j\otimes\langle \Y^2_j| &  -\Y^3_j\otimes\langle \Y^3_j|
 \ea\right)=:\(\hat{G}_{L,0}^{ij}\)_{3\times 3}, \label{G_L0}
 \\
 \hat{G}_{H,0} &=\sum^4_{j=1}e^{\beta_j(\xi)t }\left( \ba
u_j\otimes\langle u_j| & -u_j\otimes\langle \Z^2_j| & -u_j\otimes\langle \Z^3_j|\\
 \Z^2_j\otimes \langle u_j| & -\Z^2_j\otimes\langle \Z^2_j| &  -\Z^2_j\otimes\langle \Z^3_j|\\
  \Z^3_j\otimes \langle u_j| & -\Z^3_j\otimes\langle \Z^2_j| &  -\Z^3_j\otimes\langle \Z^3_j|
 \ea\right)=:\(\hat{G}_{H,0}^{ij}\)_{3\times 3}, \label{G_H0}
\ema
where $(h_k,\Y^2_k,\Y^3_k)$, $k=1,2$ and $(u_j,\Z^2_j,\Z^3_j)$, $j=1,2,3,4$ are defined by \eqref{eigenvector}.
Here, the operator $f\otimes\langle g|$  is defined in \cite{Liu1,Liu2} as
$$ f\otimes\langle g|h=(h,\overline{g})f .$$

 From \cite{Li1}, we have the following time-decay rates of each part of $\hat{G}(t,\xi)$ defined by \eqref{L-R}.

\begin{lem} \label{l-1}
For any $U_0\in  \mathcal{\hat{X}}_1$, there exist constants $C>0$ and $\kappa_0>0$ such that
\bma \|\hat{G}(t,\xi)U_0\| &\le  \|U_0\| , \label{l1}\\
\|\hat{G}_M(t,\xi)U_0\| ,\|\hat{G}_{L,1}(t,\xi)U_0\|,\|\hat{G}_{H,1}(t,\xi)U_0\|&\le Ce^{-\kappa_0t} \|U_0\| , \label{high1}
\ema
where $\hat{G}(t,\xi)$, $\hat{G}_M(t,\xi)$, $\hat{G}_{H,1}(t,\xi)$ and $\hat{G}_{L,1}(t,\xi)$ are defined by \eqref{L-R} and \eqref{GL0} respectively.
\end{lem}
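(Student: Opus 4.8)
The plan is to read off every bound from the semigroup splitting $S(t,\xi)=S_1(t,\xi)+S_2(t,\xi)+S_3(t,\xi)$ of Theorem \ref{semigroup}; no fresh spectral analysis is required. By \eqref{LVMB-3} and the identity $\hat G_0(\xi)U_0=U_0$ on $\hat{\mathcal X}_1$ one has $\hat G(t,\xi)=e^{t\AA_0(\xi)}=S(t,\xi)$ as operators on $\hat{\mathcal X}_1$, while $S_1$ is supported in $\{|\xi|\le r_0\}$, $S_2$ in $\{|\xi|\ge r_1\}$, and $\|S_3(t,\xi)U\|\le Ce^{-\kappa_0t}\|U\|$ uniformly in $\xi$.

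The contraction bound \eqref{l1} is essentially contained in the proof of Theorem \ref{semigroup}, where $\|e^{t\AA_0(\xi)}U_0\|=\|e^{t\AA_1(\xi)}V_0\|_\xi$ and $\|V_0\|_\xi=\|U_0\|$; the intermediate fact that $e^{t\AA_1(\xi)}$ is a contraction on $L^2_\xi(\R^3_v)\times\C^2\times\C^2$ follows from a dissipation identity. Using \eqref{L_7a},
\[
2\,\mathrm{Re}(\AA_1(\xi)U,U)_\xi=\big((\AA_1(\xi)+\AA^*_1(\xi))U,U\big)_\xi=2(L_1f,f)\le-2\mu\|P_rf\|^2\le0,
\]
because in $\AA_1(\xi)+\AA^*_1(\xi)$ the transport term, the correction $-i\frac{v_1}{\xi}P_d$ and all coupling blocks cancel, while $P_dL_1=0$ removes the weighted contribution. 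Hence $t\mapsto\|e^{t\AA_1(\xi)}V_0\|_\xi^2$ is nonincreasing, which gives \eqref{l1}.

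For \eqref{high1} I would argue region by region. On $\{|\xi|\le\frac{r_0}{2}\}$ one has $S_2=0$ and $\hat G_{L,0}=S_1$, hence $\hat G_{L,1}=\hat G_L-\hat G_{L,0}=S_3\,1_{\{|\xi|\le r_0/2\}}$; on $\{|\xi|\ge r_1\}$ one has $S_1=0$ and $\hat G_{H,0}=S_2$, hence $\hat G_{H,1}=\hat G_H-\hat G_{H,0}=S_3\,1_{\{|\xi|\ge r_1\}}$; in both cases the $S_3$-estimate gives the bound $Ce^{-\kappa_0t}$. For the middle band $\hat G_M=S(t,\xi)1_{\{r_0/2\le|\xi|\le r_1\}}$ I would split it: on $r_0\le|\xi|\le r_1$, Lemma \ref{spectrum1}(3) puts the whole spectrum in $\{\mathrm{Re}\,\lambda\le-\alpha\}$, so $S_1=S_2=0$ and $S=S_3$ there; on $\frac{r_0}{2}\le|\xi|\le r_0$ we have $S=S_1+S_3$ with $\mathrm{Re}\,\lambda_j(\xi)=-a_1|\xi|^2+O(|\xi|^4)\le-\frac{a_1}{8}r_0^2$ by \eqref{specr0} and the rank-two projections $\Psi_j(\xi)\otimes\langle\Psi^*_j(\xi)|$ uniformly bounded there by the explicit formulas \eqref{eigf1}, so $\|S_1(t,\xi)\|\le Ce^{-a_1r_0^2t/8}$. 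Replacing $\kappa_0$ by $\min\{\kappa_0,a_1r_0^2/8\}$ completes \eqref{high1}.

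There is no genuine obstacle here: the statement is bookkeeping on top of the spectral analysis already packaged in Lemma \ref{spectrum1} and Theorem \ref{semigroup}. The only step deserving a line of care is the sub-band $\frac{r_0}{2}\le|\xi|\le r_0$, where the two low-frequency fluid eigenvalues are still present but already exponentially small (since $|\xi|$ is bounded away from $0$), so one must check that their spectral projections stay bounded there; this is immediate from \eqref{eigf1}, or alternatively from a Kawashima-type energy estimate uniform on compact frequency annuli. Note finally that the slowly decaying high-frequency eigenvalues $\beta_j(\xi)$, with $\mathrm{Re}\,\beta_j(\xi)\sim-|\xi|^{-1}$, never enter \eqref{high1}: they live in $\hat G_{H,0}$, not $\hat G_{H,1}$, and are treated separately in the fluid-part estimates of Section \ref{fluid}.
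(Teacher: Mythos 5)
Your proof is correct and takes essentially the same route as the paper, which states Lemma \ref{l-1} by citation to \cite{Li1} and whose substance is exactly the decomposition $S=S_1+S_2+S_3$ with the uniform exponential bound on $S_3$ packaged in Lemma \ref{spectrum1} and Theorem \ref{semigroup}, together with the contraction property $\|e^{t\AA_0(\xi)}U_0\|=\|e^{t\AA_1(\xi)}V_0\|_{\xi}\le\|V_0\|_{\xi}=\|U_0\|$ already recorded in the proof of Theorem \ref{semigroup}. Your region-by-region bookkeeping, including the sub-band $r_0/2\le|\xi|\le r_0$ where the two fluid eigenvalues satisfy $\mathrm{Re}\,\lambda_j(\xi)\le -c r_0^2$ and the projections stay bounded by \eqref{eigf1}, is exactly the intended argument.
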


\begin{lem}\label{gl0}
For  any given constant $C_1>1$, there exist $C,D>0$ such that for $|x|\le C_1t$,
\bma
\|\dxa  G_{L,0}^{1i}(t,x)\|,\|\dxa  G_{L,0}^{2i}(t,x)\|&\le C\((1+t)^{-\frac{3+\alpha}2}e^{-\frac{x^2}{Dt}}+e^{-\frac{t}{D}}\), \label{gl_1}\\
\|\dxa  G_{L,0}^{i3}(t,x) \|,\|\dxa  G_{L,0}^{3i}(t,x) \|&\le C\((1+t)^{-\frac{2+\alpha}2}e^{-\frac{x^2}{Dt}}+e^{-\frac{t}{D}}\), \label{gl_2}\\
\|\dxa  G_{L,0}^{33}(t,x) \|&\le C\((1+t)^{-\frac{1+\alpha}2}e^{-\frac{x^2}{Dt}}+e^{-\frac{t}{D}}\),\label{gl_3}
\ema
where $i=1,2,$ and $\alpha\ge 0$.
\end{lem}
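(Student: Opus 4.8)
\emph{Proof plan.} The plan is to evaluate the inverse Fourier transform of $\hat G_{L,0}$ from its explicit form \eqref{G_L0}, which is supported on $|\xi|\le r_0/2$ and built from the low-frequency eigenvalue $\lambda_1(\xi)=\lambda_2(\xi)$ and the eigenvector components $h_j(\xi),\Y^2_j(\xi),\Y^3_j(\xi)$ of Theorem \ref{eigen_3} (see \eqref{eigenvector}). The first task is purely algebraic: to record, from the expansions \eqref{eigf1} and \eqref{specr0}, the precise order of vanishing in $\xi$ of each $3\times 3$ block. Since $b_1(\xi)=a_1\xi+O(\xi^3)$ and $\lambda_1(\xi)=-a_1\xi^2+O(\xi^4)$, the formulas \eqref{eigf1} give $h_j(\xi)=b_1(\xi)g_j(\xi)$ and $\Y^2_j(\xi)=b_1(\xi)\e_j$, both vanishing to first order, while $\Y^3_j(\xi)=\big(i\xi b_1(\xi)/\lambda_1(\xi)\big)\O_1\e_j$ has a finite nonzero limit at $\xi=0$ because the double zero of $\xi b_1(\xi)$ cancels that of $\lambda_1(\xi)$; here $g_j(\xi)=(\lambda_1(\xi)-L_1+i\xi P_rv_1P_r)^{-1}(\bar{v}\cdot\e_j)\chi_0$ is analytic and uniformly bounded in $L^2_v$ on a complex disk around $0$ (this is the resolvent bound already underlying the construction in Theorem \ref{eigen_3}), and likewise $i\xi b_1(\xi)/\lambda_1(\xi)$ is analytic and bounded there. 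Consequently each entry of \eqref{G_L0} can be written, for $|\xi|\le r_0/2$, as $\hat G^{ij}_{L,0}(t,\xi)=(i\xi)^{m_{ij}}\,b_{ij}(\xi)\,e^{\lambda_1(\xi)t}$ with $b_{ij}$ operator-valued, analytic and bounded near $\{|\xi|\le r_0/2\}$, and with $m_{ij}=2$ for $i,j\in\{1,2\}$, $m_{i3}=m_{3i}=1$ for $i\in\{1,2\}$, and $m_{33}=0$. This bookkeeping is exactly what produces the three different decay rates \eqref{gl_1}--\eqref{gl_3}.

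\emph{Reduction to a scalar oscillatory integral.} Applying $\dxa$ multiplies the symbol by $(i\xi)^\alpha$, so — pulling the uniformly bounded operators $b_{ij}(\xi)$ along — the lemma reduces to proving, for each integer $k\ge 0$ and all $|x|\le C_1t$,
$$
\Big\|\int_{|\xi|\le r_0/2}(i\xi)^k\,b(\xi)\,e^{ix\xi+\lambda_1(\xi)t}\,d\xi\Big\|\le C(1+t)^{-\frac{k+1}{2}}e^{-\frac{x^2}{Dt}}+Ce^{-\frac tD},
$$
then setting $k=m_{ij}+\alpha$, where $b(\xi)$ is any operator-valued function analytic and bounded on $|\xi|<r_0$ and $D$ will depend on $C_1$. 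For $t\le 1$ this is trivial since the integrand is absolutely integrable uniformly in $x$. For $t\ge 1$ I would shrink $r_0$ if necessary — which only tightens Theorem \ref{eigen_3} — so that $\lambda_1(\xi)\le-\frac{a_1}{2}\xi^2$ for real $|\xi|\le r_0$, and use that $\lambda_1$ and $b$ are analytic on the disk $|\xi|<r_0$, which contains the square $\{|\mathrm{Re}\,\xi|,|\mathrm{Im}\,\xi|\le r_0/2\}$. I would then split the Mach cone into $|x|\le\epsilon t$ and $\epsilon t\le|x|\le C_1t$ with $\epsilon:=a_1r_0/4$, deforming the contour by Cauchy's theorem in each subregion. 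On $|x|\le\epsilon t$ I would shift to the horizontal segment $\{s+i\eta:|s|\le r_0/2\}$ with $\eta=x/(2a_1t)$, completing the square in $ix\xi-a_1\xi^2t$; this yields the Gaussian $e^{-cx^2/t}$ together with a Gaussian-in-$s$ integral of width $\sim t^{-1/2}$ supplying the factor $(1+t)^{-(k+1)/2}$ (the Gaussian absorbing the polynomial $(x^2/t)^{k/2}$ that the shift $\eta$ generates), while the two vertical segments joining this line to $[-r_0/2,r_0/2]$ contribute $O(e^{-ct})$. On $\epsilon t\le|x|\le C_1t$ (say $x>0$; the other sign is symmetric) I would instead shift to a \emph{fixed} horizontal line $\mathrm{Im}\,\xi=\sigma$ with $\sigma$ a small positive constant (e.g. $\sigma=r_0/8$): there $|e^{ix\xi}|=e^{-\sigma x}\le e^{-\sigma\epsilon t}$ dominates $\mathrm{Re}\,\lambda_1(s+i\sigma)\le c_1<\sigma\epsilon$, giving pure exponential decay $e^{-c_0t}$; since $|x|\le C_1t$ forces $x^2/(Dt)\le C_1^2t/D\le c_0t$ once $D\ge C_1^2/c_0$, this decay already dominates $e^{-x^2/(Dt)}$. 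Choosing $D$ as the maximum of the finitely many thresholds so produced finishes the proof.

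\emph{Main obstacle.} The algebra of the first step is routine; the delicate part is its interplay with the analytic continuation needed for the contour shift. One must verify that, after extracting $(i\xi)^{m_{ij}}$, the remaining symbols $b_{ij}(\xi)$ are genuinely analytic and \emph{uniformly bounded in the operator norm on $L^2_v$} on a fixed complex disk — this rests on the resolvent estimates for $(\lambda_1(\xi)-L_1+i\xi P_rv_1P_r)^{-1}$ and on the cancellation making $i\xi b_1(\xi)/\lambda_1(\xi)$ regular at $\xi=0$, both consequences of the low-frequency analysis behind Theorem \ref{eigen_3} and Lemma \ref{lem1}. The second subtlety is obtaining a \emph{single} Gaussian profile valid on the whole region $|x|\le C_1t$ for an arbitrary $C_1>1$, i.e. even where $|x|/t$ exceeds the radius of analyticity and a horizontal shift to the saddle is no longer available; this is handled, as above, by giving up the sharp diffusion constant and taking $D$ large enough that the exponential-in-$t$ decay from the vertical shift in the outer subregion absorbs $e^{-x^2/(Dt)}$.
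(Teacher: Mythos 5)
Your proposal is correct and follows essentially the same route as the paper: read off the order of vanishing in $\xi$ of each block of $\hat G_{L,0}$ from the expansions in Theorem \ref{eigen_3} (the factor $b_1(\xi)\sim a_1\xi$ in $h_j,\Y^2_j$ versus the $O(1)$ cancellation in $\Y^3_j$ is exactly what produces the three different rates), then estimate the resulting oscillatory integral by a Cauchy contour deformation inside the disk of analyticity of $\lambda_1$ and the eigenprojections. The only immaterial difference is the choice of contour: the paper uses one horizontal shift to height $x/(C_2 t)$ with $C_2$ taken large in terms of $C_1$ so as to cover the whole region $|x|\le C_1 t$ at once, whereas you split into $|x|\le \epsilon t$ (shift to the saddle) and $\epsilon t\le |x|\le C_1 t$ (fixed shift giving pure exponential decay in $t$, absorbed into the Gaussian by enlarging $D$); both sacrifice the sharp diffusion constant in the same way.
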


\begin{proof}By \eqref{G_L0}, we have
$$
 \hat{G}_{L,0}^{11}(t,\xi)=e^{\lambda_1(\xi)t}\sum_{j=1}^2h_j(\xi)\otimes\langle h_j(\xi)|.
$$
 Without loss of generality, we assume $x\ge 0$. Since $e^{\lambda_1(\xi)t}h_j(\xi)\otimes\langle h_j(\xi)|$ is analytic for $|\xi|\le r_0$, we obtain by Cauchy integral theorem that
 \bma
 \dxa G_{L,0}^{11}(t,x)&=\int_{|\xi|\le \frac{r_0}{2}} e^{ix\xi}e^{\lambda_1(\xi)t}\xi^{\alpha}\sum_{j=1}^2h_j(\xi)\otimes\langle h_j(\xi)|d\xi\nnm\\
 &=\(\int_{\Gamma_1\cup\Gamma_3}+ \int_{\Gamma_2}\)e^{ix\xi}e^{\lambda_1(\xi)t}\xi^{\alpha}\sum_{j=1}^2h_j(\xi)\otimes\langle h_j(\xi)|d\xi\nnm\\
 &=:J_1+J_2, \label{g1}
 \ema
 where
$$
\left\{\bln
\Gamma_1=&\left\{\xi\,|\,\xi=\frac{r_0}{2}+i y,\,0\le y \le \frac{x}{C_2t}\right\},\\
 \Gamma_2=&\left\{\xi\,|\,\xi=u+i \frac{x}{C_2t},\,-\frac{r_0}{2}\le u \le \frac{r_0}{2}\right\},\\
\Gamma_3=& \left\{\xi\,|\,\xi=-\frac{r_0}{2}+i y,\,0\le y \le \frac{x}{C_2t}\right\}.
\eln\right.
$$
For $|x|\le C_1t$, by taking $C_2=\max\{\frac{2C_1}{r_0},2a_1 \}$ we obtain
\bma
\|J_1(t,x)\|&\le Ce^{-\frac{a_1r^2_0}4t}\int^{\frac{x}{C_2t}}_0  e^{-y(\frac{x}t-a_1y)t +o(1)(r_0^2+y^2)t}\(r_0^2+y^2\)^{1+\frac{\alpha}2} dy\nnm\\
&\le  Ce^{-\frac{a_1r^2_0}8t},\label{g2}
\\
\|J_2(t,x)\|
&\le  Ce^{-\frac{x^2}{C_2t}(1-\frac{a_1}{C_2})}\int^{\frac{r_0}2}_{-\frac{r_0}2}
 e^{-a_1u^2t+o(1)(u^2+\frac{x^2}{t^2})t}\(u^2+\frac{x^2}{t^2}\)^{1+\frac{\alpha}2}du\nnm\\
&\le C(1+t)^{-\frac32-\frac{\alpha}2}e^{-\frac{x^2}{Dt}}.\label{g3}
\ema

Combining \eqref{g1}--\eqref{g3}, we obtain \eqref{gl_1}. By a similar argument as above, we can prove \eqref{gl_2} and \eqref{gl_3}. This completes the proof of the lemma.
\end{proof}

Set
\be \label{G_b}
\hat{G}_{2}(t,\xi)=\sum^4_{j=1}e^{\alpha_jt+\gamma_jt}\left( \ba
-u^0_j\otimes \langle u^0_j| & u^0_j\otimes \langle \X^2_j| & u^0_j\otimes \langle \X^3_j|\\
-\X^2_j\otimes \langle u^0_j| & (1+d_{j})\X^2_j\otimes \langle \X^2_j| &  (1+d_{j})\X^2_j\otimes \langle \X^3_j|\\
-\X^3_j\otimes \langle u^0_j| & (1+d_{j})\X^3_j\otimes \langle \X^2_j| &  (1+d_{j})\X^3_j\otimes \langle \X^3_j|
\ea\right),
\ee
where $\alpha_j$, $\gamma_j$ and $d_j$ are given by Theorem \ref{eigen_4}, and $u^0_j$, $\X^2_j$, $\X^3_j$ are given by
\be \label{R0j}
\left\{\bln
&\X^2_1=\X^2_3=\sqrt{\frac12}(0,1,0),\quad \X^2_2=\X^2_4=\sqrt{\frac12}(0,0,1), \\
&\X^3_1=-\X^3_3=\sqrt{\frac12}(0,0,1),\quad \X^3_2=-\X^3_4=\sqrt{\frac12}(0,-1,0),\\
&u^0_j = \frac{1}{\nu(v)+i v_1\xi+\alpha_j} ( v\cdot \X^2_j)\chi_0,\quad j=1,2,3,4.
\eln\right.
\ee

Decompose $G_2$ into
\be \hat{G}_2=\hat{G}_21_{\{|\xi|< r_1\}}+\hat{G}_21_{\{|\xi|\ge r_1\}}=:\hat{G}_{2,1}+\hat{G}_{2,2}.\label{G2-1}\ee
For any $\alpha\ge 0$ and $b>0$, define the operator $\dx^{-\alpha}$ and $(b+\dx)^{-\alpha}$ by
\bma
\dx^{-\alpha}g(x)&=\frac1{\sqrt{2\pi}}\intra e^{ix\xi}(i\xi)^{-\alpha}\hat{g}(\xi)d\xi, \label{dxa}\\
(b+\dx)^{-\alpha}g(x)&=\frac1{\sqrt{2\pi}}\intra e^{ix\xi}(b+i\xi)^{-\alpha}\hat{g}(\xi)d\xi. \label{dxa-1}
\ema
We will show that the term $(G_{H,0}-G_2)(t,x)$ is bounded and has the following pointwise estimate.

\begin{lem}\label{gh0}
Let $G_{H,0}$, $G_{2,1}$ and $G_{2,2}$  be defined by \eqref{G_H0} and \eqref{G2-1}, respectively. For any integer $\alpha\ge 0$, define
\be
 F_{\alpha}(t,x)=\dx^{-\alpha}\[ G_{H,0}(t,x)-G_{2,2}(t,x)\], \label{gl_4}
\ee
then we have
  \be \label{fij}
  \left\{\bln
 &\|F^{11}_{\alpha}(t,x)\|\le  C\bigg(\sum_{l=\pm1}(1+t)^{-\alpha-1}\ln^2(2+t)e^{-\frac{\nu_0|x-lt|}2} +e^{-\frac{t}{D}}\bigg),\\
 &\|F^{1j}_{\alpha}(t,x)\|, \|F^{j1}_{\alpha}(t,x)\|\le  C\bigg(\sum_{l=\pm1}(1+t)^{-\alpha-\frac12}\ln^2(2+t)e^{-\frac{\nu_0|x-lt|}2} +e^{-\frac{t}{D}}\bigg), \\
 & \|F^{2j}_{\alpha}(t,x)\|, \|F^{3j}_{\alpha}(t,x)\|\le  C\bigg(\sum_{l=\pm1}(1+t)^{-\alpha}\ln^2(2+t)e^{-\frac{\nu_0|x-lt|}2} +e^{-\frac{t}{D}}\bigg),
 \eln\right.
 \ee
 for $j=2,3,$ and $C,D>0$ two constants. In addition,
 \be \|G^{ij}_{2,1}(t,x)\|\le Ce^{-\frac tD},\quad 1\le i,j\le 3. \label{G_21}\ee
\end{lem}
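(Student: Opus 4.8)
## Proof Proposal for Lemma \ref{gh0}

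The plan is to analyze $F_\alpha(t,x)$ directly from the spectral decomposition of $\hat{G}_{H,0}(t,\xi)$ in Theorem \ref{eigen_4}, by comparing it term by term with $\hat{G}_{2,2}(t,\xi)$, and then using the Fourier inversion formula together with the contour-shifting / integration-by-parts technique inside the Mach region. First I would write, for $|\xi|\ge r_1$,
\[
\hat{G}_{H,0}(t,\xi)=\sum_{j=1}^4 e^{\beta_j(\xi)t}\Phi_j(\xi)\otimes\langle\Phi^*_j(\xi)|,
\qquad
\hat{G}_{2,2}(t,\xi)=\sum_{j=1}^4 e^{(\alpha_j+\gamma_j)(\xi)t}\Phi^0_j(\xi)\otimes\langle\Phi^{0*}_j(\xi)|,
\]
where $\Phi^0_j$ is the "leading order" version of $\Phi_j$ with $c_j$ replaced by $i$, $u_j$ replaced by $u^0_j$, and $\beta_j$ replaced by $\alpha_j$ in the denominator of $\z^3_j$. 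By Theorem \ref{eigen_4} we have $\beta_j-(\alpha_j+\gamma_j)=z_j(\xi)=O(|\xi|^{-2}\ln^2|\xi|)$, and $\Phi_j-\Phi^0_j=O(|\xi|^{-2}\ln|\xi|)$ componentwise (the $u_j$ entry is $O(|\xi|^{-3/2})$ since $u^0_j$ is already $O(|\xi|^{-1/2})$, while the $\z^2_j,\z^3_j$ entries differ by $O(|\xi|^{-2}\ln|\xi|)$). Writing $e^{\beta_j t}=e^{(\alpha_j+\gamma_j)t}(1+O(t|\xi|^{-2}\ln^2|\xi|))$ and expanding the tensor products, the difference $\hat{G}_{H,0}-\hat{G}_{2,2}$ is, on $|\xi|\ge r_1$, a finite sum of terms of the form $e^{(\alpha_j+\gamma_j)(\xi)t}\,\xi^{-\alpha}\,a_j(\xi)$ where $a_j(\xi)$ decays like $|\xi|^{-2}\ln^2|\xi|$ (times a factor $t$ in some terms) in the appropriate operator/component norm; the key structural point is that the $(1,1)$-component carries the extra factor $|u^0_j|^2=O(|\xi|^{-1})$, and the mixed components $O(|\xi|^{-1/2})$, which is the source of the improved rates in \eqref{fij}.

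The second step is the Fourier inversion. For the $(1,1)$-component, I would write
\[
F^{11}_\alpha(t,x)=\frac1{\sqrt{2\pi}}\int_{|\xi|\ge r_1}e^{ix\xi}\,e^{(\alpha_j+\gamma_j)(\xi)t}\,(i\xi)^{-\alpha}\,a_j(\xi)\,d\xi
\]
and exploit that $\alpha_j=\mp i\xi$, so that $e^{ix\xi+\alpha_j t}=e^{i(x-\sigma_{\pm1}t)\xi}$ with $\sigma_{\pm1}=\pm1$; this is exactly where the hyperbolic wave factors $e^{-\nu_0|x-lt|/2}$, $l=\pm1$, will come from. Since $\gamma_j(\xi)$ is analytic on the strip $D_{\nu_0/2}$ with $\mathrm{Re}\,\gamma_j(\xi)\le -C_1(1+|\xi|)^{-1}$, I would shift the contour $\xi\mapsto\xi+i\,\mathrm{sgn}(x-lt)\,\kappa$ for a small fixed $\kappa\le\nu_0/2$ to produce the decay $e^{-\kappa|x-lt|}$ (choosing $\kappa=\nu_0/2$), and then estimate the remaining integral $\int_{|\xi|\ge r_1}e^{-C_1 t/(1+|\xi|)}|\xi|^{-\alpha}|a_j(\xi)|\,d\xi$. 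Using the elementary bound $|\xi|^{-\beta}e^{-C_1t/(1+|\xi|)}\le C(1+t)^{-\beta}$ for $\beta\ge 0$ together with $|a_j(\xi)|\lesssim |\xi|^{-2}\ln^2|\xi|$ (and absorbing the stray factors of $t$ from the $e^{z_jt}$ expansion into $\ln^2(2+t)$ via $t|\xi|^{-2}e^{-C_1t/(1+|\xi|)}\lesssim\ln(2+t)$ after splitting $|\xi|\lessgtr t$), one gets the claimed $(1+t)^{-\alpha-1}\ln^2(2+t)$ for the $(1,1)$-entry, $(1+t)^{-\alpha-1/2}\ln^2(2+t)$ for the mixed $(1,j)/(j,1)$ entries, and $(1+t)^{-\alpha}\ln^2(2+t)$ for the $(i,j)$-block with $i,j\in\{2,3\}$; the exponentially small term $e^{-t/D}$ absorbs the contributions of the endpoints of the shifted contour and of the region $|\xi|\sim r_1$ where $\mathrm{Re}\,\gamma_j$ is bounded below by a positive constant. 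For \eqref{G_21}, the estimate is immediate: on $|\xi|<r_1$ the integrand $e^{(\alpha_j+\gamma_j)t}$ has $\mathrm{Re}(\alpha_j+\gamma_j)\le -C_1/(1+r_1)=:-1/D<0$ uniformly, and all amplitudes are bounded on the compact frequency band, so $\|G^{ij}_{2,1}(t,x)\|\le Ce^{-t/D}$ after one more contour shift to also gain the spatial factor (which can simply be discarded since we only claim $e^{-t/D}$).

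The main obstacle I anticipate is the bookkeeping of the contour-shift argument when $x-lt$ changes sign and, more seriously, making the decomposition $\hat{G}_{H,0}-\hat{G}_{2,2}$ genuinely clean: one must verify that after subtracting $\hat{G}_{2,2}$, every surviving term really does carry the extra $|\xi|^{-1}$ (or $|\xi|^{-1/2}$) gain claimed in the three cases of \eqref{fij}, which requires careful use of the orthonormality $(\phi_i,\phi^*_j)=\delta_{ij}$ and the precise expansions \eqref{eigf2}, \eqref{eigfr3} for $c_j$ and $u_j$ — in particular the cancellation that makes the $\z^2_j\otimes\langle\z^2_j|$ block differ from $(1+d_j)\X^2_j\otimes\langle\X^2_j|$ only at order $|\xi|^{-2}\ln|\xi|$ rather than order $|\xi|^{-1}$. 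A secondary technical point is controlling the factor $t$ that appears when one linearizes $e^{z_j(\xi)t}$: one cannot simply bound $e^{z_jt}\le C$ since $z_j$ is not real, so I would instead keep $e^{\beta_j t}$ intact and bound $|e^{\beta_j t}-e^{(\alpha_j+\gamma_j)t}|\le |z_j(\xi)|\,t\,e^{(\mathrm{Re}\gamma_j)t}$ directly, then run the same frequency integral; the $\ln^2(2+t)$ in the final bounds is precisely the price of this step after splitting the $\xi$-integral at $|\xi|\sim t$.
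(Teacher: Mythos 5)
Your proposal is correct and follows essentially the same route as the paper's proof: the paper likewise splits $\hat G_{H,0}-\hat G_{2,2}$ using the expansions of $\beta_j$, $c_j$, $u_j$ from Theorem \ref{eigen_4} (writing $u_j=iu^0_j+w_j$, $\Z^2_j=i\X^2_j+\tfrac{i}{2}d_j\X^2_j+\V^2_j$, and bounding $e^{\gamma_jt}(e^{\eta_jt}-1)\le Ce^{-C_1t/|\xi|}\,t|\xi|^{-2}\ln^2|\xi|$), shifts the contour to ${\rm Im}\,\xi=\pm\nu_0/2$ to extract $e^{-\nu_0|x\mp t|/2}$ with the vertical segments at $\pm r_1$ absorbed into $e^{-t/D}$, and converts the remaining frequency integral into the stated time decay, while $G_{2,1}$ is handled by the same direct $L^1_\xi$ bound on the compact band. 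The only differences are cosmetic (e.g.\ the paper treats $G_{2,1}$ without any contour shift), so no gap.
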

\begin{proof}
By \eqref{specr1}, \eqref{specr3} and \eqref{G_b}, we have
\bmas \|G_{2,1}(t,x)\|&\le C\int^\infty_{-\infty}\left\| \hat{G}_2(t,\xi)1_{\{|\xi|< r_1\}}\right\|d\xi
\\
&\le C\int^{r_1}_{-r_1}e^{{\rm Re}\gamma_j(\xi)t}d\xi\le Ce^{-t/D}.
\emas

Write $I_1=(I^{ij}_1)_{3\times 3}$ with $\hat{I}^{ij}_1=\hat{G}^{ij}_{H,0} -\hat{G}^{ij}_{2} 1_{\{|\xi|\ge r_1\}}$. We first  estimate $I^{12}_1(t,x) $ as follows.
$$
I^{12}_1=\sum^4_{j=1}C\int_{|\xi|\ge r_1}e^{ix\xi} \(-e^{\beta_j t} u_j\otimes \langle \Z^2_j|-e^{\alpha_jt+\gamma_jt} u^0_j\otimes \langle \X^2_j|\)d\xi
=:\sum^4_{j=1} U_j.
$$
For brevity, we only give the estimation on the term $ U_{1}$.
By Theorem \ref{eigen_4}, each $\beta_j(\xi)$ is analytic for $\xi\in D_{r_1,\frac{\nu_0}2}$ and satisfies
$$
\beta_j(\xi)=\alpha_j(\xi)+\gamma_j(\xi)+\eta_j(\xi),\quad j=1,2,3,4,
$$
where  $\eta_j(\xi)=O(|\xi|^{-2}\ln^2|\xi|)$ is analytic for $\xi\in D_{r_1,\frac{\nu_0}2}$.

Assume that $x-t\ge 0$. Since $ e^{-\beta_1(\xi)t} u_1(\xi)\otimes \langle \Z^2_1(\xi)|$ and $e^{\alpha_1t+\gamma_1t} u^0_1(\xi)\otimes \langle \X^2_1|$ are analytic for $\xi\in D_{r_1,\frac{\nu_0}2}$, we obtain by Cauchy integral theorem that
\bma
\dx^{-\alpha} U_1(t,x)
&=C\int_{|\xi|\ge r_1}e^{ix\xi} (i\xi)^{-\alpha}\(-e^{\beta_1 t} u_1 \otimes \langle \Z^2_1 |-e^{\alpha_1t+\gamma_1t} u^0_1 \otimes \langle \X^2_1|\)d\xi\nnm\\
&=C\(\int_{\Gamma_1\cup\Gamma_2}+\int_{\Gamma_3\cup\Gamma_4}\)e^{ i (x-t)\xi} (i\xi)^{-\alpha}\nnm\\
&\quad\times \(-e^{\gamma_1(\xi)t+\eta_1(\xi)t} u_1(\xi)\otimes \langle \Z^2_1(\xi)|-e^{ \gamma_1(\xi)t} u^0_1(\xi)\otimes \langle \X^2_1|\)d\xi\nnm\\
&=:J_1+J_2, \label{H_1c}
\ema
where
 \be \label{gamma1}
\left\{\bln
\Gamma_1=&\left\{\xi\,|\,\xi=r_1+i y,\,0\le y \le \frac{\nu_0}2\right\},\\
 \Gamma_2=&\left\{\xi\,|\,\xi=-r_1+i y,\,0\le y \le \frac{\nu_0}2\right\},\\
 \Gamma_3=&\left\{\xi\,|\,\xi=u+i \frac{\nu_0}2,\,r_1\le u <+\infty\right\},\\
 \Gamma_4=&\left\{\xi\,|\,\xi=u+i \frac{\nu_0}2,\,-\infty< u \le-r_1\right\}.
\eln\right.
\ee

By \eqref{eigenvector} and Theorem \ref{eigen_4}, we decompose
\bma
u_1(\xi)&= iu^0_1(\xi)+w_1(\xi) , \label{phi}\\
\Z^2_1(\xi)&= i\X^2_1+i\frac12 d_1(\xi)\X^2_1+\V^2_1(\xi) , \label{yj}
\ema
where 
$$
\left\{\bln
w_1(\xi)&=\(c_1(\xi)-i \)u^0_1(\xi)+u_1(\xi)-c_1(\xi)u^0_1(\xi),\\
 \V^2_1(\xi)&=\Big(c_1(\xi)-i -i\frac12 d_1(\xi)\Big)\X^2_1.
\eln\right.
$$
Moreover, it holds for $\xi\in D_{r_1,\frac{\nu_0}{2}}$ that
\be \label{rj}
\left\{\bln
\|u^0_1(\xi)\|&\le C|\xi|^{-\frac12},\quad \|w_1(\xi)\|\le C|\xi|^{-\frac32} \ln |\xi| , \\
|d_1(\xi)|&\le C|\xi|^{-1},\quad |\V^2_1(\xi)| \le C|\xi|^{-2} \ln^2 |\xi| .
\eln\right.
\ee
Thus, by \eqref{H_1c}--\eqref{rj} and \eqref{specr1} we obtain
\be
\|J_1(t,x)\|\le C\int^{\frac{\nu_0}2}_{0}e^{-y|x-t|}e^{ -\frac{C_1t}{|r_1+iy|}} |r_1+iy|^{-\frac12-\alpha}  dy \le Ce^{-t/D}. \label{J_1}
\ee

In addition by  \eqref{phi} and \eqref{yj}, we have
\bma
&-e^{\gamma_1(\xi)t+\eta_1(\xi)t} u_1(\xi)\otimes \langle \Z^2_1(\xi)|-e^{ \gamma_1(\xi)t} u^0_1(\xi)\otimes \langle \X^2_1|\nnm\\
=& e^{ \gamma_1(\xi)t}(e^{\eta_1(\xi)t}-1)h_1(\xi)+e^{ \gamma_1(\xi)t+\eta_1(\xi)t}h_2(\xi), \label{decom1}
\ema
where
$$
\left\{\bln
h_1(\xi)=&u^0_1(\xi)\otimes \langle \X^2_1|, \\
h_2(\xi)= & \frac12 d_1(\xi)u^0_1(\xi)\otimes \langle \X^2_1|-iu^0_1(\xi)\otimes \langle \V^2_1(\xi)|-w_1(\xi)\otimes \langle \Z^2_1(\xi)|.
\eln\right.
$$
It follows from \eqref{rj} that
\be \|h_1(\xi)\|\le C  |\xi|^{-\frac12}, \quad \|h_2(\xi)\|\le C  |\xi|^{-\frac32}\ln |\xi| . \label{h5a}\ee

Thus
\bma
J_2(t,x)&= e^{ -\frac{\nu_0}2 |x-t|}\int_{|u|\ge r_1}e^{ i (x-t)u} (i\xi)^{-\alpha}e^{ \gamma_1(\xi)t}(e^{\eta_1(\xi)t}-1)h_1(\xi)du\nnm\\
&\quad+e^{ -\frac{\nu_0}2 |x-t|}\int_{|u|\ge r_1}e^{ i (x-t)u} (i\xi)^{-\alpha}e^{ \gamma_1(\xi)t+\eta_1(\xi)t}h_2(\xi)du\nnm\\
&=:e^{ -\frac{\nu_0}2|x-t|}(J_{21}+J_{22}),\quad \xi=u+i\frac{\nu_0}2. \label{J_2}
\ema
Since
\be
|e^{ \gamma_1(\xi)t}(e^{\eta_1(\xi)t}-1)|\le  e^{{\rm Re} \gamma_1(\xi)t} e^{|\eta_1(\xi)|t}|\eta_1(\xi)|t\le Ce^{-\frac{C_1t}{|\xi|}} t|\xi|^{-2}\ln^2|\xi|, \label{rrr1}
\ee
it follows from \eqref{h5a} and \eqref{rrr1} that
\bma
\|J_{21}\|&\le C\int^\infty_{ r_1} tu^{-\frac52-\alpha}\ln^2 u e^{-\frac{C_1t}{u}}du\le C(1+t)^{-\alpha-\frac12}\ln^2(2+t), \label{J_3}\\
\|J_{22}\|&\le C\int^\infty_{ r_1} u^{-\frac32-\alpha}\ln u e^{-\frac{C_1t}{u}}du\le C(1+t)^{-\alpha-\frac12}\ln(2+t).\label{J_4}
\ema
By combining \eqref{H_1c}, \eqref{J_1} and \eqref{J_2}--\eqref{J_4}, we obtain
\be
\|\dx^{-\alpha} U_1(t,x)\|\le Ce^{-\frac{t}{D}}+C(1+t)^{-\alpha-\frac12}\ln^2(2+t)e^{-\frac{\nu_0}2|x-t|}.
\ee
Thus, we can prove 
$$
\left\{\bln
&\|\dx^{-\alpha} I^{11}_1 \|
\le Ce^{-\frac{t}{D}}+C(1+t)^{-\alpha-1}\ln^2(2+t)e^{-\frac{\nu_0}2|x\pm t|},\\
&\|\dx^{-\alpha} I^{1l}_1 \|,\|\dx^{-\alpha} I^{l1}_1\|\le Ce^{-\frac{t}{D}}+C(1+t)^{-\alpha-\frac12}\ln^2(2+t)e^{-\frac{\nu_0}2|x\pm t|},
\eln\right.
$$
for $l=2,3$.

Finally, we estimate $I^{22}_1$ as follows. It holds that
$$
 I^{22}_1=\sum^4_{j=1} C\int_{|\xi|\ge r_1}e^{ix\xi} \(e^{\beta_j t} \Z^2_j \otimes \langle \Z^2_j |-e^{\alpha_jt+\gamma_jt} (1+d_j)\X^2_j\otimes \langle \X^2_j|\)
=:\sum^4_{j=1} V_j.
$$
For brevity, we only give the estimation on  the term $ V_{1}$. Assume that $x-t\ge 0$. Since $ e^{-\beta_1(\xi)t} \Z^2_1(\xi)\otimes \langle \Z^2_1(\xi)|$ and $e^{\alpha_1t+\gamma_1t} (1+d_1 )\X^2_1\otimes \langle \X^2_1|$ are analytic for $\xi\in D_{r_1,\frac{\nu_0}2}$, it follows that
\bma
\dx^{-\alpha} V_1(t,x)
&=C\int_{|\xi|\ge r_1}e^{ix\xi} (i\xi)^{-\alpha}\(e^{\beta_1t} \Z^2_1 \otimes \langle \Z^2_1 |-e^{\alpha_1t+\gamma_1t} (1+d_1)\X^2_1\otimes \langle \X^2_1|\)d\xi\nnm\\
&=C\(\int_{\Gamma_1\cup\Gamma_2}+\int_{\Gamma_3\cup\Gamma_4}\)e^{ i (x-t)\xi} (i\xi)^{-\alpha}\nnm\\
&\quad\times \(e^{\gamma_1(\xi)t+\eta_1(\xi)t} \Z^2_1(\xi)\otimes \langle \Z^2_1(\xi)|-e^{ \gamma_1(\xi)t}(1+d_1)\X^2_1\otimes \langle \X^2_1|\)d\xi\nnm\\
&=:J_3+J_4, \label{U_3}
\ema
where $\Gamma_j$, $j=1,2,3,4$ are defined by \eqref{gamma1}.
For $J_3$, it holds that
\be
\|J_3(t,x)\|\le C\int^{\frac{\nu_0}2}_{0}e^{-y|x-t|}e^{ -\frac{C_1t}{|r_1+iy|}} |r_1+iy|^{-\alpha}  dy \le Ce^{-t/D}. \label{J_5}
\ee
By  \eqref{yj}, we have
\bma
&-e^{\gamma_1(\xi)t+\eta_1(\xi)t} \Z^2_1(\xi)\otimes \langle \Z^2_1(\xi)|-e^{ \gamma_1(\xi)t}(1+d_1(\xi))\X^2_1\otimes \langle \X^2_1|\nnm\\
=& e^{ \gamma_1(\xi)t}(e^{\eta_1(\xi)t}-1)h_3(\xi)+e^{ \gamma_1(\xi)t+\eta_1(\xi)t}h_4(\xi), \label{decom3}
\ema
where
$$
\left\{\bln
h_3(\xi)=&(1+d_1(\xi))\X^2_1\otimes \langle \X^2_1|, \\
h_4(\xi)= & \frac14d^2_1(\xi)\X^2_1\otimes \langle \X^2_1|-i\Big(1+\frac12d_1(\xi)\Big)\X^2_1\otimes \langle \V^2_1(\xi)|-\V^2_1(\xi)\otimes \langle \Z^2_1(\xi)|.
\eln\right.
$$

Thus
\bma
J_4=& e^{ -\frac{\nu_0}2 |x-t|}\int_{|u|\ge r_1}e^{ i (x-t)u} (i\xi)^{-\alpha}e^{ \gamma_1(\xi)t}(e^{\eta_1(\xi)t}-1)h_3(\xi)du\nnm\\
&+e^{ -\frac{\nu_0}2 |x-t|}\int_{|u|\ge r_1}e^{ i (x-t)u} (i\xi)^{-\alpha}e^{ \gamma_1(\xi)t+\eta_1(\xi)t}h_4(\xi)du\nnm\\
=&e^{ -\frac{\nu_0}2|x-t|}(J_{41}+J_{42}),\quad \xi=u+i\frac{\nu_0}2. \label{J_6}
\ema
By \eqref{rj}, we have
\be
\|h_3(\xi)\|\le C  , \quad  \|h_4(\xi)\|\le C |\xi|^{-2} \ln^2 |\xi| ,\quad \xi\in D_{r_1,\frac{\nu_0}{2}}. \label{h6a}
\ee
This together with \eqref{specr1}, \eqref{h6a}  and \eqref{rrr1} gives
\bma
\|J_{41}\|&\le C\int^\infty_{ r_1}tu^{-2-\alpha}\ln^2 u e^{-\frac{C_1t}{u}}du\le C(1+t)^{-\alpha}\ln^2(2+t),\label{J_7}\\
\|J_{42}\|&\le C\int^\infty_{ r_1}u^{-2-\alpha}\ln^2 ue^{-\frac{C_1t}{u}}du\le C(1+t)^{-1-\alpha}\ln^2(2+t).\label{J_8}
\ema
By combining \eqref{U_3}, \eqref{J_5} and \eqref{J_6}--\eqref{J_8}, we obtain
\be
|\dx^{-\alpha} V_1(t,x)|\le Ce^{-\frac{t}{D}}+C(1+t)^{-\alpha}\ln^2(2+t)e^{-\frac{\nu_0}2|x-t|}.
\ee
Thus, we can prove 
$$
\|\dx^{-\alpha} I^{kl}_1(t,x)\|\le Ce^{-\frac{t}{D}}+C(1+t)^{-\alpha}\ln^2(2+t)e^{-\frac{\nu_0}2|x\pm t|},
$$
for $k,l=2,3.$ The proof of the lemma is completed.
\end{proof}

Then, we can show the pointwise behavior of the singular leading short wave $G_2(t,x)$  stated as follows.

\begin{lem}\label{gh1}
Let $G_2(t,x)$ be defined by \eqref{G_b}.
For any integer $\alpha\ge 0 $, we have the following decomposition
\be
 (\nu_0+\dx)^{-\alpha}G_{2}(t,x)= \Lambda_{\alpha}(t,x)+\Omega_{\alpha}(t,x), \label{gl_5}
\ee
 where $\Lambda_{\alpha}(t,x) $ is supported in $|x\pm t|> 1$ with its elements satisfying
  \be \label{F_a}
  \left\{\bln
 & \|\Lambda^{11}_{\alpha}(t,x)\|\le C\sum_{l=\pm1}(1+t)^{-\alpha-1}\ln^2(2+t)e^{-\frac{\nu_0|x-lt|}2} ,\\
 & \|\Lambda^{1j}_{\alpha}(t,x)\|,\|\Lambda^{j1}_{\alpha}(t,x)\|\le C\sum_{l=\pm1}(1+t)^{-\alpha-\frac12}\ln^2(2+t)e^{-\frac{\nu_0|x-lt|}2} , \\
 & \|\Lambda^{2j}_{\alpha}(t,x)\|,\|\Lambda^{3j}_{\alpha}(t,x)\|\le C\sum_{l=\pm1}(1+t)^{-\alpha}\ln^2(2+t)e^{-\frac{\nu_0|x-lt|}2},
 \eln\right.
 \ee
and $\Omega_{\alpha}(t,x) $ is supported in $|x\pm t|\le 1$ with its elements satisfying
  \be \label{H_a}
  \left\{\bln
 &\intra \|\Omega^{11}_{\alpha}(t,x) \|dx\le C(1+t)^{-\alpha-1}\ln^2(2+t) ,\\
 &\intra \|\Omega^{1j}_{\alpha}(t,x) \|dx,\intra \|\Omega^{j1}_{\alpha}(t,x) \|dx\le C(1+t)^{-\alpha-\frac12}\ln^2(2+t),\\
 &\intra \|\Omega^{2j}_{\alpha}(t,x) \|dx,\intra \|\Omega^{3j}_{\alpha}(t,x) \|dx\le C(1+t)^{-\alpha}\ln^2(2+t),
\eln\right.
 \ee for $j=2,3.$  In particular, for $\alpha=0$,  $\Omega_{0}(t,x)$ has the singularity at $|x\pm t|=0$ and satisfies for $|x \pm t|\le 1$ that
   \be \label{H_b}
  \left\{\bln
 & \|\Omega^{11}_{0}(t,x)\| \le C,\\
 & \|\Omega^{1j}_{0}(t,x) \| , \|\Omega^{j1}_{0}(t,x) \| \le C \frac1{\sqrt{|x\pm t|}}\ln \frac1{|x\pm t|},\,\,\, j=2,3,\\
 & \|\Omega^{2j}_{0}(t,x) \| , \|\Omega^{3j}_{0}(t,x) \| \le \delta(x\pm t)+C\ln \frac1{|x\pm t|},\,\,\, j=2,3.
\eln\right.
\ee 
\end{lem}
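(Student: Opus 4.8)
The plan is to read off the decomposition of $(\nu_0+\dx)^{-\alpha}G_2$ from the inverse Fourier transform of $\hat G_2(t,\xi)$ in \eqref{G_b}, by deforming the $\xi$‑contour away from the set $\{x=\pm t\}$ and by direct estimation near it. The first observation is that $\hat G_2(t,\xi)$ extends to an analytic operator‑valued function on the strip $D_{\nu_0/2}=\{|{\rm Im}\,\xi|\le\nu_0/2\}$: with $\alpha_j\in\{-i\xi,i\xi\}$ the denominators occurring in $u^0_j,\gamma_j,d_j$ are $\nu(v)+iv_1\xi+\alpha_j=\nu(v)+i(v_1\mp1)\xi$, whose real part is $\ge\nu(v)-(|v_1|+1)\nu_0/2\ge\nu_0(1+|v|)/2>0$ on the strip, so nothing blows up there. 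On $D_{\nu_0/2}$ I will collect, from Theorem~\ref{eigen_4} and the explicit formulas in \eqref{R0j}, the bounds $|\partial_\xi^m\gamma_j(\xi)|+|\partial_\xi^md_j(\xi)|\le C(1+|\xi|)^{-1-m}\ln(2+|\xi|)$ (using also the cancellation $\int_{\R}(\nu+iw)^{-2}\,dw=0$ to see that $d_j(\xi)=O((1+|\xi|)^{-2}\ln(2+|\xi|))$), $\|\partial_\xi^mu^0_j(\xi)\|\le C(1+|\xi|)^{-1/2-m}$, and the decisive inequality $|e^{\gamma_j(\xi)t}|=e^{{\rm Re}\,\gamma_j(\xi)t}\le e^{-C_1t/(1+|\xi|)}$. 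Splitting $\hat G_2=\hat G_2^{+}+\hat G_2^{-}$ according to $\alpha_j=-i\xi$ (the terms $j=1,2$, giving, after extracting $e^{-i\xi t}$, the wave at $x=t$) and $\alpha_j=i\xi$ (the terms $j=3,4$, wave at $x=-t$), it suffices to treat $\hat G_2^{+}$, whose inverse transform is $\frac1{\sqrt{2\pi}}\int e^{i(x-t)\xi}(\nu_0+i\xi)^{-\alpha}\sum_{j=1,2}e^{\gamma_j(\xi)t}M_j(\xi)\,d\xi$, where $M_j(\xi)$ is the matrix of the tensor products of $u^0_j,\X^m_j,(1+d_j)$ in \eqref{G_b}; the entrywise operator norms obey $\|M_j^{11}(\xi)\|\le C(1+|\xi|)^{-1}$, $\|M_j^{1k}(\xi)\|=\|M_j^{k1}(\xi)\|\le C(1+|\xi|)^{-1/2}$, and $\|M_j^{kl}(\xi)\|\le C$ for $k,l\in\{2,3\}$, the last with a $\xi$‑independent leading term $\X^m_j\otimes\lll\X^n_j|$ and a correction of size $O((1+|\xi|)^{-2}\ln(2+|\xi|))$. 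I set $\Lambda_\alpha$ to be $(\nu_0+\dx)^{-\alpha}G_2$ restricted to $\{|x-t|>1\}\cap\{|x+t|>1\}$ and $\Omega_\alpha$ its restriction to the complement.

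For $\Lambda_\alpha$, on $\{|x-t|>1\}$ I will first integrate by parts in $\xi$ in the integral for $G_2^{+}$ — once for the entries $(1,1),(1,k),(k,1)$, and twice for $(k,l)$ with $k,l\in\{2,3\}$, after first subtracting the $\xi$‑independent leading matrix (whose inverse transform is supported on $x=t$, hence irrelevant here) so that the integrand is absolutely integrable — and then deform the contour to $\{{\rm Im}\,\xi={\rm sgn}(x-t)\nu_0/2\}$, which is legitimate by the analyticity above and yields the factor $e^{-\nu_0|x-t|/2}$. Estimating the resulting integral by the decay bounds and the elementary inequality $\int_{r_1}^{\infty}u^{-s}e^{-C_1t/u}\,du\le C_s(1+t)^{1-s}$, $s>1$, gives the asserted bounds: the point is that each $\xi$‑derivative falling on $e^{\gamma_j(\xi)t}$ produces $t\,\gamma_j'(\xi)e^{\gamma_j(\xi)t}$ with $|\gamma_j'(\xi)|\le C(1+|\xi|)^{-2}\ln(2+|\xi|)$, and this extra factor $t(1+|\xi|)^{-2}$ converts, after integration, into one additional power of $(1+t)^{-1}$ — exactly what upgrades $(1+t)^{-\alpha}$ to $(1+t)^{-\alpha-1}$ for $\Lambda_\alpha^{11}$ and to $(1+t)^{-\alpha-1/2}$ for $\Lambda_\alpha^{1k},\Lambda_\alpha^{k1}$, whereas for $\Lambda_\alpha^{kl}$, $k,l\in\{2,3\}$, the leading matrix has vanishing $\xi$‑derivative, so a derivative must land on $(\nu_0+i\xi)^{-\alpha}$ and only $(1+t)^{-\alpha}$ is gained. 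The $\ln^2(2+t)$ factors are crude bounds absorbing the $\ln(2+|\xi|)$'s coming from ${\rm Im}\,\gamma_j,d_j$ and their derivatives, and $G_2^{-}$ is treated identically and produces the $l=-1$ terms.

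For $\Omega_\alpha$, near $x=t$ the piece $\hat G_2^{-}$ is smooth (since $|x+t|\ge2t-1$) and is handled as in the previous paragraph, giving an exponentially small term; likewise the frequencies $|\xi|\le r_1$ contribute a term of size $Ce^{-t/D}$ (bounded integrand over a bounded interval, with exponential decay from ${\rm Re}\,\gamma_j(0)<0$). For the high frequencies I split the $\xi$‑integral at $|\xi|\sim t$: on $|\xi|\le t$ the integrand is bounded in $L^1_\xi$ by $C\int(1+|\xi|)^{-\alpha-\beta_{kl}}e^{-C_1t/(1+|\xi|)}\,d\xi\le C(1+t)^{1-\alpha-\beta_{kl}}$ with $(\beta_{11},\beta_{1k},\beta_{kl})=(1,\tfrac12,0)$, a bound uniform in $x$; on $|\xi|>t$, one integration by parts in $\xi$ together with the decay bounds gives a bound $\le C(1+t)^{-\alpha-\beta_{kl}}/|x-t|$. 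Taking the minimum of the two and integrating over $|x-t|\le1$ yields \eqref{H_a}; for $\alpha=0$ the same two bounds give the pointwise estimates in \eqref{H_b}, with the single extra feature that for the entries $(k,l)$, $k,l\in\{2,3\}$, the integrand $M_j^{kl}(\xi)e^{\gamma_j(\xi)t}$ tends as $|\xi|\to\infty$ to the nonzero constant matrix $\X^m_j\otimes\lll\X^n_j|$; subtracting this constant — whose inverse transform is the genuine, non‑decaying singular wave proportional to $\delta(x\pm t)$ — leaves a piece that, by the argument just described, is only logarithmically singular in $|x\pm t|$; this is precisely the splitting $G_2^{kl}\sim\delta(x\pm t)+(\text{log‑type})$ in \eqref{H_b}, and it also explains why $(1,1)$ (no $\xi$‑constant part, stronger decay $(1+|\xi|)^{-1}$) is merely bounded while $(1,k),(k,1)$ produce the $|x\pm t|^{-1/2}$ singularity.

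The hard part — and the reason $G_2$ must be isolated as a separate "leading short wave" rather than folded into the remainder $G_R$ — is the complete absence of a spectral gap at high frequency: $e^{\gamma_j(\xi)t}$ does not decay in $t$ as $|\xi|\to\infty$ (only ${\rm Re}\,\gamma_j(\xi)\to0^{-}$), so, unlike for the linearized Boltzmann equation of Theorem~\ref{green2}, the high‑frequency contribution cannot be absorbed into an $e^{-(|x|+t)/D}$ term, and $G_2$ genuinely carries a non‑decaying $\delta$‑type singularity along $x=\pm t$ for all time. Everything therefore rests on two things: extracting exactly that $\delta$‑part from the $|\xi|\to\infty$ limit of the integrand, and the one extra power of $(1+t)^{-1}$ gained from $t\,\gamma_j'(\xi)$ after the integration by parts, all while keeping careful track of the logarithmic losses from $|{\rm Im}\,\gamma_j(\xi)|\le C(1+|\xi|)^{-1}\ln(2+|\xi|)$; the bookkeeping of these $(1+t)$‑ and $\ln(2+t)$‑powers across the various matrix entries is the bulk of the remaining work.
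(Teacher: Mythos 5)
Your overall strategy---shift the contour to $\mathrm{Im}\,\xi=\pm\nu_0/2$ using the analyticity of $\gamma_j,d_j,u^0_j$ on $D_{\nu_0/2}$, integrate by parts in $\xi$ to convert $t\gamma_j'(\xi)$ into an extra factor of $(1+t)^{-1}$, and extract the $\delta(x\pm t)$ wave from the $\xi$-independent part of $M_j^{kl}$---is the same as the paper's. The one genuine difference is in the treatment of $\Omega_\alpha$: you split the $\xi$-integral at the fixed scale $|\xi|\sim t$, whereas the paper uses the adaptive cut $r_2=1/|x-t|$ before integrating by parts. Your non-adaptive version does recover the $L^1_x$ bounds in \eqref{H_a} after taking the min of the two bounds and integrating over $|x\pm t|\le 1$ (the crossover at $|x-t|\sim(1+t)^{-1}$ produces the same $(1+t)^{-\alpha-\beta}\ln$ factors), so that part is a legitimate, slightly more elementary, alternative.

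There is, however, a concrete gap in your treatment of $\Lambda^{kl}_\alpha$ for $k,l\in\{2,3\}$ and $\alpha\ge1$. You propose to first subtract the $\xi$-independent matrix $\X^m_j\otimes\lll\X^n_j|$ and then integrate by parts, on the grounds that "its inverse transform is supported on $x=t$, hence irrelevant here." That is only true for $\alpha=0$. For $\alpha\ge1$ the subtracted term is $(\nu_0+\dx)^{-\alpha}\delta(x\mp t)\,\X^m_j\otimes\lll\X^n_j|$, whose inverse Fourier transform is $\frac{1}{(\alpha-1)!}(x\mp t)^{\alpha-1}_+e^{-\nu_0(x\mp t)}\X^m_j\otimes\lll\X^n_j|$: it is supported on a full half-line, not on $\{x=\pm t\}$, and it carries no $(1+t)^{-\alpha}$ decay whatsoever, so it cannot be simply discarded on $\{|x\pm t|>1\}$ if you want \eqref{F_a}. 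Symmetrically, the remainder after subtraction, $(\nu_0+i\xi)^{-\alpha}\bigl(e^{\gamma_j t}(1+d_j)-1\bigr)$, does not yield $(1+t)^{-\alpha}$ after IBP either: the term where $\partial_\xi$ falls on $(\nu_0+i\xi)^{-\alpha}$ gives $\alpha(\nu_0+i\xi)^{-\alpha-1}\bigl(e^{\gamma_j t}(1+d_j)-1\bigr)$, and since $e^{\gamma_j(\xi)t}\to0$ as $|\xi|\to0$, the factor $e^{\gamma_j t}(1+d_j)-1$ is $\approx -1$ for $|\xi|\lesssim t$; the resulting $\xi$-integral is $O(1)$, not $O(t^{-\alpha})$. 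The cancellation that makes the estimate work is exactly between the subtracted half-line piece and this $O(1)$ remainder, and your decomposition destroys it. The paper's proof of $V^1_\alpha$ for $\alpha\ge1$ therefore does \emph{not} subtract the constant: it keeps the full integrand $e^{\gamma_1t}(\nu_0+i\xi)^{-\alpha}(1+d_1)$, and it is precisely the $e^{\gamma_1 t}$ factor (exponentially small for $|\xi|\ll t$) that turns $\int(1+|\xi|)^{-\alpha-1}e^{-C_1t/(1+|\xi|)}\,d\xi$ into $O(t^{-\alpha})$. Your plan needs to be amended to perform the subtraction only when $\alpha=0$ and to estimate the unsubtracted integrand directly when $\alpha\ge1$.
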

\begin{proof}
First, we estimate $G_{2}^{12}(t,x)$. Recalling \eqref{G_b}, we have
\bma
 &(\nu_0+\dx)^{-\alpha}G_{2}^{12}(t,x)\nnm\\
 =&\sum^4_{j=1}C\int^{\infty}_{-\infty}e^{ix\xi}(\nu_0+i\xi)^{-\alpha} e^{\alpha_j(\xi)t+\gamma_j(\xi) t} u^0_j(\xi)\otimes \langle \X^2_j|d\xi=:\sum^4_{j=1}U_{\alpha}^j. \label{U_1a}
\ema
For brevity, we give only the estimate on the term $ U_{\alpha}^1$. Assume that $x-t\ge 0$. Since $ e^{-\gamma_1(\xi)t} u^0_1(\xi)\otimes \langle \X^2_1|$ is analytic for $\xi\in D_{\frac{\nu_0}2}$, by Cauchy integral theorem we obtain
\bma
U_{\alpha}^1(t,x) &=C\int^{\infty}_{-\infty}e^{ix\xi} e^{-i\xi t+\gamma_1(\xi)t}  (\nu_0+i\xi)^{-\alpha}u^0_1(\xi)\otimes \langle \X^2_1|d\xi\nnm\\
&=Ce^{ -\frac{\nu_0}2|x-t|}\int^{\infty}_{-\infty}e^{ i (x-t)u} e^{\gamma_1(z)t} (\nu_0+iz)^{-\alpha}u^0_1(z)\otimes \langle \X^2_1|du,  \label{I_2a}
\ema
where $z=u+i\frac{\nu_0}2$. We estimate 
\eqref{I_2a} as follows.
Let
\be
I_1(t,x)=\int^{\infty}_{-\infty}e^{i(x-t)u}g_{\alpha}(z) h_1(z)  du, \quad z=u+i\frac{\nu_0}2, \label{I11}
\ee
where
$$
g_{\alpha}(z)= e^{ \gamma_1(z)t}(\nu_0+i z)^{-\alpha} , \quad h_1(z)=u^0_1(z)\otimes \langle \X^2_1|.
$$
By change of variables $(v_1-1)|u|\to w_1$, we obtain that for $z=u+i\frac{\nu_0}2$ and $|u|\ge 2$,
\bma
|\gamma'_1(z)|&\le  C\intr \frac{|v_1- 1|}{\nu_0^2+(v_1- 1)^2u^2}v^2_2e^{-\frac{|v|^2}2}dv\nnm\\
&\le \frac{C}{|u|^2}\int^{|u|}_0 \frac{w_1}{\nu_0^2+w_1^2}dw_1+\frac{ C}{|u|^3}\int^\infty_{|u|} e^{-\frac12\frac{w^2_1}{|u|^2}}dw_1
\le C\frac{\ln |z|}{|z|^2}, \label{h6}\\
\|(u^0_1)'(z)\|^2&\le  C\intr \frac{|v_1- 1|^2}{[\nu_0^2+(v_1- 1)^2u^2]^2}v^2_2e^{-\frac{|v|^2}2}dv\nnm\\
&\le \frac{C}{|u|^3}\int^{\infty}_0 \frac{w_1^2}{(\nu_0^2+w_1^2)^2}dw_1\le C\frac{1}{|z|^3}. \label{h7}
\ema
Thus, for $z=u+i\frac{\nu_0}2$,
\bma
 &| g'_{\alpha}(z)|\le C\(\frac{t\ln(2+|z|)}{(1+|z|)^{2+\alpha}}+\frac1{(1+|z|)^{1+\alpha}}\) e^{-\frac{C_1t}{1+|z|}},\label{fa} \\
&\|h'_1(z)\|\le C\frac{1}{(1+|z|)^{3/2}}. \label{ha}
\ema

We estimate $I_1$ by considering two cases: $|x-t|\le 1$ and $|x-t|>1$. For $|x-t|\le 1$, we take $r_2= \frac{1}{|x-t|}\ge 1$ and divide $I_1$ into
\bmas
I_1(t,x)=&\(\int_{ |u|\ge r_2}+\int_{ |u|\le r_2}\)e^{i(x-t)u}g_{\alpha}(z) h_1(z)du \\
=& \frac1{i(x-t)}g_{\alpha}\(\pm r_2+i\frac{\nu_0}{2}\) h_1\(\pm r_2+i\frac{\nu_0}{2}\) e^{\pm i(x-t)r_2}\\
&-\frac1{i(x-t)}\int_{ |u|\ge r_2}e^{i(x-t)u}\[(g'_{\alpha} h_1)(z)+(g_{\alpha} h'_1)(z)\]du\\
&+\int_{ |u|\le r_2}e^{i(x-t)u}g_{\alpha}(z) h_1(z)du, \quad z=u+i\frac{\nu_0}2.
\emas
This together with \eqref{h5a}, \eqref{fa}--\eqref{ha} implies that
\bma
\|I_1(t,x)\|&\le C|x-t|^{\alpha-\frac12}e^{-\frac{C_1|x-t|t}{2}}+C \int^{ \frac{1}{|x-t|}}_{0}(1+u)^{-\frac12-\alpha} e^{-\frac{C_1t}{1+u}}du \nnm\\
&\quad+C\frac{1}{|x-t|}\int^{\infty}_{ \frac{1}{|x-t|}}(1+u)^{-\frac32-\alpha}\ln (2+u)e^{-\frac{C_1t}{1+u}}du \nnm\\
&=:J_1+J_2+J_3. \label{ii-1}
\ema
The right hand side terms of \eqref{ii-1} can be estimated as follows. By change of  variables $x-t\to y$, we have that for $\alpha\ge 0$,
\bma
\int_{|x-t|\le 1} J_1 dx&\le C\int^1_{0} y^{\alpha-\frac12} e^{ -\frac{yt}{C}}dy \nnm\\
&=C(1+t)^{-\alpha-\frac12}\int^t_0  z^{\alpha-\frac12} e^{ -\frac{z}{C}}dz\le  C(1+t)^{-\alpha-\frac12},\label{J11}\\
\int_{|x-t|\le 1} J_2 dx&\le C\int^1_{0} dy\int^{\frac{1}{y}}_{0}(1+u)^{-\frac12-\alpha} e^{-\frac{C_1t}{1+u}}du\nnm\\
&=C\(\int^\infty_{1} du\int^{\frac{1}u}_0 dy+\int^1_{0}  du\int^{1}_0dy\) (1+u)^{-\frac12-\alpha} e^{-\frac{C_1t}{1+u}} \nnm\\
&\le C\int^\infty_{1}u^{-\frac32-\alpha} e^{-\frac{C_1t}u}du+Ce^{-\frac{C_1t}2}\le C(1+t)^{-\alpha-\frac12},\label{J2a}
\ema
and
\bma
\int_{|x-t|\le 1} J_3 dx&\le C\int^1_{0} \frac{1}{y}dy \int^{\infty}_{ \frac{1}{y}}u^{-\frac32-\alpha}\ln(2+u)e^{-\frac{C_1t}u}du\nnm\\
&=C\int^\infty_{1} u^{-\frac32-\alpha}\ln (2+u) e^{-\frac{C_1t}u}du \int^{1}_{\frac{1}u}\frac{1}{y} dy\nnm\\
&\le C(1+t)^{-\alpha-\frac12}\ln^2(2+t). \label{J31}
\ema
Thus
\be
\int_{|x-t|\le 1} \|I_1(t,x)\| dx\le C(1+t)^{-\alpha-\frac12}\ln^2(2+t). \label{I31}
\ee
In particular, for $\alpha=0$, we obtain the pointwise singularity of $I_1$ at $|x\pm t|=0$ as
\bmas
\|I_1(t,x)\|&\le C\frac1{|x-t|^{\frac12}}e^{-\frac{C_1|x-t|t}{2}}+C \int^{ \frac{1}{|x-t|}}_{0}(1+u)^{-\frac12} du\\
&\quad+C\frac{1}{|x-t|}\int^{\infty}_{ \frac{1}{|x-t|}}(1+u)^{-\frac32}\ln (2+u) du\\
&\le C\frac1{|x-t|^{\frac12}}\ln \frac1{|x-t|}, \quad |x-t|\le 1.
\emas

For $|x-t|> 1$,  we have
$$
I_1(t,x)=-\frac1{i(x-t)}\int^{\infty}_{-\infty}e^{i(x-t)u}\[(g'_{\alpha} h_1)+(g_{\alpha} h'_1)\](z)du,
$$
which gives rise to
\bma
\|I_1(t,x)\|&\le C \int^{\infty}_{ 0}(1+u)^{-\frac32-\alpha}\ln (2+u)e^{-\frac{C_1t}{1+u}}du \nnm\\
&\le C(1+t)^{-\alpha-\frac12}\ln(2+t),\quad |x-t|> 1. \label{I32}
\ema
By combining \eqref{I_2a}, \eqref{I31} and \eqref{I32}, we  obtain
$$
\left\{\bln
 &(\nu_0+\dx)^{-\alpha}G^{12}_{1}(t,x)= \Lambda^{12}_{\alpha}(t,x)+\Omega^{12}_{\alpha}(t,x) , \quad \alpha\ge 0,\\
 &\Lambda^{12}_{\alpha}=(U_{\alpha}^1+U_{\alpha}^2)1_{\{|x-t|\le 1\}}+(U_{\alpha}^3+U_{\alpha}^4)1_{\{|x+t|\le 1\}},\\
 &\Omega^{12}_{\alpha}=(U_{\alpha}^1+U_{\alpha}^2)1_{\{|x-t|> 1\}}+(U_{\alpha}^3+U_{\alpha}^4)1_{\{|x+t|> 1\}},
 \eln\right.
$$
 where $U_{\alpha}^j$, $j=1,2,3,4$ are defined by \eqref{U_1a}, and $\Lambda^{12}_{\alpha}$ and $\Omega^{12}_{\alpha}$ satisfy \eqref{F_a}, \eqref{H_a} and \eqref{H_b}. Similarly, we can prove that $G^{1l}(t,x)$ and $G^{l1}(t,x)$, $l=1,2,3$ satisfy \eqref{gl_5}--\eqref{H_b}.

Finally, we consider $G_{2}^{22}(t,x)$. Recalling \eqref{G_b}, we have
\bma
&\quad(\nu_0+\dx)^{-\alpha}G_{2}^{22}(t,x)\nnm\\
&=\sum^4_{j=1}C\int^{\infty}_{-\infty}e^{ix\xi} e^{\alpha_j(\xi)t+\gamma_j(\xi)t}(\nu_0+i\xi)^{-\alpha}(1+d_j(\xi))\X^2_j\otimes \langle \X^2_j|d\xi=:\sum^4_{j=1}V_{\alpha}^j. \label{V_a}
\ema
For brevity, we only estimate $V_{\alpha}^1$ as follows. Assume that $x-t\ge 0$. For $\alpha\ge 1$,
\bmas
V_{\alpha}^1(t,x)&=C\int^{\infty}_{-\infty}e^{ i (x-t)\xi} e^{\gamma_1(\xi)t}(\nu_0+i\xi)^{-\alpha}(1+d_1(\xi))\X^2_1\otimes \langle \X^2_1|d\xi\\
&=Ce^{ -\frac{\nu_0}2|x-t|}\int^{\infty}_{-\infty}e^{ i (x-t)u} e^{\gamma_1(z)t} (\nu_0+iz)^{-\alpha}(1+d_1(z))\X^2_1\otimes \langle \X^2_1|du,
\emas
where $z=u+i\frac{\nu_0}2$.
Let
$$
I_2= \int^{\infty}_{-\infty}e^{i(x-t)u}g_{\alpha}(z) (1+d_1(z))\X^2_1\otimes \langle \X^2_1|du,\quad z=u+i\frac{\nu_0}2.
$$
By change of variables $(v_1-1)|u|\to w_1$, we obtain that for $z=u+i\frac{\nu_0}2$ and $|u|\ge 2$,
\bma
|d'_1(z)|\le &C\intr \frac{|v_1- 1|}{ \nu_0^3+|v_1- 1|^3|u|^3}v^2_2e^{-\frac{|v|^2}2}dv\nnm\\
\le& \frac{C}{|u|^2}\int^{\infty}_0 \frac{w_1}{\nu_0^3+w_1^3}dw_1
\le C\frac{1}{|z|^2}. \label{d_1}
\ema
By \eqref{fa}, \eqref{d_1} and similar arguments as proving \eqref{I31} and \eqref{I32}, we can obtain that for $\alpha\ge 1$,
\bmas
\intra \|I_{2}(t,x)1_{\{|x-t|\le 1\}}\|dx&\le C(1+t)^{-\alpha}\ln^2(2+t), 
\\
\|I_{2}(t,x)1_{\{|x-t|> 1\}}\|&\le  C(1+t)^{-\alpha}\ln(2+t) . 
\emas
For $\alpha=0$, we obtain by Cauchy integral theorem that
\bmas
V_0^1(t,x)
&=C\int^{\infty}_{-\infty}e^{ i (x-t)\xi}\(1+ (e^{\gamma_1(\xi)t}-1)+e^{\gamma_1(\xi)t}d_1(\xi)\)\X^2_1\otimes \langle \X^2_1|d\xi\\
&=\bigg(\delta(x-t) +Ce^{ -\frac{\nu_0}2|x-t|}\int^{\infty}_{-\infty}e^{ i (x-t)u}(e^{\gamma_1(z)t}-1)du  \\
&\quad+Ce^{ -\frac{\nu_0}2|x-t|}\int^{\infty}_{-\infty}e^{ i (x-t)u}e^{\gamma_1(z)t}d_1(z)du\bigg) \X^2_1\otimes \langle \X^2_1|,
\emas
where $z=u+i\frac{\nu_0}2$. Let
$$
I_3=\int^{\infty}_{-\infty}e^{ i (x-t)u}(e^{\gamma_1(z)t}-1)du,\quad
I_4=\int^{\infty}_{-\infty}e^{ i (x-t)u}e^{\gamma_1(z)t}d_1(z)du.
$$

By \eqref{d_1} and similar arguments as proving \eqref{I31} and \eqref{I32}, we can obtain 
\bmas
\intra |I_{4}(t,x)1_{\{|x-t|\le 1\}}|dx&\le C(1+t)^{-1}\ln^2(2+t),  \\
|I_{4}(t,x)1_{\{|x-t|> 1\}}|&\le  C(1+t)^{-1}\ln(2+t) .
\emas

 We estimate $I_3(t,x)$ as follows.
For $|x-t|\le 1$, we take $r_2= \frac{1}{|x-t|}\ge 1$ and divide $I_3$ into
\bmas
I_3(t,x)=&\(\int_{ |u|\ge r_2}+\int_{ |u|\le r_2}\)e^{i(x-t)u}(e^{\gamma_1(z)t}-1) du \\
=& \frac1{i(x-t)} \(e^{\gamma_1(\pm r_2+i\frac{\nu_0}{2})t}-1\)e^{\pm i(x-t)r_2}  \\
&-\frac1{i(x-t)}\int_{ |u|\ge r_2}e^{i(x-t)u} e^{\gamma_1(z)t}\gamma'_1(z)t du\\
&+\int_{ |u|\le r_2}e^{i(x-t)u}(e^{\gamma_1(z)t}-1)du, \quad z=u+i\frac{\nu_0}2.
\emas
Since
\bmas
e^{\gamma_1(z)t}-1=&(e^{{\rm Re}\gamma_1(z)t}-1)+ie^{{\rm Re}\gamma_1(z)t}\sin({\rm Im}\gamma_1(z)t)\\
&+e^{{\rm Re}\gamma_1(z)t}\(\cos({\rm Im}\gamma_1(z)t)-1\)\\
\le &\lt|e^{-\frac{C_1t}{1+|z|}} -1\rt|+Ce^{-\frac{C_1t}{1+|z|}} \(\frac{\ln (2+|z|)}{1+|z|}t+\frac{\ln^2 (2+|z|)}{(1+|z|)^2}t^2\),
\emas
it follows from \eqref{h6} that for $|x-t|\le 1$,
\bma
|I_3(t,x)|&\le C\frac1{|x-t|}\(1-e^{-\frac{C_1|x-t|t}{2}}\)+C \int^{ \frac{1}{|x-t|}}_{0}\(1- e^{-\frac{C_1t}{1+u}}\)du \nnm\\
&\quad+Ce^{-\frac{C_1|x-t|t}{2}}\(t\ln\frac1{|x-t|}+t^2|x-t|\ln^2\frac1{|x-t|}\) \nnm\\
&\quad+C \int^{ \frac{1}{|x-t|}}_{0} e^{-\frac{C_1t}{1+u}}\(\frac{\ln (2+u)}{1+u}t+\frac{\ln^2 (2+u)}{(1+u)^2}t^2\)du \nnm\\
&\quad+C\frac{1}{|x-t|}\int^{\infty}_{ \frac{1}{|x-t|}}t(1+u)^{-2}\ln(2+u) e^{-\frac{C_1t}{1+u}}du \nnm\\
&=:J_4+J_5+J_6+J_7+J_8. \label{ii-2}
\ema
The right hand side terms  of \eqref{ii-2} can be estimated as follows. By change of  variables $x-t\to y$, we have
\bmas
\int_{|x-t|\le 1} J_4 dx&\le C\int^1_{0} y^{-1}\(1- e^{ -\frac{C_1yt}{2}}\)dy\\
&=C\int^t_{0} z^{-1}\(1- e^{ -\frac{C_1z}{2}}\)dz\le C\ln(2+t),\\
\int_{|x-t|\le 1} J_5 dx&\le C\int^1_{0} dy\int^{\frac{1}{y}}_{0}\(1- e^{-\frac{C_1t}{1+u}}\)du\\
&=C\(\int^\infty_{1} du\int^{\frac{1}u}_0 dy+\int^1_{0}  du\int^{1}_0dy\) \(1- e^{-\frac{C_1t}{1+u}}\) \\
&\le C\int^\infty_{1}u^{-1}\(1- e^{-\frac{C_1t}{u}}\)du+C \le C \ln(2+t).
\emas
By a similar argument as \eqref{J11}--\eqref{J31}, we obtain
$$
\int_{|x-t|\le 1} (J_6+J_7+J_8)dx\le C\ln^2(2+t).
$$
Thus
\be
\int_{|x-t|\le 1} |I_3(t,x)| dx\le C \ln^2(2+t).  \label{I42}
\ee
For $|x-t|> 1$,  we have
$$
I_3(t,x)=-\frac1{i(x-t)}\int^{\infty}_{-\infty}e^{i(x-t)u} e^{\gamma_1(z)t}\gamma'_1(z)t du,
$$
which gives
\bma
|I_3(t,x)|&\le C \int^{\infty}_{ 0}t(1+u)^{-2}\ln (2+u)e^{-\frac{C_1t}{1+u}}du \nnm\\
&\le C \ln(2+t),\quad |x-t|> 1.  \label{I43}
\ema
Thus, we obtain the estimate of $G^{22}_{1}(t,x)$ as listed in \eqref{gl_5}, \eqref{F_a} and \eqref{H_a}.
 Similarly, we can prove that $G^{kl}(t,x)$, $k,l=2,3$ satisfy \eqref{gl_5}--\eqref{H_b}.
The proof of the lemma is completed.
\end{proof}

\section{Green's function:  kinetic part}
\setcounter{equation}{0}
\label{kinetic}
In this section, we introduce a  Picard's iteration to construct the singular waves and and study the remaining part of  Green's function by the weighted energy estimate.

Let $f$ be the $1^{th}$ line of $G$, and $E=(E_1,E_2,E_3)^T ,B=(B_1,B_2,B_3)^T $ 
with $E_k,B_k$, $k=1,2,3$ be the $(k+1)^{th}$ and $(k+4)^{th}$ line of $G$, respectively.
Then the Green's function $\hat G$ can be rewritten as
$$\hat G=(\hat{f},\hat{E}, \hat{B})^T=(\hat{f},\hat{E}_1,\hat{E}_r,0, \hat{B}_r)^T , \quad \hat{E}_r=(\hat{E}_2,\hat{E}_3)^T,\,\, \hat{B}_r=(\hat{B}_2,\hat{B}_3)^T .$$
By \eqref{LVMB-3}, $(\hat{f},\hat{E}_1,\hat{E}_r,0, \hat{B}_r)$ satisfies
\be\label{l2}
\left\{\bln
&\dt \hat f+i v_1\xi \hat f-L_1\hat f-(v_1 \hat E_1+\bar{v} \cdot \hat E_r)\chi_0=0, \\
 &\dt \hat{E}_1= -(\hat{f},v_1\chi_0),\quad   i\xi \hat{E}_1=(\hat f,\chi_0),\\
 &\dt \hat{E}_r=i \xi \mathbb{O}_1\hat B_r-(\hat{f},\bar{v}\chi_0),\quad \dt \hat B_r=-i \xi \mathbb{O}_1\hat{E}_r, \\
& \hat{G}(0,\xi)=(\hat{f}(0),\hat{E}_1(0),\hat{E}_r(0),0, \hat{B}_r(0))^T=\hat{G}_{0}(\xi),
\eln\right.
\ee
where $\bar{v}=(v_2,v_3)$, $\O_1$ is defined by \eqref{O_1}, and the initial data $\hat{G}_0(\xi) $ is defined by \eqref{initial-1}.

Define the following two systems for $(\hat g_1,\hat E_r,\hat B_r)^T$ describing the particle transport under the electro-magnetic fields (lorentz force) as
\be \label{l3}
\left\{\bln
&\dt \hat g_1+i v_1\xi \hat g_1-L_1\hat g_1- \bar{v} \cdot \hat E_r \chi_0=0, \\
 &\dt \hat{E}_r=i \xi \mathbb{O}_1\hat B_r-(\hat{g}_1,\bar{v}\chi_0),\quad \dt \hat B_r=-i \xi \mathbb{O}_1\hat{E}_r, \\
&(\hat g_1,\hat E_r,\hat B_r)(0,\xi)=(\hat{f}(0),\hat{E}_r(0),\hat{B}_r(0)),
\eln\right.
\ee
and for $(\hat g_2,\hat E_1)^T$ describing the particle transport under the electric field as
\be \label{l4}
\left\{\bln
&\dt \hat g_2+i v_1\xi \hat g_2-L_1\hat g_2- v_1 \hat E_1 \chi_0=0, \\
 & i\xi \hat{E}_1=(\hat g_2,\chi_0)+(\hat{g}_1,\chi_0),\\
&(\hat g_2,\hat E_1)(0,\xi)=(0,\hat{E}_1(0)).
\eln\right.
\ee

By the fact that $L_1-iv_1\xi$ is invariant under  change of variables $v_l\to  -v_l$ with $l=2,3$, it holds that
\be (e^{t(L_1-iv_1\xi)}v_1\chi_0, v_l\chi_0)=0,\quad l=2,3 . \label{orth}\ee
This together with \eqref{l3}--\eqref{l4} and
$$
\hat{g}_2(t)=\intt e^{(t-s)(L_1-iv_1\xi)}v_1\chi_0\hat{E}_1(s)ds\Longrightarrow (\hat{g}_2 ,\bar v\chi_0)=0,
$$
implies that $(\hat g_1+\hat g_2,\hat E_1,\hat E_r,0,\hat B_r)^T$ satisfies the system \eqref{l2}.

By \eqref{l2}, \eqref{l3} and \eqref{l4}, we define the approximate  sequence $\hat{\U}_n=(\hat{I}_n,\hat{U}_n^{1},\hat{U}_n^{2},0,\hat{U}_n^{3} )^T $ for $\hat{G}=(\hat{f},\hat{E}_1,\hat{E}_r,0, \hat{B}_r)^T$, and $\hat{\U}_n$ can be decomposed into  the following two parts:
\be
\hat{\U}_n=(\hat{H}_n,\hat{U}^{1}_n,0,0,0)^T+(\hat{J}_n,0,\hat{U}_n^{2},0,\hat{U}_n^{3})^T,\quad n\ge 0. \label{U_4}
\ee
Here, $ (\hat{J}_n,\hat{U}_n^{2},\hat{U}_n^{3})^T$, $n\ge 0$ is the the approximate  sequence for $(\hat{g}_1,\hat{E}_r,\hat{E}_r)$ constructed through \eqref{l3} as
\be \label{c-w}
\left\{\bln
&\dt \hat{J}_0+i \xi v_1 \hat{J}_0+\nu(v)\hat{J}_0=0,\\
 & \dt \hat{U}_0^{2}=i \xi \O_1 \hat{U}_0^{3} ,\,\,\,  \dt \hat{U}_0^{3}=-i \xi \O_1\hat{U}_0^{2},  \\
 &(\hat{J}_0,\hat{U}_0^{2},\hat{U}_0^{3})(0)=(\hat{f}(0), \hat{E}_{r}(0), \hat{B}_{r}(0))^T,
  \eln\right.
\ee
and
\be \label{c-w1}
\left\{\bln
&\dt \hat{J}_n+i \xi v_1 \hat{J}_n+\nu(v)\hat{J}_n=K_1\hat{J}_{n-1}+ \bar{v }\cdot \hat{U}_{n-1}^{2}\chi_0,\\
 &\dt \hat{U}_{n}^{2}=i \xi \O_1 \hat{U}_{n}^{3}-(\hat{J}_{n-1},\bar{v}\chi_0),\,\,\,  \dt \hat{U}_{n}^{3}=-i \xi \O_1\hat{U}_{n}^{2},
 \\
 &(\hat{J}_n,\hat{U}_n^{2},\hat{U}_n^{3})(0)=(0,0,0), \quad n\ge 1.
  \eln\right.
\ee
And $(\hat{H}_n,\hat{U}^{1}_n)^T$, $n\ge 0$  is the the approximate  sequence for $(\hat{g}_2,\hat{E}_1 )$  constructed through \eqref{l4} as
\be \label{c-w3}
\left\{\bln
&\dt \hat{H}_0+i \xi v_1 \hat{H}_0+\nu(v)\hat{H}_0=0,\\
& (\nu_0+i\xi)\hat{U}_0^{1}=(H_0+ \hat{J}_0,\chi_0),\\
 &(\hat{H}_0,\hat{U}^{1}_0)(0)=(0,\mbox{$\frac{i\xi  }{\nu_0+i\xi}$}\hat{E}_1(0))^T,
  \eln\right.
\ee
and
\be \label{c-w4}
\left\{\bln
&\dt \hat{H}_n+i \xi v_1 \hat{H}_n+\nu(v)\hat{H}_n=K_1\hat{H}_{n-1}+ v_1\hat{U}_{n-1}^{1} \chi_0,\\
 &(\nu_0+i\xi)\hat{U}_{n}^{1}=( \hat{H}_n+ \hat{J}_n,\chi_0)+\nu_0\hat{U}_{n-1}^{1},\\
 &(\hat{H}_n,\hat{U}^{1}_n)(0)=(0,\mbox{$\frac{i\xi \nu^n_0 }{(\nu_0+i\xi)^{n+1}}$}\hat{E}_1(0))^T, \quad n\ge 1.
  \eln\right.
\ee

Set
\be
\BB_2(\xi) =\left( \ba
 0 & i\xi\O_1\\
-i\xi\O_1 & 0
\ea\right). \label{BB-2}
\ee
We can solve the systems \eqref{c-w}--\eqref{c-w1} and \eqref{c-w3}--\eqref{c-w4} as
\be \label{U_1}
\left\{\bln
&\hat{J}_0(t)=\hat{S}^t\hat{f}(0), \,\,\, (\hat{U}_{0}^{2},\hat{U}_{0}^{3})^T(t)= e^{t\BB_2(\xi)}(\hat{E}_{r} (0),\hat{B}_{r}(0))^T,\\
& \hat{H}_0(t)\equiv0,\,\,\, \hat{U}_{0}^{1}(t)=\frac{1}{\nu_0+i\xi}(\hat{S}^t\hat{f}(0),\chi_0),
  \eln\right.
\ee
and
\be \label{U_2}
\left\{\bln
&\hat{J}_n(t)=\intt \hat{S}^{t-s}K_1\hat{J}_{n-1}ds+\intt \hat{S}^{t-s} \bar{v }\cdot \hat{U}_{n-1}^{2} \chi_0ds,\\
&(\hat{U}_{n}^{2},\hat{U}_{n}^{3})^T(t)=-\intt e^{(t-s)\BB_2(\xi)}(P_{\bar m}\hat{J}_{n-1},0,0)^Tds,\\
&\hat{H}_n(t)=\intt \hat{S}^{t-s}K_1\hat{H}_{n-1}ds+\intt \hat{S}^{t-s} v_1\hat{U}_{n-1}^{1} \chi_0 ds,\\
&\hat{U}_{n}^{1}(t)=\frac{1}{\nu_0+i\xi}(\hat{H}_{n}+\hat{J}_{n},\chi_0)+\frac{\nu_0}{\nu_0+i\xi}\hat{U}_{n-1}^{1}, \,\,\, n\ge 1 ,
  \eln\right.
\ee
where $P_{\bar m}g=(g,\bar{v}\chi_0)$,  and
$\hat{S}^t $ is an operator on $L^2(\R^3_v)$ defined by
$$\hat{S}^t=e^{-(\nu(v)+iv_1\xi)t}.$$

 Define the $j^{th}$ degree mixture operator $\hat{\M}^t_{j}(\xi)$ related to the Boltzmann equation by (cf. \cite{Liu1,Liu2})
\be
\hat{\M}^t_{j}(\xi)=\intt\int^{s_1}_0\cdots\int^{s_{j-1}}_0 \hat{S}^{t-s_1}K_1 \hat{S}^{s_1-s_2}\cdots  \hat{S}^{s_{j-1}-s_{j}}K_1 \hat{S}^{s_{j}}ds_{j}\cdots ds_1, \label{Mk}
\ee
where $\hat{\M}^t_0=\hat{S}^t$ and $\xi\in\mathbb{C}$. 

By the fact that $\hat{\M}_j^{t}(\xi)$ is invariant under change of  variables $v_l\to  -v_l$ with $l=2,3$,  it holds that
\be
( \hat{\M}_j^{t}\bar v\chi_0,\chi_0)=0,\quad ( \hat{\M}_j^{t}v_1\chi_0,\bar v\chi_0)=0,\quad \forall j\ge 0. \label{M1}
\ee
Then, by \eqref{M1} and \eqref{U_2}, we conclude that
\be (\hat H_n(t),\bar v\chi_0)=0,\quad (\hat J_n(t), \chi_0)=(\M^t_n \hat f(0),\chi_0), \label{U_5}\ee
and hence  $(\hat{H}_n,\hat{U}^{1}_n)$ for $n\ge 1$ can be represented as
\be \label{H_n}
\left\{\bln
&\hat{H}_{n}(t)=\sum^{n-1}_{l=0}\intt \hat{\M}^{t-s}_{l} v_1\chi_0\hat{U}^{1}_{n-l-1} ds,\\
&\hat{U}_{n}^{1}(t)=\frac{1}{\nu_0+i\xi}(\hat{H}_{n}+\hat{\M}^t_{n}\hat{f}(0),\chi_0)+\frac{\nu_0}{\nu_0+i\xi}\hat{U}_{n-1}^{1}.
\eln\right.
\ee

Set
\be \label{A12}
 \AA_2(\xi) =\left( \ba
c(\xi) &0_{1\times 4}  \\
0_{4\times 1} & \BB_2(\xi)
\ea\right),\quad
 \AA_3 =\left( \ba
K_1 &K_2\\
-K_3 & 0_{4\times 4}
\ea\right),
\ee
where $c(\xi)=-\nu(v)-iv_1\xi$, $\BB_2(\xi)$ is given by \eqref{BB-2}, and  $K_2:\C^4\to L^2_v$, $K_3:L^2_v\to \C^4$ are defined by
$$
K_2=(\bar v\chi_0,0,0),\quad K_3=(P_{\bar m},0,0)^T.
$$

Define the $j^{th}$ degree mixture operator $\hat{\Q}^t_{j}(\xi)$ related to the VMB system:
\be
\hat{\Q}^t_{j}(\xi)=\intt\int^{s_1}_0\cdots\int^{s_{j-1}}_0 e^{(t-s_1)\AA_2}\AA_3 e^{(s_1-s_2)\AA_2}\cdots  e^{(s_{j-1}-s_{j})\AA_2}\AA_3 e^{s_{j}\AA_2}ds_{j}\cdots ds_1,\label{Q_3}
\ee
where $\hat{\Q}^t_0(\xi)=e^{t\AA_2(\xi)}$ and $\xi\in\mathbb{C}$. Note that  $\hat{\mathbb{Q}}^t_j(\xi) $ is an operator in  $L^2_v\times \C^2\times \C^2$.

Denote $\hat{V}_n(t)=:(\hat{J}_n,\hat{U}_n^{2},\hat{U}_n^{3})^T(t) $.
By \eqref{U_2}, we can represent $\hat{V}_n(t)  $ as
\be \hat{V}_n(t) =\hat{\Q}^t_{n}(\xi)\hat{V}_0(0), \quad t>0, \label{V_5}
\ee where $\hat{V}_0(0)=(\hat{f}(0), \hat{E}_{r}(0), \hat{B}_{r}(0))^T$.

We have the following mixture lemmas in the frequency space for the operators $\hat{\mathbb{M}}^t_{j}(\xi)$ and $\hat{\mathbb{Q}}^t_{j}(\xi)$ 
respectively.

\begin{lem}[Mixture Lemma] \label{mix1a}
For any $j\ge 1$, $\hat{\M}^t_{j}(\xi)$ is analytic for $\xi\in D_{\nu_0}$
and satisfies
\bma
\|\hat{\mathbb{M}}^t_{2}(\xi)g_0\| &\le C(1+|\xi|)^{-1}(1+t)^2e^{-\nu_0t}(\|g_0\|+\|\Tdv g_0\|), \label{mix2}\\
\|\hat{\mathbb{M}}^t_{3j}(\xi)g_0\| &\le C_j(1+|\xi|)^{-j}(1+t)^{3j}e^{-\nu_0t}\|g_0\|, \label{mix3a}
\ema
for any $g_0\in L^2_v$ and  two positive constants $C$ and  $C_j$.
\end{lem}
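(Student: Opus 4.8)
The plan is to work entirely in the frequency variable $\xi$, exploiting the oscillation carried by the free-transport factors $\hat S^\tau=e^{-(\nu(v)+iv_1\xi)\tau}$ between successive copies of the smoothing operator $K_1$ in \eqref{Mk}; this is the Fourier-side form of the classical mixture lemma of \cite{Liu1,Liu2}, adapted as in \cite{Li4}. Two elementary facts are used throughout. First, for $\xi\in D_{\nu_0}$ one has $\|\hat S^\tau\|_{L^2_v\to L^2_v}\le e^{-\nu_0\tau}$: indeed $|\hat S^\tau h(v)|=e^{-\nu(v)\tau+v_1({\rm Im}\,\xi)\tau}|h(v)|$ and, by \eqref{nuv} together with $|v_1|\le|v|$, $-\nu(v)\tau+v_1({\rm Im}\,\xi)\tau\le-\nu_0(1+|v|)\tau+\nu_0|v|\tau=-\nu_0\tau$. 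Second, $K_1$ is bounded on $L^2_v$ and smoothing, $\|K_1h\|_{L^2_v}+\|\Tdv K_1h\|_{L^2_v}\le C\|h\|_{L^2_v}$, a standard consequence of the Grad-type estimates on $k_1$ (the worst contribution $\Tdv|v-u|^{-1}=O(|v-u|^{-2})$ is locally integrable in $\R^3$). Granting these, $\hat{\M}^t_j(\xi)$ is analytic on $D_{\nu_0}$ (differentiate under the integral in \eqref{Mk}, the $\xi$-integrand being entire and dominated locally uniformly), and $\|\hat{\M}^t_j(\xi)\|\le C^jt^je^{-\nu_0t}/j!$; since $(1+|\xi|)^{-1}$ is bounded below on any set $\{|\xi|\le R\}$, this trivial bound already gives both inequalities for bounded $|\xi|$, so from now on one may assume $|\xi|$ large, hence $|{\rm Re}\,\xi|\ge|\xi|/2$ on $D_{\nu_0}$.

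The core of the matter is the single gain \eqref{mix2}. Writing $\hat{\M}^t_2(\xi)g(v)=\int_{\R^3}\int_{\R^3}k_1(v,w)k_1(w,u)\,A(v,w,u;\xi)\,g(u)\,dw\,du$ and passing in \eqref{Mk} to the sojourn times $(a,b,c)$ with $a+b+c=t$, the amplitude is the explicit oscillatory integral
\[
A(v,w,u;\xi)=\int_{a+b\le t,\;a,b\ge0}e^{-\nu(v)a-\nu(w)b-\nu(u)(t-a-b)}\,e^{-i\xi[v_1a+w_1b+u_1(t-a-b)]}\,da\,db ,
\]
whose phase $\Phi=v_1a+w_1b+u_1(t-a-b)$ has gradient $(\partial_a\Phi,\partial_b\Phi)=(v_1-u_1,\,w_1-u_1)$. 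On the set where both first-velocity differences are $\gtrsim\epsilon$, integrating by parts in $a$ and in $b$ extracts the oscillatory gain, the non-oscillatory amplitude causing no harm because its velocity derivatives grow only polynomially and are absorbed by the coercivity $\nu(v)\ge\nu_0(1+|v|)$; on the complementary near-diagonal set, of measure $O(\epsilon)$ in the relevant directions, one keeps $|A|\le e^{-\nu_0t}t^2/2$ but now performs an integration by parts in the velocity $u_1$ (the argument of $g$), which simultaneously removes the non-integrable diagonal singularities of $k_1(v,w)k_1(w,u)$ and transfers one derivative onto $g$, precisely accounting for the term $\|\Tdv g_0\|$ in \eqref{mix2}. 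Balancing $\epsilon$ against $|\xi|$ and collecting the two time integrations (giving the factor $(1+t)^2$) together with the uniform bound $\|\hat S^aK_1\hat S^bK_1\hat S^c\|\le C^2e^{-\nu_0(a+b+c)}$ (giving $e^{-\nu_0t}$) yields \eqref{mix2}. Making this rigorous, with careful bookkeeping of which derivative lands where and of the behaviour near the boundary $\{c=0\}$ of the time simplex, is the delicate step and is carried out exactly as in \cite{Liu1,Liu2}.

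Finally, \eqref{mix3a} follows from \eqref{mix2} by iteration. Grouping the first three collisions in \eqref{Mk} and translating the remaining sojourn times gives, directly from the definition, the recursion
\[
\hat{\M}^t_{3j}(\xi)=\int_0^t\hat{\M}^{t-s}_2(\xi)\,K_1\,\hat{\M}^s_{3(j-1)}(\xi)\,ds,\qquad j\ge1,\qquad \hat{\M}^t_0(\xi)=\hat S^t .
\]
For any $h\in L^2_v$ the input $K_1\hat{\M}^s_{3(j-1)}(\xi)h$ lies in $H^1_v$, with $\|K_1\hat{\M}^s_{3(j-1)}(\xi)h\|_{L^2_v}+\|\Tdv K_1\hat{\M}^s_{3(j-1)}(\xi)h\|_{L^2_v}\le C\|\hat{\M}^s_{3(j-1)}(\xi)h\|_{L^2_v}$ (the $v$-derivative falls only on the kernel $k_1(v,\cdot)$, not on $\hat{\M}^s$, so no power of $|\xi|$ is lost), so \eqref{mix2} applies to $\hat{\M}^{t-s}_2(\xi)$ acting on this input. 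Inserting the inductive bound $\|\hat{\M}^s_{3(j-1)}(\xi)\|\le C_{j-1}(1+|\xi|)^{-(j-1)}(1+s)^{3(j-1)}e^{-\nu_0s}$ (base case $\hat{\M}^s_0=\hat S^s$), using $(1+(t-s))^2(1+s)^{3(j-1)}\le(1+t)^{3j-1}$ and $e^{-\nu_0(t-s)}e^{-\nu_0s}=e^{-\nu_0t}$, and integrating in $s$ over $[0,t]$ produces \eqref{mix3a}. The main obstacle is the near-diagonal analysis of the previous paragraph: an integration by parts in a velocity variable produces an inverse power of a sojourn time, singular as that time tends to $0$, whereas an integration by parts in a sojourn time produces an inverse power of a difference of velocity first-components, singular on the diagonal and not even locally integrable against the $|v-u|^{-1}$ singularities of $k_1$; reconciling these two obstructions is exactly why one must spend either a velocity derivative on $g_0$, as in \eqref{mix2}, or an extra collision, as in passing from $\hat{\M}_2$ to $\hat{\M}_{3j}$ --- which is the origin of the factor $3$ in \eqref{mix3a} --- and it is where the careful bookkeeping of \cite{Liu1,Liu2} is essential.
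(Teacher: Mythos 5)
Your reduction of \eqref{mix3a} to \eqref{mix2} via the recursion $\hat{\M}^t_{3j}(\xi)=\int_0^t\hat{\M}^{t-s}_2(\xi)\,K_1\,\hat{\M}^s_{3(j-1)}(\xi)\,ds$, the $L^2_v\to H^1_v$ smoothing of $K_1$, and induction is the right mechanism and matches the structure of Lemma~3.7 in \cite{Li4}, to which the paper defers; the preliminary facts (the uniform bound $\|\hat S^\tau\|\le e^{-\nu_0\tau}$ on $D_{\nu_0}$, analyticity under the integral, and the trivial simplex bound $\|\hat{\M}^t_j\|\le C^jt^je^{-\nu_0t}/j!$ disposing of bounded $|\xi|$) are also correctly handled. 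Since the paper's own proof is a bare citation, this elaboration is genuine added content.

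The sketch of the base estimate \eqref{mix2} is, however, internally inconsistent in the near-diagonal regime. You propose to bound $|A(v,w,u;\xi)|\le e^{-\nu_0t}t^2/2$ trivially on that set \emph{and} to gain by integrating by parts in $u_1$; but the only $u_1$-oscillation available to integrate against lives inside $A$, in the factor $e^{-i\xi u_1(t-a-b)}$ under the $(a,b)$-integral, so discarding $A$ by a trivial bound removes precisely the structure that such an integration by parts would need. Even retaining $A$, an integration by parts in $u_1$ produces a factor $(|\xi|(t-a-b))^{-1}$ that blows up at the corner $\{a+b=t\}$ of the time simplex --- one of the two boundary difficulties you flag but do not resolve. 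Reconciling the near-diagonal constraint in $(v_1,w_1,u_1)$ with this corner is the actual content of the Liu--Yu mixture lemma, and the balance ``$\epsilon$ against $|\xi|$'' you invoke does not, as written, yield the claimed factor $(1+|\xi|)^{-1}$. Your instinct to defer this bookkeeping to \cite{Liu1,Liu2} is appropriate and is also what the paper does, but the oscillatory-integral sketch should be regarded as a heuristic rather than a proof that would close on its own; the iteration from \eqref{mix2} to \eqref{mix3a} is the part that is fully correct.
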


\begin{proof}
This lemma is a slightly modification of the mixture lemmas in \cite{Liu1,Li4}. The proof is same as Lemma 3.7 in \cite{Li4} by using the fact that
$|e^{-[\nu(v)+iv_1\xi]t}|\le e^{-\nu_0t}$ for $\xi\in D_{\nu_0}.$
Note that the lemma also holds for the case of $K_1$ replaced by $K$.
\end{proof}

\begin{lem}[Mixture Lemma] \label{mix1b}
For any  $j\ge 1$, $\hat{\Q}^t_{j}(\xi)$ can be decomposed into
\be
\hat{\Q}^t_{j}(\xi)=\hat{\Q}^t_{j,1}(\xi)+\hat{\Q}^t_{j,2}(\xi), \label{M3}
\ee
where $\hat{\Q}^t_{j,1}(\xi)$ and $\hat{\Q}^t_{j,2}(\xi)$ are analytic for $\xi\in D_{\nu_0/2}$ and satisfy
\bma
\|\hat{\Q}^t_{3j,1}(\xi)V_0 \|&\le C_j (1+t)^{3j}e^{-\nu_0t}(1+|\xi|)^{-j}\ln^{j}(2+|\xi|)\|V_0\| ,\label{M31}\\
\|\hat{\Q}^t_{3j,2}(\xi)V_0 \|&\le C_j (1+t)^{2j-1}e^{|{\rm Im}\xi|t}(1+|\xi|)^{-j}\ln^{j}(2+|\xi|)\|V_0\|, \label{M32}
\ema  for any $V_0\in L^2_v\times \C^2\times \C^2$ and a positive constant $C_j$.
In particular, for $j=3,4,5,$
\be
\|\hat{\Q}^t_{j,2}(\xi)V_0 \|\le C_j (1+t)^{3}e^{|{\rm Im}\xi|t}(1+|\xi|)^{-\frac32}\ln (2+|\xi|)\|V_0\| .\label{M33}
\ee
\end{lem}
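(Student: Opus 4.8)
The plan is to exploit the block structure of the operators in \eqref{A12}. Write $e^{s\AA_2(\xi)}=\mathrm{diag}\bigl(\hat{S}^s,e^{s\BB_2(\xi)}\bigr)$, where $\hat{S}^s=e^{-(\nu(v)+iv_1\xi)s}$. Since $\O_1^2=-I$ one has $\BB_2(\xi)^2=-\xi^2I$, hence $e^{s\BB_2(\xi)}=\cos(\xi s)I+\xi^{-1}\sin(\xi s)\BB_2(\xi)$, which is entire in $\xi$ and satisfies $\|e^{s\BB_2(\xi)}\|\le 2e^{|\mathrm{Im}\,\xi|s}$ (and $\le2$ for $\xi\in\R$); on the kinetic block, \eqref{nuv} gives $\nu(v)-v_1\mathrm{Im}\,\xi\ge\nu_0$ for $\xi\in D_{\nu_0/2}$, so $\|\hat{S}^s\|_{L^2_v}\le e^{-\nu_0s}$. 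Multiplying out the block matrices $e^{s\AA_2(\xi)}$ and $\AA_3$ in \eqref{Q_3} and using that the $(2,2)$-entry of $\AA_3$ vanishes, $\hat{\Q}^t_j(\xi)$ becomes a finite sum of iterated-integral products of propagators $\hat{S}$ and $e^{(\cdot)\BB_2}$ joined by the couplings $K_1$ (kinetic$\to$kinetic), $-K_3$ (kinetic$\to$fluid) and $K_2$ (fluid$\to$kinetic), in which every visit to the fluid block is an isolated single-propagator excursion preceded by a $-K_3$ and followed by a $K_2$. A natural choice is to take $\hat{\Q}^t_{j,1}(\xi)$ to be the unique all-$K_1$ product, which is exactly the Boltzmann mixture operator $\hat{\M}^t_j(\xi)$ of \eqref{Mk} placed in the kinetic$\times$kinetic slot, and $\hat{\Q}^t_{j,2}(\xi):=\hat{\Q}^t_j(\xi)-\hat{\Q}^t_{j,1}(\xi)$, the sum of all products containing at least one fluid excursion.

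With this splitting, \eqref{M31} is simply Lemma \ref{mix1a}, \eqref{mix3a} applied to $\hat{\M}^t_{3j}(\xi)=\hat{\Q}^t_{3j,1}(\xi)$, the factor $\ln^{j}(2+|\xi|)$ being kept only for uniformity with \eqref{M32}. For \eqref{M32} and \eqref{M33} one estimates a generic product in $\hat{\Q}^t_{j,2}(\xi)$: each kinetic propagator contributes $e^{-\nu_0(\cdot)}$ and each fluid propagator $Ce^{|\mathrm{Im}\,\xi|(\cdot)}$, so the overall exponential factor is $e^{|\mathrm{Im}\,\xi|t}$; along every maximal $K_1$-joined kinetic run one harvests $(1+|\xi|)^{-1}$ for each three $K_1$'s exactly as in the proof of Lemma \ref{mix1a} (absorbing the accompanying $v$-derivatives into neighbouring $K_1$'s via \eqref{mix2}); at each fluid excursion the flanking blocks $\hat{S}^{(\cdot)}(-K_3)$ and $K_2\hat{S}^{(\cdot)}$ each contribute $(1+|\xi|)^{-1/2}\ln(2+|\xi|)$ by the resolvent bounds of Lemma \ref{lem1} (cf.\ \eqref{T_4}) together with the $K_1(\nu(v)+iv_1\xi+\cdot)^{-1}\chi_2$ estimate of order $\ln|\xi|/|\xi|$ (cf.\ \eqref{ggg1}); and, crucially, the time integration over an excursion length is traded for an extra $(1+|\xi|)^{-1}$ by integrating by parts in the oscillatory factor $\cos(\xi\cdot),\sin(\xi\cdot)$ of $e^{(\cdot)\BB_2(\xi)}$ instead of yielding a power of $(1+t)$. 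Carrying out the remaining nested time integrations over the $3j$-simplex and summing the finitely many products gives \eqref{M32} with decay $(1+|\xi|)^{-j}\ln^{j}(2+|\xi|)$ and time growth $(1+t)^{2j-1}$; the case $j=3,4,5$ of \eqref{M33} is the same computation, since with at most five couplings and at least one fluid excursion one extracts $(1+|\xi|)^{-3/2}$ accompanied by only one $\ln(2+|\xi|)$.

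Analyticity of $\hat{\Q}^t_{j,1}(\xi)$ and $\hat{\Q}^t_{j,2}(\xi)$ on $D_{\nu_0/2}$ is immediate, since every building block ($e^{s\BB_2(\xi)}$, the $\xi$-independent $K_1,K_2,K_3$, and $\hat{S}^s$ together with the resolvents behind the mixture estimates, which are analytic for $|\mathrm{Im}\,\xi|<\nu_0$) is analytic there, and a finite sum of iterated integrals of analytic integrands over a fixed simplex is analytic. The main obstacle is the path bookkeeping in \eqref{M32}: one must verify that each fluid excursion genuinely converts a would-be power of $(1+t)$ into a power of $(1+|\xi|)^{-1}$, so that the all-kinetic growth $(1+t)^{3j}$ degrades precisely to $(1+t)^{2j-1}$, while the cumulative logarithmic losses from the boundary operators $K_2,K_3$ stay at exactly $\ln^{j}(2+|\xi|)$; this rests on the sharp near-axis estimates behind Lemmas \ref{lem1} and \ref{eigen_5} combined with the triple-$K_1$ regularization of Lemma \ref{mix1a}.
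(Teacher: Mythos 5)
Your strategy is essentially the paper's: expand $e^{t\AA_2(\xi)}$ into its kinetic block $\hat S^t$ and its fluid block $e^{t\BB_2(\xi)}$, use that the $(2,2)$-entry of $\AA_3$ vanishes so that no two fluid segments are adjacent, handle kinetic runs with Lemma \ref{mix1a}, and convert each fluid excursion's time integral into a resolvent $\(\nu(v)+i(v_1\pm1)\xi\)^{-1}$ whose pairings with $\chi_2$ and $K_1$ supply the $(1+|\xi|)^{-1/2}$ and $(1+|\xi|)^{-1}\ln(2+|\xi|)$ gains. The one genuinely different choice is the splitting itself: you put \emph{every} fluid-touching path into $\hat\Q^t_{j,2}$ and keep only the pure-$K_1$ term $\hat\M^t_j\mathbb{E}_1$ in $\hat\Q^t_{j,1}$, whereas the paper resolves each fluid excursion through the identity $\int_0^te^{c(\xi)(t-s)}e^{\alpha_js}ds=(e^{\alpha_jt}-e^{c(\xi)t})/(\alpha_j-c(\xi))$ and assigns the exponentially damped $e^{c(\xi)t}$ residues (the operators $\mathbb{Z}^t_1,\mathbb{Z}^t_2,\mathbb{Z}^t_{3,1}$) back to $\hat\Q^t_{j,1}$, so that $\hat\Q^t_{j,2}$ consists purely of oscillatory symbols $e^{\pm i\xi t}\mathbb{D}^k_j$. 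Your coarser split still satisfies \eqref{M31}--\eqref{M33}, since any term obeying the kinetic bound $(1+t)^{3j}e^{-\nu_0t}(1+|\xi|)^{-j}\ln^j$ also obeys the weaker fluid bound \eqref{M32}; and your $\hat\Q^t_{3j,1}=\hat\M^t_{3j}\mathbb{E}_1$ makes \eqref{M31} an immediate corollary of \eqref{mix3a}. Be aware, however, that the \emph{specific} form of $\hat\Q^t_{k,2}$ is consumed downstream (in \eqref{w6}, Lemma \ref{mix3} and the explicit computation of $\hat G_3=\sum_{k\le2}\hat\U_{k,2}$ in Lemma \ref{W_1}, which feeds the definition of the singular short wave $\hat G_2$), so adopting your decomposition would force corresponding changes there.

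Two bookkeeping steps are asserted rather than proved and should be made explicit. First, the reduction of the naive time growth $(1+t)^{3j}$ to $(1+t)^{2j-1}$: the mechanism is that every kinetic duration is damped by $e^{-\nu_0\tau}$ (still summable after extracting $e^{|{\rm Im}\xi|t}$, since $|{\rm Im}\xi|\le\nu_0/2$) and so integrates to $O(1)$, while only the undamped fluid durations each cost a factor $(1+t)$; since consecutive fluid segments are forbidden, a path in $\hat\Q^t_{3j}$ has at most $\lceil(3j+1)/2\rceil$ fluid segments, hence at most $2j-1$ free fluid time variables after the total-time constraint. Without this count the claimed power is unjustified. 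Second, your per-flank bound ``$(1+|\xi|)^{-1/2}\ln(2+|\xi|)$'' overcounts logarithms: with up to $O(j)$ excursions it would yield $\ln^{3j}$ rather than the required $\ln^j$. The correct accounting (which the paper's three-step induction $\hat\Q^t_j=\int_0^t\hat\Q^{t-s}_{2,\cdot}\AA_3\hat\Q^s_{j-3,\cdot}ds$ produces automatically) is that the $L^2$ resolvent bounds \eqref{T_6a}, \eqref{R_0} carry no logarithm, and every logarithm that does appear ($\|u^0_j\|_{L^1_v}$, $\|K_1u^0_j\|$, $b_{jl}$ in \eqref{R_1}) is paired with a full factor $(1+|\xi|)^{-1}$, so the excess logs are absorbed via $(1+|\xi|)^{-1}\ln(2+|\xi|)\le C$. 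With these two points supplied, your argument closes.
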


\begin{proof}
Note that
$$
e^{t\AA_2(\xi)} =\left( \ba
e^{tc(\xi)} &0_{1\times 4} \\
0_{4\times 1} & e^{t\BB_2(\xi)} \ea\right).
$$
It is easy to verify that
$$e^{t\BB_2(\xi)}=\sum^4_{j=1} e^{ \alpha_{j}t}\tilde{\X}_j\otimes \langle \tilde{\X}_j|,$$
where $ (\alpha_j,\tilde{\X}_{j})$, $j=1,2,3,4$ are the eigenvalues and eigenvectors of $\BB_2(\xi)$ as follows
$$
\left\{\bln
&\alpha_{1}=\alpha_{2}= -i\xi,\quad \alpha_{3}=\alpha_{4}=i\xi, \\
&\tilde{\X}_1=\sqrt{\frac12}(1,0,0,1),\quad \tilde{\X}_2=\sqrt{\frac12}(0,1,-1,0),\\
&\tilde{\X}_3=\sqrt{\frac12}(1,0,0,-1),\quad \tilde{\X}_4=\sqrt{\frac12}(0,1,1,0).
\eln\right.
$$

Thus, we can decompose $e^{t\AA_2(\xi)}$ into
$$ e^{t\AA_2(\xi)}=e^{-c(\xi)}\mathbb{E}_1+\sum^4_{j=1} e^{ \alpha_{j}t}\mathbb{D}_{jj}=:\S^t_1+\S^t_2, $$
where $\mathbb{E}_1$ and $\mathbb{D}_{jk}$ $(\mathbb{D}^0_j=\mathbb{D}_{jj})$ are $5\times 5$ matrices defined by
$$\mathbb{E}_1=\left( \ba
1 &0_{1\times 4} \\
0_{4\times 1} & 0_{4\times 4} \ea\right)_{5\times 5},\quad \mathbb{D}_{jk}=\left( \ba
0 &0_{1\times 4}  \\
0_{4\times 1}  & \tilde{\X}_k\otimes \langle \tilde{\X}_j| \ea\right)_{5\times 5}.$$

Thus,
\be
\hat{\Q}^t_0(\xi)=e^{t\AA_2(\xi)} =\hat{\Q}^t_{0,1}(\xi)+\hat{\Q}^t_{0,2}(\xi), \label{Q_0}
\ee
where $\hat{\Q}^t_{0,1}=\S^t_1$ and $\hat{\Q}^t_{0,2}=\S^t_2.$

Then, we deal with $\hat{\Q}^t_1(\xi)$. Since
$\mathbb{D}_{ij}\AA_3\mathbb{D}_{kl}=0$ for $i,j,k,l=1,2,3,4,$ it follows that
$$\intt \S^{t-s}_2\AA_3\S^s_2ds=0.$$
This implies that
\bmas
\hat{\Q}^t_1(\xi)&=\intt \S^{t-s}_1\AA_3\S^s_1ds+\intt \S^{t-s}_1\AA_3\S^s_2ds+\intt \S^{t-s}_2\AA_3\S^s_1ds \\
&=:I_{11}+I_{12}+I_{13}.
\emas
A direct computation gives
\bmas
I_{11}=&\intt e^{c(\xi)(t-s)}K_1e^{c(\xi)s}\mathbb{E}_1ds=\M^t_1\mathbb{E}_1,\\
I_{12}=&\sum^4_{j=1}\(e^{\alpha_jt} -e^{c(\xi)t}\)\frac1{\alpha_j-c(\xi)}\mathbb{E}_1\AA_3\mathbb{D}^0_j\\
=&\sum^4_{j=1}\(e^{\alpha_jt} -e^{c(\xi)t}\) \left( \ba
0 & u^0_j\otimes \langle \tilde{\X}_j| \\
0 & 0 \ea\right) ,
\\
I_{13}=&\sum^4_{j=1} \mathbb{D}^0_j\AA_3\mathbb{E}_1\frac1{\alpha_j-c(\xi)} \(e^{\alpha_jt} -e^{c(\xi)t}\)\\
=&\sum^4_{j=1}\left( \ba
0 & 0 \\
 -\tilde{\X}_j\otimes \langle u^0_j| & 0 \ea\right)  \(e^{\alpha_jt} -e^{c(\xi)t}\),
\emas
where $u^0_j=(\nu(v)+\alpha_j+iv_1\xi)^{-1} (v\cdot \X^2_j)\chi_0 $, $j=1,2,3,4$. 
Thus
\be
\hat{\Q}^t_1(\xi)=\hat{\Q}^t_{1,1}(\xi)+\hat{\Q}^t_{1,2}(\xi),\label{Q_1}
\ee
where
\bmas
\hat{\Q}^t_{1,1}&=\hat{\M}^t_1\mathbb{E}_1-\sum^4_{j=1}\left( \ba
0 & \hat{S}^tu^0_j\otimes \langle \tilde{\X}_j| \\
-\tilde{\X}_j\otimes \langle \hat{S}^t u^0_j| & 0 \ea\right)
=:\hat{\M}^t_1\mathbb{E}_1-\mathbb{Z}^t_1,
\\
\hat{\Q}^t_{1,2}&=\sum^4_{j=1}e^{\alpha_jt}\left( \ba
0 &   u^0_j\otimes \langle \tilde{\X}_j| \\
 -\tilde{\X}_j\otimes \langle u^0_j| & 0 \ea\right)
 =:\sum^4_{j=1}e^{\alpha_jt}\mathbb{D}^1_j.
\emas
Since $u^0_j(\xi)$ is analytic for $\xi\in D_{\nu_0/2}$ and satisfies
\be \|u^0_j(\xi)\| \le C(1+|\xi|)^{-\frac12},\quad  \xi\in D_{\nu_0/2}, \label{R_0}\ee
it follows that $\mathbb{Z}^t_1(\xi)$ and $\mathbb{D}^1_j(\xi)$ are analytic for $\xi\in D_{\nu_0/2}$ and satisfy for any $V_0\in L^2_v\times \C^2\times \C^2$,
\be \label{Q-1}
\left\{\bln
\|\mathbb{Z}^t_1(\xi)V_0\| &\le C(1+t)e^{-\nu_0t}(1+|\xi|)^{-\frac12}\|V_0\| ,\\
\|\mathbb{D}^1_j(\xi)V_0\| &\le C(1+|\xi|)^{-\frac12}\|V_0\| .
\eln\right.
\ee

To estimate $\hat{\Q}^t_2(\xi)$, we first decompose
\bmas
\hat{\Q}^t_2(\xi)&=\intt \mathbb{S}^{t-s}_1\AA_3\hat{\Q}^t_{1,1}ds+\intt \S^{t-s}_1\AA_3\hat{\Q}^t_{1,2}ds+\intt \S^{t-s}_2\AA_3\hat{\Q}^t_{1,1}ds \nnm\\
&\quad+\intt \S^{t-s}_2\AA_3\hat{\Q}^t_{1,2}ds=:I_{21}+I_{22}+I_{23}+I_{24}.
\emas
A direct computation shows that
\bmas
I_{21}=&\intt \S^{t-s}_1\AA_3\hat{\M}_1^s\mathbb{E}_1ds-\intt \S^{t-s}_1\AA_3\mathbb{Z}_1^s ds\\
=&\hat{\M}^t_2\mathbb{E}_1-\sum^4_{j=1}\left( \ba
-\intt \hat{S}^{t-s}r_j\otimes \langle \hat{S}^s u^0_j|ds & \hat{\M}^t_1u^0_j\otimes \langle \tilde{\X}_j| \\
0 & 0 \ea\right),
\\
I_{22}=&\sum^4_{j=1}\(e^{\alpha_jt} -e^{c(\xi)t}\)\frac1{\alpha_j-c(\xi)}\mathbb{E}_1\AA_3\mathbb{D}^1_j \\
=&\sum^4_{j=1}\(e^{\alpha_jt} -e^{c(\xi)t}\) \left( \ba
-u^0_j\otimes \langle u^0_j| & u^1_j\otimes \langle \tilde{\X}_j| \\
0 & 0 \ea\right) ,\\
I_{23}
=&\sum^4_{j=1} \left( \ba
0 & 0 \\
\tilde{\X}_j\otimes \langle \hat{\M}^t_1 u^0_j| & 0 \ea\right)- \sum^4_{j=1}\left( \ba
0 & 0 \\
\tilde{\X}_j\otimes \langle (e^{\alpha_jt} -e^{c(\xi)t})u^1_j| & 0 \ea\right)  \\
&+\sum^4_{j,l=1} \left( \ba
0 & 0 \\
0 & \Big( (e^{\alpha_jt} -e^{c(\xi)t})u^0_l,\bar{u}^0_j\Big)\tilde{\X}_j\otimes \langle \tilde{\X}_l| \ea\right),\\
I_{24}=&\sum^4_{j,l =1}\intt e^{\alpha_j(t-s)}e^{ \alpha_l s}\mathbb{D}^0_j\AA_3\mathbb{D}^1_l ds\\
=&-\sum^4_{j,l =1}e^{\alpha_jt}\intt e^{-(\alpha_j-\alpha_l)s}ds\left( \ba
0 & 0 \\
0 &\(  u^0_l,r_j\)\tilde{\X}_j\otimes \langle \tilde{\X}_l| \ea\right),
\emas
where $ u^l_j$, $l\ge 1$ and $r_j$ are defined by
\be
u^l_j=(\nu(v)+\alpha_j+iv_1\xi)^{-1}K_1u^{l-1}_j  ,\quad r_j=(v\cdot \X^2_j)\chi_0. \label{R1j}
\ee

Denote
\be
b_{jl}=-( u^0_j,r_l),\quad d_{jl}=( u^0_j,\bar{u}^0_l),\quad j,l=1,2,3,4.
\ee
It is straightforward to verify that
$$
\left\{\bln
&b_{jl}=b_{jj},\,\,\, l=l_j;\quad b_{jl}=0,\,\,\, l\ne j \,\,{\rm and} \,\, l\ne l_j,\\
&d_{jl}=\frac1{2i\xi}(b_{11}-b_{33}),\,\,\,  l=l_j;\quad d_{jl}=0,\,\,\, l\ne j \,\,{\rm and} \,\, l\ne l_j,
\eln\right.
$$
where $l_j$ is defined by
\be
l_1=3,\,\,\,\ l_2=4,\,\,\,\ l_3=1,\,\,\,\ l_4=2. \label{lj}
\ee
Thus
\be
\hat{\Q}^t_2(\xi)=\hat{\Q}^t_{2,1}(\xi)+\hat{\Q}^t_{2,2}(\xi), \label{Q_2}
\ee
where
\bmas
\hat{\Q}^t_{2,1}&= \hat{\M}^t_2\mathbb{E}_1-\sum^4_{j=1}\left( \ba
-\intt \hat{S}^{t-s}r_j\otimes \langle \hat{S}^s u^0_j|ds & \hat{\M}^t_1u^0_j\otimes \langle \tilde{\X}_j| \\
-\tilde{\X}_j\otimes \langle \hat{\M}^t_1 u^0_j| & 0 \ea\right)\\
&\quad-\sum^4_{j=1}\left( \ba
-\hat{S}^tu^0_j\otimes \langle u^0_j| & \hat{S}^tu^1_j\otimes \langle \tilde{\X}_j| \\
-\tilde{\X}_j\otimes \langle \hat{S}^t u^1_j| & (\hat{S}^tu^0_j,\bar{u}^0_{l_j})\tilde{\X}_{l_j}\otimes \langle \tilde{\X}_j| \ea\right)\\
&=:\hat{\M}^t_2\mathbb{E}_1-\mathbb{Z}^t_2,\\
\hat{\Q}^t_{2,2}&=\sum^4_{j=1}e^{\alpha_jt}\left( \ba
-u^0_j\otimes \langle u^0_j| &   u^1_j\otimes \langle \tilde{\X}_j| \\
 -\tilde{\X}_j\otimes \langle u^1_j| & (d_{jj}+tb_{jj})\tilde{\X}_j\otimes \langle \tilde{\X}_j| \ea\right)\\
 &\quad+\sum^4_{j=1}\left( \ba
0 &   0 \\
 0 & \(\frac{e^{i\xi t}-e^{-i\xi t}}{2i\xi} b_{jj}+e^{\alpha_jt}d_{jl_j}\)\tilde{\X}_{l_j}\otimes \langle \tilde{\X}_{j}|
 \ea\right)\\
 &=:\sum^4_{j=1}e^{\alpha_jt}(\mathbb{D}^2_j+tb_{jj}\mathbb{D}^0_{j})+\frac{e^{i\xi t}-e^{-i\xi t}}{2i\xi}\sum^4_{j=1}b_{jj}\mathbb{D}_{jl_j}.
\emas

By \eqref{bbb}, \eqref{ddd-1}, \eqref{ggg1} and \eqref{fff}, it holds that for $\xi\in D_{\nu_0/2}$,
\be \label{R_1}
\left\{\bln
& \|u^0_j\|_{L^1_v},\|K_1u^0_j\|_{L^2_v}\le C(1+|\xi|)^{-1}\ln(2+|\xi|),\\
& \|u^1_j\|_{L^2_v}\le C(1+|\xi|)^{-\frac32}\ln(2+|\xi|),\\
&|b_{jl}|\le C(1+|\xi|)^{-1}\ln(2+|\xi|),\quad |d_{jl}|\le C(1+|\xi|)^{-1}.
\eln\right.
\ee
This and \eqref{mix2}, \eqref{R_0} imply that $\mathbb{\hat{M}}^t_2(\xi)$, $\mathbb{Z}^t_2(\xi)$ and $\mathbb{D}^2_j(\xi)$ are analytic for $\xi\in D_{\nu_0/2}$ and satisfy  for any $V_0=(f_0,E_0,B_0)\in L^2_v\times \C^2\times \C^2$,
\be \label{Z_2}
\left\{\bln
&\|\mathbb{\hat{M}}^t_2 \mathbb{E}_1V_0\|\le C(1+t)^2e^{-\nu_0t}(1+|\xi|)^{-1} (\|f_0\|_{L^2_v }+\|\Tdv f_0\|_{L^2_v }),
\\
&\|\mathbb{Z}^t_2V_0\|\le C(1+t)^2e^{-\nu_0t}(1+|\xi|)^{-1}\ln(2+|\xi|)(\|V_0\|+\|f_0\|_{L^\infty_v }),
\\
&\|\mathbb{D}^2_{j}V_0\|,\|b_{jj}\mathbb{D}_{jl}V_0\|\le C(1+|\xi|)^{-1}\ln(2+|\xi|)\|V_0\| .
\eln\right.
\ee
Since
\be
|e^{\pm i\xi t}|,|\intt e^{i\xi(t-s)}e^{-i\xi s}ds|\le  e^{|{\rm Im}\xi|t}, \quad \xi\in \C, \label{F_1}
\ee
 it follows from \eqref{Z_2} that  for $\xi\in D_{\nu_0/2}$,
\bma
\|\hat{\Q}^t_{2,1}(\xi)\AA_3V_0\| &\le C(1+t)^2e^{-\nu_0t}(1+|\xi|)^{-1}\ln(2+|\xi|)\|V_0\| , \label{Q_4}\\
\|\hat{\Q}^t_{2,2}(\xi)\AA_3V_0\| &\le C(1+t)e^{|{\rm Im}\xi|t}(1+|\xi|)^{-1}\ln(2+|\xi|)\|V_0\| .\label{Q_5}
\ema

We now turn to estimate $\Q^t_3(\xi)$ as follows.
\bma
\hat{\Q}^t_3(\xi)&=\intt \hat{\Q}^{t-s}_{2,1}\AA_3\S^s_{1}ds+\intt \hat{\Q}^{t-s}_{2,1}\AA_3\S^s_{2}ds +\intt  \hat{\Q}^{t-s}_{2,2}\AA_3\S^s_{1}ds\nnm\\
&\quad+\intt \hat{\Q}^{t-s}_{2,2}\AA_3\S^s_{2}ds=:I_{31}+I_{32}+I_{33}+I_{34}. \label{Q33}
\ema
A direct computation shows that
\bma
I_{33}
=& \sum^4_{j=1}\(\mathbb{D}^2_{j}\AA_3\mathbb{E}_1 R_{j} - b_{jj}\mathbb{D}^0_j\AA_3\mathbb{E}_1 (R_{j})^2\)(e^{\alpha_jt}-e^{c(\xi)t})\nnm\\
&+\sum^4_{j=1}e^{\alpha_jt} tb_{jj}\mathbb{D}^0_j\AA_3\mathbb{E}_1 R_{j}-\intt \frac{\sin(\xi(t-s))}{\xi}\sum^4_{j=1}b_{jj}\mathbb{D}_{jl_j}\AA_3\mathbb{E}_1e^{c(\xi)s}ds, \label{I33}
\\
I_{34}
=&\sum^4_{j=1}e^{\alpha_jt}t\mathbb{D}^2_{j}\AA_3\mathbb{D}^0_j+\frac{\sin(\xi t)}{\xi}\sum^4_{j=1}\mathbb{D}^2_{j}\AA_3\mathbb{D}^0_{l_j}, \label{I34}
\ema
where $R_{j} =(\nu(v)+iv_1\xi+\alpha_j)^{-1},$ $j=1,2,3,4.$
Since
$$
\AA_3\mathbb{E}_1R_{j}=\left( \ba
K_1R_j & 0 \\
-K_3R_j & 0 \ea\right),
\quad
\mathbb{D}^2_j\AA_3\mathbb{D}^0_{l}=\left( \ba
0 & -(u^0_j,r_l)u^0_j\otimes \langle \tilde{\X}_l| \\
0 & -(u^1_j,r_l) \tilde{\X}_j\otimes \langle \tilde{\X}_l| \ea\right),
$$
 it follows from Lemma \ref{lem1}, \eqref{R_0} and \eqref{R_1} that
\bma
&\|\AA_3\mathbb{E}_1R_{j}\|\le  C(1+|\xi|)^{-\frac12},\label{I35}\\
&\|\mathbb{D}^2_j\AA_3\mathbb{D}^0_l\|\le C(1+|\xi|)^{-\frac32}\ln(2+|\xi|). \nnm
\ema
This together with \eqref{Z_2} and \eqref{F_1} imply that
\be \|I_{33} \|,\|I_{34} \|\le C(1+t)e^{|{\rm Im }\xi| t}(1+|\xi|)^{-\frac32}\ln(2+|\xi|). \label{Q-2}\ee

We have
\be
I_{32}=-\mathbb{Z}^t_{3}+\sum^4_{j=1}e^{\alpha_jt}\mathbb{D}^{3}_j,
\ee
where
\bmas
\mathbb{Z}^t_{3}=& \sum^4_{j,l=1}\left( \ba
0 &  \( ( u^0_{l} ,r_{j})\hat{S}^tR_ju^0_l+\hat{S}^tu^2_j+\hat{\M}^t_1u^1_j+\hat{\M}^t_2u^0_j\) \otimes \langle \tilde{\X}_j|  \\
0 & 0 \ea\right)\\
&+\sum^4_{j,l=1}\left( \ba
0 &  \[ ( u^0_{l} ,\bar{u}^0_{j}) \hat{S}^t R_{j}r_l +\intt \hat{S}^{t-s} r_l (\hat{S}^su^0_{l} ,\bar{u}^0_{j})ds \] \otimes \langle \tilde{\X}_{j}|\\
0 & \[(\hat{\M}^t_1 u^0_{l},\bar{u}^0_{j})+2(\hat{S}^t u^0_{l},\bar{u}^1_{j})\]\tilde{\X}_{l}\otimes\langle \tilde{\X}_{j}| \ea\right),
\\
\mathbb{D}^{3}_j=& \left( \ba
0 &  u^2_j \otimes \langle \tilde{\X}_j|  \\
0 & 0 \ea\right)+\sum^4_{l=1}\left( \ba
0 &   ( u^0_{l} ,\bar{u}^0_{j}) R_{j}r_l \otimes \langle \tilde{\X}_{j}|\\
0 & 2(u^0_{l},\bar{u}^1_{j})\tilde{\X}_{l}\otimes\langle \tilde{\X}_{j}| \ea\right).
\emas
It follows from Lemma \ref{lem1} and \eqref{R_1} that
\be\label{Q-4}
\left\{\bln
\|\mathbb{Z}^t_{3}(\xi) \|&\le C(1+t)^2e^{-\nu_0t}(1+|\xi|)^{-1}\ln (2+|\xi|) ,\\
\|\mathbb{D}^{3}_j(\xi) \|&\le C (1+|\xi|)^{-\frac32}\ln (2+|\xi|) .
\eln\right.
\ee

Thus
\be
\hat{\Q}^t_3(\xi)=\hat{\Q}^t_{3,1}(\xi)+\hat{\Q}^t_{3,2}(\xi), \label{Q_3a}
\ee
where
\bmas
\hat{\Q}^t_{3,1} &=  \intt \Q^{t-s}_{2,1}\AA_3\S^s_1ds-\mathbb{Z}^t_{3},
\\
\hat{\Q}^t_{3,2} &=\sum^4_{j=1}e^{\alpha_jt}\mathbb{D}^{3}_j +\intt  \hat{\Q}^{t-s}_{2,2}\AA_3\S^s_{1}ds+\intt \hat{\Q}^{t-s}_{2,2}\AA_3\S^s_{2}ds.
\emas
Then, it follows from \eqref{Q_4}, \eqref{Q33} and \eqref{Q-2}--\eqref{Q-4}  that for any $V_0\in L^2_v\times \C^2\times \C^2$,
\bma
\|\hat{\Q}^t_{3,1}(\xi)V_0 \| &\le C(1+t)^3e^{-\nu_0t}(1+|\xi|)^{-1}\ln(2+|\xi|) \|V_0\|, \label{Q_4a}\\
\|\hat{\Q}^t_{3,2}(\xi)V_0 \| &\le C(1+t)e^{|{\rm Im}\xi|t}(1+|\xi|)^{-\frac32}\ln(2+|\xi|)\|V_0\| .\label{Q_5a}
\ema
Thus, we obtain \eqref{M3}--\eqref{M32} for $j=1$ and \eqref{M33} for $j=3$.

Next, we estimate $\hat{\Q}^t_{j}(\xi)$ for $j\ge 4$. Indeed, we can decompose $\hat{\Q}^t_{j}(\xi)$ for $j\ge 4$ into
\be
\hat{\Q}^t_{j}(\xi)=\hat{\Q}^t_{j,1}(\xi)+\hat{\Q}^t_{j,2}(\xi), \label{Q_4b}
\ee
where
\bmas
\hat{\Q}^t_{j,1}(\xi)&=\intt \hat{\Q}^{t-s}_{2,1}\AA_3\hat{\Q}^s_{j-3,1}ds=: I_{j1}, \\
\hat{\Q}^t_{j,2}(\xi)&=\intt \hat{\Q}^{t-s}_{2,1}\AA_3\hat{\Q}^s_{j-3,2}ds+\intt  \hat{\Q}^{t-s}_{2,2}\AA_3 \hat{\Q}^s_{j-3,1}ds\nnm\\
&\quad+\intt  \hat{\Q}^{t-s}_{2,2}\AA_3\hat{\Q}^s_{j-3,2} ds=: I_{j2}+I_{j3}+I_{j4}.
\emas

We first estimate $\hat{\Q}^t_{4}(\xi)$, i.e., $I_{4k}$ for $k=1,2,3,4$. For $I_{41}$, it follows from \eqref{Q_4} that
\bmas
\|I_{41}\|&\le C(1+|\xi|)^{-1}\ln(2+|\xi|)\intt (1+t-s)^2e^{-\nu_0(t-s)}(1+s)e^{-\nu_0s}ds \\
&\le C(1+t)^4e^{-\nu_0t}(1+|\xi|)^{-1}\ln(2+|\xi|).
\emas
By \eqref{Q-1}, \eqref{Q_4} and \eqref{Q_5}, we have
\bmas
\|I_{42}\|
\le &C(1+|\xi|)^{-\frac32}\ln(2+|\xi|)\intt e^{-\frac{\nu_0}2(t-s)}e^{|{\rm Im}\xi|s} ds\\
\le & Ce^{|{\rm Im}\xi|t}  (1+|\xi|)^{-\frac32}\ln(2+|\xi|),
\\
\|I_{44}\|
\le &C (1+|\xi|)^{-\frac32}\ln(2+|\xi|)\intt (1+t-s)e^{|{\rm Im}\xi|(t-s)}e^{|{\rm Im}\xi|s} ds \\
\le& C(1+t)^2e^{|{\rm Im}\xi|t}(1+|\xi|)^{-\frac32}\ln(2+|\xi|).
\emas
To estimate $I_{43}$, we decompose
\bmas
I_{43}&= \intt \hat{\Q}^{t-s}_{2,2}\AA_3 \hat{\M}^s_{1}\mathbb{E}_1ds+\intt \hat{\Q}^{t-s}_{2,2}\AA_3  \mathbb{Z}^s_{1}ds\\
&=\intt I_{33}(t-s) \AA_3 \hat{\S}^s_{1} ds+\intt \hat{\Q}^{t-s}_{2,2}\AA_3  \mathbb{Z}^s_{1}ds,
\emas where $I_{33}=\intt  \hat{\Q}^{t-s}_{2,2}\AA_3\S^s_{1}ds$  satisfies \eqref{I33}.
Thus, it follows from \eqref{Q-1}, \eqref{Q-2} and \eqref{Q_5} that
\bmas
\|I_{43}\| \le &C\intt (1+t-s)e^{|{\rm Im}\xi|(t-s)}e^{-\frac{\nu_0}2s} ds (1+|\xi|)^{-\frac32}\ln(2+|\xi|)\\
\le & C(1+t)e^{|{\rm Im}\xi|t}  (1+|\xi|)^{-\frac32}\ln(2+|\xi|).
\emas
Thus, we obtain  \eqref{M33} for $j=4$. Similarly, we can prove  \eqref{M33} for $j=5$.

Finally, we prove \eqref{M3}--\eqref{M32} by induction argument. Suppose \eqref{M3}--\eqref{M32} hold for $j\le l$ with $l\ge 1$ given.
By \eqref{Q_4}, \eqref{Q_5}, \eqref{Q_4b} and induction hypothesis, we have
\bmas
\|\hat{\Q}^t_{3l+3,1}\|&\le C\intt (1+t-s)^2e^{-\nu_0(t-s)}s^{3l}e^{-\nu_0s}ds (1+|\xi|)^{-l-1}\ln^{l+1} (2+|\xi|)\\
&\le C(1+t)^{3l+3}e^{-\nu_0t} (1+|\xi|)^{-l-1}\ln^{l+1} (2+|\xi|),
\\
\|\hat{\Q}^t_{3l+3,2}\| &\le C\intt (1+t-s)e^{|{\rm Im}\xi|(t-s)}s^{2l-1}e^{|{\rm Im}\xi|s}ds (1+|\xi|)^{-l-1}\ln^{l+1} (2+|\xi|)\\
&\le C(1+t)^{2l+1}e^{|{\rm Im}\xi|t} (1+|\xi|)^{-l-1}\ln^{l+1} (2+|\xi|),
\emas  which implies \eqref{M3}--\eqref{M32} hold for $j = l+1$ so that it
holds for any $j\ge 1 $.
\end{proof}


With the help of   Lemmas \ref{mix1a}--\ref{mix1b},
we have the following estimate on $\hat{\U}_{n}(t,\xi)$ defined by \eqref{U_4}.

\begin{lem}\label{mix3}
For each $n\ge 1$,  we have the following decomposition
\be
\hat{\U}_{n}(t,\xi) =\hat{\U}_{n,1}(t,\xi)+\hat{\U}_{n,2}(t,\xi),\label{w1}
\ee
where $\hat{\U}_{n,1}(t,\xi)$ and $\hat{\U}_{n,2}(t,\xi)$ are analytic for $\xi\in D_{\nu_0/2}$ and satisfy 
\bma
\|\hat{\U}_{3n,1}(t,\xi)\| &\le C_n (1+t)^{3n} e^{- \nu_0t }(1+|\xi|)^{-n}\ln^{n}(2+|\xi|) , \label{w2}
\\
\|\hat{\U}_{3n,2}(t,\xi)\| &\le C_n (1+t)^{2n-1}e^{|{\rm Im}\xi|t} (1+|\xi|)^{-n}\ln^{n}(2+|\xi|) .
\ema
In  particular, for $n=3,4,5$,
\be
\|\hat{\U}_{n,2}(t,\xi)\| \le C_n (1+t)^{3}e^{|{\rm Im}\xi|t} (1+|\xi|)^{-\frac32}\ln (2+|\xi|) . \label{w2a}
\ee
\end{lem}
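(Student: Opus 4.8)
The plan is to run the curl--divergence decomposition \eqref{U_4} through the mixture structure and control the two pieces separately. Write $\hat{\U}_n=\hat{\mathcal H}^c_n+\hat{\mathcal H}^d_n$, where $\hat{\mathcal H}^c_n=(\hat{J}_n,0,\hat{U}_n^{2},0,\hat{U}_n^{3})^T$ is the curl part governed by \eqref{c-w}--\eqref{c-w1} and $\hat{\mathcal H}^d_n=(\hat{H}_n,\hat{U}^{1}_n,0,0,0)^T$ the divergence part governed by \eqref{c-w3}--\eqref{c-w4}. We then put the whole of $\hat{\mathcal H}^d_n$ together with the kinetic component of $\hat{\mathcal H}^c_n$ into $\hat{\U}_{n,1}$, and the fluid component of $\hat{\mathcal H}^c_n$ into $\hat{\U}_{n,2}$. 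Analyticity of every term on $D_{\nu_0/2}$ follows from that of the elementary ingredients $\hat S^t$, $e^{t\BB_2(\xi)}$, $K_1$ and $(\nu_0+i\xi)^{-1}$ (whose only pole $\xi=i\nu_0$ has $|{\rm Im}\,\xi|=\nu_0>\nu_0/2$), together with the analyticity of $\hat{\M}^t_l(\xi)$ and $\hat{\Q}^t_l(\xi)$ on $D_{\nu_0/2}$ supplied by Lemmas \ref{mix1a}--\ref{mix1b}.

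For the curl part, \eqref{V_5} reads $(\hat{J}_n,\hat{U}_n^{2},\hat{U}_n^{3})^T=\hat{\Q}^t_n(\xi)\hat{V}_0(0)$ with $\hat{V}_0(0)=(\hat f(0),\hat{E}_r(0),\hat{B}_r(0))^T$ and $\|\hat{V}_0(0)\|\le C\|U_0\|$ for $U_0\in\hat{\mathcal X}_1$. The decomposition $\hat{\Q}^t_n=\hat{\Q}^t_{n,1}+\hat{\Q}^t_{n,2}$ of Lemma \ref{mix1b} therefore induces the splitting \eqref{w1} for every $n\ge1$. When the index is $3m$ the estimates \eqref{M31}--\eqref{M32} give, for $\hat{\Q}^t_{3m,1}(\xi)\hat{V}_0(0)$, the bound appearing in \eqref{w2}, and for $\hat{\Q}^t_{3m,2}(\xi)\hat{V}_0(0)$ the fluid bound with weight $(1+t)^{2m-1}e^{|{\rm Im}\,\xi|t}$; when the index is $3,4,5$ one invokes \eqref{M33} instead to obtain \eqref{w2a}. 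The mild inhomogeneity in \eqref{Z_2}, where a term is bounded through $\|f_0\|_{L^\infty_v}$, is harmless because the velocity data relevant for the Green's function are fixed, rapidly decreasing functions, exactly as in \cite{Li4}.

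For the divergence part we use the explicit formulae \eqref{H_n}--\eqref{U_n}, together with $(\hat J_n(t),\chi_0)=(\hat{\M}^t_n(\xi)\hat f(0),\chi_0)$ from \eqref{U_5}; unwinding the algebraic recursion \eqref{U_n} yields
$$
\hat{U}^{1}_n(t)=\sum_{k=0}^{n}\Big(\tfrac{\nu_0}{\nu_0+i\xi}\Big)^{n-k}\tfrac{1}{\nu_0+i\xi}\big(\hat{H}_k(t)+\hat{\M}^t_k(\xi)\hat f(0),\,\chi_0\big),
$$
so $\hat H_n$ and $\hat U^1_n$ are assembled from the Boltzmann mixture operators $\hat{\M}^t_l(\xi)$ $(0\le l\le n)$, time convolutions against their kernels, the pairing $(\cdot,\chi_0)$, and the scalar multipliers $\tfrac1{\nu_0+i\xi},\tfrac{\nu_0}{\nu_0+i\xi}$. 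The crucial point is that on $D_{\nu_0/2}$ one has $|\nu_0+i\xi|\ge c(1+|\xi|)$, hence \emph{both} multipliers are $O\big((1+|\xi|)^{-1}\big)$, so every factor of $(\nu_0+i\xi)^{-1}$ gains one order of regularity just as a mixture step does. Combining this with Lemma \ref{mix1a} (so that $\hat{\M}^t_{3j}$ gains $j$ orders and the weight $(1+t)^{3j}e^{-\nu_0t}$ by \eqref{mix3a}, and $\hat{\M}^t_2$ gains $1$ order modulo one $v$-derivative absorbed by smoothness via \eqref{mix2}), an induction on $n$ that tracks simultaneously the number of gained $(1+|\xi|)^{-1}$ orders and the power of $(1+t)$ produced against $e^{-\nu_0t}$ gives
$$
\|\hat{\mathcal H}^d_{3m}(t,\xi)\|\le C_m(1+t)^{3m}e^{-\nu_0t}(1+|\xi|)^{-m},
$$
which is subsumed in \eqref{w2}; the special indices $3,4,5$ are handled the same way, keeping one order of $(1+|\xi|)^{-1/2}$ from \eqref{mix2}. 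This is exactly the VPB-type computation of \cite{Li4}, and its transcription here is routine once the curl/divergence split is in place.

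The main obstacle is precisely this last inductive bookkeeping for the divergence part: one must check that, despite $\hat H_n$ involving $\hat{\M}^t_l$ with every $0\le l\le n-1$ (and the low-$l$ ones gaining little regularity on their own), the geometric sum in \eqref{U_n} and the $(\nu_0+i\xi)^{-1}$ multipliers conspire to supply exactly the $m$ orders $(1+|\xi|)^{-m}$ demanded for $\hat{\U}_{3m}$, while the nested time convolutions $\int_0^t\hat{\M}^{t-s}_l v_1\chi_0\,\hat U^1_{n-l-1}(s)\,ds$ in \eqref{H_n} never accumulate more than $(1+t)^{3m}$ against the exponential weight. Once this is settled, the full statement follows by adding the curl estimate and the divergence estimate, identifying the fluid part \eqref{M32} (resp.\ \eqref{M33}) as the only source of the $e^{|{\rm Im}\,\xi|t}$-type growth in $\hat{\U}_{n,2}$; everything else reduces to direct applications of Lemmas \ref{mix1a}--\ref{mix1b}.
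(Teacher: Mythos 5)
Your proposal follows essentially the same route as the paper: the same splitting ($\hat{\U}_{n,1}$ collects the entire divergence part $(\hat H_n,\hat U^1_n)$ plus the kinetic component $\hat{\Q}^t_{n,1}\hat V_0(0)$ of the curl part, while $\hat{\U}_{n,2}$ is the fluid component $\hat{\Q}^t_{n,2}\hat V_0(0)$), the same direct application of Lemma \ref{mix1b} to the curl part, and the same induction — using Lemma \ref{mix1a} together with the gain of one order from each factor $(\nu_0+i\xi)^{-1}$ on $D_{\nu_0/2}$ — to establish $\|\hat H_{3n}\|\le Ct^{3n}e^{-\nu_0t}(1+|\xi|)^{-n}$ and $\|\hat U^1_{3n}\|\le Ct^{3n}e^{-\nu_0t}(1+|\xi|)^{-n-1}$. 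The argument is correct and matches the paper's proof in all essentials.
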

\begin{proof}
By \eqref{U_4}, \eqref{H_n}  and \eqref{V_5},  we can construct $\hat{\U}_n(t,\xi) $ for $n\ge 0$ as follows:
\be \label{U-1}
\left\{\bln
&\hat{\U}_n =(\hat{I}_n,\hat{U}^{1}_n,\hat{U}_n^{2},0,\hat{U}_n^{3})^T, \quad \hat{I}_n= \hat{H}_n+\hat{J}_n ,\\
 &\hat{H}_0 \equiv0, \quad \hat{U}_{0}^{1} =\frac{1}{\nu_0+i\xi}(\hat{S}^t\hat{f}(0),\chi_0),\\
& \hat{H}_{n+1}=\sum^{n}_{l=0}\intt \M^{t-s}_{l} v_1\chi_0\hat{U}^{1}_{n-l} ds, \\
&\hat{U}^{1}_{n+1}=\frac{1}{\nu_0+i\xi}(\hat{H}_{n+1}+\hat{\M}^t_{n+1}\hat{f}(0),\chi_0)+\frac{\nu_0}{\nu_0+i\xi}\hat{U}_{n}^{1},\\
&\hat{V}_{n} =(\hat{J}_{n},\hat{U}^{2}_{n},\hat{U}^{3}_{n})^T=\hat{\Q}^t_{n} \hat{V}_0(0),
\eln\right.
\ee
where 
$\hat{V}_0(0)=(\hat{f}(0),\hat{E}_r(0),\hat{B}_r(0))$  defined by \eqref{l2}. Note that $\hat{\U}_n(t,\xi) $ is an operator from $\mathcal{\hat{X}}_1$ to $L^2_v\times \C^3\times \C^3$. For any $\hat{X}_0=(\hat f_0,\hat E_0,\hat B_0) \in \mathcal{\hat{X}}_1$, we have $\hat{G}_0(\xi)\hat{X}_0=\hat{X}_0$, i.e.,
$$\hat{f}(0)\hat{X}_0=\hat f_0, \quad \hat{E}(0)\hat{X}_0=\hat E_0, \quad \hat{B}(0)\hat{X}_0=\hat B_0.$$
This leads to $\|\hat{f}(0)\|$, $\|\hat{E}(0)\|$, $\|\hat{B}(0)\|\le 1.$

First, we will prove by induction for $\xi\in D_{\nu_0/2}$ that
\be \|\hat{H}_{3n}\| \le C(1+t)^{3n}e^{- \nu_0t }(1+|\xi|)^{-n}, \quad \|\hat{U}^1_{3n}\|\le C(1+t)^{3n}e^{- \nu_0t }(1+|\xi|)^{-n-1}. \label{w3}\ee
For $n=0,$ it holds that
$$\hat{H}_0\equiv0, \quad \|\hat{U}^1_{0}\|\le \frac{1 }{|\nu_0+i\xi|}\|\hat{S}^t\hat{f}(0)\|\le Ce^{-\nu_0t}(1+|\xi|)^{-1}.$$
Suppose \eqref{w3} holds for $l\le n-1.$ By Lemma \ref{mix1a}, we have
\bmas
\|\hat{H}_{3n}\|&\le \sum^{3n-1}_{l=0}\intt \|\hat{\M}^{t-s}_{l} v\chi_0\|\|\hat{U}^1_{3n-l-1}\| ds \le C(1+t)^{3n}e^{- \nu_0t }(1+|\xi|)^{-n} ,\\
\|\hat{U}^1_{3n}\|&\le \frac{1}{|\nu_0+i\xi| }(\| \hat{H}_{3n}\|+\|\hat{\M}^t_{3n}\hat{f}(0) \|+\nu_0 \|\hat{U}^1_{3n-1}\|)\le C(1+t)^{3n}e^{- \nu_0t }(1+|\xi|)^{-n-1} ,
\emas
which gives \eqref{w3}.

By Lemma \ref{mix1a}, we decompose
$$ \hat{V}_n(t)=\hat{\Q}^t_{n,1} \hat{V}_{0}(0)+\hat{\Q}^t_{n,2} \hat{V}_{0}(0)=:\hat{V}_{n,1}(t) +\hat{V}_{n,2}(t), $$
where
$$\hat{V}_{n,1}=(\hat{J}_{n,1},\hat{U}^{2}_{n,1},\hat{U}^{3}_{n,1}),\quad \hat{V}_{n,2}=(\hat{J}_{n,2},\hat{U}^{2}_{n,2},\hat{U}^{3}_{n,2}).$$
Set
\be \label{w6}
\left\{\bln
\hat{\U}_{n}&= \hat{\U}_{n,1}+\hat{\U}_{n,2},\\
\hat{\U}_{n,1}&= (\hat{J}_{n,1}+\hat{H}_n,\hat{U}^{1}_n,\hat{U}^{2}_{n,1},0,\hat{U}^{3}_{n,1})^T ,\\
\hat{\U}_{n,2}&= (\hat{J}_{n,2},0,\hat{U}^{2}_{n,2},0,\hat{U}^{3}_{n,2})^T ,
\eln\right.
\ee
where $(\hat{H}_n,\hat{U}^1_n)$ is defined by \eqref{U-1}.

By \eqref{w3}, \eqref{w6} and Lemma \ref{mix1b}, we conclude that $\hat{\U}_{n,1}(t,\xi)$ and $\hat{\U}_{n,2}(t,\xi)$ are analytic for $\xi\in D_{\nu_0/2}$ and satisfy \eqref{w2}--\eqref{w2a}.
\end{proof}

Define the $n^{th}$ degree kinetic wave $Y_n$ and  the $n^{th}$ degree remaining part $Z_n $  as follows:
\be
Y_n =\sum_{k=0}^{3n}\U_k ,\quad Z_n =G-Y_n, \label{YZ}
\ee
where $Y_n=(T_n,Y_n^{1},Y_n^{2},0,Y_n^{3})^T$ and  $Z_n=(R_n,Z_n^{1},Z_n^{2},0,Z_n^{3})^T$. Note that $\hat{\U}_n $, $\hat{Y}_n $ and $\hat{Z}_n$ are operators from $\mathcal{\hat{X}}_1$ to $L^2_v\times \C^3\times \C^3$.
Furthermore, from \eqref{w1}  and \eqref{YZ}, we write
\be \hat{Y}_{n}(t,\xi)=\hat{Y}_{n,1}(t,\xi)+\hat{Y}_{n,2}(t,\xi), \label{Yn2} \ee
where
\be
\hat{Y}_{n,1}(t,\xi)=\sum^{3n}_{k=0}\hat{\U}_{k,1}(t,\xi),\quad \hat{Y}_{n,2}(t,\xi)=\sum^{3n}_{k=0}\hat{\U}_{k,2}(t,\xi). \label{Yn1}
\ee
And we will prove the following theorem.

\begin{lem}\label{W_1}
For each $n\ge 2$,  $\hat{Y}_{n,2}(t,\xi) $ is  analytic for $\xi\in D_{\nu_0/2}$
and satisfies
\be
\|(\hat{Y}_{n,2}  -\hat{G}_{2})(t,\xi)\|\le C_ne^{|{\rm Im}\xi|t}(1+t)^{2n-1}(1+|\xi|)^{-\frac32}\ln (2+|\xi|),  \label{Y_1}
\ee
where $\hat{G}_2$ is defined by \eqref{G_b}  and $C_n>0$ is a positive constant. In  particular, for $|x|\ge 2t$,
\be
\|Y_{n,2}(t,x) -G_{2}(t,x)\|\le C_n e^{-\frac{\nu_0(|x|+t)}{8}}. \label{Y_2}
\ee
\end{lem}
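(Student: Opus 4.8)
The plan is to exploit the explicit structure of the approximate sequence and the mixture Lemma~\ref{mix1b}. First I would identify which terms of $\hat{Y}_{n,2}=\sum_{k=0}^{3n}\hat{\U}_{k,2}$ actually contribute to the ``leading'' singular fluid wave and show that the complementary terms are $O(|\xi|^{-3/2})$-small. Recall that $\hat{\U}_{k,2}$ is built from $\hat{\Q}^t_{k,2}$ acting on $\hat V_0(0)$; from the proof of Lemma~\ref{mix1b} one sees that $\hat{\Q}^t_{0,2}=\S^t_2$, $\hat{\Q}^t_{1,2}=\sum_j e^{\alpha_j t}\mathbb{D}^1_j$, and $\hat{\Q}^t_{2,2}$ contributes the pieces carrying $e^{\alpha_j t}(\mathbb{D}^2_j + tb_{jj}\mathbb{D}^0_j)$ together with the oscillatory cross-term $\frac{e^{i\xi t}-e^{-i\xi t}}{2i\xi}\sum_j b_{jj}\mathbb{D}_{jl_j}$. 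Comparing these against the definition \eqref{G_b} of $\hat G_2$ and the expansions \eqref{specr1}, \eqref{eigfr3} of $\beta_j, c_j, d_j$, the matching terms are exactly those of total ``order zero'' in $|\xi|^{-1}$ modulo $\ln^2|\xi|/|\xi|^{3/2}$ errors: $\hat G_2$ is precisely the reassembly of the $\alpha_j$-phase, $e^{\gamma_j t}$-damped, $O(1)$-amplitude parts of $\hat{\U}_{k,2}$ for $k=0,1,2$, with the coefficient $(1+d_j)$ and the vectors $u^0_j, \X^2_j, \X^3_j$ reproducing $\mathbb{D}^0_j$, $\mathbb{D}^1_j$ and the $tb_{jj}$-term after resummation of $e^{\gamma_j t}=1+\gamma_j t+\cdots$ against $e^{\alpha_j t}$.

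**Main steps.** (i) Write $\hat{Y}_{n,2}-\hat G_2 = \big(\sum_{k=0}^{2}\hat{\U}_{k,2}-\hat G_2\big) + \sum_{k=3}^{3n}\hat{\U}_{k,2}$. (ii) For the tail $\sum_{k=3}^{3n}\hat{\U}_{k,2}$, invoke \eqref{w2a} of Lemma~\ref{mix3}: each term is bounded by $C_n(1+t)^3 e^{|{\rm Im}\xi|t}(1+|\xi|)^{-3/2}\ln(2+|\xi|)$, and there are finitely many (at most $3n-2$) of them, so the sum obeys the bound in \eqref{Y_1}. (iii) For the head $\sum_{k=0}^{2}\hat{\U}_{k,2}-\hat G_2$, substitute the asymptotic expansions from Theorems~\ref{eigen_4} for $\beta_j=\alpha_j+\gamma_j+O(|\xi|^{-2}\ln^2|\xi|)$, $c_j=i+\tfrac{i}{2}d_j+O(\ln|\xi|/|\xi|^2)$ and $\|u^0_j\|\le C|\xi|^{-1/2}$, $\|w_j\|\le C|\xi|^{-3/2}\ln|\xi|$, $|d_j|\le C|\xi|^{-1}$ into the explicit forms of $\hat{\U}_{0,2},\hat{\U}_{1,2},\hat{\U}_{2,2}$ from the proof of Lemma~\ref{mix1b}. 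Expanding $e^{\eta_j(\xi)t}=1+O(t|\xi|^{-2}\ln^2|\xi|)$ and collecting, every mismatch between the head and $\hat G_2$ is either an amplitude term of size $O(|\xi|^{-3/2}\ln|\xi|)$ or a term carrying an extra factor $t|\xi|^{-2}\ln^2|\xi|$ multiplied by a bounded amplitude with phase $\alpha_j$ and damping $e^{{\rm Re}\gamma_j t}$; since $e^{{\rm Re}\gamma_j t}\le Ce^{|{\rm Im}\xi|t}$ and $t|\xi|^{-2}\le (1+t)|\xi|^{-3/2}\cdot|\xi|^{-1/2}$ for $|\xi|\ge r_1$, while the low-frequency piece $\hat G_{2,1}=\hat G_2 1_{\{|\xi|<r_1\}}$ is harmless by \eqref{G_21}, all such terms are absorbed into the right-hand side of \eqref{Y_1}. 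This establishes \eqref{Y_1}.

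**From frequency to space.** To pass from \eqref{Y_1} to \eqref{Y_2}, I would shift the contour of the inverse Fourier transform. Every term of $Y_{n,2}-G_2$ has its $\xi$-dependence analytic in the strip $D_{\nu_0/2}=\{|{\rm Im}\xi|\le \nu_0/2\}$ and carries the bound $e^{|{\rm Im}\xi|t}(1+t)^{2n-1}(1+|\xi|)^{-3/2}\ln(2+|\xi|)$. For $x\ge 2t$ (the case $x\le -2t$ being symmetric), deform $\xi=u\mapsto u+i\tfrac{\nu_0}{2}$; this produces a factor $e^{-\frac{\nu_0}{2}x}\cdot e^{\frac{\nu_0}{2}t}$ from $|e^{ix\xi}|\cdot e^{|{\rm Im}\xi|t}$, and the residual $u$-integral of $(1+|u|)^{-3/2}\ln(2+|u|)$ converges, contributing only the polynomial factor $(1+t)^{2n-1}$. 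Since $x\ge 2t$ gives $\tfrac{\nu_0}{2}(t-x)\le -\tfrac{\nu_0}{4}x$ and $\tfrac{\nu_0}{4}x\ge\tfrac{\nu_0}{8}(x+t)$, while $(1+t)^{2n-1}e^{-\nu_0 t/16}$ is bounded, one gets $\|Y_{n,2}(t,x)-G_2(t,x)\|\le C_n e^{-\nu_0(|x|+t)/8}$, which is \eqref{Y_2}.

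**Main obstacle.** The delicate part is step (iii): verifying that the explicit, somewhat lengthy matrix expressions for $\hat{\U}_{0,2},\hat{\U}_{1,2},\hat{\U}_{2,2}$ coming out of the nested decomposition of $\hat{\Q}^t_{1,2},\hat{\Q}^t_{2,2}$ reassemble exactly — after the resummation $e^{\alpha_j t}(1+\gamma_j t+\cdots)\leftrightarrow e^{(\alpha_j+\gamma_j)t}$ and the identification of the $(1+d_j)$ weights — into the $4\times 4$-block form of $\hat G_2$ in \eqref{G_b}, with all discrepancies genuinely of order $|\xi|^{-3/2}$ (up to logs and the benign $t$-powers). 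This is essentially a bookkeeping computation, but one must be careful that the oscillatory cross-terms $\frac{e^{\pm i\xi t}}{2i\xi}b_{jj}\mathbb{D}_{jl_j}$ appearing in $\hat{\Q}^t_{2,2}$ are genuinely $O(|\xi|^{-2}\ln|\xi|)$ in amplitude (they are, since they carry an extra $1/\xi$ and $|b_{jj}|\le C|\xi|^{-1}\ln(2+|\xi|)$) and therefore do not spoil the estimate; and that the $tb_{jj}\mathbb{D}^0_j$ term is exactly what the first-order Taylor term of $e^{\gamma_j t}$ against $\mathbb{D}^0_j$ supplies, so it cancels rather than accumulates.
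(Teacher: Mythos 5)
Your proposal follows essentially the same route as the paper's proof: the decomposition $\hat Y_{n,2}-\hat G_2=\big(\sum_{k=0}^2\hat\U_{k,2}-\hat G_2\big)+\sum_{k=3}^{3n}\hat\U_{k,2}$, the use of the mixture-lemma bound for the tail, explicit bookkeeping for the head, and the contour shift $\xi\mapsto u+i\nu_0/2$ to convert the analytic bound in the strip into exponential spatial decay for $|x|\ge 2t$. The paper denotes the head $\hat G_3=\sum_{k=0}^2\hat\U_{k,2}$ and writes $\hat G_2-\hat G_3$ out termwise.

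One conceptual imprecision in your step (iii): you describe substituting the expansions of $\beta_j(\xi)$, $c_j(\xi)$ from Theorem~\ref{eigen_4} and ``expanding $e^{\eta_j(\xi)t}$,'' but these quantities do not appear in the comparison here. The operators $\hat\U_{0,2},\hat\U_{1,2},\hat\U_{2,2}$ come out of the Picard iteration (Lemma~\ref{mix1b}) already in terms of the bare quantities $e^{\alpha_jt}$, $u^0_j$, $u^1_j$, $b_{jj}$, $d_{jj}$, $d_{jl_j}$ — there is no $\beta_j$, $c_j$, or $\eta_j$ to substitute into. The definition~\eqref{G_b} of $\hat G_2$ is likewise built directly from $e^{(\alpha_j+\gamma_j)t}$, $u^0_j$, $d_j$. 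The needed observation is simply $b_{jj}=\gamma_j$, after which the comparison reduces to Taylor-expanding $e^{\gamma_jt}$ against $1+tb_{jj}$ (for the $(l,n)$-blocks) and against $1$ (for the other blocks), plus bounding the oscillatory cross-terms $\frac{\sin(\xi t)}{\xi}b_{jj}$ and $d_{jl_j}$. You correctly identify the key cancellation of the $tb_{jj}\mathbb{D}^0_j$ term against the first-order Taylor coefficient of $e^{\gamma_jt}$, so the substance of your argument is sound; it is only the framing (which imports the $\hat G_{H,0}$-vs-$\hat G_2$ comparison from Lemma~\ref{gh0}, where $\beta_j$, $c_j$, $\eta_j$ genuinely do appear) that is off.
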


\begin{proof}By \eqref{w6}, \eqref{Q_0}, \eqref{Q_1} and \eqref{Q_2}, we have
\be \label{G_2}
\hat{G}_3(t,\xi)=:\sum^2_{l=0}\hat{\U}_{l,2}(t,\xi)=\left( \hat{G}^{ij}_3(t,\xi)\right)_{3\times 3},
\ee
where
\bmas
\hat{G}^{11}_3(t,\xi)=& \sum^4_{j=1}e^{\alpha_jt}  u^0_j\otimes \langle u^0_j|,\\
\hat{G}^{1k}_3(t,\xi)=& \sum^4_{j=1}e^{\alpha_jt}  \(u^0_j\otimes  \langle \X^k_j|+u^1_j\otimes  \langle \X^k_j|\),\\
\hat{G}^{k1}_3(t,\xi)=& \sum^4_{j=1}e^{\alpha_jt}  \(\X^k_j\otimes  \langle u^0_j|+\X^k_j\otimes  \langle u^1_j|\),\\
\hat{G}^{kn}_3(t,\xi)=&\sum^4_{j=1}e^{\alpha_jt}\(1+tb_{jj}+d_{jj}\)\X^k_j\otimes \langle \X^n_j|\\
&+\sum^4_{j= 1} \( \frac{e^{i\xi t}-e^{-i\xi t}}{2i\xi}b_{jj}+e^{\alpha_jt}d_{jl_j}\)\X^k_j\otimes\langle \X^n_{l_j} |
\emas
with $k,n=2,3$ and $l_j$ defined by \eqref{lj}.

For $n\ge 1$, we decompose
\be \hat{Y}_{n,2}(t,\xi) -\hat{G}_{2}(t,\xi)=(\hat{G}_3-\hat{G}_{2})(t,\xi)+(\hat{Y}_{n,2}-\hat{G}_3)(t,\xi), \label{Y_3}\ee
where
$$ (\hat{Y}_{n,2}-\hat{G}_3)(t,\xi) =\sum^{3n}_{l=3}\hat{\U}_{l,2}(t,\xi) .$$
By Lemma \ref{mix3}, it holds that for $l\ge 3$ and $\xi\in D_{\nu_0/2}$,
\be \|\hat{\U}_{l,2}(t,\xi) \|\le C_le^{|{\rm Im}\xi|t}(1+t)^{2[\frac{l}3]-1}(1+|\xi|)^{-\frac32}\ln(2+|\xi|). \label{Y_4}\ee

By \eqref{G_b}, \eqref{G_2} and noting that $b_{jj}=\gamma_j$, we obtain
\bma
\hat{G}^{11}_{2}-\hat{G}^{11}_3=&\sum^4_{j=1}e^{\alpha_jt}\(e^{b_{jj}t}-1\)u^0_j\otimes \langle u^0_j|,
\nnm\\
\hat{G}^{1k}_{2}-\hat{G}^{1k}_3=&\sum^4_{j=1}e^{\alpha_jt}\(e^{b_{jj}t}-1\)u^0_j\otimes \langle \X^k_j|+\sum^4_{j=1}e^{\alpha_jt}u^1_j \otimes \langle \X^k_j| ,
\nnm\\
\hat{G}^{k1}_2-\hat{G}^{k1}_3=& \sum^4_{j=1}e^{\alpha_jt}  \(e^{b_{jj}t}-1\) \X^k_j\otimes  \langle u^0_j|+\sum^4_{j=1}e^{\alpha_jt}\X^k_j\otimes  \langle u^1_j|, \label{G_3}
\\
\hat{G}^{kn}_{2}-\hat{G}^{kn}_3=&\sum^4_{j=1}e^{\alpha_jt}\[\(e^{b_{jj}t}- 1-tb_{jj}\)+\(e^{b_{jj}t}-1\) d_{jj}\] \X^k_j \otimes \langle \X^n_j |\nnm\\
&+\sum^4_{j= 1} \(  \frac{\sin(\xi t)}{\xi}b_{jj}+e^{\alpha_jt}d_{jl_j}\)\X^k_j\otimes\langle \X^n_{l_j} |,\quad k,n=2,3.\nnm
\ema
Since $b_{jj}(\xi)$, $d_{jl_j}(\xi)$ and $\sin(\xi t)/\xi$ are analytic for $\xi\in D_{\nu_0/2}$ and satisfy
\bmas
 &| e^{b_{jj}t}-1|\le C|b_{jj}|t\le C\frac{\ln (2+|\xi|)}{1+|\xi|}t,\\
 &|e^{b_{jj}t}-1-tb_{jj}|\le C |b_{jj}|^2t^2\le C\frac{\ln^2 (2+|\xi|)}{(1+|\xi|)^2}t^2,\\
&\frac{|\sin(\xi t)|}{|\xi|}|b_{jj}| \le \frac{1+t}{1+|\xi|}|b_{jj}|\le C\frac{\ln (2+|\xi|)}{(1+|\xi|)^2}(1+t),\\
&|d_{jl_j}|=\left|\frac1{2i\xi}(b_{11} -b_{33} )\right|\le C\frac{\ln(2+|\xi|)}{(1+|\xi|)^{2}},
\emas
it follows from \eqref{G_3} and \eqref{F_1} that for $\xi\in D_{\nu_0/2}$,
\be
\|(\hat{G}_{2} -\hat{G}_3)(t,\xi)\|\le Ce^{|{\rm Im}\xi|t}(1+t)^{2}(1+|\xi|)^{-\frac32}\ln (2+|\xi|). \label{G_23}
\ee
This together with \eqref{Y_3} and \eqref{Y_4} proves \eqref{Y_1}.

Finally, we can prove \eqref{Y_2} as follows. Assume that $x\ge 0$.
By \eqref{G_23}, we have
\bma
\| (G_{2} -G_3)(t,x)\|&= \left\|\int^{\infty}_{-\infty} e^{ix\xi} (\hat{G}_{2} -\hat{G}_3)(t,\xi) d\xi\right\| \nnm\\
&=\left\|\int^{\infty}_{-\infty} e^{ix(u+i\frac{\nu_0}2)} (\hat{G}_{2} -\hat{G}_3)\(t,u+i\frac{\nu_0}2\) du\right\|\nnm\\
&\le C e^{-\frac{\nu_0(|x|-t)}{2}} \int^{\infty}_{-\infty} (1+t)^{2} (1+| u|)^{-\frac32}\ln (2+|u|)d u\nnm\\
&\le C (1+t)^{2}e^{-\frac{\nu_0(|x|-t)}{2}}\le  C e^{-\frac{\nu_0(|x|+t)}{8}} \label{G23}
\ema for $|x|\ge 2t$.
Similarly, by \eqref{Y_4} we obtain
$$ \|(Y_{n,2} -G_{3})(t,x)\|\le C e^{-\frac{\nu_0(|x|+t)}{8}},\quad |x|\ge 2t, $$
which together with  \eqref{G23} implies \eqref{Y_2}. 
\end{proof}

Then, we can show the pointwise estimates on the remaining terms $Z_n(t,x)$  defined by \eqref{YZ}. To this end, we  first derive the following equation for $\dt\hat{U}_{n}^{1}$. By \eqref{c-w}--\eqref{c-w4} and noting that $\hat{I}_n=\hat{J}_n+\hat{H}_n$, we have
\bmas
\dt\hat{U}_{n}^{1}=&-(\hat{I}_n,v_1\chi_0)-\frac{1}{\nu_0+i\xi}(\nu(v)(\hat{I}_n-\hat{I}_{n-1}),\chi_0)\\
&+\frac{\nu_0}{\nu_0+i\xi}( \hat{I}_n,v_1\chi_0)+\frac{\nu_0}{\nu_0+i\xi}\dt\hat{U}_{n-1}^{1},\,\,\, n\ge 1,
\emas
and hence
\bma
\dt \hat{U}_{0}^{1}=&-(\hat{I}_0,v_1\chi_0)+\frac{\nu_0}{\nu_0+i\xi}( \hat{I}_0,v_1\chi_0)-\frac{1}{\nu_0+i\xi}(\nu(v)\hat{I}_0,\chi_0),\label{u10}\\
\dt \hat{U}_{n}^{1}=&-(\hat{I}_n,v_1\chi_0)+\sum^{n}_{l=1}\frac{\nu_0^{n-l+1}}{(\nu_0+i\xi)^{n-l+1}}(\hat{I}_{l}-\hat{I}_{l-1},v_1\chi_0)\nnm\\
&-\sum^{n}_{l=1}\frac{\nu_0^{n-l}}{(\nu_0+i\xi)^{n-l+1}}(\nu(v)(\hat{I}_{l}-\hat{I}_{l-1}),\chi_0), \quad n\ge 1. \label{u11}
\ema
By \eqref{u10} and \eqref{u11}, we obtain
\be
\dt \hat{Y}^{1}_{n}=-( \hat{T}_n,v_1\chi_0)-\hat{ \Theta}_{3n}, \label{er2}
\ee
where
\be
\hat{\Theta}_{n}=\sum^{n}_{l=0}\frac{\nu_0^{n-l}}{(\nu_0+i\xi)^{n-l+1}}(\nu(v)\hat{I}_{l}-\nu_0v_1\hat{I}_{l},\chi_0).
\label{psij}
\ee

Thus, it follows from \eqref{U_4}--\eqref{c-w4}, \eqref{U_5}  and \eqref{er2} that $\hat{Y}_n=(\hat T_n,\hat Y_n^{1},\hat Y_n^{2},0,\hat Y_n^{3})^T$  satisfies
\be \label{ew}
\left\{\bln
&\dt \hat{T}_n+i \xi v_1 \hat{T}_n-L_1\hat{T}_n-v \cdot \hat{Y}_n^{4}\chi_0=-K_1\hat{I}_{3n}- v\cdot \hat{U}_{3n}^{4} \chi_0,\\
&\dt \hat{Y}^{1}_{n}=-( \hat{T}_n,v_1\chi_0)-\hat{\Theta}_{3n},\quad i\xi\hat{Y}_n^{1}=-( \hat{T}_n,\chi_0)-\nu_0 \hat{U}_{3n}^{1},\\
 &\dt \hat{Y}_n^{2}=i \xi \O_1 \hat{Y}_n^{3}-(\hat{T}_n,\bar{v}\chi_0)-( \hat{I}_{3n},\bar{v}\chi_0),\\
 & \dt \hat{Y}_n^{3}=-i \xi \O_1\hat{Y}_n^{2},\\
 &\hat{Y}_n(0)
 =\hat{G}_0-\hat{Z}_n(0),
  \eln\right.
\ee
and   $\hat{Z}_n=(\hat R_n,\hat Z_n^{1},\hat Z_n^{2},0,\hat Z_n^{3})^T$ satisfies
\be \label{er}
\left\{\bln
&\dt \hat{R}_n+i \xi v_1 \hat{R}_n-L_1\hat{R}_n-v \cdot \hat{Z}_n^{4}\chi_0=K_1\hat{I}_{3n}+v\cdot \hat{U}_{3n}^{4}\chi_0,\\
&\dt \hat{Z}^{1}_{n}=-( \hat{R}_n,v_1\chi_0)+\hat{\Theta}_{3n},\quad i\xi\hat{Z}_n^{1}=-( \hat{R}_n,\chi_0)+\nu_0 \hat{U}_{3n}^{1},\\
 &\dt \hat{Z}_n^{2}= i \xi \O_1 \hat{Z}_n^{3}-(\hat{R}_n,\bar{v}\chi_0)+( \hat{I}_{3n},\bar{v}\chi_0),\\
 & \dt \hat{Z}_n^{3}= -i \xi \O_1\hat{Z}_n^{2},\\
 &\hat{Z}_n(0)
  =(0, \mbox{($\frac{\nu_0}{\nu_0+i\xi})^{3n+1}$}\hat E_1(0),0,0,0)^T,
  \eln\right.
\ee
where $\hat{Y}_n^{4}=(\hat{Y}_n^{1},\hat{Y}_n^{2})$, $\hat{Z}_n^{4}=(\hat{Z}_n^{1},\hat{Z}_n^{2})$ and  $\hat{U}_n^{4}=(\hat{U}_n^{1},\hat{U}_n^{2})$.

\begin{lem}\label{W_2}
For any $n\ge 1$, there exists a small constant $\delta_0>0$ such that for any $\delta\in (0,\delta_0)$,
\be
\| \hat{Z}_{n}(t,\xi)\|\le C_{n}\delta^{-2n}e^{\delta t}(1+|\xi|)^{-n}\ln^{n}(2+|\xi|) , \label{w1c}
\ee
where $\xi\in \R$ and $C_{n}>0$ is a constant. Moreover, for $n\ge 2$ and $|x|\ge 2t$,
\be
\| Z_{n}(t,x)\|\le C_ne^{- (|x|+t)/D}, \label{w4}
\ee
where $C_{n},D>0$ are  constants.
\end{lem}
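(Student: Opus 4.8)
The plan is to establish the frequency-space bound \eqref{w1c} for real $\xi$ first, by a weighted energy estimate on the remainder system \eqref{er}, and then to upgrade it to the pointwise estimate \eqref{w4} on the region $|x|\ge 2t$ by deforming the Fourier inversion contour into the strip of analyticity $D_{\nu_0/2}$.

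For \eqref{w1c}, I would regard \eqref{er} as the Fourier-transformed linearized VMB system \eqref{LVMB1a} driven by the source terms $K_1\hat I_{3n}+v\cdot\hat U_{3n}^4\chi_0$, $\hat\Theta_{3n}$, $(\hat I_{3n},\bar v\chi_0)$ and $\nu_0\hat U^1_{3n}$, with a lower-order modification of the $\hat E_1$-constraint and with initial datum satisfying $\|\hat Z_n(0,\xi)\|\le C_n(1+|\xi|)^{-(3n+1)}$. The first step is to bound the sources: by Lemma \ref{mix3}, together with the representations \eqref{H_n}, \eqref{U_n}, \eqref{psij} and \eqref{R1j}, each source term splits into a kinetic piece of size $C_n(1+t)^{3n}e^{-\nu_0 t}(1+|\xi|)^{-n}\ln^n(2+|\xi|)$ and a fluid piece of size $C_n(1+t)^{2n-1}(1+|\xi|)^{-n}\ln^n(2+|\xi|)$; in fact the $\nu_0\hat U^1_{3n}$ and $\hat\Theta_{3n}$ contributions are purely kinetic, hence exponentially decaying in $t$, because the fluid iterates $\hat\U_{k,2}$ are built from the functions $u^l_j$ — obtained by alternately applying $(\nu(v)+iv_1\xi+\alpha_j)^{-1}$ and $K_1$ to $(v\cdot\X^2_j)\chi_0$, which is odd in $v_2$ or $v_3$ — and are therefore annihilated by $(\nu(v)\,\cdot\,,\chi_0)$ and $(v_1\,\cdot\,,\chi_0)$. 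The second step is the energy estimate: running the weighted estimate of Section \ref{spectrum-two} on \eqref{er} in the inner product $(\cdot,\cdot)_\xi$ that makes $\AA_1(\xi)$ dissipative (cf. \eqref{L_7a}), the collision term produces $-\mu\|P_r\hat R_n\|^2\le 0$ by \eqref{L_4}, the particle–field couplings cancel, and the modified constraint contributes only a correction governed by the exponentially small quantity $\hat U^1_{3n}$, so that $\frac{d}{dt}\|\hat Z_n(t,\xi)\|$ is bounded by the sum of the source norms above. Integrating in time gives $\|\hat Z_n(t,\xi)\|\le C_n(1+t)^{2n}(1+|\xi|)^{-n}\ln^n(2+|\xi|)$, and then the elementary bound $(1+t)^{2n}\le C_n\,\delta^{-2n}e^{\delta t}$ produces \eqref{w1c}.

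For \eqref{w4}, I would first note that $\hat Z_n(t,\xi)$ is analytic for $\xi\in D_{\nu_0/2}$: the coefficient operator of \eqref{er} is entire in $\xi$, and the sources and initial datum are analytic there, since the mixture operators $\hat\M^t_j$ and $\hat\Q^t_{j,1},\hat\Q^t_{j,2}$ are analytic in $D_{\nu_0}$ and $D_{\nu_0/2}$ by Lemmas \ref{mix1a}, \ref{mix1b}, \ref{mix3}, while the factors $(\nu_0+i\xi)^{-k}$ are analytic for $|\mathrm{Im}\,\xi|<\nu_0$. Repeating the energy estimate on the shifted line $\xi=\zeta+i\eta$ with $|\eta|\le\nu_0/2$ — now with a real positive weight, the extra non-self-adjoint term $\eta\int v_1|\hat R_n|^2$ being controlled using $\int v_1 M(v)\,dv=0$ together with the collision dissipation, and the complex-$\xi$ source bounds carrying the factor $e^{|\eta|t}$ from Lemmas \ref{mix1b}–\ref{mix3} — yields $\|\hat Z_n(t,\zeta+i\eta)\|\le C_n(1+t)^{2n}e^{|\eta|t}(1+|\zeta|)^{-n}\ln^n(2+|\zeta|)$. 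For $x>0$ (the case $x<0$ being symmetric), I would then move the inversion contour in $Z_n(t,x)=\frac{1}{\sqrt{2\pi}}\int_{\R}e^{ix\xi}\hat Z_n(t,\xi)\,d\xi$ to $\xi=\zeta+i\frac{\nu_0}{4}$; since $n\ge 2$ makes $\int_{\R}(1+|\zeta|)^{-n}\ln^n(2+|\zeta|)\,d\zeta$ finite and the integrand decays as $|\zeta|\to\infty$ uniformly in the strip (so the vertical segments vanish), this gives $\|Z_n(t,x)\|\le C_n(1+t)^{2n}e^{-\frac{\nu_0}{4}(x-t)}$. On $|x|\ge 2t$ one has $x-t\ge\frac13(|x|+t)$, so absorbing the polynomial factor into the exponential gives \eqref{w4} with $D=D(\nu_0)$.

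The main obstacle is the energy estimate itself. One must (i) handle the nonstandard $\hat E_1$-constraint so that the $L^2$ dissipation structure of Section \ref{spectrum-two} — the weighted inner product rendering $\AA_1(\xi)$ dissipative — still applies to a suitably corrected unknown; (ii) keep precise track of the kinetic/fluid decomposition of each of the four source terms, so that at most the $\ln^n(2+|\xi|)$ logarithmic loss is incurred and the full $(1+|\xi|)^{-n}$ spatial regularity survives the time integration; and (iii), in the complex-$\xi$ version, control the macroscopic modes carefully, since the linearized VMB operator has no spectral gap at high frequency and the energy method therefore delivers only polynomial-in-$t$ control. The decay in $x$ needed for \eqref{w4} comes entirely from the contour shift, which is precisely why the analyticity in the strip $D_{\nu_0/2}$ and the $e^{|\mathrm{Im}\,\xi|t}$ bounds of the mixture lemmas are indispensable.
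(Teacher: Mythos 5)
Your treatment of \eqref{w1c} is essentially the paper's: take the real part of the $(\cdot\,,\hat R_n)$ inner product of \eqref{er}, use the dissipativity of $L_1$ and the cancellation of the particle--field coupling to reduce everything to the source terms $\hat\Theta_{3n}$, $\hat U^4_{3n}$, $K_1\hat I_{3n}$, bound those by Lemma \ref{mix3} (the paper uses the crude polynomial bound \eqref{phi1} on $\hat\Theta_{3n}$ rather than your sharper parity observation, but either suffices), and apply Gronwall; the paper generates the $\delta^{-2n}e^{\delta t}$ directly from a $\delta\|\hat Z_n\|^2$ Young-inequality term rather than from $(1+t)^{2n}\le C\delta^{-2n}e^{\delta t}$, which is only a cosmetic difference.

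For \eqref{w4} you take a genuinely different route. The paper stays in physical space: it runs the energy estimate on \eqref{er1} with the weight $w=e^{\eps(|x|-\frac32 t)}$, uses the analyticity of the \emph{sources} only, via the Parseval identity \eqref{u1}, to obtain the exponentially weighted bounds \eqref{kkk}, and finishes with Sobolev embedding in $x$. You instead shift the energy estimate to complex lines $\xi=\zeta+i\eta$ and deform the inversion contour. The two are morally dual (weight $e^{\eps|x|}$ versus contour shift by $i\eps$), but two points in your version need repair. First, the specific height $\eta=\nu_0/4$ is too aggressive: on the shifted line the transport operator contributes $-\eta\int v_1|\hat R_n|^2dv$, whose $P_d$--$P_r$ cross term costs, after Young's inequality against the collision dissipation, a term of order $\frac{\eta^2}{\mu}\|P_d\hat R_n\|^2$ that is \emph{not} dissipated and feeds Gronwall, so the true growth is $e^{(\eta+C\eta^2/\mu)t}$ rather than $e^{\eta t}$; since the paper only assumes $\nu_0\ge\mu$, at $\eta=\nu_0/4$ this extra growth can exceed the gain $e^{-2\eta t}$ available on $|x|\ge 2t$. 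The fix is exactly what the paper does with its $\eps$: take $\eta$ small relative to $\mu$, so that $\eta+C\eta^2/\mu<2\eta$, at the price of a smaller (but still positive) constant $1/D$ in \eqref{w4}. Second, your contour deformation requires analyticity of the full remainder $\hat Z_n(t,\cdot)$ in the strip, which is plausible (the generator $L_1-iv_1\xi+\cdots$ remains dissipative for $|{\rm Im}\,\xi|\le\nu_0/2$ since $\nu(v)\ge\nu_0(1+|v|)$) but is an extra step you assert rather than prove; the paper's physical-space formulation deliberately needs analyticity only of the explicitly constructed mixture-operator sources, for which Lemmas \ref{mix1a}--\ref{mix3} already supply it. With these two adjustments your argument closes.
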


\begin{proof}
Taking inner product of $\eqref{er}_1$ and $\hat{R}_n$ and choosing the real part, we obtain
\bma
&\frac12\Dt \|\hat{R}_n\|^2 - (L_1\hat{R}_n,\hat{R}_n) -{\rm Re} (\hat{Z}^{4}_n, (\hat{R}_n,v\chi_0))  \nnm\\
=& {\rm Re} (K_1\hat{I}_{3n},\hat{R}_n) + {\rm Re} (\hat{U}^{4}_{3n},(\hat{R}_n,v\chi_0)) . \label{R_n}
\ema
Since
\bmas
-{\rm Re} (\hat{Z}^{4}_n, (\hat{R}_n,v\chi_0)) =& {\rm Re} (\hat{Z}^{1}_n, \dt \hat{Z}^{1}_n-\hat{\Theta}_{3n})  \\
&+ {\rm Re} (\hat{Z}^{2}_n, \dt \hat{Z}^{2}_n-i\xi \O_1\hat{Z}^{3}_n+(\hat{I}_{3n},\bar{v}\chi_0)) \\
=&\frac12\Dt (|\hat{Z}^{4}_n|^2+|\hat{Z}^{3}_n|^2)-  {\rm Re} (\hat{Z}^{1}_n,\hat{\Theta}_{3n})  -  {\rm Re} (\hat{Z}^{2}_n,(\hat{I}_{3n},\bar{v}\chi_0) ),
\emas
it follows from \eqref{R_n} that
\be
\frac12\Dt\|\hat{Z}_n\|^2 +\frac\mu2 (P_r\hat{R}_n,P_r\hat{R}_n)
\le \frac{C}{\delta} (|\hat{\Theta}_{3n}|^2 + |\hat{U}^{4}_{3n}|^2 + \|\hat{I}_{3n}\|^2  )+\delta \|  \hat{Z}_n\|^2,\label{Zj}
\ee
where $\delta\in (0,\delta_0)$ with $\delta_0>0$ being a small constant.

By Lemma \ref{mix3}, it holds that for  $\xi\in D_{\nu_0/2}$,
\be \|\hat \U_{3n}(t,\xi)\|\le C(1+t)^{2n-1}e^{|{\rm Im}\xi| t}(1+|\xi|)^{-n}\ln^n(2+|\xi|). \label{lll}\ee
This together with \eqref{psij} implies that for $\xi\in D_{\nu_0/2}$,
\bma
|\hat{\Theta}_{3n}(t,\xi)|&\le \sum^{3n}_{k=0}\frac{\nu_0^{3n-k}}{(\nu_0+|\xi|)^{3n-k+1}}\|\hat{\mathcal{U}}_{k}(t,\xi)\| \nnm\\
&\le C_n(1+t)^{2n-1}e^{|{\rm Im}\xi| t}(1+|\xi|)^{-n-1}\ln^n(2+|\xi|). \label{phi1}
\ema
Applying Gronwall's inequality to \eqref{Zj}, we have
\bmas
\| \hat{Z}_{n}(t,\xi)\|^2&\le e^{2\delta t}\|\hat{Z}_{n}(0)\|^2+\frac{C}{\delta}\intt  e^{2\delta (t-s)}(|\hat{\Theta}_{3n}|^2 + |\hat{U}^{4}_{3n}|^2 + \|\hat{I}_{3n}\|^2  )ds\\
&\le C(1+|\xi|)^{-2n}\ln^{2n}(2+|\xi|)e^{2\delta t}\(1 + \frac{C}{\delta}\intt  e^{-2\delta s}(1+s)^{4n-2}ds\)\\
&\le C\delta^{-4n}e^{2\delta t}(1+|\xi|)^{-2n}\ln^{2n}(2+|\xi|) ,\quad \xi\in \R,
\emas
where we have used the following inequality
$$
\intt e^{-\delta s}s^nds=\frac{n!}{\delta^{n+1}}\(1-\(1+\delta t+\frac{(\delta t)^2}{2!}+\cdots+\frac{(\delta t)^n}{n!}\)e^{-\delta t}\)\le \frac{n!}{\delta^{n+1}}.
$$
This proves \eqref{w1c}.

To prove \eqref{w4}, set
$$w(t,x)=e^{\eps (|x|-\theta t)}\quad {\rm with}\,\,\, \eps>0,\,\, \theta>1.$$
Then
$$\dt w=-\eps \theta  w,\quad \dx w= \eps\frac{x}{|x|} w.$$

From \eqref{er}, we have
\be \label{er1}
\left\{\bln
&\dt R_n+v_1\dx R_n-L_1R_n-v \cdot Z^{4}_n\chi_0=K_1I_{3n}+v \cdot U^{4}_{3n}\chi_0,\\
&\dt Z^{1}_n=-(R_n,v_1\chi_0)+\Theta_{3n},\,\,\, \dx Z^{1}_n=( R_n,\chi_0)+\nu_0 U^{1}_{3n},\\
 &\dt Z^{2}_n= \dx \O_1 Z^{3}_n-(R_n,\bar{v}\chi_0)-( I_{3n},\bar{v}\chi_0),\\
 & \dt Z^{3}_n= -\dx \O_1Z^{2}_n. 
  \eln\right.
\ee
Taking inner product of $\eqref{er1}_1$ and $R_nw$ gives
\bma
&\frac12\Dt\intra \|R_n\|^2wdx+\frac12 \eps \theta \intra \|R_n\|^2wdx-\frac12\intra (v_1R_n,R_n)\dx wdx\nnm\\
&-\intra (L_1R_n,R_n)wdx-\intra   Z^{4}_n (R_n,v\chi_0) wdx\nnm\\
=&\intra (K_1I_{3n},R_n)wdx+\intra U^{4}_{3n}(R_n,v\chi_0)wdx. \label{en2}
\ema
Since
\bmas
\intra (v_1R_n,R_n)\dx wdx
=&2\intra (v_1P_dR_n,P_rR_n)\dx wdx+\intra (v_1P_rR_n,P_rR_n)\dx wdx\nnm\\
\le&  \eps\intra  (P_dR_n,P_dR_n) wdx+C\eps \intra(|v|P_rR_n,P_rR_n)  wdx, 
\emas
and
\bmas
-\intra   Z^{4}_n (R_n,v\chi_0) wdx
=&\intra  Z^{1}_n (\dt Z^{1}_n-\Theta_{3n})wdx\nnm\\
& +\intra Z^{2}_n [\dt Z^{2}_n-\dx \O_1Z^{3}_n+(I_{3n},\bar{v}\chi_0)]  wdx\nnm\\
=&\frac12\Dt\intra |(Z^{4}_n,Z^{3}_n)|^2  wdx+\frac12 \eps \theta\intra  |(Z^{4}_n,Z^{3}_n)|^2 wdx\nnm\\
&-\intra [Z^{1}_n\Theta_{3n}- Z^{2}_n(I_{3n},\bar{v}\chi_0)]wdx+\intra Z^{2}_n \O_1Z^{3}_n \dx wdx , 
\emas
 it follows from \eqref{en2} that for   $0<\eps\ll1$, $\theta=3/2$,
\bmas
&\frac12\Dt\intra \|Z_n\|^2wdx+\frac{\eps}{4}  \intra \|Z_n\|^2wdx+\frac\mu2\intra (\nu P_1R_n,P_1R_n)wdx \\
\le &\frac{C}{\eps}\intra (|\Theta_{3n}|^2 +|U^{4}_{3n}|^2+\|I_{3n}\|^2)wdx ,
\emas
which gives rise to
\be
 \intra \|Z_n\|^2wdx \le e^{-\frac{\eps t}{2}}\intra \|Z_n(0)\|^2wdx  +C\intt\intra e^{-\frac{\eps(t-s)}{2}}(|\Theta_{3n}|^2+\|\U_{3n}\|^2 )wdx ds. \label{kkk2}
\ee

We claim that for $0\le b\le \nu_0$ and $n\ge \alpha+1$,
\be
\intra e^{b|x|}(|\dxa\Theta_{3n}|^2+\|\dxa \U_{3n}\|^2 )dx\le Ce^{b t}(1+t)^{4n-2}. \label{kkk}
\ee
Assume that $\hat V(\xi)$ is analytic for $\xi\in D_{\delta}$ with $\delta>0$ and satisfies that $|\xi|^{\alpha}|\hat V(\xi)|\to 0$ as $|\xi|\to \infty$. Since
$$
e^{b x}\dxa V(x)=C\int^{\infty}_{-\infty} e^{ix\xi+b x}\xi^{\alpha}\hat{V}(\xi)d\xi
=C\int^{\infty}_{-\infty} e^{ixu}(u-ib)^{\alpha}\hat{V}(u-ib)du
$$
for $x\in \R$ and $0<b<\delta$, it follows that
$$
e^{b |x|}\dxa V(x)=\left\{\bln
&\mathcal{F}^{-1}\((u-ib)^{\alpha}\hat{V}(u-ib)\),\quad x\ge 0,\\
&\mathcal{F}^{-1}\((u+ib)^{\alpha}\hat{V}(u+ib)\),\quad x<0.
\eln\right.
$$
By Parseval's equality, we  obtain
\be
\int^{\infty}_{-\infty} e^{2b|x|}\|\dxa V(x)\|^2dx\le C\int^{\infty}_{-\infty} |u\pm ib|^{2\alpha}\|\hat{V}(u\pm ib)\|^2du, \label{u1}
\ee for any $0<b<\delta$. Thus, it follows from \eqref{u1}, \eqref{lll} and \eqref{phi1} that for $0\le 2b\le \nu_0$,
\bmas
&\quad\int^{\infty}_{-\infty} e^{2b|x|}(|\Theta_{3n}|^2+\|\U_{3n}\|^2 )(t,x)dx\\
 &\le C\int^{\infty}_{-\infty} |u\pm ib |^{2\alpha} (|\hat{\Theta}_{3n}|^2+\|\hat{\U}_{3n}\|^2 )(t,u\pm ib)du\\
&\le C\int^{\infty}_{0} e^{2bt}(1+t)^{4n-2}(1+u)^{2\alpha-2n}\ln^{2n}(2+u)du\\
&\le Ce^{2bt}(1+t)^{4n-2},
\emas
which prove \eqref{kkk}. Similarly,  by \eqref{er} we can obtain that for $0\le b\le \nu_0$ and $3n\ge \alpha$,
\be \intra e^{b|x|}\|\dxa Z_n(0)\|^2 dx\le C . \label{kkk1}\ee
Combining \eqref{kkk2}, \eqref{kkk} and \eqref{kkk1}, we have
\be
\intra \|Z_n\|^2 wdx\le C . \label{r2}
\ee
Similarly,
\be
\intra \|\dx Z_n\|^2 wdx \le  C . \label{r3}
\ee
Thus by \eqref{r2}, \eqref{r3} and Sobolev's embedding theorem, we have
$$
w(t,x)\|Z_n(t,x)\|^2\le C .
$$
This completes the proof of  the lemma.
\end{proof}


\begin{lem}\label{l-high2} Given $n\ge 2 $. There exist constants $\eta_0>0$ and $C>0$   such that
\bq
\|  G_{H,1}(t,x)-Y_{n,1}(t,x) \| \le Ce^{-\eta_0t},
\eq where $ G_{H,1}$ and $Y_{n,1}$ are defined by \eqref{GL0} and \eqref{Yn1} respectively.
\end{lem}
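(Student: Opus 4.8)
The plan is to extract two complementary estimates on the operator $H(t,\xi):=\hat G_{H,1}(t,\xi)-\hat Y_{n,1}(t,\xi)$ (acting from $\hat{\mathcal X}_1$ to $L^2_v\times\C^3\times\C^3$): one that is exponentially small in $t$ but merely uniform in $\xi$, and one that is absolutely integrable in $\xi$ but only polynomially (times $e^{\delta t}$ with $\delta$ arbitrarily small) controlled in $t$. Balancing the two through a $t$-dependent frequency cut-off then yields the exponential decay.

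For the first estimate I would use that $\hat G_{H,1}(t,\xi)$ is supported in $|\xi|\ge r_1$, so $H(t,\xi)=-\hat Y_{n,1}(t,\xi)$ for $|\xi|<r_1$. Summing \eqref{w2} over $k\le 3n$, together with the elementary uniform bound $\|\hat{\U}_{k,1}(t,\xi)\|\le C_n(1+t)^{3n}e^{-\nu_0 t}$ (which, as in the proof of Lemma~\ref{mix3}, is available for every $k$, not only multiples of three), gives $\|\hat Y_{n,1}(t,\xi)\|\le C_n(1+t)^{3n}e^{-\nu_0 t}$ for all $\xi\in\R$; and for $|\xi|\ge r_1$ Lemma~\ref{l-1} gives $\|\hat G_{H,1}(t,\xi)\|\le Ce^{-\kappa_0 t}$. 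Hence $\|H(t,\xi)\|\le C_n(1+t)^{3n}e^{-\eta_1 t}$ uniformly in $\xi\in\R$, for any fixed $\eta_1\in(0,\min\{\kappa_0,\nu_0\})$.

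For the second estimate I would restrict to $|\xi|\ge r_1$, where $\hat G_2=\hat G_{2,2}$, and use $\hat G=\hat Y_n+\hat Z_n=\hat Y_{n,1}+\hat Y_{n,2}+\hat Z_n$ together with $\hat G_{H,1}=\hat G\,1_{\{|\xi|\ge r_1\}}-\hat G_{H,0}$ to write
\be\label{l-high2-split}
H(t,\xi)=\big(\hat Y_{n,2}-\hat G_2\big)(t,\xi)+\hat Z_n(t,\xi)-\big(\hat G_{H,0}-\hat G_{2,2}\big)(t,\xi).
\ee
Then the first term is $O\big((1+t)^{2n-1}(1+|\xi|)^{-3/2}\ln(2+|\xi|)\big)$ on $\xi\in\R$ by Lemma~\ref{W_1}; the second is $O\big(\delta^{-2n}e^{\delta t}(1+|\xi|)^{-3/2}\big)$ by Lemma~\ref{W_2}, where $n\ge2$ is used so that $(1+|\xi|)^{-n}\ln^n(2+|\xi|)\le(1+|\xi|)^{-3/2}$ for $|\xi|\ge r_1$; and the third is $O\big((1+t)(1+|\xi|)^{-3/2}\ln^2(2+|\xi|)\big)$ on $|\xi|\ge r_1$ — this is exactly what the decompositions \eqref{decom1}, \eqref{decom3} together with \eqref{h5a}, \eqref{h6a} yield in the proof of Lemma~\ref{gh0}, the point being that the non-integrable, distribution-valued leading parts of $\hat G_{H,0}$ and $\hat G_{2,2}$ cancel. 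Thus $\|H(t,\xi)\|\le C_n\delta^{-2n}(1+t)^{2n-1}e^{\delta t}(1+|\xi|)^{-3/2}\ln^2(2+|\xi|)$ for $|\xi|\ge r_1$, which is integrable in $\xi$.

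Finally I would write $H(t,x)=\frac1{\sqrt{2\pi}}\intra e^{ix\xi}H(t,\xi)\,d\xi$, split the integral at $|\xi|=R(t):=e^{\sigma t}$, estimate the part $|\xi|\le R(t)$ by the uniform bound (contribution $\lesssim R(t)(1+t)^{3n}e^{-\eta_1 t}$) and the part $|\xi|>R(t)$ by the integrable bound (contribution $\lesssim \delta^{-2n}(1+t)^{2n-1}e^{\delta t}R(t)^{-1/4}$, since $\int_R^\infty\xi^{-3/2}\ln^2\xi\,d\xi\le CR^{-1/4}$), and then choose, say, $\sigma=\eta_1/2$ and $\delta=\eta_1/16$, which makes both exponents strictly negative; absorbing the polynomial factors gives $\|H(t,x)\|\le Ce^{-\eta_0 t}$ with $\eta_0>0$. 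The hard part, as I see it, is the middle step: no single term in \eqref{l-high2-split} decays in $t$, so one has to recognize that their sum is forced to be exponentially small purely by the semigroup bound of Lemma~\ref{l-1}, while separately verifying that each term is integrable in $\xi$ — and it is precisely the hypothesis $n\ge2$ that makes $\hat Z_n$, hence all of \eqref{l-high2-split}, absolutely integrable.
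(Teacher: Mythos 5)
Your argument is correct and follows essentially the same route as the paper's proof: the paper also combines the decomposition $\hat G_{H,1}-\hat Y_{n,1}=\hat Z_n+(\hat Y_{n,2}-\hat G_{H,0})-(\hat G_L+\hat G_M)$ (estimated via Lemmas \ref{gh0}, \ref{W_1}, \ref{W_2} to get a $\xi$-integrable bound with at most $e^{\delta t}$ growth) with the uniform bound $\|\hat G_{H,1}\|+\|\hat Y_{n,1}\|\le Ce^{-\kappa_0 t}$ from Lemmas \ref{l-1} and \ref{mix3}. The only difference is cosmetic: the paper merges the two bounds by taking a geometric mean (yielding $Ce^{-\kappa_0 t/4}(1+|\xi|)^{-9/8}\ln^{3/4}(2+|\xi|)$ and integrating), whereas you split the $\xi$-integral at a time-dependent radius $R(t)=e^{\sigma t}$; both are valid and give the same conclusion.
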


\begin{proof}
By \eqref{L-R} and  \eqref{YZ}, we have
$$
G(t,x)=G_L(t,x)+G_M(t,x)+G_{H,0}(t,x)+G_{H,1}(t,x),
$$
and
$$
G(t,x)=Y_{n,1}(t,x)+Y_{n,2}(t,x)+Z_n(t,x).
$$
It follows that
\be
G_{H,1}-Y_{n,1}=Z_n+(Y_{n,2}-G_{H,0})-(G_L+G_M).\label{GH1}
\ee
From Lemmas \ref{gh0}, \ref{W_1} and \ref{W_2}, it holds that for $n\ge 2$ and $\xi\in \R$,
\bma
\|(\hat{Y}_{n,2}-\hat{G}_{H,0})(t,\xi)\|&\le  \|(\hat{Y}_{n,2}-\hat{G}_{2})(t,\xi)\|+\|(\hat{G}_2 -\hat{G}_{H,0})(t,\xi)\|\nnm\\
&\le C (1+t)^{2n-1}(1+|\xi|)^{-\frac32}\ln(2+|\xi|),
\\
\|\hat{Z}_n(t,\xi)\|&\le Ce^{\delta t}(1+|\xi|)^{-2}\ln^2(2+|\xi|).\label{Z_1}
\ema
By \eqref{GH1}--\eqref{Z_1}, we obtain
\be
\|\hat{G}_{H,1}(t,\xi)-\hat{Y}_{n,1}(t,\xi)\|\le Ce^{\delta t}(1+|\xi|)^{-\frac32}\ln(2+|\xi|). \label{x3}
\ee
Also, by  Lemmas \ref{l-1} and \ref{mix3}, we have
\be
\|\hat{G}_{H,1}(t,\xi)\|+\|\hat{Y}_{n,1}(t,\xi)\|\le Ce^{-\kappa_0t}. \label{x5}
\ee
Thus, it follows from \eqref{x3} and \eqref{x5} that
$$
\|\hat{G}_{H,1}(t,\xi)-\hat{Y}_{n,1}(t,\xi)\|\le Ce^{-\frac{\kappa_0}{4}t}(1+|\xi|)^{-\frac{9}{8}}\ln^{\frac{3}{4}}(2+|\xi|),
$$
which gives
\bmas
\| (G_{H,1} -Y_{n,1})(t,x) \|=&C\left\|\int^\infty_{-\infty}e^{i x\xi}  [\hat{G}_{H,1}(t,\xi)-\hat{Y}_{n,1}(t,\xi)]d \xi\right\|\\
\le& C\int^\infty_{-\infty}e^{-\frac{\kappa_0}{4}t} (1+|\xi|)^{-\frac{9}{8}}\ln^{\frac{3}{4}}(2+|\xi|)d \xi \le Ce^{-\eta_0t},
\emas
where $\eta_0=\kappa_0/4$. The proof of the lemma is completed.
\end{proof}

Combining the decompositions \eqref{L-R} and \eqref{YZ} and the pointwise estimates in sections \ref{fluid} and \ref{kinetic},
we are ready to  prove Theorem \ref{green1} as follows.

\begin{proof}[\underline{\textbf{Proof of Theorem \ref{green1}}}]
By \eqref{L-R},  \eqref{YZ} and \eqref{Yn2}, we have
\be
G (t,x) =G_{L,0}  +G_{L,1} +G_{H,0}  +G_{H,1} +G_M , \label{L-R2}
\ee
and
\be
G(t,x)=Y_{2,1} +Y_{2,2} + Z_2 . \label{W-R1}
\ee
Let
$$G_4(t,x)=G(t,x) -G_{2}(t,x)-Y_{2,1}(t,x).$$
For $|x|\le 2t$, by \eqref{L-R2} we decompose
$$
G_4=  G_{L,0} +(G_{H,0}-G_{2,2}) +(G_{H,1}-Y_{2,1}) +(G_{2,1}+G_M+ G_{L,1}).
$$
For $|x|\ge 2t$, by \eqref{W-R1} we decompose
$$
G_4= (Y_{2,2}-G_{2})(t,x)+Z_2(t,x).
$$
Thus
\be
G= W_0+G_{F,0}+G_{F,1} + G_{R}, \label{GV}
\ee
where
$$
\left\{\bln
W_0&=Y_{2,1}+G_2,\\
 G_{F,0} &=G_{L,0} 1_{\{|x|\le 2t\}} ,\quad
  G_{F,1} =(G_{H,0}-G_{2,2})1_{\{|x|\le 2t\}} ,\\
  G_{R} &= (G_{H,1}-Y_{2,1}) 1_{\{|x|\le 2t\}}+(G_{2,1}+G_M+ G_{L,1})1_{\{|x|\le 2t\}}\\
  &\quad+(Y_{2,2}-G_{2}+Z_2 )1_{\{|x|\ge 2t\}}.
  \eln\right.
$$
 By Lemmas \ref{gl0}--\ref{gh0}, we can obtain \eqref{in1}--\eqref{in2}. By Lemmas \ref{W_1}, \ref{W_2} and \ref{l-high2}, we can obtain \eqref{in3}. The proof of the theorem is completed.
\end{proof}

\section{Green's function of Boltzmann equation}\setcounter{equation}{0}
\label{sect3}

In this section, we establish the pointwise space-time estimates of the Green's function to the linear Boltzmann equation \eqref{LB} as stated in Theorem \ref{green2}. As stated in the introduction,
the authors  in  \cite{Liu1,Liu2} study the pointwise estimates of the solution to the linearized Boltzmann equation with the initial data $f_0$ being compactly supported. We rewrite the results in \cite{Liu1,Liu2} to establish the pointwise estimates on the Green's function for the linearized Boltzmann equation~\eqref{LB} with the initial data $f_0=\delta(x)I_v$. Different from \cite{Liu1,Liu2},  we study the kinetic waves through Fourier transform of Boltzmann equation \eqref{be} in order to overcome the difficulty caused by the singularity arising from  $\delta(x)$. For this, we introduce an improved mixture lemma in frequency space  (Lemma  \ref{mix1a}), and construct the singular kinetic waves $\hat{W}_1$ of the Green's function $\hat{G}_b$  stated in \eqref{W-1}. 

\subsection{Spectrum of Boltzmann equation}
\begin{thm}[\cite{Ellis,Ukai1,Liu2}]\label{spect2}
Let $\sigma(\BB_0(\xi))$ denotes the spectral set
of the operator $\BB_0(\xi)$. We have

(1) For any $r_1>0$  there
exists $\alpha=\alpha(r_1)>0$ such that  for $|\xi|\geq r_1$ that
\be
\sigma(\BB_0(\xi))\subset\{\lambda\in\mathbb{C}\,|\,{\rm Re}\lambda\le -\alpha\}. \label{sg2a}
\ee

(2)
There exists a constant $r_0>0$ such that  $\sigma(\BB_0(\xi))$ for   $|\xi|\leq r_0$
consists of five points $\{\eta_j(\xi),j=-1,0,1,2,3\}$ in the domain $\mathrm{Re}\lambda>-\mu/2$,  which are  analytic functions of $\xi$ for $|\xi|\leq r_0$:
\be                                   \label{specr0c}
 \left\{\bln
 &\eta_{\pm1}(\xi)=\pm i\mathbf{c}\xi -A_{\pm1}\xi^2 +O(\xi^3), \\
& \eta_{0}(\xi) =-A_0\xi^2 +O(\xi^3) ,\\
 &\eta_{2}(\xi) = \eta_{3}(\xi) =-A_2\xi^2 +O(\xi^3),
 \eln\right.
 \ee
 where $A_{i}>0$, $i=-1,0,1,2,3$ are defined by
\bq
\left\{\bln
&A_{j}=- (L^{-1} P_1v_1E_j,v_1E_j)>0,
\\
&E_{\pm1}=\sqrt{\frac3{10}}\chi_0\pm\frac{\sqrt2}2v_1\sqrt{M}+\sqrt{\frac15}\chi_4,\\
&E_0=\sqrt{\frac25}\chi_0-\sqrt{\frac35}\chi_4,\\
&E_j= v_j\sqrt{M},\quad j=2,3.
\eln\right.
\eq

The eigenfunctions $ \varphi_j(\xi) $ are  pairwise orthogonal and satisfy \be
 \left\{\bln
 &(\varphi_j(\xi),\overline{\varphi_k(\xi)})=\delta_{jk},
  \quad  j, k=-1,0,1,2,3,                                  \label{eigfr0-1}
 \\
&\varphi_j(\xi) =E_j +\varphi_{j,1}\xi+O(\xi^2), \quad |\xi|\leq r_0,
 \eln\right.
 \ee
 where the coefficients $\varphi_{j,1}$  have the following expressions
\bq
  \left\{\bln                      \label{eigf1-1}
 &\varphi_{l,1}=\sum^1_{k=-1}b^l_{k}E_k+ i L^{-1} P_1v_1E_l,\,\,\  l=-1,0,1, \\
 &\varphi_{k,1}=  i L^{-1} P_1v_1E_k, \quad k=2,3,
  \eln\right.
  \eq  with $b^j_{k}$, $j,k=-1,0,1$ given by
  \be
   \left\{\bln
  &b^j_{j}=0,\quad
  b^j_{k}=\frac{(L^{-1}P_1v_1E_{j},v_1E_{k})}{i(u_k-u_j)},\,\,\ j\ne k,\\
  &u_{\pm1}=\mp\sqrt{\frac53}, \quad u_0=0.
   \eln\right.
  \ee
\end{thm}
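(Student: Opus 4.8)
The plan is to split the analysis into the high-frequency regime $|\xi|\ge r_1$ and the low-frequency regime $|\xi|\le r_0$, following the classical spectral analysis of the references. In the first regime one establishes a uniform resolvent bound, and in the second one applies analytic perturbation theory to the isolated multiplicity-five eigenvalue $0$ of the self-adjoint operator $\BB_0(0)=L$, whose eigenspace is $N_0=\mathrm{span}\{\chi_0,\dots,\chi_4\}$ and whose remaining spectrum lies in $\{\mathrm{Re}\,\lambda\le-\mu\}$.

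For the high-frequency claim (1) I would write $\BB_0(\xi)=c(\xi)+K$ with $c(\xi)=-\nu(v)-iv_1\xi$, so that $\|(\lambda-c(\xi))^{-1}\|\le(\nu_0+\mathrm{Re}\,\lambda)^{-1}$ and it suffices to make $\|K(\lambda-c(\xi))^{-1}\|<1$. Since $K$ is not small, split $K=K^\sharp+K^\flat$, where $K^\sharp$ has a smooth, compactly supported kernel in $(v,u)$ and $\|K^\flat\|\le\epsilon$; then $\|K^\flat(\lambda-c(\xi))^{-1}\|\le\epsilon(\nu_0+\mathrm{Re}\,\lambda)^{-1}$, while the oscillatory factor $e^{-iv_1\xi s}$ hidden in the resolvent forces, by the same change of variables $v_1\mapsto u_1/\zeta$ used in the proof of Lemma \ref{lem1}, the bound $\|K^\sharp(\lambda-c(\xi))^{-1}\|\le C|\xi|^{-1/2}$ uniformly for $\mathrm{Re}\,\lambda\ge-\nu_0/2$. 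Choosing $\epsilon$ small and then $r_1$ large yields $\|K(\lambda-c(\xi))^{-1}\|<1$ for $|\xi|\ge r_1$, $\mathrm{Re}\,\lambda\ge-\alpha$ with a suitable $\alpha\in(0,\nu_0/2)$, and a Neumann series gives invertibility of $\lambda-\BB_0(\xi)$, which is \eqref{sg2a}.

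For the low-frequency claim (2), since $\xi\mapsto\BB_0(\xi)=L-iv_1\xi$ is an analytic family of type (A) and $0$ is a semisimple eigenvalue of $L$ of multiplicity $5$, Kato's theory provides, for $|\xi|\le r_0$, an analytic total eigenprojection $P(\xi)$ with $P(0)=P_0$ onto a $5$-dimensional invariant subspace, and the eigenvalues near $0$ are those of $P(\xi)\BB_0(\xi)P(\xi)$. Eliminating the microscopic component via a Schur complement (writing $f=P_0f+P_1f$ and inverting $L-\lambda P_1-i\xi P_1v_1P_1$, which is analytic near $\lambda=0$ on $\mathrm{Ran}\,P_1$) reduces the problem to $\det\!\big(\lambda+i\xi\,P_0v_1P_0-\xi^2P_0v_1L^{-1}P_1v_1P_0+O(\xi^3)\big)=0$ on $N_0$. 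At first order this is controlled by multiplication by $v_1$ on $N_0$, whose eigenvalues $\mp\sqrt{5/3},\,0,\,0,\,0$ have eigenvectors $E_{\pm1}$ and $E_0,E_2,E_3$ — which is exactly why these vectors are taken as the leading order of $\psi_j(\xi)$; the simple acoustic modes split off analytically and receive the second-order correction $-A_{\pm1}\xi^2$ with $A_{\pm1}=-(L^{-1}P_1v_1E_{\pm1},v_1E_{\pm1})$, the positivity of every $A_j$ following from $(Lg,g)\le-\mu\|P_1g\|^2$ together with the negative-definiteness and self-adjointness of $L^{-1}$ on $\mathrm{Ran}\,P_1$. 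The triple eigenvalue $0$ requires degenerate second-order perturbation on $\mathrm{span}\{E_0,E_2,E_3\}$: the quadratic form $-\xi^2(P_0v_1L^{-1}P_1v_1\,\cdot\,,\,\cdot\,)$ there is diagonalized by $E_0,E_2,E_3$ with values $-A_0\xi^2,-A_2\xi^2,-A_2\xi^2$; the identity $\eta_2(\xi)\equiv\eta_3(\xi)$ (so no genuine branch point) is a consequence of the $SO(2)$ rotational invariance of $\BB_0(\xi)$ in $(v_2,v_3)$, under which $E_2,E_3$ span an irreducible two-dimensional representation, while $\eta_0$ is one-dimensional and analytic by the implicit function theorem. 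The expansions $\psi_j(\xi)=E_j+\psi_{j,1}\xi+O(\xi^2)$ with $\psi_{j,1}$ as in \eqref{eigf1-1} and the coefficients $b^j_k$ are then read off term by term: $iL^{-1}P_1v_1E_j$ is the microscopic first-order correction, and the $N_0$-component of $\psi_{l,1}$ for $l=-1,0,1$ comes from the off-diagonal entries $(L^{-1}P_1v_1E_j,v_1E_k)/i(u_k-u_j)$ of the first-order acoustic matrix.

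The main obstacle is twofold. First, the high-frequency resolvent estimate genuinely uses the dispersive gain from the oscillatory factor $e^{-iv_1\xi s}$, since $K$ is not a small perturbation; the smooth--rough splitting of $K$ together with the one-dimensional change of variables (as in Lemma \ref{lem1}) is the essential mechanism. Second, one must handle the level crossing at $\xi=0$ of the three would-be-degenerate branches $\eta_0,\eta_2,\eta_3$: rather than merely invoking analyticity of the roots of the characteristic determinant, the argument must exploit the rotational symmetry to force $\eta_2\equiv\eta_3$ exactly and to let $\eta_0$ separate off analytically. The remaining computations are the standard Chapman--Enskog bookkeeping, with the explicit constants obtained by evaluating the relevant inner products against the orthonormal macroscopic basis $\{E_j\}$.
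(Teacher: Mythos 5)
You have reconstructed, essentially correctly, the classical argument of the cited references \cite{Ellis,Ukai1,Liu2} (the paper itself gives no proof of Theorem \ref{spect2}, only the citation): a resolvent/Neumann-series bound at high frequency, and at low frequency Kato perturbation theory with macro--micro (Schur) reduction, a two-step treatment of the degenerate eigenvalue $0$, and the $(v_2,v_3)$-rotational symmetry to force $\eta_2\equiv\eta_3$ exactly. The low-frequency half of your sketch is sound, and the formulas you extract for $A_j$, $\psi_{j,1}$ and $b^j_k$ are the correct ones (in particular, since the first-order matrix $-iP_0v_1P_0$ is already diagonal in the basis $\{E_j\}$ and annihilates $\mathrm{span}\{E_0,E_2,E_3\}$, the effective second-order operator on that subspace is just $Qv_1L^{-1}P_1v_1Q$, so no cross-corrections enter $A_0,A_2,A_3$).

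The one genuine gap is in part (1). The statement asserts a uniform spectral gap for \emph{every} $r_1>0$ and all $|\xi|\ge r_1$, but your mechanism --- ``choosing $\epsilon$ small and then $r_1$ large'' --- only yields a contraction $\|K(\lambda-c(\xi))^{-1}\|<1$ beyond some large threshold $R_0$. On the intermediate annulus $r_1\le|\xi|\le R_0$ the dispersive gain is merely $O(|\xi|^{-1/2})=O(r_1^{-1/2})$, which is useless when $r_1$ is small, and $K$ is not small there. You need the separate, standard compactness argument: by Weyl's theorem (compactness of $K$ relative to $c(\xi)$) the essential spectrum of $\BB_0(\xi)$ lies in $\{\mathrm{Re}\,\lambda\le-\nu_0\}$, so only finitely many discrete eigenvalues can sit in $\{\mathrm{Re}\,\lambda>-\nu_0/2\}$; each such eigenvalue has strictly negative real part when $\xi\ne0$, since $\mathrm{Re}\,\lambda=0$ would give $(Lf,f)=0$, hence $f\in N_0$ and then $-iv_1\xi f=\lambda f$, which would make $f\ne0$ an $L^2$ eigenfunction of multiplication by $v_1$ --- impossible; finally, these eigenvalues depend continuously on $\xi$ and remain in a bounded region of the half-plane $\mathrm{Re}\,\lambda>-\nu_0/2$ for $\xi$ in a compact set, so compactness of the annulus yields a uniform $\alpha(r_1)>0$. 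This is exactly the mechanism behind part (3) of Lemma \ref{spectrum1} for the VMB operator; without it, part (1) of the theorem is not established as stated.
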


\begin{thm}[\cite{Ellis,Ukai1}]\label{E_3c}The semigroup $S(t,\xi)=e^{t\BB_0(\xi)}$   satisfies
 \be
 S(t,\xi)f=S_1(t,\xi)f+S_2(t,\xi)f,
     \quad f\in L^2(\R^3_v), \ \ t>0, \label{E_3d}
 \ee
 where
 \bq
 S_1(t,\xi)f=\sum^3_{j=-1}e^{t\eta_j(\xi)} \(f,\overline{\varphi_j(\xi)}\) \varphi_j(\xi)
               1_{\{|\xi|\leq r_0\}},           \label{E_5}
 \eq
with $(\beta_j(|\xi|),\varphi_j(\xi))$ being the eigenvalue and eigenfunction of the operator $\BB_0(\xi)$ given by Theorem~\ref{spect2} for $|\xi|\le r_0$,
and $S_2(t,\xi)f =: S(t,\xi)f-S_1(t,\xi)f$ satisfies for a constant $\sigma_0>0$ independent of $\xi$ that
 \bq
 \|S_2(t,\xi)f\| \leq Ce^{-\sigma_0t}\|f\| ,\quad t>0.\label{B_3}
 \eq
\end{thm}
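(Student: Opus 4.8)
The plan is to reproduce the classical spectral-semigroup argument of Ellis--Pinsky and Ukai \cite{Ellis,Ukai1}, organized around the spectral data already recorded in Theorem \ref{spect2}. First one notes that $\BB_0(\xi)=L-iv_1\xi=(K-\nu(v))-iv_1\xi$ does generate a $C_0$-semigroup $S(t,\xi)=e^{t\BB_0(\xi)}$ on $L^2(\R^3_v)$: the multiplication operator $-\nu(v)-iv_1\xi$ is $m$-dissipative and generates the contraction semigroup $\hat S^t$ of \eqref{S-t}, while $K$ is bounded, so the bounded-perturbation theorem applies; moreover $\mathrm{Re}(\BB_0(\xi)f,f)=(Lf,f)\le 0$ by Boltzmann's H-theorem, hence $\|S(t,\xi)\|\le 1$ for real $\xi$, a bound used throughout. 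The semigroup is then split according to the two regimes $|\xi|\le r_0$ and $|\xi|\ge r_0$.

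For $|\xi|\le r_0$, Theorem \ref{spect2}(2) says that the only spectrum of $\BB_0(\xi)$ in $\{\mathrm{Re}\lambda>-\mu/2\}$ consists of the five eigenvalues $\eta_j(\xi)$, $j=-1,\dots,3$, which are semisimple with the bi-orthonormal eigenfunctions $\psi_j(\xi)$ of \eqref{eigfr0-1} and satisfy $\eta_j(\xi)=O(\xi^2)$ by \eqref{specr0c}. After shrinking $r_0$ so that $\mathrm{Re}\,\eta_j(\xi)>-\sigma_0$ for some fixed $0<\sigma_0<\mu/2$ and all $|\xi|\le r_0$, I would take the Riesz spectral projection $P(\xi)=\frac1{2\pi i}\oint_\Gamma(\lambda-\BB_0(\xi))^{-1}\,d\lambda$ over a small loop $\Gamma$ enclosing $\{\eta_j(\xi)\}$ inside $\{\mathrm{Re}\lambda>-\sigma_0\}$, and set $S_1(t,\xi)=S(t,\xi)P(\xi)1_{\{|\xi|\le r_0\}}$. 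Since each $\eta_j(\xi)$ is semisimple with eigenprojection $f\mapsto(f,\overline{\psi_j(\xi)})\psi_j(\xi)$, this is exactly the formula \eqref{E_5}. On the complementary range $\mathrm{Ran}(I-P(\xi))$ the generator has spectrum in $\{\mathrm{Re}\lambda\le-\sigma_0\}$; because $L^2_v$ is a Hilbert space and $\|(\lambda-\BB_0(\xi))^{-1}(I-P(\xi))\|$ is uniformly bounded on $\{\mathrm{Re}\lambda\ge-\sigma_0\}$ (uniformly for $|\xi|\le r_0$, by continuity on a compact set), the Gearhart--Pr\"uss theorem gives $\|S(t,\xi)(I-P(\xi))f\|\le Ce^{-\sigma_0 t}\|f\|$ --- equivalently one shifts the Laplace-inversion contour of $S(t,\xi)(I-P(\xi))$ from $\mathrm{Re}\lambda=\kappa\gg1$ down to $\mathrm{Re}\lambda=-\sigma_0$.

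For $|\xi|\ge r_0$ one sets $P(\xi)=0$, so $S_2(t,\xi)=S(t,\xi)$, and the point is the uniform bound $\|S(t,\xi)\|\le Ce^{-\sigma_0 t}$. By \eqref{sg2a} together with a compactness/continuity argument on the annulus $r_0\le|\xi|\le r_1$ one fixes $\alpha>0$ with $\sigma(\BB_0(\xi))\subset\{\mathrm{Re}\lambda\le-\alpha\}$ for all $|\xi|\ge r_0$, and then needs a uniform resolvent bound on $\{\mathrm{Re}\lambda\ge-\sigma_0\}$ with $0<\sigma_0<\alpha$. I would use the splitting $(\lambda-\BB_0(\xi))^{-1}=(\lambda+\nu+iv_1\xi)^{-1}+(\lambda+\nu+iv_1\xi)^{-1}K(\lambda-\BB_0(\xi))^{-1}$: for $|\xi|\ge R$ with $R$ large, a Lemma \ref{lem1}-type estimate gives $\|(\lambda+\nu+iv_1\xi)^{-1}K\|\le C(1+|\xi|)^{-1/2}<\tfrac12$, so the Neumann series $\sum_k(\lambda+\nu+iv_1\xi)^{-1}[K(\lambda+\nu+iv_1\xi)^{-1}]^k$ converges with a bound uniform in $\lambda$ and $\xi$ (using $\|(\lambda+\nu+iv_1\xi)^{-1}\|\le(\nu_0-\sigma_0)^{-1}$ from \eqref{nuv}); for the compact band $r_0\le|\xi|\le R$ one invokes analytic Fredholm theory and the spectral gap for $|\mathrm{Im}\lambda|\le M$, combined with the decay $\|K(\lambda+\nu+iv_1\xi)^{-1}\|\to0$ as $|\mathrm{Im}\lambda|\to\infty$ (Gaussian tails of the kernel $k$) for $|\mathrm{Im}\lambda|>M$. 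Gearhart--Pr\"uss then yields the uniform exponential decay. Combining the two regimes, $S_2(t,\xi)=S(t,\xi)-S_1(t,\xi)$ obeys $\|S_2(t,\xi)f\|\le Ce^{-\sigma_0 t}\|f\|$ with $C,\sigma_0$ independent of $\xi$, i.e. \eqref{B_3}.

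The main obstacle is the $|\xi|\ge r_0$ estimate: $\BB_0(\xi)$ is not sectorial --- the transport term $-iv_1\xi$ makes $(\lambda-\BB_0(\xi))^{-1}$ fail to decay along vertical lines --- so a naive Duhamel iteration in $K$ produces only boundedness, never the needed time decay, which must be extracted from the spectral gap. Two routes work: (i) a direct Laplace inversion with contour shift to $\mathrm{Re}\lambda=-\sigma_0$, isolating the explicit free resolvent $(\lambda+\nu+iv_1\xi)^{-1}$ (whose inversion is $\hat S^t$ and needs no contour estimate) and the Laplace transforms of the iterated mixture terms $\hat\M^t_k(\xi)$ (which decay like $t^ke^{-\nu_0t}$), and using the smoothing of $K$ --- most transparently in a weighted $L^\infty_v$ norm, as in \cite{Ukai1} --- to render the remaining contour integral absolutely convergent; or (ii) the Gearhart--Pr\"uss route above, cleaner in the Hilbert space $L^2_v$, which reduces everything to the uniform resolvent bound. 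A secondary technical point is precisely the intermediate band $r_0\le|\xi|\le r_1$, where no structural smallness of $K$ is available and one must combine compactness of $K(\lambda+\nu+iv_1\xi)^{-1}$, analytic Fredholm theory, and the spectral gap of Theorem \ref{spect2} to upgrade to a uniform resolvent bound.
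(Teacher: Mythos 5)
Your reconstruction is sound, but note first that the paper does not prove Theorem \ref{E_3c} at all: it is stated with the citation [\cite{Ellis,Ukai1}] and used as a black box, exactly like its VMB analogue Theorem \ref{semigroup}, whose "proof" in the paper reduces to quoting Lemma \ref{spectrum1} from \cite{Li1}. So the only meaningful comparison is with the classical Ellis--Pinsky/Ukai argument and with the way the paper handles the corresponding decay elsewhere. Your low-frequency step (Riesz projection over a loop around the five semisimple eigenvalues, which by the bi-orthonormality \eqref{eigfr0-1} collapses to the rank-one sum \eqref{E_5}) is the standard route and is correct. For the remainder you package the decay via Gearhart--Pr\"uss, which is a legitimate and arguably cleaner Hilbert-space shortcut; the original references, and the paper's own style in Sections \ref{kinetic}--\ref{sect3}, instead perform the Laplace inversion with an explicit contour shift after peeling off finitely many Duhamel iterates $\hat{\M}^t_k(\xi)$ (whose bounds are Lemma \ref{mix1a}), using the gain of decay of $K(\lambda+\nu+iv_1\xi)^{-1}$ in $|\xi|$ and $|{\rm Im}\lambda|$ to make the shifted integral absolutely convergent. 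The two routes buy the same estimate; the contour-shift version additionally exhibits the singular kinetic waves $\hat{\M}^t_k$ explicitly, which is what the paper actually needs later, whereas Gearhart--Pr\"uss hides them.

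Two points you should tighten. First, you correctly flag the decay of $\|K(\lambda+\nu+iv_1\xi)^{-1}\|$ as $|{\rm Im}\lambda|\to\infty$ as the mechanism for the intermediate band, but the same input is silently needed in your low-frequency step: "uniform boundedness of the resolvent on $\{{\rm Re}\lambda\ge-\sigma_0\}$ by continuity on a compact set" is not enough, since the half-plane is unbounded in ${\rm Im}\lambda$; you must invoke the same Ukai-type lemma there to control the resolvent for large $|{\rm Im}\lambda|$ before compactness takes over on the remaining bounded region. Second, if you use Gearhart--Pr\"uss you should say why the constant $C$ in \eqref{B_3} is uniform in $\xi$; this follows because $\|S(t,\xi)\|\le 1$ uniformly (contractivity from $(Lf,f)\le0$) together with a $\xi$-uniform resolvent bound, but it is not automatic from the qualitative statement of the theorem. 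With these two remarks supplied, the proof is complete and faithful to the cited classical argument.
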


\subsection{Green's function}
By Theorems \ref{spect2} and \ref{E_3c}, we decompose the operator $G_b(t,x)$ into low-frequency part and high-frequency part:
\be \label{L-R1}
\left\{\bln
&G_b(t,x)=G_l(t,x)+G_h(t,x),\\
&G_l(t,x)=\frac1{\sqrt{2\pi}}\int_{|\xi|\le r_0/2} e^{ i x \xi +t\BB_0(\xi)}d\xi,
\\
&G_h(t,x)=\frac1{\sqrt{2\pi}}\int_{|\xi|\ge r_0/2} e^{ i x \xi +t\BB_0(\xi)}d\xi,
\eln\right.
\ee
and the operator $G_l(t,x)$ can be further divided into the fluid part and the non-fluid part:
\be G_l(t,x)=G_{l,0}(t,x)+G_{l,1}(t,x), \label{GL-0a}\ee
where
\bma
\hat G_{l,0}(t,\xi)=&\sum_{j=-1}^3e^{\eta_j(\xi)t} \varphi_j(\xi)\otimes \langle \varphi_j(\xi) |,\label{GL0a}\\
\hat G_{l,1}(t,\xi)=&\hat G_{l}(t,\xi)-\hat G_{l,0}(t,\xi). \label{GL1a}
\ema

First, we have the following estimates on each part of $\hat{G}_{b}(t,\xi)$ defined  by \eqref{L-R1}.

\begin{lem}[\cite{Ukai1,Ukai3}]\label{l-1a}
For any $g_0\in L^2(\R^3_v)$, there exist positive constants $C$ and $\kappa_0$ such that
\bma \|\hat{G}_l(t,\xi)g_0\| &\le  C\|g_0\| , \label{l1a}\\
\|\hat{G}_{l,1}(t,\xi)g_0\| &\le Cr_0e^{-\kappa_0t} \|g_0\| ,\label{low2a}\\
\|\hat{G}_{h}(t,\xi)g_0\|&\le Ce^{-\kappa_0t} \|g_0\| , \label{high1a}
\ema
where $\hat{G}_l(t,\xi)$,  $\hat{G}_{h}(t,\xi)$ and $\hat{G}_{l,1}(t,\xi)$ are  defined by \eqref{L-R1} and \eqref{GL-0a} respectively.
\end{lem}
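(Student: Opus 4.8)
The plan is to read off all three estimates from the semigroup decomposition of Theorem \ref{E_3c} and the low-frequency expansion of Theorem \ref{spect2}, after unpacking the definitions. Since $G_b$ solves \eqref{LB} with $\BB_0=L-v_1\dx$ (see \eqref{LVMB0z}), its Fourier transform solves $\dt\hat{G}_b=\BB_0(\xi)\hat{G}_b$ with $\BB_0(\xi)=L-iv_1\xi$, whence $\hat{G}_b(t,\xi)=S(t,\xi):=e^{t\BB_0(\xi)}$. Consequently, by \eqref{L-R1}--\eqref{GL1a}, one has $\hat{G}_l(t,\xi)=S(t,\xi)1_{\{|\xi|\le r_0/2\}}$, $\hat{G}_h(t,\xi)=S(t,\xi)1_{\{|\xi|\ge r_0/2\}}$, $\hat{G}_{l,0}(t,\xi)=S_1(t,\xi)1_{\{|\xi|\le r_0/2\}}$ (the cutoff in \eqref{E_5} being inactive for $|\xi|\le r_0/2$), and therefore $\hat{G}_{l,1}(t,\xi)=\hat{G}_l(t,\xi)-\hat{G}_{l,0}(t,\xi)=S_2(t,\xi)1_{\{|\xi|\le r_0/2\}}$. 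We may take $\|\cdot\|_\xi$ in the statement to be the ordinary $L^2(\R^3_v)$ norm, since the Boltzmann operator $\BB_0(\xi)$ carries no hypoelliptic $\frac{v_1}{\xi}P_d$ correction.

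For \eqref{l1a} I would first prove the uniform-in-$\xi$ contraction $\|S(t,\xi)g_0\|\le\|g_0\|$: writing $f(t)=S(t,\xi)g_0$ and using that $\xi\in\R$ makes $i\xi(v_1f,f)$ purely imaginary, one gets $\frac12\Dt\|f\|^2=\mathrm{Re}(\BB_0(\xi)f,f)=(Lf,f)\le-\mu\|P_1f\|^2\le0$ by Boltzmann's $H$-theorem, hence $\|f(t)\|\le\|g_0\|$; restricting to $|\xi|\le r_0/2$ gives \eqref{l1a}. For \eqref{low2a} I would simply invoke \eqref{B_3}: since $\hat{G}_{l,1}(t,\xi)=S_2(t,\xi)1_{\{|\xi|\le r_0/2\}}$, Theorem \ref{E_3c} gives $\|\hat{G}_{l,1}(t,\xi)g_0\|\le Ce^{-\sigma_0t}\|g_0\|$, which is \eqref{low2a} with $\kappa_0=\sigma_0$ (the harmless factor $r_0$ being absorbed into $C$).

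For \eqref{high1a} I would split the support $|\xi|\ge r_0/2$ into the band $r_0/2\le|\xi|\le r_0$ and the range $|\xi|\ge r_0$. On $|\xi|\ge r_0$ the cutoff in \eqref{E_5} annihilates $S_1$, so $S(t,\xi)=S_2(t,\xi)$ and \eqref{B_3} yields $\|\hat{G}_h(t,\xi)g_0\|\le Ce^{-\sigma_0t}\|g_0\|$. On the band $r_0/2\le|\xi|\le r_0$ we have $S(t,\xi)=S_1(t,\xi)+S_2(t,\xi)$; the eigenfunctions $\psi_j(\xi)=E_j+O(\xi)$ of Theorem \ref{spect2} are uniformly bounded for $|\xi|\le r_0$, and by the parabolic expansions \eqref{specr0c} one has $\mathrm{Re}\,\eta_j(\xi)\le-A_j\xi^2+C|\xi|^3\le-\big(\frac14\min_jA_j-Cr_0\big)r_0^2=:-\kappa_1$, which is strictly negative once $r_0$ is fixed small enough, so $\|S_1(t,\xi)g_0\|\le Ce^{-\kappa_1t}\|g_0\|$ there; combined with \eqref{B_3} this gives \eqref{high1a} with $\kappa_0=\min(\sigma_0,\kappa_1)$. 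The only point needing a little care is precisely this one: $G_h$ is supported on $|\xi|\ge r_0/2$, which overlaps the zone $|\xi|\le r_0$ where the five fluid modes $\eta_j(\xi)$ still contribute to $S(t,\xi)$, and the exponential bound there must be extracted from the expansion \eqref{specr0c} rather than from Theorem \ref{spect2}(1), which only concerns $|\xi|\ge r_1$. Beyond this bookkeeping the lemma is an immediate consequence of Theorems \ref{spect2} and \ref{E_3c}, i.e. of the classical spectral analysis transcribed into the present Green's-function normalization; no new estimate is required.
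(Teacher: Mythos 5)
This lemma is cited from \cite{Ukai1,Ukai3} and no proof appears in the paper; your task is effectively to reconstruct the standard argument, and you do so correctly. The dissipativity computation for \eqref{l1a}, the identification $\hat G_{l,1}=S_2\,1_{\{|\xi|\le r_0/2\}}$ plus \eqref{B_3} for \eqref{low2a}, and the split of $\{|\xi|\ge r_0/2\}$ into the overlap band $[r_0/2,r_0]$ and $\{|\xi|\ge r_0\}$ for \eqref{high1a} are exactly the right steps given that the paper's frequency cutoffs \eqref{L-R1} sit at $r_0/2$ while the eigenprojection cutoff in \eqref{E_5} sits at $r_0$. Your handling of the band — using the analyticity of $\eta_j$ and the expansion \eqref{specr0c} to get $\mathrm{Re}\,\eta_j(\xi)\le -\kappa_1<0$ for $r_0/2\le|\xi|\le r_0$ with $r_0$ small, then combining with \eqref{B_3} — is the expected argument; note you could alternatively cite Theorem \ref{spect2}(1) with $r_1=r_0/2$ to get a spectral gap on the whole region $|\xi|\ge r_0/2$, though passing from a spectral gap to a uniform semigroup bound still requires the $S_1+S_2$ structure of Theorem \ref{E_3c} on the overlap band, which is what you use. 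Your observation that $\|\cdot\|_\xi$ reduces to the plain $L^2_v$ norm in the Boltzmann (as opposed to VPB/VMB) setting, since $\BB_0(\xi)$ carries no $\frac{v_1}{\xi}P_d$ term, correctly resolves a notational inheritance in the lemma statement.
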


From \cite{Liu1}, we have the pointwise estimate on the fluid part $G_{l,0}(t,x)$ as follows.

\begin{thm}[\cite{Liu1}]\label{green4}
For  any given constant $C_1>1$, there exist $C,D>0$ such that for $|x|\le C_1t$,
\bma
\|\dxa G_{l,0}(t,x)\|
&\le C\bigg(\sum^1_{i=-1}(1+t)^{-\frac{1+\alpha}2}e^{-\frac{(x-i\mathbf{c}t)^2}{Dt}} + e^{-\frac{t}{D}}\bigg), \label{in1a} \\
\|\dxa G_{l,0}(t,x)P_1\|
&\le C\bigg(\sum^1_{i=-1}(1+t)^{-\frac{2+\alpha}2}e^{-\frac{(x-i\mathbf{c}t)^2}{Dt}} + e^{-\frac{t}{D}}\bigg),
\\
\|\dxa P_1G_{l,0}(t,x)P_1\| &\le C\bigg(\sum^1_{i=-1}(1+t)^{-\frac{3+\alpha}2}e^{-\frac{(x-i\mathbf{c}t)^2}{Dt}} + e^{-\frac{t}{D}}\bigg). \label{in4a}
     \ema
\end{thm}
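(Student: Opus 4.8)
The plan is to reproduce the low--frequency computation of \cite{Liu1}, reading the three Gaussian wave packets directly off the explicit fluid block. By Theorem \ref{spect2} the eigenvalues $\eta_j(\xi)$ and the eigenprojections $\psi_j(\xi)\otimes\lll\psi_j(\xi)|$ extend analytically to a complex disk $\{|\xi|\le r_0\}\subset\C$, so
$$
\hat G_{l,0}(t,\xi)=\sum_{j=-1}^{3}e^{\eta_j(\xi)t}\,\psi_j(\xi)\otimes\lll\psi_j(\xi)|\cdot 1_{\{|\xi|\le r_0/2\}}
$$
has compact support and is analytic on $|\xi|\le r_0$; consequently $G_{l,0}(t,x)$, being the inverse Fourier transform of a function with compact support in $\xi$, is real--analytic (in particular $C^\infty$) in $x$. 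From \eqref{specr0c} I would write $\eta_j(\xi)=-i\sigma_j\xi-A_j\xi^2+\xi^3g_j(\xi)$ with $A_j>0$, $g_j$ analytic on $|\xi|\le r_0$, and speeds $\sigma_j\in\{-\mathbf{c},0,\mathbf{c}\}$ (the $\pm\mathbf{c}$ coming from $\eta_{\mp1}$ and the speed $0$ from $\eta_0,\eta_2,\eta_3$); shrinking $r_0$ we may assume $|\mathrm{Re}\,\xi^3g_j(\xi)|\le\tfrac12A_j|\xi|^2$ on that disk, so the cubic term only perturbs the parabolic factor. Splitting $G_{l,0}=\sum_jG^j_{l,0}$, with
$$
\dxa G^j_{l,0}(t,x)=\frac1{\sqrt{2\pi}}\int_{|\xi|\le r_0/2}(i\xi)^\alpha e^{i(x-\sigma_jt)\xi}\,e^{-A_j\xi^2t+\xi^3g_j(\xi)t}\,\psi_j(\xi)\otimes\lll\psi_j(\xi)|\,d\xi,
$$
it suffices to estimate each $G^j_{l,0}$ on $|x|\le C_1t$.

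The key step is the $x$--localization by contour deformation. Fix $j$, put $\tilde x:=x-\sigma_jt$ (so $|\tilde x|\le Ct$), and use analyticity of the integrand to move the real interval $[-r_0/2,r_0/2]$ to the shifted interval $\{\xi_1+i\,\mathrm{sgn}(\tilde x)\,\theta:|\xi_1|\le r_0/2\}$ plus two short vertical segments at $\mathrm{Re}\,\xi=\pm r_0/2$, where $\theta:=\min\{|\tilde x|/(3A_jt),\,r_0/4\}$ (chosen so the deformed contour stays inside the disk of analyticity). On the shifted interval the phase together with the parabolic and cubic factors is bounded by $e^{-|\tilde x|\theta+\frac32A_j\theta^2t-\frac12A_j\xi_1^2t}$: when $|\tilde x|\le\tfrac34A_jr_0t$, $\theta=|\tilde x|/(3A_jt)$ is interior and this is $\le e^{-\tilde x^2/(Dt)}e^{-\frac12A_j\xi_1^2t}$; when $|\tilde x|>\tfrac34A_jr_0t$ (in particular for $|\tilde x|\sim t$) the shift saturates at $r_0/4$ and one gets $e^{-t/D}e^{-\frac12A_j\xi_1^2t}$; the two vertical segments likewise yield $e^{-t/D}$, since there the parabolic factor is $\le e^{-cr_0^2t}$ and $|e^{i\tilde x\xi}|\le1$. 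Combining with $\int_{\R}|\xi_1|^\alpha e^{-\frac12A_j\xi_1^2t}d\xi_1\le C(1+t)^{-(1+\alpha)/2}$ and $\|\psi_j(\xi)\otimes\lll\psi_j(\xi)|\|\le C$ gives $\|\dxa G^j_{l,0}(t,x)\|\le C((1+t)^{-(1+\alpha)/2}e^{-(x-\sigma_jt)^2/(Dt)}+e^{-t/D})$, and summing over $j$ yields \eqref{in1a}.

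For the regularity gain under $P_1$, \eqref{eigfr0-1}--\eqref{eigf1-1} give $\psi_j(\xi)=E_j+\psi_{j,1}\xi+O(\xi^2)$ with $E_j\in N_0=\ker L$ and $P_1\psi_{j,1}=iL^{-1}P_1v_1E_j$, hence $\|P_1\psi_j(\xi)\|\le C|\xi|$ on $|\xi|\le r_0/2$. Therefore the amplitude of $\hat G_{l,0}(t,\xi)P_1$ carries an extra factor $|\xi|$ and that of $P_1\hat G_{l,0}(t,\xi)P_1$ an extra factor $|\xi|^2$; rerunning the above contour argument with $(i\xi)^\alpha$ replaced by $(i\xi)^{\alpha+1}$, resp.\ $(i\xi)^{\alpha+2}$, and using $\int_{\R}|\xi_1|^{\alpha+1}e^{-\frac12A_j\xi_1^2t}d\xi_1\le C(1+t)^{-(2+\alpha)/2}$ and $\int_{\R}|\xi_1|^{\alpha+2}e^{-\frac12A_j\xi_1^2t}d\xi_1\le C(1+t)^{-(3+\alpha)/2}$, produces the remaining two estimates.

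The hard part will be the contour deformation: turning the oscillatory factor $e^{i(x-\sigma_jt)\xi}$ together with the parabolic factor $e^{-A_j\xi^2t}$ into the Gaussian profile $e^{-(x-\sigma_jt)^2/(Dt)}$ \emph{uniformly} in $|x|\le C_1t$, which forces one to split into the regime near the ray $x=\sigma_jt$ (optimal shift interior to the disk of analyticity) and the regime $|x-\sigma_jt|\sim t$ (shift saturated, only $e^{-t/D}$ surviving), all the while keeping the endpoint contributions and the cubic error $e^{\xi^3g_j(\xi)t}$ under control. Everything else is routine bookkeeping on the eigen--expansions of Theorem \ref{spect2}.
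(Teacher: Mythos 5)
Your argument is correct and is essentially the same contour-deformation computation the paper itself carries out for the analogous VMB low-frequency fluid part in Lemma \ref{gl0} (shift the integration path to $\mathrm{Im}\,\xi\sim (x-\sigma_j t)/t$ capped by the radius of analyticity, absorb the cubic error into the parabolic factor, and bound the vertical endpoint segments by $e^{-t/D}$); the extra decay under $P_1$ from $P_1\psi_j(\xi)=O(\xi)$ is exactly how the regularity gain is obtained in \cite{Liu1}, which the paper cites for this theorem without reproving it. No gaps beyond routine bookkeeping (e.g.\ the $\theta^\alpha$ contribution from $|(i\xi)^\alpha|$ on the shifted contour, which you can absorb into the Gaussian as in \eqref{g3}).
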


Next, we construct the kinetic waves based on Fourier analysis and the  mixture lemma \ref{mix1a}. Note that $\hat G_b(t,\xi)$ satisfies
\be \label{be}
\left\{\bln
&\dt \hat G_b+iv_1\xi\hat G_b-L\hat G_b=0,\\
&\hat{G}_b(0,\xi)=1(\xi)I_v.
\eln\right.
\ee
By \eqref{be}, we define the approximate solution sequence $I_k$ for $\hat G_b$ as follow
\be \label{c-wa}
\left\{\bln
&\dt \hat{I}_0+i v_1 \xi \hat{I}_0+\nu(v)\hat{I}_0=0,\\
&\hat{I}_0(\xi,0)=1(\xi)I_v,
\eln\right.
\ee
and
\be \label{c-w1a}
\left\{\bln
&\dt \hat{I}_k+i v_1 \xi \hat{I}_k+\nu(v)\hat{I}_k=K\hat{I}_{k-1}, \\
&\hat{I}_k(\xi,0)=0, \quad k\ge 1.
\eln\right.
\ee
By \eqref{c-wa} and \eqref{c-w1a}, we define 
\be
\W_k(t,x)=\sum_{j=0}^{3k}I_j(t,x), \quad \mathcal{R}_k(t,x)=G_b(t,x)-\mathcal{W}_k(t,x). \label{W-R}\ee

It is straightforward to verify that $\hat{\W}_k(t,\xi)$  satisfies
\be \label{ew1}
\left\{\bln
&\dt \hat{\W}_k+i v_1 \xi \hat{\W}_k-L\hat{\W}_k=-K\hat{I}_{3k},\\
&\hat{\W}_k(\xi,0)=1(\xi)I_v,
\eln\right.
\ee
and  $\hat{\RR}_k(t,\xi)$ satisfies
\be \label{er1a}
\left\{\bln
&\dt \hat \RR_k+i v_1 \xi \hat \RR_k-L\hat \RR_k=K\hat{I}_{3k},\\
&\hat \RR_k(\xi,0)=0.
\eln\right.
\ee


\begin{lem}\label{W_1a}
For each $k\ge 1$,  $\hat{I}_{k}(t,\xi) $ is  analytic for $\xi\in D_{\nu_0}$
and satisfies
\be
\|\hat{I}_{3k}(t,\xi)\|\le C_k (1+|\xi|)^{-k}e^{-\frac{\nu_0t}{2}},\label{w1b}
\ee for $C_k>0$ a positive constant.
In  particular, for $0\le b\le \nu_0$ and  $k\ge \alpha+1$,
\be
\intra e^{b|x|}\|\dxa I_{3k}(t)\|^2dx\le C_k e^{-\frac{\nu_0t}{2}}.\label{w1a}
\ee
\end{lem}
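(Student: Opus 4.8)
The plan is to prove Lemma \ref{W_1a} by induction on $k$ using the explicit Duhamel representation of $\hat{I}_k$ coming from \eqref{c-wa}--\eqref{c-w1a}, namely $\hat{I}_k(t,\xi)=\hat{\M}^t_k(\xi)$ acting on the identity operator, and then combining the frequency-space decay \eqref{w1b} with Parseval's identity in the spirit of \eqref{u1}. First I would observe that $\hat{I}_k(t,\xi)=\intt \hat{S}^{t-s}K\hat{I}_{k-1}(s,\xi)\,ds$ with $\hat{I}_0(t,\xi)=\hat{S}^t=e^{-(\nu(v)+iv_1\xi)t}$, so that $\hat{I}_{3k}(t,\xi)=\hat{\M}^t_{3k}(\xi)$ in the notation of \eqref{Mk} (with $K_1$ replaced by $K$). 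Since $\mathrm{Re}\,c(\xi)=-\nu(v)-(\mathrm{Im}\,v_1\xi)$ — more precisely, for $\xi=\zeta+i\eta$ with $|\eta|\le\nu_0$ one has $|\hat{S}^t|=e^{-(\nu(v)+v_1\eta)t}\le e^{-(\nu_0-|\eta|)|v|t}\cdot e^{-\nu_0 t/2}$ up to constants using \eqref{nuv} — each factor $\hat{S}^{s_{j}-s_{j+1}}$ is analytic in $\xi\in D_{\nu_0}$ and $K$ is a fixed compact operator, so $\hat{\M}^t_{3k}(\xi)$ is analytic in $D_{\nu_0}$. The decay estimate \eqref{w1b} is then precisely the content of the Mixture Lemma \ref{mix1a}, estimate \eqref{mix3a}, applied with $g_0$ replaced by the identity kernel: one gets $\|\hat{I}_{3k}(t,\xi)\|\le C_k(1+|\xi|)^{-k}(1+t)^{3k}e^{-\nu_0 t}$, and absorbing the polynomial factor $(1+t)^{3k}$ into the exponential yields $\|\hat{I}_{3k}(t,\xi)\|\le C_k(1+|\xi|)^{-k}e^{-\nu_0 t/2}$ as claimed.

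For the weighted pointwise bound \eqref{w1a}, I would mimic the argument used in the proof of Lemma \ref{W_2} around \eqref{u1}. Fix $0\le b\le\nu_0$ and $k\ge\alpha+1$. Because $\hat{I}_{3k}(t,\xi)$ extends analytically to the strip $D_{\nu_0}$ and $|\xi|^\alpha\|\hat{I}_{3k}(t,\xi)\|\to 0$ as $|\mathrm{Re}\,\xi|\to\infty$ (guaranteed by \eqref{w1b} since $k\ge\alpha+1$), we may shift the contour of the inverse Fourier transform of $\dxa I_{3k}$ by $\mp ib$ according to the sign of $x$, obtaining
\be
e^{b|x|}\dxa I_{3k}(t,x)=\left\{\bln
&\mathcal{F}^{-1}\((u-ib)^{\alpha}\hat{I}_{3k}(t,u-ib)\),\quad x\ge 0,\\
&\mathcal{F}^{-1}\((u+ib)^{\alpha}\hat{I}_{3k}(t,u+ib)\),\quad x<0.
\eln\right.
\ee
Then Parseval's equality gives
\be
\intra e^{2b|x|}\|\dxa I_{3k}(t,x)\|^2dx\le C\intra |u\pm ib|^{2\alpha}\|\hat{I}_{3k}(t,u\pm ib)\|^2du,
\ee
and plugging in \eqref{w1b} on the shifted contour (where $|\mathrm{Im}\,\xi|=b\le\nu_0$, so analyticity and the bound both apply) yields $\intra |u\pm ib|^{2\alpha}\|\hat{I}_{3k}(t,u\pm ib)\|^2 du\le C e^{-\nu_0 t}\intra (1+|u|)^{2\alpha-2k}du$, which is finite since $2k-2\alpha\ge 2>1$. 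Replacing $b$ by $b/2$ where needed, this gives \eqref{w1a} with the constant depending only on $k,\alpha$.

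The main obstacle I anticipate is making the contour shift and the analyticity input fully rigorous: one needs the uniform decay $|\xi|^\alpha\|\hat{I}_{3k}(t,\xi)\|\to 0$ as $|\mathrm{Re}\,\xi|\to\infty$ \emph{uniformly} for $|\mathrm{Im}\,\xi|\le\nu_0$, so that the horizontal segments in the contour deformation vanish, and one needs the operator norm version of the mixture lemma valid on the whole strip $D_{\nu_0}$ rather than just on the real axis. Both are already essentially recorded — the analyticity of $\hat{\M}^t_{3k}$ in $D_{\nu_0}$ and the strip estimate are stated in Lemma \ref{mix1a}, and the decay-in-$\xi$ for the horizontal tails follows from the same estimate. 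A secondary technical point is justifying $\hat{I}_k=\hat{\M}^t_k$ with the identity operator in place of $g_0$; since $I_v$ is not in $L^2_v$, one should interpret the statement operator-wise, i.e. apply everything to an arbitrary $g_0\in L^2_v$ and note all bounds are uniform in $\|g_0\|_{L^2_v}\le 1$, exactly as is done implicitly for $\hat{G}_b$ throughout Section \ref{sect3}. With these points handled, the proof is a direct transcription of the argument from Lemma \ref{W_2}.
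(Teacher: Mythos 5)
Your proposal is correct and follows essentially the same route as the paper: identify $\hat{I}_k$ with the mixture operator $\hat{\M}^t_{b,k}$, obtain \eqref{w1b} by citing the mixture lemma \eqref{mix3a} and absorbing the polynomial $(1+t)^{3k}$ into the exponential, then apply the contour-shift/Parseval inequality \eqref{u1} together with $2k-2\alpha\ge 2$ to get \eqref{w1a}. The caveats you flag at the end (operator-wise interpretation of $I_v$, uniform decay on the strip, the factor-of-two mismatch between $b\le\nu_0$ and $2b\le\nu_0$) are the same ones the paper glosses over, so no essential difference in method.
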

\begin{proof}Since
\be
\hat{I}_k(t,\xi)=\hat{\M}^t_{b,k}(\xi)=:\intt\int^{s_1}_0\cdots\int^{s_{k-1}}_0 \hat{S}^{t-s_1}K \hat{S}^{s_1-s_2}\cdots  \hat{S}^{s_{k-1}-s_{k}}K \hat{S}^{s_{k}}ds_{k}\cdots ds_1, \label{mix4}
\ee
we obtain \eqref{w1b} from Lemma \ref{mix1a}.
Thus, it follows from \eqref{u1} that for $0\le 2b\le \nu_0$ and $\alpha\le j-1$,
\bmas
\int^{\infty}_{-\infty} e^{2b|x|}\|\dxa I_{3k}(t,x)\|^2dx
 &\le C_k\int^{\infty}_{-\infty}  |u\pm ib |^{2\alpha} \|\hat{I}_{3k} (t,u\pm ib ) \|^2du\\
&\le C_k\int^{\infty}_{0} e^{-\frac{\nu_0t}{2}}(1+u)^{2\alpha-2k} du\le C_ke^{-\frac{\nu_0t}{2}},
\emas
which gives \eqref{w1a}.
The proof of the lemma is completed.
\end{proof}

Then, we can show the pointwise estimate on the remaining terms $\mathcal{R}_k(t,x)$ as follows.

\begin{lem}\label{green5}Given $k\ge 2$. There exist constants $\delta>0$ small and $C_k>0$   such that
\be
\| \RR_k(t,x)\| \le C_ke^{-\delta(|x|-\frac32\mathbf{c}t)}.
\ee
\end{lem}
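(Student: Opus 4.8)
The plan is to mirror the weighted–energy argument used for the VMB remaining term $Z_n$ in Lemma~\ref{W_2}, but in the considerably simpler setting of the scalar Boltzmann equation \eqref{er1a}. First I would record the governing equation for $\RR_k(t,x)$ by taking the inverse Fourier transform of \eqref{er1a}:
\be
\dt \RR_k+v_1\dx \RR_k-L\RR_k=K I_{3k},\qquad \RR_k(0,x)=0, \label{prop-er}
\ee
which is meaningful since, by Lemma~\ref{W_1a}, the forcing $K I_{3k}(t,x)$ is an honest $L^2_{x,v}$ function with Gaussian-type spatial decay. The key point that makes the pointwise statement plausible is that the source decays like $e^{-\nu_0 t}$ in $L^2_v(L^2_x(e^{b|x|}dx))$ for every $0\le b\le\nu_0$ (estimate \eqref{w1a}), so the inhomogeneity is both spatially and temporally well-localized, and the zero initial data removes any contribution from $\delta(x)I_v$.

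The core step is a weighted energy estimate with the moving exponential weight
$$
w(t,x)=e^{\eps(|x|-\tfrac32\mathbf{c}t)},\qquad 0<\eps\ll 1,
$$
exactly as in the proof of Lemma~\ref{W_2}; note $\dt w=-\tfrac32\mathbf{c}\,\eps\,w$ and $\dx w=\eps\tfrac{x}{|x|}w$. Taking the inner product of \eqref{prop-er} with $\RV_k w$ in $L^2_{x,v}$, one uses the coercivity $(Lf,f)\le-\mu\|P_1f\|^2$ from \eqref{L_4} for the $-L\RR_k$ term, and controls the transport term $-\tfrac12\int (v_1\RR_k,\RR_k)\dx w\,dx$ by splitting $\RR_k=P_0\RR_k+P_1\RR_k$: the $P_0$–$P_0$ piece carries no $v_1$-weight loss worse than $\eps$, and the $P_1$–$P_1$ and cross pieces are absorbed using the gain $\|P_1\RR_k\|^2$ together with $\nu(v)\ge\nu_0$ and the smallness of $\eps$, precisely because the macroscopic transport speed $\mathbf{c}=\sqrt{5/3}<1\le$ the microscopic speeds only appears through $v_1$, whose $L^2(M)$-moments are finite. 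Choosing $\eps$ small enough that the weight-drift term $-\tfrac32\mathbf{c}\eps$ dominates the $O(\eps)$ losses from $\dx w$ and from the $\nu_0$-coercivity margin, one obtains a differential inequality
$$
\Dt\intra\|\RR_k\|^2 w\,dx+c_0\eps\intra\|\RR_k\|^2 w\,dx
\le \frac{C}{\eps}\intra\|K I_{3k}\|^2 w\,dx,
$$
and Gronwall plus \eqref{w1a} (with $b=\eps$) yields $\int\|\RR_k\|^2 w\,dx\le C_k$. Differentiating \eqref{prop-er} in $x$ and repeating the same argument (the source $\dx(KI_{3k})$ still obeys \eqref{w1a} with $\alpha=1$, which needs $3k\ge 1$, hence $k\ge 1$) gives $\int\|\dx\RR_k\|^2 w\,dx\le C_k$; then Sobolev embedding in $x$ turns these two weighted $L^2$ bounds into the pointwise estimate $w(t,x)\|\RR_k(t,x)\|\le C_k$, i.e.\ $\|\RR_k(t,x)\|\le C_k e^{-\eps(|x|-\tfrac32\mathbf{c}t)}$, which is the claim with $\delta=\eps$. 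The requirement $k\ge2$ enters only to guarantee that the forcing and its first $x$-derivative lie in the weighted spaces with decay $e^{-\nu_0 t}$ uniformly (cf.\ \eqref{w1a} with $\alpha\le 1$, $k\ge\alpha+1$), and that $\RR_k$ itself is a bona fide bounded function rather than a distribution.

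The main obstacle I anticipate is the transport term $\int(v_1\RR_k,\RR_k)\dx w\,dx$: since the weight travels at the macroscopic sound speed $\tfrac32\mathbf{c}$ while $v_1$ ranges over all of $\R$, the naive bound loses a factor $|v_1|$ that is not controlled by the spectral gap of $L$ on the hydrodynamic part $P_0\RR_k$. The resolution, as in Lemma~\ref{W_2}, is to treat the $P_0$–$P_0$ block by hand — where the $v_1$-moment against $\chi_0,\chi_4$ is an explicit finite constant and can be absorbed into the $c_0\eps$ dissipation since $\eps$ is small — and to pair every occurrence of $P_1\RR_k$ with the coercive term, using $|v_1|\le C(1+|v|)\le C\nu(v)/\nu_0$. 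One must also check that the drift rate $\tfrac32\mathbf{c}$ is genuinely large enough: this is why the weight uses $\tfrac32\mathbf{c}$ rather than $\mathbf{c}$ itself — it gives the slack needed so that, after subtracting the $O(\eps)$ error terms, a strictly positive coefficient $c_0\eps$ survives in front of $\int\|\RR_k\|^2w\,dx$. No new ideas beyond those already deployed for $Z_n$ are required; the Boltzmann case is strictly easier because there is no electro-magnetic coupling and no $P_d$-weighted inner product to manage.
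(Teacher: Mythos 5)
Your approach is exactly the paper's: a weighted energy estimate with the moving weight $w(t,x)=e^{\eps(|x|-\tfrac32\mathbf{c}t)}$ applied to \eqref{er1a}, using the $P_0$/$P_1$ split of the transport term, the source bound \eqref{w1a}, the companion $\dx$-estimate, and Sobolev embedding. One point should be made sharper: the paper invokes (citing Lemma 5.3 of \cite{Liu1}) the precise bound $|(v_1P_0g,P_0g)|\le\mathbf{c}\,(P_0g,P_0g)$, and it is this constant $\mathbf{c}$ being strictly below the drift speed $\tfrac32\mathbf{c}$ — not any smallness of $\eps$, since both terms scale like $\eps$ — that makes the $P_0$–$P_0$ piece absorbable; you acknowledge this at the end but the phrase ``can be absorbed ... since $\eps$ is small'' midway through is misleading.
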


\begin{proof}Set
$$w(t,x)=e^{\eps (|x|-\theta t)}\quad {\rm with}\,\,\, \eps>0,\,\, \theta>1.$$

By \eqref{er1a}, we have
 \be
 \dt \RR_k +v_1\dx \RR_k -L\RR_k = KI_{3k} . \label{er3}
 \ee
Taking the inner product of \eqref{er3} and $\RR_kw $ to get
\bma
&\frac12\Dt\intra \|\RR_k\|^2wdx+\frac12\eps\theta\intra \|\RR_k\|^2wdx-\frac12\intra (v_1\RR_k,\RR_k)\dx wdx\nnm\\
&-\intra (L P_1\RR_k, P_1\RR_k)wdx=\intra (KI_{3k},\RR_k)wdx  . \label{r1}
\ema
Since (cf.  Lemma 5.3 in \cite{Liu1})
$$|(v_1P_0g,P_0g)|\le \mathbf{c}(P_0g,P_0g), \quad \forall g\in L^2_v,$$
it follows that
\bma
\intra (v_1\RR_k,\RR_k)\dx wdx
=&\intra (v_1P_0\RR_k,P_0\RR_k)\dx wdx+2\intra (v_1P_0\RR_k,P_1\RR_k)\dx wdx\nnm\\
&+\intra (v_1P_1\RR_k,P_1\RR_k)\dx wdx\nnm\\
\le&   \frac54\mathbf{c}\eps\intra (P_0\RR_k,P_0\RR_k)wdx+C\eps \intra (|v| P_1\RR_k,P_1\RR_k)  wdx. \label{r1a}
\ema
Thus, it follows from \eqref{r1} and \eqref{r1a} that for $0<\eps \ll1$, $\theta=3\mathbf{c}/2$,
\bma
& \Dt\intra \|\RR_k\|^2wdx+\frac14\eps \mathbf{c}\intra \|\RR_k\|^2wdx+\mu\intra (\nu P_1\RR_k, P_1\RR_k)wdx\nnm\\
&\le \frac{C}{\eps}\intra \|KI_{3k}\|^2wdx. \label{H_1}
\ema
Applying Gronwall's inequality to \eqref{H_1} and using Lemma \ref{W_1a}, we have
\be
\intra \|\RR_k\|^2wdx\le C\intt\intra e^{- \frac{\eps \mathbf{c}(t-s)}{4}} \|I_{3k}\|^2w dx ds\le C . \label{r6}
\ee
Similarly,
\be
\intra \|\dx \RR_k\|^2wdx\le  C .  \label{r6a}
\ee
By \eqref{r6} and \eqref{r6a}, we have
$$w(t,x)\|\RR_k(t,x)\|^2\le C.$$
This completes the proof of  the lemma.
\end{proof}

\begin{thm}\label{green6}
Let $k\ge 2$.   There exist constants $\eta_0>0$ small and $C_k>0$   such that
\bq
\|G_h(t,x)-\W_k(t,x)\|\le C_k e^{-\eta_0t}.\label{x5a}
\eq
\end{thm}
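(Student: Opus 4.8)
The plan is to combine the decomposition $G_b = G_l + G_h$ with the Picard decomposition $G_b = \W_k + \RR_k$ in order to isolate the high-frequency part $G_h$. Subtracting, one has
\bq
G_h - \W_k = \RR_k + (\W_k - G_b) + (G_b - G_l) - G_h + G_h = \RR_k - G_l + (G_b - \W_k - \RR_k) + \cdots,
\eq
but more cleanly: from $G_b = G_l + G_h$ and $G_b = \W_k + \RR_k$ we get
\bq
G_h - \W_k = \RR_k - G_l + (\W_k - \W_k) = \RR_k - G_l + (G_b - G_b),
\eq
so in fact $G_h - \W_k = \RR_k - G_l$. Wait — more carefully: $G_h = G_b - G_l$ and $\W_k = G_b - \RR_k$, hence $G_h - \W_k = (G_b - G_l) - (G_b - \RR_k) = \RR_k - G_l$. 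So the identity to exploit is simply $G_h - \W_k = \RR_k - G_l$, and it remains to show both $\RR_k$ and $G_l$ decay like $e^{-\eta_0 t}$ in the operator norm on $L^2_v$, uniformly in $x$ in the relevant region. The subtlety is that $\RR_k$ was only controlled pointwise in $x$ in Lemma \ref{green5}; to get a clean $t$-decay without the exponential $x$-factor I would instead work in Fourier space.

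First I would record the Fourier-space estimates. From Lemma \ref{l-1a}, $\|\hat{G}_l(t,\xi)\| \le C$ and, crucially, $\hat{G}_l$ is supported in $|\xi|\le r_0/2$, while from the low-frequency spectral expansion the fluid semigroup on that range decays no better than polynomially; however, $G_l$ itself is $O(e^{-\kappa_0 t})$ only on its non-fluid part. The honest route is: $G_h - \W_k = \RR_k - G_l$ is not literally true because $G_l$ and $G_h$ overlap — they do not, the projectors $1_{\{|\xi|\le r_0/2\}}$ and $1_{\{|\xi|\ge r_0/2\}}$ partition frequency space, so the identity is exact. Then $\hat{G}_h(t,\xi) - \hat{\W}_k(t,\xi) = \hat{\RR}_k(t,\xi) - \hat{G}_l(t,\xi)$ fails dimensionally since $\hat{\W}_k$ has full frequency support while $\hat{G}_h$ does not; the correct statement keeps $\hat{\W}_k$ intact. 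So I would instead mimic the proof of Lemma \ref{l-high2}: use that $\hat{\RR}_k$ satisfies equation \eqref{er1a} with source $K\hat{I}_{3k}$, apply the weighted energy estimate of Lemma \ref{green5} on a strip $\xi \in D_{\delta}$ (shifting the contour $\xi = u \pm i\delta$) together with Lemma \ref{W_1a}'s bound $\|\hat{I}_{3k}(t,\xi)\| \le C_k(1+|\xi|)^{-k} e^{-\nu_0 t/2}$, to obtain $\|\hat{\RR}_k(t,\xi)\| \le C_k e^{\delta t}(1+|\xi|)^{-k}$ for $\xi$ real. Likewise, from Lemma \ref{l-1a}, $\|\hat{G}_l(t,\xi)\| \le C\, 1_{\{|\xi|\le r_0/2\}}$ and the relation $\hat{\W}_k = \hat{G}_b - \hat{\RR}_k$, $\hat{G}_h = \hat{G}_b - \hat{G}_l$ gives $\hat{G}_h - \hat{\W}_k = \hat{\RR}_k - \hat{G}_l$ as operators, valid for every $\xi$. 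Then I would split: for $|\xi|\le r_0/2$ use $\|\hat{G}_l\| + \|\hat{\RR}_k\| \le C$ plus the extra decay obtained below; for $|\xi|\ge r_0/2$, $\hat{G}_l = 0$ so only $\hat{\RR}_k$ remains and $\|\hat{\RR}_k(t,\xi)\| \le C_k e^{-\kappa_0 t}$ directly from Lemma \ref{l-1a} applied to $\hat{G}_h$ together with $\|\hat{\W}_k\|\le \sum_{j\le 3k}\|\hat{I}_j\| \le C_k e^{-\nu_0 t/2}$ from Lemma \ref{W_1a} (since $K_1$ replaced by $K$ is permitted).

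The key interpolation step, exactly parallel to Lemma \ref{l-high2}, is to combine the growing-in-$t$, decaying-in-$|\xi|$ bound $\|\hat{G}_h - \hat{\W}_k\| \le C_k e^{\delta t}(1+|\xi|)^{-k}$ (with $k \ge 2$) with the uniformly exponentially decaying bound $\|\hat{G}_h - \hat{\W}_k\| \le C_k e^{-\kappa_0 t}$ (valid because both $\|\hat{G}_h(t,\xi)\| \le C e^{-\kappa_0 t}$ by \eqref{high1a} and $\|\hat{\W}_k(t,\xi)\| \le C_k e^{-\nu_0 t/2}$ by Lemma \ref{W_1a}). Taking the geometric mean with weight $\theta$ chosen so that $\theta\delta - (1-\theta)\kappa_0 < 0$, I get $\|\hat{G}_h(t,\xi) - \hat{\W}_k(t,\xi)\| \le C_k e^{-\eta_0 t}(1+|\xi|)^{-\theta k}$ with $\eta_0 > 0$ and $\theta k > 1$ (achievable for $k\ge 2$ by taking $\theta$ close enough to $1$, which is allowed since $\delta$ can be chosen arbitrarily small in Lemma \ref{green5}). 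Integrating in $\xi$ then yields
\bq
\|(G_h - \W_k)(t,x)\| = \Big\| \frac{1}{\sqrt{2\pi}}\int_{\R} e^{ix\xi}(\hat{G}_h - \hat{\W}_k)(t,\xi)\,d\xi \Big\| \le C_k e^{-\eta_0 t}\int_{\R}(1+|\xi|)^{-\theta k}\,d\xi \le C_k e^{-\eta_0 t},
\eq
uniformly in $x$, which is \eqref{x5a}. The main obstacle is organizing the frequency-space estimate on $\hat{\RR}_k$ cleanly: one must verify the weighted energy argument of Lemma \ref{green5} localizes to give, after the contour shift $\xi \mapsto u + i\delta$, the bound $e^{\delta t}(1+|\xi|)^{-k}$ rather than something with spurious $t$-powers — this follows by running Gronwall with the $e^{-\nu_0 t/2}(1+|\xi|)^{-k}$ source from Lemma \ref{W_1a} and absorbing the $e^{2\delta t}$ from the weight, exactly as in the proof of Lemma \ref{W_2}; the polynomial-in-$|\xi|$ gain is the only thing that must be tracked carefully, and it is precisely what makes the final $\xi$-integral converge.
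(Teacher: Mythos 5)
Your proposal is correct and follows essentially the same route as the paper: the exact identity $G_h-\W_k=\RR_k-G_l$, a frequency-decay bound $\|\hat{\RR}_k(t,\xi)\|\le C_k(1+|\xi|)^{-k}$, interpolation against the uniform exponential decay $\|\hat{G}_h(t,\xi)\|+\|\hat{\W}_k(t,\xi)\|\le Ce^{-\kappa_0 t}$, and integration in $\xi$, which is exactly where $k\ge 2$ is needed. The only difference is that the paper gets the bound on $\hat{\RR}_k$ from a one-line Duhamel estimate, $\hat{\RR}_k(t)=\int_0^t \hat{G}_b(t-s)K\hat{I}_{3k}(s)\,ds$ together with the boundedness of the full semigroup and Lemma \ref{W_1a}, yielding $C_k(1+|\xi|)^{-k}$ uniformly in $t$ with no $e^{\delta t}$ factor, so your contour-shifted weighted-energy argument is a workable but unnecessary detour.
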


\begin{proof}
By \eqref{L-R1} and  \eqref{W-R}, we have
$$
G_b(t,x)=G_l(t,x)+G_h(t,x)=\W_k(t,x)+\RR_k(t,x).
$$
It follows that
$$
G_h(t,x)-\W_k(t,x)=\RR_k(t,x)-G_l(t,x).
$$
From \eqref{er1a}, Lemma \ref{l-1a} and Lemma \ref{W_1a},
\bmas
\|\hat{\RR}_k(t,\xi)\|
&\le \intt \|\hat{G}_b(t-s)K\hat{I}_{3k}(s)\| ds\le C\intt \|\hat{I}_{3k}(s)\|ds\nnm\\
&\le C_k(1+|\xi|)^{-k},
\emas
which gives
\be
\|\hat{G}_h(t,\xi)-\hat{\W}_k(t,\xi)\|\le \|\hat{\RR}_k(t,\xi)\|+\|\hat{G}_l(t,\xi)\|\le C(1+|\xi|)^{-k}. \label{x3a}
\ee
From Lemma \ref{l-1a} and Lemma \ref{W_1a},
\be
\|\hat{G}_h(t,\xi)\|+\|\hat{\W}_k(t,\xi)\|\le Ce^{-\kappa_0t}. \label{x5b}
\ee
Thus, it follows from \eqref{x3a} and \eqref{x5b} that there exists a constant $\eta_0>0$ such that
$$
\|\hat{G}_h(t,\xi)-\hat{\W}_k(t,\xi)\|\le Ce^{-\frac{\kappa_0}{4}t}(1+|\xi|)^{-\frac{3k}{4}},
$$
which gives
\bmas
\|G_h(t,x)-\W_k(t,x)\|=&C\left\|\intra e^{i x \xi}[\hat{G}_h(t,\xi)-\hat{\W}_k(t,\xi)]d\xi\right\|\\
\le& C\intra e^{-\frac{\kappa_0}{4}t}(1+|\xi|)^{-\frac{3k}{4}}d\xi \le Ce^{-\eta_0t},
\emas
for $k\ge 2$. The proof of the lemma is completed.
\end{proof}

\begin{proof}[\underline{\textbf{Proof of Theorem \ref{green2}}}]
By \eqref{L-R1} and  \eqref{W-R}, we have
$$
G_b(t,x)=G_{l,0}(t,x)+G_{l,1}(t,x)+G_h(t,x),
$$
and
$$
G_b(t,x)=\W_2(t,x)+ \RR_2(t,x).
$$

Thus
\be G_b(t,x)=G_{b,0}(t,x)+G_{b,1}(t,x)+W_1(t,x), \label{G_bb}\ee
where
\be \label{G_bc}
\left\{\bln
W_1(t,x)&=\W_2, \quad G_{b,0}(t,x) =G_{l,0} 1_{\{|x|\le 2\mathbf{c}t\}},\\
G_{b,1}(t,x)&=(G_{l,1}+G_h-\W_2)1_{\{|x|\le 2\mathbf{c}t\}}+\RR_2(t,x)1_{\{|x|\ge 2\mathbf{c}t\}}.
\eln\right.
\ee
By Lemmas \ref{green4}--\ref{green6}, the statements in the theorem hold.
\end{proof}

\section{The  nonlinear system}
\label{behavior-nonlinear}
 \setcounter{equation}{0}
In this section, we show the pointwise estimate of the solution to the Cauchy problem for Vlasov-Maxwell-Boltzmann systems based on the estimates for the linearized problem obtained in Sections~\ref{fluid}--\ref{sect3}.

\subsection{Energy estimate}
We first obtain some energy estimates.
Let $N$ be a positive integer and $U=(f_1,f_2,E,B)$. Set
\bma
\mathbb{E}_{N,k}(U)&=\sum_{|\alpha|+|\beta|\le N}\|w^k\dxa\dvb (f_1,f_2)\|^2_{L^2_{x,v}}+\sum_{|\alpha|\le N}\|\dxa(E,B)\|^2_{L^2_x},\label{energy3}\\
\mathbb{H}_{N,k}(U)&= \sum_{|\alpha|+|\beta|\le N}\|w^k\dxa\dvb
(P_1f_1,P_rf_2)\|^2_{L^2_{x,v}}+\sum_{1\le|\alpha|\le N}\|\dxa (E,B)\|^2_{L^2_x}\nnm\\
&\quad +\sum_{|\alpha|\le N-1}\|\dx^{\alpha+1}  (P_0f_1,P_{d}f_2)\|^2_{L^2_{x,v}}+\|E\|^2_{L^2_x} ,\label{energy3a}\\
\mathbb{D}_{N,k}(U)&=\sum_{|\alpha|+|\beta|\le N}\|w^{\frac12+k}\dxa\dvb  (P_1f_1,P_rf_2)\|^2_{L^2_{x,v}}+\sum_{1\le |\alpha|\le N-1}\|\dxa  (E,B)\|^2_{L^2_x}\nnm\\
&\quad+\sum_{|\alpha|\le N-1}\|\dx^{\alpha+1}  (P_0f_1,P_{d}f_2)\|^2_{L^2_{x,v}}+\|E\|^2_{L^2_x} ,\label{energy3b}
\ema
for $k\ge 0$. For brevity, we denote $\mathbb{E}_N(U)=\mathbb{E}_{N,0}(U)$, $\mathbb{H}_N(U)=\mathbb{H}_{N,0}(U)$ and $\mathbb{D}_N(U)=\mathbb{D}_{N,0}(U)$.


The energy estimates of the VMB system near a global Maxwellian have been well  established when
 $x\in \R^3$, cf.  \cite{Duan4,Li1,Strain}. Note that a key feature of 3-D in the energy estimate used  in  \cite{Duan4,Li1,Strain} comes from the Sobolev's inequality
$$\|u\|_{L^6_x}\le C\|\Tdx u\|_{L^2_x},\quad \forall u=u(x),\,\, x\in \R^3.$$
However, this Sobolev's inequality does not hold  when $x\in \R$. In fact, the 1-D Sobolev's inequality takes the form
$$\|u\|_{L^\infty_x}\le C\|u\|^{1/2}_{L^2_x}\|\dx u\|^{1/2}_{L^2_x},\quad \forall u=u(x),\,\, x\in \R.$$
Thus,  the nonlinear coupling for the 1-D VMB system is bounded by
$$C\sqrt{ \mathbb{E}_N(U)\mathbb{D}_N(U)}\| P_0f_1\|_{L^2_{v}(L^\infty_x)}+C\sqrt{\mathbb{E}_N(U)}\mathbb{D}_N(U),$$
instead of $C\sqrt{\mathbb{E}_N(U)}\mathbb{D}_N(U)$. By using this  and a similar  argument as in \cite{Duan4,Li1,Strain}, we can obtain
 the following energy estimates in 1-D setting.

\begin{lem}\label{energy1}
For  $N\ge 2$, there are two equivalent energy functionals
$\mathcal{E}_{N}(\cdot)\sim \mathbb{E}_N(\cdot)$, $\mathcal{H}_{N}(\cdot)\sim \mathbb{H}_N(\cdot)$
such that the following holds. If
$\mathbb{E}_N(U)$ is sufficiently small, then the solution $U=(f_1,f_2,E,B)(t,x,v)$ to the two-species
 VMB system \eqref{VMB3a}--\eqref{VMB3d} satisfies
\bma
\Dt \mathcal{E}_{N}(U) + \mu \mathbb{D}_N(U) &\le  C \mathbb{E}_N(U)\| P_0f_1\|^2_{L^2_{v}(L^\infty_x)},  \label{G_ba}\\
\Dt \mathcal{H}_{N}(U)+\mu \mathbb{D}_N(U)&\le C\|\dx P_0f_1\|^2_{L^2_{x,v}} +C\|\dx B\|^2_{L^2_{x}} +C \mathbb{E}_N(U)\| P_0f_1\|^2_{L^2_{v}(L^\infty_x)}.\label{G_4}
\ema
\end{lem}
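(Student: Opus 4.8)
The plan is to prove Lemma \ref{energy1} by adapting the standard macro-micro energy method for the VMB system (as in \cite{Duan4,Li1,Strain}) to the one-dimensional setting, with the crucial modification that the cubic nonlinear terms involving the macroscopic part $P_0 f_1$ must be handled using the 1-D Sobolev inequality $\|u\|_{L^\infty_x}\le C\|u\|_{L^2_x}^{1/2}\|\dx u\|_{L^2_x}^{1/2}$ rather than the 3-D inequality $\|u\|_{L^6_x}\le C\|\Tdx u\|_{L^2_x}$. First I would apply $\dxa\dvb$ to the equations \eqref{VMB3a}--\eqref{VMB3d}, take $L^2_{x,v}$ inner products with $\dxa\dvb(f_1,f_2)$ and $L^2_x$ inner products with $\dxa(E,B)$, and sum over $|\alpha|+|\beta|\le N$. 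This produces the basic energy identity whose dissipative terms, by the coercivity estimates \eqref{L_4} for $L$ and $L_1$, control $\|\nu^{1/2}\dxa\dvb(P_1f_1,P_rf_2)\|_{L^2_{x,v}}^2$. Because the magnetic field $B$ enters only through the rotation term $v\times B\cdot\Tdv$ and the Maxwell block is skew-symmetric, the electromagnetic contributions either cancel or are absorbed; the term $-v\sqrt M\cdot E$ together with $\dt E_1=-(\hat f_2,v_1\chi_0)$ etc.\ gives the standard macroscopic coupling that feeds $\|E\|_{L^2_x}^2$ into the dissipation.

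Next I would construct the interactive (cross) terms needed to recover dissipation of the macroscopic quantities $(P_0 f_1, P_d f_2)$ and of $\dxa(E,B)$ for $1\le|\alpha|\le N$. This is done exactly as in the cited works: one uses the local conservation laws obtained by testing \eqref{VMB3a}--\eqref{VMB3b} against the basis $\{\chi_j\}$ of \eqref{basis}, writes the fluid-type system for the moments, and builds a functional of the form $\sum_{|\alpha|\le N-1}\langle \dxa\nabla(\text{moments}),\,\dxa(P_1 f_1,\ldots)\rangle$ plus a term coupling $E$ to the velocity moment of $f_2$; adding a small multiple $\kappa$ of this interactive functional to $\mathbb{E}_N(U)$ yields the equivalent functional $\mathcal E_N(U)\sim \mathbb{E}_N(U)$ and upgrades the dissipation to the full $\mathbb{D}_N(U)$ of \eqref{energy3b}. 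The functional $\mathcal H_N(U)$ and inequality \eqref{G_4} are obtained by the same procedure applied with at least one $x$-derivative on every term, which is why the right-hand side of \eqref{G_4} carries the extra $\|\dx P_0 f_1\|_{L^2_{x,v}}^2$.

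The main obstacle — and the only genuinely new point compared to \cite{Duan4,Li1,Strain} — is the estimate of the nonlinear terms, in particular the transport-type terms $(E+v\times B)\cdot\Tdv f_{\mp}$ and the collision terms $\Gamma(f_i,f_1)$, when one factor is the non-dissipative macroscopic part $P_0 f_1$. In 3-D one would put the macroscopic factor in $L^6_x$ and the remaining factors in $L^2_x$ and $L^3_x$, closing by $\|\cdot\|_{L^6_x}\lesssim\|\nabla_x\cdot\|_{L^2_x}\le \sqrt{\mathbb{D}_N}$; in 1-D the best available bound is $\|P_0 f_1\|_{L^\infty_x}\lesssim \|P_0 f_1\|_{L^2_x}^{1/2}\|\dx P_0 f_1\|_{L^2_x}^{1/2}$, and here $\|P_0 f_1\|_{L^2_x}$ is \emph{not} part of $\mathbb{D}_N$. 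I would therefore isolate exactly those trilinear terms in which the $L^\infty_x$ slot is forced onto $P_0 f_1$: after Cauchy--Schwarz and the velocity weights from \eqref{nuv}, each such term is bounded by $C\sqrt{\mathbb{E}_N(U)\,\mathbb{D}_N(U)}\,\|P_0 f_1\|_{L^2_v(L^\infty_x)}$, while every other nonlinear term still closes as $C\sqrt{\mathbb{E}_N(U)}\,\mathbb{D}_N(U)$ just as in the 3-D case. Absorbing $C\sqrt{\mathbb{E}_N(U)}\,\mathbb{D}_N(U)$ into $\mu\mathbb{D}_N(U)$ for $\mathbb{E}_N(U)$ small, and bounding $\sqrt{\mathbb{E}_N\mathbb{D}_N}\,\|P_0f_1\|_{L^2_v(L^\infty_x)}\le \frac12\mathbb{D}_N + C\mathbb{E}_N\|P_0f_1\|_{L^2_v(L^\infty_x)}^2$ by Young's inequality, gives \eqref{G_ba}; the analogous bookkeeping with one extra $x$-derivative, noting that a pure top-order $x$-derivative landing on the $L^\infty_x$ factor can instead be distributed so the macroscopic factor keeps at most its $L^\infty_x$ norm while the genuinely dangerous leftover is precisely $\|\dx P_0 f_1\|_{L^2_{x,v}}^2$, yields \eqref{G_4}. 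I would not carry out the (routine but lengthy) term-by-term commutator and product estimates in detail, referring instead to \cite{Duan4,Li1,Strain} for the structure and emphasizing only the 1-D Sobolev replacement described above.
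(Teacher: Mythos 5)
Your proposal is correct and follows essentially the same route as the paper, which itself only sketches this lemma: the paper likewise defers the macro--micro energy machinery to \cite{Duan4,Li1,Strain} and identifies the sole new point as replacing the 3-D inequality $\|u\|_{L^6_x}\le C\|\Tdx u\|_{L^2_x}$ by the 1-D interpolation $\|u\|_{L^\infty_x}\le C\|u\|_{L^2_x}^{1/2}\|\dx u\|_{L^2_x}^{1/2}$, so that the nonlinear coupling is bounded by $C\sqrt{\mathbb{E}_N(U)\mathbb{D}_N(U)}\|P_0f_1\|_{L^2_v(L^\infty_x)}+C\sqrt{\mathbb{E}_N(U)}\,\mathbb{D}_N(U)$, exactly as you isolate. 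Your Young's-inequality absorption then reproduces the right-hand sides of \eqref{G_ba} and \eqref{G_4}.
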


\begin{lem}\label{energy2}
For $N\ge 2$ and $k\ge 1$, there are the equivalent energy functionals $\mathcal{E}_{N,k}(\cdot)\sim \mathbb{E}_{N,k}(\cdot)$, $\mathcal{H}_{N,k}(\cdot)\sim \mathbb{H}_{N,k}(\cdot)$
such that if $\mathbb{E}_{N,k}(U)$ is sufficiently small, then the solution $U=(f_1,f_2,E,B)(t,x,v)$ to the two-species
 VMB system \eqref{VMB3a}--\eqref{VMB3d} satisfies
\bma \Dt \mathcal{E}_{N,k}(U)+\mu \mathbb{D}_{N,k}(U)&\le C\mathbb{E}_N(U)\| P_0f_1\|^2_{L^2_{v}(L^\infty_x)}, \label{G_4b}\\
\Dt \mathcal{H}_{N,k}(U)+\mu \mathbb{D}_{N,k}(U)&\le C\|\dx P_0f_1\|^2_{L^2_{x,v}}+C\|\dx B\|^2_{L^2_{x}} +C \mathbb{E}_N(U)\| P_0f_1\|^2_{L^2_{v}(L^\infty_x)}.\label{G_4a}
\ema
\end{lem}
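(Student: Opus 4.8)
\textbf{Plan of proof for Lemma \ref{energy2}.}
The strategy is to mimic the proof of Lemma \ref{energy1} (the case $k=0$) but carry the collision-frequency weight $\nu^k$ through every estimate. First I would introduce the weighted energy functionals
$\mathcal{E}_{N,k}(U)$ and $\mathcal{H}_{N,k}(U)$ by adding to the unweighted functionals $\mathcal{E}_N(U)$ and $\mathcal{H}_N(U)$ the purely microscopic weighted quantities $\sum_{|\alpha|+|\beta|\le N}\|\nu^k\dxa\dvb(P_1 f_1,P_r f_2)\|^2_{L^2_{x,v}}$ together with suitably small multiples of the same interaction terms (the $(E,B)$ versus macroscopic-flux cross terms) already used in the $k=0$ construction; the point is that the weight only sits on the microscopic part, so no new interaction terms are needed, and the equivalences $\mathcal{E}_{N,k}\sim\mathbb{E}_{N,k}$, $\mathcal{H}_{N,k}\sim\mathbb{H}_{N,k}$ follow exactly as for $k=0$ once the coefficients are chosen small enough.

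Next I would derive the weighted microscopic estimate. Applying $\nu^k\dxa\dvb$ to \eqref{VMB3a}--\eqref{VMB3d}, taking the $L^2_{x,v}$ inner product with $\nu^k\dxa\dvb(P_1 f_1,P_r f_2)$, and using the coercivity \eqref{L_4} together with the lower bound \eqref{nuv} gives a dissipation term controlling $\|\nu^{k+1/2}\dxa\dvb(P_1 f_1,P_r f_2)\|^2_{L^2_{x,v}}$. The transport term $v_1\dx$ commutes with $\nu^k$; the commutator $[\nu^k\dvb, v_1\dx]$ and $[\nu^k,\dvb]$ produce only lower-order-in-$|\beta|$ terms with at most the same $\nu$-weight, which are absorbed by the dissipation plus the unweighted energy as in \cite{Duan4,Li1,Strain}. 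The operator $K$ (resp.\ $K_1$) maps the weighted space into a space with a better weight by the standard bounds on the kernels $k,k_1$ in \eqref{VMB3v}, so the $K$-terms are controlled. The Lorentz-force and $\frac12(v\cdot E)$ nonlinear terms, and the streaming term $v\sqrt M\cdot E$, are handled by the $1$-D Sobolev inequality $\|u\|_{L^\infty_x}\le C\|u\|^{1/2}_{L^2_x}\|\dx u\|^{1/2}_{L^2_x}$ exactly as in the paragraph preceding Lemma \ref{energy1}, which is why the nonlinear bound carries the extra factor $\|P_0 f_1\|_{L^2_v(L^\infty_x)}$. Combining the weighted microscopic estimate with the already-established unweighted estimates \eqref{G_ba}--\eqref{G_4} (which control the macroscopic part $\dx^{\alpha+1}(P_0 f_1,P_d f_2)$, the fields, and $\|E\|_{L^2_x}$ via the macro-micro decomposition and the Maxwell structure), and adding small multiples of the interaction-term time derivatives, yields \eqref{G_4b}; the analogous computation starting from the high-order ($1\le|\alpha|\le N$) version of the functional, together with the Poincar\'e-type control of $\|\dx P_0 f_1\|_{L^2_{x,v}}$ by the full dissipation plus the stated error, yields \eqref{G_4a}.

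The main obstacle I anticipate is purely bookkeeping rather than conceptual: one must check that commuting the weight $\nu^k$ through the $v$-derivatives $\dvb$ and through the collision operators $L_1$, $K_1$ (and $L$, $K$) does not generate terms with a $\nu$-weight higher than $\nu^{k+1/2}$ that cannot be absorbed by the dissipation $\mathbb{D}_{N,k}$. This works because $\dv\nu$ is bounded (from \eqref{nuv} for the hard-sphere model $\nu$ grows linearly, with bounded derivatives), so $|\dvb\nu^k|\le C\nu^k$, and because $K_1$ gains decay in $v$; the worst term is the self-transport term, where integration by parts in $x$ removes the top $x$-derivative and the weight commutes. A secondary point is that the nonlinear terms involving $\Tdv f_j$ (present because of the Lorentz force $(E+v\times B)\cdot\Tdv f$) must be split by the macro-micro decomposition so that the macroscopic velocity-derivative pieces are absorbed into $\|P_0 f_1\|_{L^2_v(L^\infty_x)}$-type factors and the microscopic pieces into $\mathbb{D}_{N,k}$; this is where the smallness of $\mathbb{E}_{N,k}(U)$ is used. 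Once these checks are in place, the conclusion follows by the same closing argument (choice of small interaction-coefficients and absorption) as in Lemma \ref{energy1}.
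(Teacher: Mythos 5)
The paper does not write out a proof of Lemma \ref{energy2}: it states both Lemma \ref{energy1} and Lemma \ref{energy2} after a single paragraph explaining that the estimates follow from the weighted energy method of \cite{Duan4,Li1,Strain} once the 3-D Sobolev inequality $\|u\|_{L^6_x}\le C\|\nabla_x u\|_{L^2_x}$ is replaced by the 1-D interpolation $\|u\|_{L^\infty_x}\le C\|u\|^{1/2}_{L^2_x}\|\partial_x u\|^{1/2}_{L^2_x}$, which is responsible for the extra factor $\|P_0f_1\|_{L^2_v(L^\infty_x)}$ in the nonlinear bound. Your plan is exactly this route: build $\mathcal{E}_{N,k}$, $\mathcal{H}_{N,k}$ by weighting only the microscopic components, run the $\nu^k$-weighted microscopic estimate, control commutators $[\nu^k,\partial_v^\beta]$ via boundedness of $\partial_v\nu$ for hard spheres, use the smoothing of $K_1$, close with the 1-D Sobolev inequality and the macro estimates from the unweighted case. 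This is correct and is essentially the proof the paper is silently invoking, so there is nothing to add beyond the observation that your commutator check (the potentially delicate step) is indeed the right thing to verify and works for the hard-sphere $\nu$.
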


\subsection{The pointwise estimate}

The following Lemmas \ref{S_1}--\ref{wc-1} are  about estimates on the wave coupling of linear interactions and nonlinear interactions.

\begin{lem} \label{S_1}
For any given $\beta\ge 0$, there exists a constant $C>0$ such that
\be
\| S^tg_0(x)\|_{L^\infty_{v,\beta}}\le Ce^{-\frac{2\nu_0t}3}\max_{y\in \R}e^{-\frac{\nu_0|x-y|}3}\|g_0(y)\|_{L^\infty_{v,\beta}}, \label{s3a}
\ee
where $S^t$  and $\nu_0$ are defined by \eqref{S-t} and \eqref{nuv}. In particular, if $g_0(x,v)$  satisfies
$$
 \| g_0(x)\|_{L^\infty_{v,\beta}}\le C(1+|x|^2)^{-\frac{\gamma}2},\quad \forall \gamma\ge 0,
$$
then we have
\bma
\| S^tg_0(x)\|_{L^\infty_{v,\beta}}&\le Ce^{-\frac{2\nu_0t}3}(1+|x|^2)^{-\frac{\gamma}2},\label{wt1}\\
\|W_{1}(t)\ast g_0(x)\|_{L^\infty_{v,\beta}}&\le Ce^{-\frac{\nu_0t}3}(1+|x|^2)^{-\frac{\gamma}2}, \label{wt}
\ema
where  $W_{1}(t,x)$ is defined by \eqref{W-1}.
\end{lem}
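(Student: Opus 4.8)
\textbf{Proof plan for Lemma \ref{S_1}.}
The plan is to establish the three assertions in sequence, each building on the previous one. For \eqref{s3a}, I would start directly from the explicit formula \eqref{S-t}, namely $S^tg_0(x,v)=e^{-\nu(v)t}g_0(x-v_1t,v)$. Using the lower bound $\nu(v)\ge\nu_0$ from \eqref{nuv}, I split the exponential as $e^{-\nu(v)t}\le e^{-\frac{2\nu_0t}{3}}e^{-\frac{\nu_0t}{3}}\cdot e^{-(\nu(v)-\nu_0)t}$ and use the remaining weight factors to control the shift. Writing $y=x-v_1t$, I need $e^{-\frac{\nu_0t}{3}}\le Ce^{-\frac{\nu_0|x-y|}{3}}$ uniformly in the velocities contributing at that weight, which follows because the weight $(1+|v|)^\beta e^{-(\nu(v)-\nu_0)t}$ is bounded (by \eqref{nuv}, $\nu(v)-\nu_0\ge \nu_0|v|$ for suitable normalization, so $(1+|v|)^\beta e^{-\nu_0|v|t}\le C$ for $t$ bounded below, and $|x-y|=|v_1|t\le|v|t$ gives $e^{-\nu_0|v|t}\le e^{-\nu_0|x-y|/t\cdot t}$... ). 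More carefully: $|x-y| = |v_1| t \le |v| t$, so $e^{-\frac{\nu_0 t}{3}} e^{-(\nu(v)-\nu_0)t} \le e^{-\frac{\nu_0 t}{3}} e^{-\nu_0|v|t} \le e^{-\frac{\nu_0 t}{3}} e^{-\nu_0 |x-y|}$ when $t \ge 1$, and when $t \le 1$ one has $e^{-\nu_0|v|t}(1+|v|)^\beta \le C$ directly while $e^{-\frac{\nu_0 t}{3}}\le 1$; in either regime one absorbs the polynomial weight and extracts $e^{-\frac{\nu_0|x-y|}{3}}$. Taking the supremum over $v$ and then over $y$ yields \eqref{s3a}.

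For \eqref{wt1}, I would simply combine \eqref{s3a} with the decay hypothesis $\|g_0(y)\|_{L^\infty_{v,\beta}}\le C(1+|y|^2)^{-\gamma/2}$ and the elementary convolution-type inequality $\max_{y}e^{-\frac{\nu_0|x-y|}{3}}(1+|y|^2)^{-\gamma/2}\le C(1+|x|^2)^{-\gamma/2}$, which holds because an exponentially localized kernel preserves polynomial decay (split into $|y-x|\le|x|/2$ where $(1+|y|^2)^{-\gamma/2}\le C(1+|x|^2)^{-\gamma/2}$, and $|y-x|>|x|/2$ where $e^{-\frac{\nu_0|x-y|}{3}}\le e^{-\frac{\nu_0|x|}{6}}\le C(1+|x|^2)^{-\gamma/2}$). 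This gives \eqref{wt1} immediately with the rate $e^{-\frac{2\nu_0t}{3}}$ retained.

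For \eqref{wt}, recall from \eqref{W-1} that $W_1(t,x)=\sum_{k=0}^{6}J_k(t,x)$ with $J_0=S^t\delta(x)I_v$ and $J_k=\int_0^t S^{t-s}KJ_{k-1}\,ds$. I would prove by induction on $k$ that $\|J_k(t)\ast g_0(x)\|_{L^\infty_{v,\beta}}\le C(1+t)^k e^{-\frac{2\nu_0 t}{3}}(1+|x|^2)^{-\gamma/2}$, or more crudely $\le Ce^{-\frac{\nu_0 t}{3}}(1+|x|^2)^{-\gamma/2}$ after absorbing the polynomial factor $(1+t)^k$ into a slightly weaker exponential (legitimate since $k\le 6$ is fixed). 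The base case $k=0$ is exactly \eqref{wt1}. For the inductive step I apply \eqref{s3a} to the outer $S^{t-s}$, use that $K$ is bounded on $L^\infty_{v,\beta}$ with a gain — the standard pointwise bound on the kernel $k(v,u)$ from \eqref{VMB3v} shows $\|Kh\|_{L^\infty_{v,\beta}}\le C\|h\|_{L^\infty_{v,\beta'}}$ for appropriate weights, and crucially $K$ maps into a space where the $e^{-(\nu(v)-\nu_0)(t-s)}$ smoothing still applies — then integrate the resulting $\int_0^t e^{-\frac{2\nu_0(t-s)}{3}}(1+s)^{k-1}e^{-\frac{2\nu_0 s}{3}}\,ds\le C(1+t)^k e^{-\frac{2\nu_0 t}{3}}$, keeping the spatial profile $(1+|x|^2)^{-\gamma/2}$ propagated through the convolution at each stage via the localization lemma above. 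Summing over $k=0,\dots,6$ gives \eqref{wt}.

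\textbf{Main obstacle.} The delicate point is the treatment of $J_0=S^t\delta(x)I_v$, which is a genuine distribution (a measure supported on the line $x=v_1t$), so the statement "$W_1(t)\ast g_0$" and the bound in $L^\infty_{v,\beta}$ must be interpreted correctly: $J_0\ast g_0(x,v)=e^{-\nu(v)t}g_0(x-v_1t,v)=S^tg_0(x,v)$, which is fine, but for $k\ge 1$ one must verify that the operator $K$ applied to the measure-valued $J_{k-1}$ produces an honest function with the claimed weighted $L^\infty$ bound — this is where the smoothing property of $K$ (its kernel being in $L^\infty_v L^1_u$ against weights, from the explicit $k(v,u)$ in \eqref{VMB3v}) does the essential work, and one must track that the $v$-weight $(1+|v|)^\beta$ is controlled through each application of $K$ without loss, using $\nu(v)\sim 1+|v|$ from \eqref{nuv} to absorb any polynomial growth picked up from $K$. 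I expect this bookkeeping of weights through the Picard iterates, rather than any single hard estimate, to be the part requiring the most care.
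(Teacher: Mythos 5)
Your proposal is correct and follows essentially the same route as the paper: the key point for \eqref{s3a} is exactly the inequality $\nu_0|x-y|=\nu_0|v_1|t\le\nu(v)t$ from \eqref{nuv}, which splits $e^{-\nu(v)t}$ into $e^{-\frac{2\nu_0 t}{3}}e^{-\frac{\nu_0|x-y|}{3}}$, and \eqref{wt} is obtained by the same induction on the Picard iterates $J_k$ using the $L^\infty_{v,\beta}$-boundedness of $K$, the localization estimate $\max_y e^{-\frac{\nu_0|x-y|}{3}}(1+|y|^2)^{-\gamma/2}\le C(1+|x|^2)^{-\gamma/2}$, and the absorption $t^k e^{-\frac{2\nu_0 t}{3}}\le Ce^{-\frac{\nu_0 t}{3}}$. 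The only inessential difference is your case-split on $t$ and the concern about absorbing the velocity weight in \eqref{s3a}: since $S^t$ acts pointwise in $v$, the factor $(1+|v|)^\beta$ passes through to the $L^\infty_{v,\beta}$-norm on the right unchanged, so no absorption is needed.
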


\begin{proof}
We first prove \eqref{s3a}. Recall 
$$ S^tg_0(x,v)=e^{-\nu(v)t}g_0(x-v_1t,v).$$
Let $y=x-v_1t$. By \eqref{nuv}, we have
$$\nu_0|x-y|=\nu_0|v_1|t\le \nu(v)t.$$
Thus
\bma
| S^tg_0(x,v)|&\le e^{-\frac{2\nu(v)t}3}e^{-\frac{\nu_0|x-y|}3} |g_0(y,v)| \nnm\\
&\le e^{-\frac{2\nu_0t}3}(1+|v|)^{-\beta}\max_{y\in \R}e^{-\frac{\nu_0|x-y|}3}\|g_0(y)\|_{L^\infty_{v,\beta}}, \label{bb2}
\ema
which gives \eqref{s3a}.

To prove \eqref{wt}, by \eqref{W-1}, we have
$$W_{1}(t)\ast g_0(x)=\sum^{6}_{k=0} \M^t_{b,k} g_0(x),$$
where $\hat{\M}_{b,k}^t(\xi)$ is defined by \eqref{mix4}.
By \eqref{s3a}, we have
\be \|\M^t_{b,0} g_0(x)\|_{L^\infty_{v,\beta}}=\|S^tg_0(x)\|_{L^\infty_{v,\beta}}\le Ce^{-\frac{2\nu_0t}3}(1+|x|^2)^{-\frac{\gamma}2}. \label{J_2d}\ee
Since
$$\|Kg\|_{L^\infty_{v,\beta}}\le C\|g\|_{L^\infty_{v,\beta}},\quad \forall g\in L^2_v,$$
it follows from \eqref{J_2d} that
\bmas
\|\M^t_{b,1}  g_0(x)\|_{L^\infty_{v,\beta}}
&\le C\intt  e^{-\frac{2\nu_0(t-s)}3}\max_{y\in \R}e^{-\frac{\nu_0|x-y|}3}\|S^sg_0\|_{L^\infty_{v,\beta}} ds\\
&\le Cte^{-\frac{2\nu_0t}3}(1+|x|^2)^{-\frac{\gamma}2}.
\emas
By induction, we  obtain
\be
\| \M^t_{b,k} g_0(x)\|_{L^\infty_{v,\beta}}\le C_kt^ke^{-\frac{2\nu_0t}3}(1+|x|^2)^{-\frac{\gamma}2},\quad \forall\ k\ge 1, \label{J_k}
\ee
 which gives \eqref{wt}. The proof of the lemma is completed.
\end{proof}

\begin{lem}\label{green3a} Given  $\alpha,\beta,\gamma\ge 0$, if the function $F(t,x,v)$  satisfies
$$
 \| F(t,x)\|_{L^\infty_{v,\beta-1}}\le C(1+t)^{-\frac{\alpha}2}B_{\frac{\gamma}2}(t,x-\lambda t), \quad \lambda \in \R,
$$
then we have
\bma
\bigg\|\intt  S^{t-s}F(s,x)ds\bigg\|_{L^\infty_{v,\beta}}&\le C(1+t)^{-\frac{\alpha}2}B_{\frac{\gamma}2}(t,x-\lambda t), \label{s4}\\
\bigg\|\intt  W_{1}(t-s)\ast F(s,x)ds\bigg\|_{L^\infty_{v,\beta}}&\le C(1+t)^{-\frac{\alpha}2}B_{\frac{\gamma}2}(t,x-\lambda t), \label{s4a}
\ema
where $C>0$  is a constant, and $W_{1}(t,x)$ is defined by \eqref{W-1}.
\end{lem}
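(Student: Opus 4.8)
The plan is to prove both \eqref{s4} and \eqref{s4a} by direct computation, starting with the pure transport part and then bootstrapping to the full singular kinetic wave via the mixture representation. First I would recall the explicit form $S^{t-s}F(s,x,v)=e^{-\nu(v)(t-s)}F(s,x-v_1(t-s),v)$ and, using $\nu_0(1+|v|)\le \nu(v)$ together with the weight loss $\beta-1\to\beta$, absorb one power of $(1+|v|)$ into the exponential decay $e^{-\nu_0|v_1|(t-s)}$ coming from $e^{-\nu(v)(t-s)}$. This reduces \eqref{s4} to the scalar estimate
\[
\int_0^t e^{-\frac{\nu_0}{2}(t-s)}(1+s)^{-\frac{\alpha}{2}}\Big(1+\frac{|x-v_1(t-s)-\lambda s|^2}{1+s}\Big)^{-\frac{\gamma}{2}}\,ds \le C(1+t)^{-\frac{\alpha}{2}}B_{\frac{\gamma}{2}}(t,x-\lambda t),
\]
uniformly in $v_1\in\R$, where the remaining exponential $e^{-\frac{\nu_0}{2}(t-s)}$ is what localizes the integral near $s=t$.

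The key observation for the scalar bound is that the exponential factor $e^{-\frac{\nu_0}{2}(t-s)}$ forces the effective integration region to be $t-s=O(1)$ (more precisely $t-s\lesssim\ln(2+t)$ up to negligible tails), on which $(1+s)^{-\alpha/2}\sim(1+t)^{-\alpha/2}$ and the shift $v_1(t-s)$ is controlled by $|v_1|(t-s)$. I would split into the two regimes $|v_1|(t-s)\le \tfrac12|x-\lambda t|$ and $|v_1|(t-s)> \tfrac12|x-\lambda t|$: in the first regime the translated argument $x-v_1(t-s)-\lambda s = (x-\lambda t)+(\lambda-v_1)(t-s)$ stays comparable to $x-\lambda t$ so the parabolic profile is bounded by $CB_{\gamma/2}(t,x-\lambda t)$ and the $ds$-integral of $e^{-\nu_0(t-s)/2}$ converges; in the second regime one exploits $e^{-\frac{\nu_0}{2}(t-s)}\le e^{-\frac{\nu_0}{4}(t-s)}e^{-\frac{\nu_0}{4}|v_1|(t-s)}\le e^{-\frac{\nu_0}{4}(t-s)}e^{-\frac{\nu_0}{8}|x-\lambda t|}$, and the resulting pure exponential decay in $|x-\lambda t|$ dominates any polynomial profile $B_{\gamma/2}(t,x-\lambda t)$ since $e^{-c|y|}\le C_\gamma(1+y^2/(1+t))^{-\gamma/2}$ only fails by a factor $(1+t)^{\gamma/2}$, which is harmless against the strict exponential gain — I would be slightly careful here and instead keep enough exponential decay, e.g. $e^{-\frac{\nu_0}{8}|x-\lambda t|}\le C(1+t)^{-1}B_{\gamma/2}(t,x-\lambda t)$-type comparisons, exactly as in the analogous waves-coupling lemmas of \cite{Liu1,Li4}. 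This yields \eqref{s4}.

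For \eqref{s4a}, I would use the representation $W_{1}(t)\ast = \sum_{k=0}^{6}\M^{t}_{b,k}$ with $\hat{\M}^t_{b,k}(\xi)$ as in \eqref{mix4}, so that $\int_0^t W_{1}(t-s)\ast F(s,x)\,ds=\sum_{k=0}^6\int_0^t\M^{t-s}_{b,k}F(s,x)\,ds$, and treat each term inductively in $k$: the $k=0$ term is exactly \eqref{s4}; for $k\ge1$ I write $\M^{t-s}_{b,k}=\int_0^{t-s}S^{t-s-\tau}K\,\M^{\tau}_{b,k-1}\,d\tau$, use boundedness of $K$ on $L^\infty_{v,\beta}$ (the estimate $\|Kg\|_{L^\infty_{v,\beta}}\le C\|g\|_{L^\infty_{v,\beta}}$, already invoked in Lemma \ref{S_1}), the inductive hypothesis that $\|\M^{\tau}_{b,k-1}F(s,\cdot)\|_{L^\infty_{v,\beta}}\le C_{k-1}\tau^{k-1}e^{-2\nu_0\tau/3}(1+s)^{-\alpha/2}B_{\gamma/2}(s,x-\lambda s)$, and then the same scalar $S^t$-convolution computation as in the first part (the extra polynomial factor $\tau^{k-1}$ is absorbed since $\tau$ is effectively $O(1)$). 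Summing the finitely many $k\le6$ terms gives \eqref{s4a}. The main obstacle is purely the scalar transport estimate above — keeping track of the interplay between the exponential relaxation $e^{-\nu_0(t-s)}$, the moving frames with two different speeds $v_1$ and $\lambda$, and the self-similar parabolic profile $B_{\gamma/2}$, and in particular verifying the comparison that converts residual exponential-in-$x$ decay into the profile $B_{\gamma/2}(t,x-\lambda t)$ with the right power of $(1+t)$; everything else is a routine induction built on the mixture representation and the $L^\infty_{v,\beta}$-boundedness of $K$.
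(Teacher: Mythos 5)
Your overall strategy coincides with the paper's: write $S^{t-s}F$ out along characteristics, use the relaxation factor $e^{-\nu(v)(t-s)}$ both to absorb the weight loss $\beta-1\to\beta$ and to convert the shifted profile $B_{\frac{\gamma}2}(s,x-v_1(t-s)-\lambda s)$ back into $B_{\frac{\gamma}2}(t,x-\lambda t)$, then run a finite induction over the mixture operators $\M^{t}_{b,k}$ using $\|Kg\|_{L^\infty_{v,\beta}}\le C\|g\|_{L^\infty_{v,\beta}}$. However, there is a concrete flaw in your reduction. The ``scalar estimate uniform in $v_1$'' to which you reduce \eqref{s4}, namely
\begin{equation*}
\intt e^{-\frac{\nu_0}{2}(t-s)}(1+s)^{-\frac{\alpha}{2}}B_{\frac{\gamma}2}(s,x-v_1(t-s)-\lambda s)\,ds\le C(1+t)^{-\frac{\alpha}{2}}B_{\frac{\gamma}2}(t,x-\lambda t),
\end{equation*}
is false for $\gamma>1$ once all $v$-dependent decay has been discarded: take $|x-\lambda t|=R\gg\sqrt{1+t}$ and $v_1\sim R$, so that the characteristic sweeps through the center of the profile during a window of length $\sim\sqrt{1+t}/R$ at times with $t-s=O(1)$; the left-hand side is then at least $c(1+t)^{-\frac{\alpha}{2}}\sqrt{1+t}/R$, which exceeds $C(1+t)^{-\frac{\alpha}{2}}(1+t)^{\frac{\gamma}{2}}R^{-\gamma}$ whenever $R\gg\sqrt{1+t}$ and $\gamma>1$. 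Your patch in the second regime, $e^{-\frac{\nu_0}{2}(t-s)}\le e^{-\frac{\nu_0}{4}(t-s)}e^{-\frac{\nu_0}{4}|v_1|(t-s)}$, is simply false for $|v_1|>1$, which is exactly the dangerous range of velocities in this scenario.

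The repair is small and is precisely what the paper does in \eqref{bb2}: since $\nu(v)\ge\nu_0(1+|v|)\ge\nu_0(1+|v_1|)$, split $e^{-\nu(v)(t-s)}$ into three factors and retain one of them as $e^{-\frac{\nu_0}{3}|v_1|(t-s)}=e^{-\frac{\nu_0}{3}|x-y|}$ with $y=x-v_1(t-s)$, reserving a second factor (together with the $ds$-integration of $\nu(v)e^{-\nu(v)(t-s)/3}$) to absorb the weight $(1+|v|)$ and a third for the time localization. With $e^{-\frac{\nu_0}{3}|x-y|}$ in hand, the inequality $e^{-\frac{\nu_0(t-s)}{3}}e^{-\frac{\nu_0|x-y|}{3}}B_{\frac{\gamma}2}(s,y-\lambda s)\le e^{-\frac{|x-\lambda t|}{6D}}+CB_{\frac{\gamma}2}(t,x-\lambda t)$ holds for all $v_1$, and the remainder of your argument, including the induction for \eqref{s4a} where the same retained factor controls the shift $v_1\tau$ in each application of $S^{\tau}$, goes through as planned.
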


\begin{proof} By \eqref{bb2}, we have
\bmas
&\quad\nu(v)^{\beta}\intt | S^{t-s}F(s,x,v)|ds\\
&\le C\intt e^{-\frac{2\nu(v)(t-s)}3}\nu(v) e^{-\frac{\nu_0|x-y|}3}\|F(s,y)\|_{L^\infty_{v,\beta-1}} ds\\
&\le C\intt e^{-\frac{2\nu(v)(t-s)}3}\nu(v)(1+s)^{-\frac{\alpha}2} e^{-\frac{\nu_0|x-y|}3}B_{\frac{\gamma}2}(s,y-\lambda s) ds.
\emas
Since
\bmas
 e^{-\frac{\nu_0(t-s)}3}e^{-\frac{\nu_0|x-y|}{3}}B_{\frac{\gamma}2}(s,y-\lambda s)&\le e^{-\frac{|x-y-\lambda(t- s)|}{3D}}B_{\frac{\gamma}2}(s,y-\lambda s)\\
 &\le e^{-\frac{|x- \lambda t|}{6D}}+CB_{\frac{\gamma}2}(t,x-\lambda t),
 \emas
where $D=\nu_0^{-1}\max\{1,|\lambda|\}$, we have
\bmas
&\quad\nu(v)^{\beta}\intt | S^{t-s}F_1(s,x,v)|ds\nnm\\
&\le C\intt e^{-\frac{ \nu(v)(t-s)}3}\nu(v)(1+s)^{-\frac{\alpha}2}ds \(e^{-\frac{|x-\lambda t|}{6D}}+B_{\frac{\gamma}2}(t,x-\lambda t)\) \\
&\le C(1+t)^{-\frac{\alpha}2} \(e^{-\frac{|x-\lambda t|}{6D}}+B_{\frac{\gamma}2}(t,x-\lambda t)\),
\emas
which gives \eqref{s4}. Similarly,
\bmas
&\quad\nu(v)^{\beta}\intt |\M^{t-s}_{b,1}  F(s,x,v)|ds\\
&\le C\intt e^{-\frac{2\nu(v)(t-s)}3} e^{-\frac{\nu_0|x-y|}3}\int^s_0\|KS^{s-s_1}F(s_1,y)\|_{L^\infty_{v,\beta}}ds_1 ds\\
&\le C\intt e^{-\frac{2\nu(v)(t-s)}3}(1+s)^{-\frac{\alpha}2} e^{-\frac{\nu_0|x-y|}3}B_{\frac{\gamma}2}(s,y-\lambda s) ds\\
&\le C(1+t)^{-\frac{\alpha}2} B_{\frac{\gamma}2}(t,x-\lambda t).
\emas
By induction, we  obtain
$$
\bigg\|\intt \M^{t-s}_{b,k}  F(s,x)ds\bigg\|_{L^\infty_{v,\beta}}\le C_k(1+t)^{-\frac{\alpha}2} B_{\frac{\gamma}2}(t,x-\lambda t),\quad \forall\ k\ge 1,
$$
 which gives \eqref{s4a}.
\end{proof}

\begin{lem} \label{S_2}
Let $\beta,\gamma\ge 0$. If $V_0(x)=(g_0(x,v),Y^2_0(x),Y^3_0(x))\in \mathcal{X}_1$  satisfies
$$
 \| g_0(x)\|_{L^\infty_{v,\beta}}+|Y^2_0(x)|+|Y^3_0(x)|\le C(1+|x|^2)^{-\frac{\gamma}2},
$$
then
\be
\| W_{0,1}(t)\ast V_0(x)\|_{Y^\infty_{\beta}}\le Ce^{-\frac{\nu_0t}3}(1+|x|^2)^{-\frac{\gamma}2} , \label{s3b}
\ee
where $C>0$ is a constant, and $W_{0,1}(t,x)$ is defined by \eqref{W_0}.
\end{lem}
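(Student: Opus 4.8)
The plan is to reduce the statement to the explicit structure of the kinetic wave $W_{0,1}$ and then run an induction in the spirit of the proof of Lemma~\ref{S_1}. By \eqref{W_0} and \eqref{w1} one has $W_{0,1}(t,x)=\sum_{n=0}^{6}\U_{n,1}(t,x)$, so it suffices to bound $\|\U_{n,1}(t)\ast V_0(x)\|_{Y^\infty_{\beta}}$ for each $n\le 6$. Recalling the construction \eqref{w6}, \eqref{U-1}, \eqref{U_1}--\eqref{U_2} together with the decomposition of $\hat{\Q}^t_{n}$ from Lemma~\ref{mix1b}, each $\U_{n,1}(t)\ast V_0$ is a finite linear combination of iterated Duhamel compositions built from: the free‑streaming‑with‑friction operator $S^{t-s}$ (kernel $e^{-\nu(v)(t-s)}\delta(x-v_1(t-s))$), the bounded gain operators $K_1$ and $K$, the macroscopic couplings $g\mapsto(g,v_l\chi_0)$, $g\mapsto(g,\chi_0)$ and $c\mapsto c\,v_l\chi_0$, the resolvents $R_j=(\nu(v)+i(v_1+j)\xi)^{-1}$ with $j\in\{0,\pm1\}$ (the shifts $j=\pm1$ coming precisely from the eigenvalues $\pm i\xi$ of $\BB_2(\xi)$, the new electro‑magnetic rotation, and $j=0$ from the electric field), and the scalar factor $(\nu_0+i\xi)^{-1}$. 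The crucial structural point, furnished exactly by the splitting $\hat{\Q}^t_n=\hat{\Q}^t_{n,1}+\hat{\Q}^t_{n,2}$ of Lemma~\ref{mix1b}, is that the pure‑rotation factors $e^{\pm i\xi t}$ — equivalently, the unbounded spatial translations by $\pm t$ — occur only in the fluid part $\hat{\Q}^t_{n,2}$, hence are absent from $W_{0,1}$; in the kinetic part $\hat{\Q}^t_{n,1}$ and in the pieces $\hat H_n,\hat U^1_n$ the only translation is the $v_1(t-s)$ shift of $S^{t-s}$, damped by $e^{-\nu(v)(t-s)}$, while each resolvent kernel is exponentially localized because $\nu(v)\ge\nu_0(1+|v_1|)\ge\nu_0|v_1+j|$ for $j\in\{0,\pm1\}$.

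Next I would record the building‑block bounds in $\|\cdot\|_{L^\infty_{v,\beta}}$ and $\|\cdot\|_{Y^\infty_{\beta}}$. For $S^t$ this is \eqref{s3a}. One has $\|K_1g\|_{L^\infty_{v,\beta}}\le C\|g\|_{L^\infty_{v,\beta}}$ (same for $K$), $|(g,v_l\chi_0)|+|(g,\chi_0)|\le C\|g\|_{L^\infty_{v,\beta}}$ and $\|c\,v_l\chi_0\|_{L^\infty_{v,\beta}}\le C|c|$, since $v_l\chi_0,\chi_0\in L^1_v$ with any polynomial weight. For a resolvent $R_j$ (and for $(\nu_0+i\xi)^{-1}$), the inverse Fourier transform in $x$ is, up to the harmless prefactor $|v_1+j|^{-1}$, an exponentially localized kernel of total mass $\le\nu(v)^{-1}\le\nu_0^{-1}$ concentrated at scale $O(1)$; using $\nu(v)/|v_1+j|\ge\nu_0$ one gets $\|R_jg(x)\|_{L^\infty_{v,\beta}}\le C(1+|x|^2)^{-\gamma/2}$ whenever $\|g(y)\|_{L^\infty_{v,\beta}}\le C(1+|y|^2)^{-\gamma/2}$, and likewise $R_j$ preserves a Huygens profile $B_{\frac{\gamma}2}(t,x-\lambda t)$ up to an additive term $e^{-|x-\lambda t|/D}$ absorbed as in Lemma~\ref{green3a}. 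Finally, $S^t$ applied after any finite product of resolvents and couplings sends the profile $(1+|\cdot|^2)^{-\gamma/2}$ to $Ce^{-\nu_0t/3}(1+|x|^2)^{-\gamma/2}$: writing $e^{-\nu(v)t}=e^{-\nu_0t/3}e^{-2\nu(v)t/3}$, using $e^{-2\nu(v)t/3}\le e^{-2\nu_0|v_1|t/3}$, and splitting into $|v_1|t\le|x|/2$ (where $(1+|x-v_1t|^2)^{-\gamma/2}\le C(1+|x|^2)^{-\gamma/2}$) and $|v_1|t>|x|/2$ (where $e^{-2\nu_0|v_1|t/3}\le e^{-\nu_0|x|/3}\le C_\gamma(1+|x|^2)^{-\gamma/2}$), exactly as in \eqref{bb2}. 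Note that the initial data $(Y^2_0,Y^3_0)$ enter $W_{0,1}$ only through such resolvent‑localized operators, never through $e^{t\BB_2}$, so they are handled on the same footing.

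With these in hand the argument concludes by induction on the number of Duhamel factors, mirroring \eqref{J_2d}--\eqref{J_k}: the lowest term $\U_{0,1}(t)\ast V_0$ reduces to $S^tg_0$ (on the particle component) together with resolvent compositions thereof, hence is bounded by $Ce^{-2\nu_0t/3}(1+|x|^2)^{-\gamma/2}$; and each extra application of $K_1$, of a macroscopic coupling, or of a resolvent inside a time integral $\int_0^t S^{t-s}(\cdot)\,ds$ costs at most one factor $C(1+t)$, while leaving the profile $(1+|x|^2)^{-\gamma/2}$ and the rate $e^{-2\nu_0t/3}$ intact. Since $W_{0,1}$ is a finite sum of terms, each built from a bounded number of such factors, this yields $\|W_{0,1}(t)\ast V_0(x)\|_{Y^\infty_{\beta}}\le C(1+t)^{N}e^{-2\nu_0t/3}(1+|x|^2)^{-\gamma/2}\le Ce^{-\nu_0t/3}(1+|x|^2)^{-\gamma/2}$ for a fixed $N$, which is \eqref{s3b}. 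I expect the main obstacle to be the bookkeeping at the structural level: one must make the decomposition of $\hat{\Q}^t_n$ in Lemma~\ref{mix1b} fully explicit in physical space in order to confirm that $\hat{\Q}^t_{n,1}$ (and $\hat H_n$, $\hat U^1_n$) genuinely carry no $e^{\pm i\xi t}$ translation factor, and to control the mild $|v_1+j|^{-1}$ singularity of the resolvent kernels uniformly in $v$; once this is set up, each individual estimate is routine.
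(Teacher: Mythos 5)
Your proposal is correct and follows essentially the same route as the paper: the paper's proof also sets $V_n=\U_{n-1,1}(t)\ast V_0$, uses the physical-space representation $R^0_jh_0(x,v)=\int_0^\infty e^{-\nu(v)\tau}h_0(x-(v_1+d_j)\tau,v)r_j\,d\tau$ together with $\nu_0|x-y|\le\nu_0(|v_1|+1)\tau\le\nu(v)\tau$ to localize the resolvents (your $\nu(v)\ge\nu_0|v_1+j|$ observation), handles $(\nu_0+\dx)^{-1}$ via the kernel $Ce^{-\nu_0x}1_{\{x>0\}}$, and pays one factor $(1+t)$ per Duhamel iteration before absorbing $(1+t)^{n-1}e^{-2\nu_0t/3}$ into $e^{-\nu_0t/3}$. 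The only difference is presentational: the paper writes out $V_1,V_2,V_3$ explicitly from the formulas for $\hat{\Q}^t_{0,1},\hat{\Q}^t_{1,1},\hat{\Q}^t_{2,1}$ and asserts the higher terms "similarly," whereas you argue structurally by induction on the number of Duhamel factors.
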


\begin{proof} For $n\ge 1$, define $V_n(t,x)=(F_n(t,x,v),Z^2_n(t,x),Z^3_n(t,x))\in L^2_v\times \R^3\times \R^3$ as
$$V_n(t,x)= \U_{n-1,1}(t)\ast V_0(x),$$
where $\U_{k,1}(t,x)$, $k\ge 0$ is defined by \eqref{w6}. 

From \eqref{U_5}, \eqref{U-1} and \eqref{w6}, we have
\be\label{V_1}
\left\{\bln
&F_n = J_n+H_{n},\quad Z^2_n=(Z^1_n,\tilde{Z}^2_n),\quad Z^3_n=(0,\tilde{Z}^3_n),\\
&(J_n,\tilde{Z}^2_n,\tilde{Z}^3_n)^T= \Q^t_{n-1,1}\ast (g_0,\tilde{Y}^2_0,\tilde{Y}^3_0)^T,\\
&H_1\equiv0, \quad  Z^1_1  =(\nu_0+\dx)^{-1} (F_1,\chi_0),\\
&H_{n+1}= \sum^{n-1}_{k=0}\intt \M^{t-s}_{k} Z^1_{n-k}v_1\chi_0ds,
\\
&Z^1_{n+1} =(\nu_0+\dx)^{-1}\[(F_{n+1},\chi_0)+\nu_0 Z^1_{n}\] ,
\eln\right.
\ee
where $n\ge 1$, $(\tilde{Y}^2_0,\tilde{Y}^3_0), (\tilde{Z}^2_n,\tilde{Z}^3_n)\in \R^2\times \R^2$, and  $\Q^t_{k,1}(t,x)$, $k\ge 0$ is defined by \eqref{Q_3}.

Let $\tilde{Z}_n(t,x)=(\tilde{Z}^2_n,\tilde{Z}^3_n)(t,x) $ for $n\ge 1$. By \eqref{Q_0} and \eqref{V_1}, we obtain
\be
F_{1}(t,x)=S^tg_0,
\quad  \tilde{Z}_1(t,x)\equiv0. \label{V_3}
\ee
From Lemma \ref{S_1}, we have
\be \|F_{1}(t,x)\|_{L^\infty_{v,\beta}}=\|S^tg_0(x)\|_{L^\infty_{v,\beta}}  \le Ce^{-\frac{2\nu_0t}3}(1+|x|^2)^{-\frac{\gamma}2}. \label{V_3a}\ee
Since
\be \mathcal{F}^{-1}\(\frac{1}{\nu_0+i\xi}\)=Ce^{-\nu_0x}1_{\{x>0\}}, \label{V_3d}\ee
it follows that
\bma
|Z^1_1(t,x)|&=\left|(\nu_0+\dx)^{-1}(F_1(t,x),\chi_0)\right| \nnm\\
&\le C e^{-\frac{2\nu_0t}3}\int^{\infty}_0 e^{-\nu_0y}(1+|x-y|^2)^{-\frac{\gamma}2}dy\le Ce^{-\frac{2\nu_0t}3}(1+|x|^2)^{-\frac{\gamma}2}.\label{V_3b}
\ema
Combining   \eqref{V_3}--\eqref{V_3b}, we obtain
\be
\|V_1(t,x)\|_{Y^\infty_{\beta}}\le \|F_1(t,x)\|_{L^\infty_{v,\beta}}+ |Z^1_1(t,x) |+ |\tilde{Z}_1(t,x) |
\le Ce^{-\frac{2\nu_0t}3}(1+|x|^2)^{-\frac{\gamma}2}.
\ee

By \eqref{Q_1} and \eqref{V_1}, we have
$$
\left\{\bln
&F_2(t,x)=\M^t_1g_0+\sum^4_{j=1}S^tR^0_j(\tilde{Y}_0\cdot\tilde{\X}_j)+\intt S^{t-s}v_1\chi_0Z^1_1 ds,
\\
&\tilde{Z}_2(t,x)=\sum^4_{j=1}\tilde{\X}_j \intr S^tR^0_jg_0dv,
\eln\right.
$$
where $\tilde{Y}_0(x)=(\tilde{Y}^2_0,\tilde{Y}^3_0)(x)\in \R^2\times \R^2$, 
and $R^0_j$ is an operator in $L^2_v$ given by
$$
\hat{R}^0_j=u^0_j=(\nu(v)+iv_1\xi+\alpha_j)^{-1}r_j,\quad r_j=(v\cdot \X^2_j)\chi_0.
$$
For any $h_0=h_0(x,v)$,  $R^0_jh_0$ has the expression
\be
R^0_jh_0(x,v)=\int^{\infty}_0e^{-\nu(v)t}h_0(x-(v_1+d_j)t,v)r_jdt, \label{R-0}
\ee
where $d_1=d_2=1,\ d_3=d_4=-1$. Let $y=x-(v_1+d_j)t$. By \eqref{nuv}, we have
$$\nu_0|x-y|\le\nu_0(|v_1|+1)t\le \nu(v)t.$$
Thus, for any $\beta\ge 0$,
$$
|R^0_jh_0(x,v) |\le C\int^{\infty}_0e^{-\frac{2\nu_0t}3} (1+|v|)^{-\beta}e^{-\frac{\nu_0|x-y|}3}\|h_0(y,v)|v|\chi_0\|_{L^\infty_{v,\beta}}dt,
$$
which gives
\be
\|R^0_jh_0(x) \|_{L^\infty_{v,\beta}}\le C \max_{y\in\R} e^{-\frac{\nu_0|x-y|}3}\|h_0(y,v)|v|\chi_0\|_{L^\infty_{v,\beta}} . \label{V_4}
\ee
Thus
$$
\|S^tR^0_j(\tilde{Y}_0\cdot\tilde{\X}_j)\|_{L^\infty_{v,\beta}}+\|S^tR^0_jg_0\|_{L^\infty_{v,\beta}}\le Ce^{-\frac{2\nu_0t}3}(1+|x|^2)^{-\frac{\gamma}2},
$$
which together with \eqref{J_k} gives
\be
\|F_2(t,x)\|_{L^\infty_{v,\beta}}+|\tilde{Z}_2(t,x)|\le C(1+t)e^{-\frac{2\nu_0t}3}(1+|x|^2)^{-\frac{\gamma}2}. \label{V_4a}
\ee
Moreover, it follows from \eqref{V_4a}, \eqref{V_3b} and \eqref{V_3d} that
\be
|Z^1_2(t,x)|=\left|(\nu_0+\dx)^{-1}[(F_2,\chi_0)+Z^1_1]\right| \le C(1+t)e^{-\frac{2\nu_0t}3}(1+|x|^2)^{-\frac{\gamma}2}.\label{V_3c}
\ee
Thus, by  \eqref{V_4a} and \eqref{V_3c}, we obtain
\be
\|V_2(t,x)\|_{Y^\infty_{\beta}} \le C(1+t)e^{-\frac{2\nu_0t}3}(1+|x|^2)^{-\frac{\gamma}2}. \label{V_2b}
\ee
Furthermore, by \eqref{Q_2} and \eqref{V_1}, we have
$$
\left\{\bln
&F_3(t,x)=\M^t_2g_0+\sum^4_{j=1}S^tR^0_j\intr R^0_jg_0dv+\sum^4_{j=1}(\M^t_1R^0_j+S^tR^1_j)(\tilde{Y}_0\cdot\tilde{\X}_j)\\
&\qquad\qquad+\sum^4_{j=1}\intt S^{t-s}r_j \intr S^sR^0_jg_0dv ds+\sum^1_{k=0}\intt  \M^{t-s}_kv_1\chi_0Z^1_{2-k} ds,
\\
&\tilde{Z}_3(t,x)=\sum^4_{j=1}\tilde{\X}_j\bigg\{\intr (\M^t_1R^0_j+S^tR^1_j)g_0dv +\intr S^tR^0_j R^0_{l_j} (\tilde{Y}_0\cdot\tilde{\X}_{l_j})dv\bigg\},
\eln\right.
$$
where $R^1_j$ is an operator in $L^2_v$ given by
$$
\hat{R}^1_j=u^1_j=(\nu(v)+iv_1\xi+\alpha_j)^{-1}K_1u^0_j.
$$
By using a similar argument as for proving \eqref{V_2b}, we obtain
\be
\|V_3(t,x)\|_{Y^\infty_{\beta}}\le C(1+t)^2e^{-\frac{2\nu_0t}3}(1+|x|^2)^{-\frac{\gamma}2}.
\ee
Similarly, by \eqref{Q_3a} and \eqref{Q_4b}, one can show that
$$
\|V_n(t,x)\|_{Y^\infty_{\beta}} \le C_n(1+t)^{n-1}e^{-\frac{2\nu_0t}3}(1+|x|^2)^{-\frac{\gamma}2},\quad n\ge 4.
$$
Thus  \eqref{s3b} holds and this completes the proof of the lemma.
\end{proof}

\begin{lem}\label{green3b} Given $\alpha,\beta,\gamma\ge 0$, if $V(t,x)=(F(t,x,v),0,0)\in \mathcal{X}_1$  satisfies $(F(t,x),\chi_0)=0$ and
$$
 \|V(t,x)\|_{Y^\infty_{\beta-1}}\le (1+t)^{-\frac{\alpha}2} B_{\frac{\gamma}2}(t,x-\lambda t),\\
$$
then we have
\be
\bigg\|\intt  W_{0,1}(t-s)\ast V(s,x) ds\bigg\|_{Y^\infty_{\beta}}\le C(1+t)^{-\frac{\alpha}2} B_{\frac{\gamma}2}(t,x-\lambda t),
\ee
where $C>0$ is a constant, and $W_{0,1}(t,x)$ is defined by \eqref{W_0}.
\end{lem}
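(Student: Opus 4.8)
The plan is to reduce this lemma to the Duhamel analogues of Lemmas \ref{S_1}, \ref{S_2} and \ref{green3a}, which together already contain every estimate that is needed. By the definition \eqref{W_0} of the singular kinetic wave, $W_{0,1}(t)=\sum_{n=0}^{6}\U_{n,1}(t)$, so that
$$\intt W_{0,1}(t-s)\ast V(s,x)\,ds=\sum_{n=0}^{6}\intt \U_{n,1}(t-s)\ast V(s,x)\,ds,$$
and it suffices to estimate each summand in $Y^\infty_{\beta}$. Here the hypothesis $(F(s,\cdot),\chi_0)=0$ is precisely the compatibility condition guaranteeing $V(s)=(F(s),0,0)\in\mathcal X_1$, so each operator $\U_{n,1}(t-s)$ may legitimately be applied to $V(s)$.

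First I would record the explicit structure of $\U_{n,1}(t)\ast V_0$ for $V_0=(g_0,0,0)\in\mathcal X_1$, which is exactly the content of \eqref{V_1} from the proof of Lemma \ref{S_2} specialized to vanishing electromagnetic data $\tilde Y_0\equiv0$. In that case every component of $\U_{n,1}(t)\ast V_0$ is a finite sum of iterated compositions of the free-transport semigroup $S^t$ (equivalently $\M^t_0$), the compact operator $K_1$, the mixture operators $\M^t_k$, the propagators $R^0_j,R^1_j$ of \eqref{R0j}, \eqref{R1j}, and the convolution operator $(\nu_0+\dx)^{-1}$, glued together by one or two time integrals. Moreover, by Lemmas \ref{mix1a}, \ref{mix1b}, \ref{mix3} together with the propagation bounds \eqref{bb2}, \eqref{V_4} and the identity $\mathcal F^{-1}\big(\tfrac1{\nu_0+i\xi}\big)=Ce^{-\nu_0 x}1_{\{x>0\}}$ used in the proof of Lemma \ref{S_2}, each such block has a $v$-kernel carrying a Gaussian/collision-frequency weight, an exponential $e^{-\nu_0|\cdot|/C}$ decay in $x$, and a time prefactor of the form $(1+t)^{m}e^{-\nu_0 t}$ with an integer $m\le 3n$.

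The analytic heart is then a single absorption inequality, already exploited inside the proof of Lemma \ref{green3a}: for a kernel decaying like $e^{-\nu_0(t-s)/C}$ in time and $e^{-\nu_0|x-y|/C}$ in space, convolving against the source profile $(1+s)^{-\alpha/2}B_{\gamma/2}(s,y-\lambda s)$ and integrating in $(s,y)$ produces, after using
$$e^{-\frac{\nu_0(t-s)}{3}}e^{-\frac{\nu_0|x-y|}{3}}B_{\frac{\gamma}{2}}(s,y-\lambda s)\le e^{-\frac{|x-\lambda t|}{6D}}+CB_{\frac{\gamma}{2}}(t,x-\lambda t),\qquad D=\max\{1,|\lambda|\},$$
together with the splitting of $e^{-|x-\lambda t|/(6D)}$ according to $|x|\le 2\lambda_1 t$ or $|x|\ge 2\lambda_1 t$ exactly as in the proof of Lemma \ref{green3a}, the target bound $C(1+t)^{-\alpha/2}B_{\gamma/2}(t,x-\lambda t)$. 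The polynomial factors $(1+t-s)^{m}$ are harmless since they multiply $e^{-\nu_0(t-s)/C}$ and are absorbed after the $s$-integration, and the weight change $\nu(v)^{\beta-1}\to\nu(v)^{\beta}$ is absorbed by the integrated factor $\nu(v)e^{-\nu(v)(t-s)/C}$. Applying this term by term — invoking \eqref{s4} of Lemma \ref{green3a} for the $S^t$ and $\M^t_k$ blocks, the bound \eqref{V_4} for the $R^0_j,R^1_j$ blocks, and the one-sided exponential kernel of $(\nu_0+\dx)^{-1}$ for the $E_1$-component — and then proceeding by induction on $n$ exactly as in the proof of Lemma \ref{S_2}, one obtains $\big\|\intt\U_{n,1}(t-s)\ast V(s)\,ds\big\|_{Y^\infty_{\beta}}\le C_n(1+t)^{-\alpha/2}B_{\gamma/2}(t,x-\lambda t)$ for $0\le n\le 6$; summing over $n$ completes the proof.

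The step I expect to be the main obstacle is purely organizational: keeping track of all the iterated blocks composing $\U_{n,1}$ for $n$ up to $6$ and checking that each one, after the time convolution against a travelling source, still reproduces the profile $B_{\gamma/2}(t,x-\lambda t)$ with no loss of time decay — in particular that the convolution operator $(\nu_0+\dx)^{-1}$ appearing in the $E_1$-component does not destroy the travelling-wave profile, which again follows from the absorption inequality applied to its kernel $Ce^{-\nu_0 x}1_{\{x>0\}}$. No genuinely new estimate beyond those already established in Lemmas \ref{S_1}, \ref{green3a} and \ref{S_2} is required.
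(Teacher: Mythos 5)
Your proposal is correct and follows essentially the same route as the paper: the paper's own proof writes $\int_0^t W_{0,1}(t-s)\ast V(s)\,ds$ as the sum of the Duhamel integrals of the $\U_{n,1}$, specializes the explicit block structure \eqref{V_1} to vanishing electromagnetic data, and then invokes the kernel bounds and absorption inequality from Lemmas \ref{S_1}, \ref{S_2} and \ref{green3a} term by term, exactly as you describe. No discrepancy to report.
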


\begin{proof}For $n\ge 1$, denote $V_n(t,x)=(F_n(t,x,v),Z^2_n(t,x),Z^3_n(t,x))\in L^2_v\times \R^3\times \R^3$ as
$$V_n(t,x) =\intt  \U_{n-1,1}(t-s)\ast V (s,x)ds.$$

By  \eqref{V_1}, we have
$$
F_{1}(t,x)=\intt S^{t-s}F(s)ds,
\quad  \tilde{Z}_1(t,x)\equiv0,
$$
$$
\left\{\bln
&F_2(t,x)=\intt \M^{t-s}_1F(s)ds +\intt S^{t-s}v_1\chi_0Z^1_1 ds,
\\
&\tilde{Z}_2(t,x)=\sum^4_{j=1}\tilde{\X}_j\intt \intr S^{t-s}R^0_j F(s)dvds,
\eln\right.
$$
and
$$
\left\{\bln
&F_3(t,x)=\intt \M^{t-s}_2F(s)ds+\sum^4_{j=1}\intt S^{t-s}R^0_j\intr R^0_j F(s)dv ds \\
&\qquad\qquad+\sum^4_{j=1}\intt\int^s_0 S^{t-s}r_j \intr S^{s-s_1}R^0_j F(s_1)dvds_1ds\\
&\qquad\qquad+\sum^1_{j=0}\intt  \M^{t-s}_jv_1\chi_0Z^1_{2-j} ds,
\\
&\tilde{Z}_3(t,x)=\sum^4_{j=1}\tilde{\X}_j \intt\intr ( \M^{t-s}_1R^0_j+S^{t-s}R^1_j)F(s)dvds ,
\eln\right.
$$ where $\tilde{Z}_n(t,x)=(\tilde{Z}^2_n,\tilde{Z}^3_n)(t,x)$ $(n\ge 1)$.
By using a similar argument as for Lemma \ref{S_2}, we can prove the lemma. Here we omit the detail
for brevity.
\end{proof}

Let $\Pi_{\alpha}(t,x;\lambda)\ge 0$ be a function supported in $|x-\lambda  t|\le 1$ and satisfies
\be \intra \Pi_{\alpha}(t,x;\lambda)dx \le C(1+t)^{-\frac{\alpha}2}. \label{H-a}\ee

\begin{lem}\label{p-1}For any constants $D>0$, $\lambda\in \R$ and $\gamma>1$, we have
\bma
 \intra\frac{e^{-\frac{|x-y-\lambda t|^2}{D(1+t)}}}{(1+t)^{\frac{\alpha}2}}(1+|y|^2)^{-\frac{\gamma}2} dy
  \le& C(1+t)^{-\frac{\alpha}2} B_{\frac{\gamma}2}(t,x-\lambda t),\label{wave3a}\\
   \intra\frac{e^{-\frac{\nu_0|x-y-\lambda t|}{2}}}{ (1+t)^{\frac{\alpha}2}}(1+|y|^2)^{-\frac{\gamma}2} dy
  \le& C(1+t)^{-\frac{\alpha}2} 
  B_{\frac{\gamma}2}(t,x-\lambda t),\label{wave3b}\\
   \intra \Pi_{\alpha}(t,x-y;\lambda) (1+|y|^2)^{-\frac{\gamma}2} dy
  \le& C(1+t)^{-\frac{\alpha}2} 
  B_{\frac{\gamma}2}(t,x-\lambda t),\label{wave3c}
  \ema
where  $C>$ is a constant.
\end{lem}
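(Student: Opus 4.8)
The plan is to prove the three convolution estimates \eqref{wave3a}--\eqref{wave3c} by a direct region-splitting argument, the standard technique for bounding the interaction of a traveling heat/exponential/localized kernel against an algebraically decaying profile. All three bounds share the same structure, so I would treat \eqref{wave3a} in detail and indicate the (easier) modifications for the others. Throughout I would write $z = x - \lambda t$ and reduce to estimating, up to the prefactor $(1+t)^{-\alpha/2}$, the integral $\int_{\R} \Theta(t,z-y)(1+|y|^2)^{-\gamma/2}\,dy$, where $\Theta$ is either the Gaussian $e^{-|z-y|^2/(D(1+t))}$, the exponential $e^{-\nu_0|z-y|/2}$, or the localized bump $\Pi_\alpha$.

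First I would split the $y$-integration into the near region $|y| \le |z|/2$ and the far region $|y| \ge |z|/2$. On the far region we have $(1+|y|^2)^{-\gamma/2} \le C(1+|z|^2)^{-\gamma/2} \le C B_{\gamma/2}(t,z)$ since $(1+|z|^2)^{-\gamma/2} \le (1 + |z|^2/(1+t))^{-\gamma/2} = B_{\gamma/2}(t,z)$ for $t\ge 0$; pulling this factor out, the remaining integral $\int_{\R}\Theta(t,z-y)\,dy$ is bounded by a constant uniformly in $t$ (it is $\le C(1+t)^{1/2}$ for the Gaussian, $\le C$ for the exponential, $\le C(1+t)^{-\alpha/2}$ for $\Pi_\alpha$ — in the Gaussian case the extra $(1+t)^{1/2}$ is harmless because, as I explain next, the near region is where we must be careful and it is there the analysis is sharp; more precisely for the far region one instead keeps one copy of $\Theta$ to produce a second small factor when $|z|$ is large relative to $\sqrt{1+t}$). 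On the near region $|y|\le|z|/2$ we have $|z-y|\ge |z|/2$, so $\Theta(t,z-y) \le \Theta(t, z/2)$ by monotonicity of the kernel in $|z-y|$; for the Gaussian this gives $e^{-|z|^2/(4D(1+t))} \le C(1+|z|^2/(1+t))^{-\gamma/2} = CB_{\gamma/2}(t,z)$ (since $e^{-cs}\le C_\gamma(1+s)^{-\gamma/2}$ for $s\ge 0$), and then $\int_{|y|\le|z|/2}(1+|y|^2)^{-\gamma/2}\,dy \le \int_\R (1+|y|^2)^{-\gamma/2}\,dy < \infty$ precisely because $\gamma > 1$. This is the one place the hypothesis $\gamma>1$ is used and it is essential.

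For \eqref{wave3b} the argument is identical but simpler: on the near region $e^{-\nu_0|z|/4} \le C B_{\gamma/2}(t,z)$ directly, and $\int_\R (1+|y|^2)^{-\gamma/2}dy<\infty$; on the far region $(1+|y|^2)^{-\gamma/2}\le CB_{\gamma/2}(t,z)$ and $\int_\R e^{-\nu_0|z-y|/2}dy = C$. For \eqref{wave3c} one uses the support condition: $\Pi_\alpha(t,\cdot;\lambda)$ is supported in $|\cdot|\le 1$, so $\Pi_\alpha(t,z-y;\lambda)$ forces $|z-y|\le 1$, hence $|y|\ge |z|-1$ and $(1+|y|^2)^{-\gamma/2}\le C(1+|z|^2)^{-\gamma/2}\le CB_{\gamma/2}(t,z)$ whenever $|z|\ge 2$, while for $|z|\le 2$ one has $B_{\gamma/2}(t,z)\ge (1+4/(1+t))^{-\gamma/2}\ge c>0$ so the bound is trivial; in either case $\int_\R \Pi_\alpha(t,z-y;\lambda)\,dy \le C(1+t)^{-\alpha/2}$ by \eqref{H-a}. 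Multiplying back the prefactor $(1+t)^{-\alpha/2}$ in \eqref{wave3a}--\eqref{wave3b} (which I carried along but did not write) yields the claimed estimates.

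I do not expect any genuine obstacle here — this is a routine lemma whose only subtlety is the correct region decomposition and the use of $\gamma>1$ to integrate the tail of the profile. The mildly delicate point, worth stating carefully rather than glossing, is the elementary inequality $e^{-cs} \le C_\gamma (1+s)^{-\gamma/2}$ for $s \ge 0$ (with $s = |z|^2/(D(1+t))$ or $s=\nu_0|z|/\text{const}$), which converts the Gaussian/exponential decay picked up on the near region into the algebraic parabolic profile $B_{\gamma/2}(t,z)$; once this is in hand, all three cases follow by the same two-region split. I would present the proof for \eqref{wave3a}, remark that \eqref{wave3b} is the same with the Gaussian replaced by the exponential, and dispatch \eqref{wave3c} in two lines using the compact support of $\Pi_\alpha$ together with \eqref{H-a}.
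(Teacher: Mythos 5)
Your region split and near-region estimate match the paper's in spirit, but the far-region argument for \eqref{wave3a} has a genuine gap that you acknowledge but do not close. You first assert that $\int_{\R}\Theta(t,z-y)\,dy$ ``is bounded by a constant uniformly in $t$'' and then immediately contradict this by noting it is $\le C(1+t)^{1/2}$ in the Gaussian case; with the bound $(1+|y|^2)^{-\gamma/2}\le CB_{\gamma/2}(t,z)$ that you use on the far region, you are left with $C(1+t)^{1/2}B_{\gamma/2}(t,z)$, not $CB_{\gamma/2}(t,z)$, and the proposed repair -- ``keep one copy of $\Theta$'' -- cannot work: on $\{|y|\ge|z|/2\}$ there is no lower bound on $|z-y|$, so the Gaussian kernel provides no extra smallness there.

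The mechanism that actually kills the $(1+t)^{1/2}$ is different, and it is the content of the paper's preliminary case split. For $|x-\lambda t|^2\le 1+t$ the whole integral is $\le C\int_\R(1+|y|^2)^{-\gamma/2}\,dy<\infty$ (using $\gamma>1$), and since $B_{\gamma/2}(t,x-\lambda t)\ge 2^{-\gamma/2}$ in this regime the estimate is immediate -- no far/near decomposition is needed at all. For $|x-\lambda t|^2\ge 1+t$, on the far region one has $(1+|y|^2)^{-\gamma/2}\le C(1+|x-\lambda t|^2)^{-\gamma/2}$ and then the elementary inequality
$(1+|x-\lambda t|^2)^{-\gamma/2}\le C\,(1+t)^{-\gamma/2}\,B_{\gamma/2}(t,x-\lambda t)$,
which is valid precisely because $|x-\lambda t|^2\ge 1+t$, supplies a factor $(1+t)^{-\gamma/2}$; combined with $\int_\R e^{-|x-y-\lambda t|^2/(D(1+t))}\,dy\le C(1+t)^{1/2}$ this gives $C(1+t)^{(1-\gamma)/2}B_{\gamma/2}\le CB_{\gamma/2}$, where $\gamma>1$ is what makes the exponent nonpositive. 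So the hypothesis $\gamma>1$ enters twice -- once to integrate the tail of the profile (as you observed) and once to absorb the Gaussian normalization -- and the second use is invisible unless you make the $|x-\lambda t|\lessgtr\sqrt{1+t}$ dichotomy explicit, which is exactly what the paper does and what your write-up omits. Your treatments of \eqref{wave3b} and \eqref{wave3c} are fine, because there the kernel mass is $O(1)$ (resp.\ $O((1+t)^{-\alpha/2})$) with no growing factor to absorb.
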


\begin{proof}
For $|x-\lambda t|^2\le 1+t$,
\bmas
\intra \frac{e^{-\frac{|x-y-\lambda t|^2}{D(1+t)}}}{(1+t)^{\frac{\alpha}2}}(1+|y|^2)^{-\frac{\gamma}2} dy
\le C(1+t)^{-\frac{\alpha}2}.
\emas
For $|x-\lambda t|^2\ge 1+t$,
\bmas
 \intra \frac{e^{-\frac{|x-y-\lambda t|^2}{D(1+t)}}}{(1+t)^{\frac{\alpha}2}}(1+|y|^2)^{-\frac{\gamma}2} dy
 \le&\(\int_{|y|\le \frac{|x-\lambda t|}{2}}+ \int_{|y|\ge \frac{|x-\lambda t|}{2}}\) \frac{e^{-\frac{|x-y-\lambda t|^2}{D(1+t)}}}{(1+t)^{\frac{\alpha}2}}(1+|y|^2)^{-\frac{\gamma}2}dy \\
 \le& C(1+t)^{-\frac{\alpha}2} \(e^{-\frac{|x-\lambda t|^2}{4D(1+t)}}+B_{\frac{\gamma}2}(t,x-\lambda t)\).
  \emas
Thus, we obtain \eqref{wave3a}.  \eqref{wave3b} and \eqref{wave3c} can be proved similarly.
\end{proof}

We now consider the following integrals for estimating the nonlinear interactions:
\bma
&I^{\alpha,\beta,\gamma}(t,x;t_1,t_2;\lambda,\mu,D) \nnm\\
=&\int^{t_2}_{t_1} \intra (1+t-s)^{-\frac{\alpha}2}e^{-\frac{|x-y-\lambda(t-s)|^2}{D(1+t-s)}}(1+s)^{-\frac{\beta}2} B_{\frac{\gamma}2}(s,y-\mu s)dyds, \label{Iab}\\
&J^{\alpha,\beta,\gamma}(t,x;t_1,t_2;\lambda,\mu)\nnm\\
=&\int^{t_2}_{t_1} \intra (1+t-s)^{-\frac{\alpha}2}e^{-\frac{\nu_0|x-y-\lambda(t-s)|}{2}}(1+s)^{-\frac{\beta}2} B_{\frac{\gamma}2}(s,y-\mu s) dyds,\label{Jab}\\
&K^{\alpha, \beta,\gamma}(t,x;t_1,t_2;\lambda,\mu)\nnm\\
=&\int^{t_2}_{t_1} \intra  \Pi_{\alpha}(t-s,x-y;\lambda)(1+s)^{-\frac{\beta}2} B_{\frac{\gamma}2}(s,y-\mu s) dyds, \label{Kab}\\
&L^{ \beta,\gamma}(t,x;t_1,t_2;\lambda, D)\nnm\\
=&\int^{t_2}_{t_1} \intra  e^{-\frac{|x-y|+t-s}{D}}(1+s)^{-\frac{\beta}2} B_{\frac{\gamma}2}(s,y-\lambda s) dyds. \label{Lab}
\ema

Set
\be
\Gamma_{\alpha}(t)=\intt (1+s)^{-\frac{\alpha}2}ds
=O(1)\left\{\bal (1+t)^{\frac{2-\alpha}2} & \alpha< 2,\\
\ln(1+t) & \alpha=2,\\
1 & \alpha> 2.
\ea\right.
\ee

\begin{lem}\label{green3c}
For $s\in [0,t]$, $A^2\ge 1+t$ and $\alpha\ge 0$,
\bma
   e^{-\frac{A^2}{D(1+s)}}&\le C_\alpha\(\frac{1+s}{1+t}\)^{-\alpha} e^{-\frac{A^2}{D(1+t)}}, \label{a}\\
   \(1+\frac{A^2}{1+s}\)^{-\alpha}&\le 2^\alpha\(\frac{1+t}{1+s}\)^{-\alpha} \(1+\frac{A^2}{1+t}\)^{-\alpha},\label{b}
  \ema
where $D,C_\alpha>0$ are two constants.
\end{lem}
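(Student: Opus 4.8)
The plan is to verify both inequalities by elementary monotonicity, the only genuine input being the hypothesis $A^2\ge 1+t$.

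For the first inequality I would argue as follows. Since $0\le s\le t$ we have $1\le 1+s\le 1+t$, hence $\frac{1}{D(1+s)}\ge\frac{1}{D(1+t)}$ and therefore $e^{-A^2/(D(1+s))}\le e^{-A^2/(D(1+t))}$. On the other hand, because $\alpha\ge 0$ and $1+s\le 1+t$, we have $\bigl(\frac{1+s}{1+t}\bigr)^{-\alpha}=\bigl(\frac{1+t}{1+s}\bigr)^{\alpha}\ge 1$. Multiplying the two displayed facts gives the claim with $C_\alpha=1$ (any $C_\alpha\ge 1$ works); note that this step does not even use $A^2\ge 1+t$, only the monotonicity of $r\mapsto 1/(1+r)$ and of $x\mapsto x^\alpha$ on $(0,\infty)$.

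For the second inequality I would first dispose of the case $\alpha=0$, where both sides equal $1$. For $\alpha>0$, since $x\mapsto x^\alpha$ is increasing on $(0,\infty)$, it suffices to prove the $\alpha=1$ version
\begin{equation*}
\Bigl(1+\frac{A^2}{1+s}\Bigr)^{-1}\le 2\Bigl(\frac{1+t}{1+s}\Bigr)^{-1}\Bigl(1+\frac{A^2}{1+t}\Bigr)^{-1},
\end{equation*}
and then raise both sides to the power $\alpha$. Writing $1+\frac{A^2}{1+r}=\frac{(1+r)+A^2}{1+r}$ for $r=s,t$, the displayed inequality is equivalent to $(1+t)+A^2\le 2\bigl((1+s)+A^2\bigr)$, i.e.\ $1+t\le 2(1+s)+A^2$, which is immediate from $A^2\ge 1+t$. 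Undoing the reduction yields $\bigl(1+\frac{A^2}{1+s}\bigr)^{-\alpha}\le 2^\alpha\bigl(\frac{1+t}{1+s}\bigr)^{-\alpha}\bigl(1+\frac{A^2}{1+t}\bigr)^{-\alpha}$, which is the assertion.

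There is no serious obstacle here: the lemma is a bookkeeping device, used to transfer Gaussian or algebraic decay from the inner time scale $1+s$ to the outer scale $1+t$ (picking up a polynomial factor) inside the convolution estimates for $I^{\alpha,\beta,\gamma}$, $J^{\alpha,\beta,\gamma}$, $K^{\alpha,\beta,\gamma}$ and $L^{\beta,\gamma}$ of \eqref{Iab}--\eqref{Lab}. The only points worth a moment of care are the passage through the exponent $\alpha$ in the second inequality (handled by monotonicity of $x\mapsto x^\alpha$ as above) and the observation that the constant $2$ in $(1+t)+A^2\le 2((1+s)+A^2)$ --- hence the factor $2^\alpha$ in the statement --- cannot be improved, as is seen by letting $s\to 0$ and $A^2\to 1+t$.
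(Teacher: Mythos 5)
Your treatment of \eqref{b} is correct and essentially identical to the paper's: both arguments come down to the single inequality $(1+t)+A^2\le 2\bigl((1+s)+A^2\bigr)$, which is exactly where $A^2\ge 1+t$ enters, followed by raising to the power $\alpha$.

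The problem is \eqref{a}. You prove the inequality exactly as printed, and as printed it is indeed trivially true with $C_\alpha=1$ --- but it is also vacuous, and your own remark that the argument ``does not even use $A^2\ge 1+t$'' is the tell-tale sign that you have proved the wrong statement. The factor $\left(\frac{1+s}{1+t}\right)^{-\alpha}=\left(\frac{1+t}{1+s}\right)^{\alpha}\ge 1$ only weakens the trivial monotonicity bound $e^{-A^2/(D(1+s))}\le e^{-A^2/(D(1+t))}$. What the paper actually proves --- and what is needed in Lemmas \ref{couple1a}--\ref{couple2}, where one must trade the factor $(1+t-s)^{-\alpha/2}$ in the Duhamel integrand for $(1+t)^{-\alpha/2}$ --- is the reverse-weighted inequality
\begin{equation*}
e^{-\frac{A^2}{D(1+s)}}\le C_\alpha\left(\frac{1+s}{1+t}\right)^{\alpha}e^{-\frac{A^2}{D(1+t)}};
\end{equation*}
the printed exponent $-\alpha$ is a sign typo (compare with \eqref{b}, where the analogous factor $\left(\frac{1+t}{1+s}\right)^{-\alpha}\le 1$ correctly appears on the right). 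This version is genuinely nontrivial: it says the Gaussian at the inner scale $1+s$ absorbs the polynomial loss $\left(\frac{1+t}{1+s}\right)^{\alpha}$, and it fails without $A^2\ge 1+t$ (take $A$ bounded, $s=0$, $t\to\infty$). The paper's proof sets $x=\frac{A^2}{D(1+s)}-\frac{A^2}{D(1+t)}=\frac{(t-s)A^2}{D(1+t)(1+s)}\ge 0$, uses $e^{x}\ge C_\alpha(1+x)^{\alpha}$, and then bounds $1+x\ge 1+\frac{t-s}{1+s}=\frac{1+t}{1+s}$ precisely because $\frac{A^2}{D(1+t)}\ge \frac1D$. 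You need to supply this argument; as it stands your proof of \eqref{a} establishes nothing that the wave-coupling lemmas can use.
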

\begin{proof}
Since
$$
e^{\frac{(t-s)A^2}{D(1+t)(1+s)}}\ge C_\alpha\(1+\frac{(t-s) A^2}{(1+t)(1+s)}\)^\alpha\ge C_\alpha\(1+\frac{t-s}{1+s}\)^\alpha=C_\alpha\(\frac{1+t}{1+s}\)^\alpha,
$$
it follows that
$$
\(\frac{1+s}{1+t}\)^\alpha e^{\frac{A^2}{D(1+s)}-\frac{A^2}{D(1+t)}}=\(\frac{1+s}{1+t}\)^\alpha e^{\frac{(t-s)A^2}{D(1+t)(1+s)}}\ge C_\alpha,
$$
which gives \eqref{a}.

Since
$$
1+\frac{A^2}{1+s}\ge \frac{1+t}{1+s}\frac{A^2}{1+t}\ge \frac12\(\frac{1+t}{1+s}\)\(1+\frac{A^2}{1+t}\),
$$
it follows that
$$
\(1+\frac{A^2}{1+s}\)^{-\alpha}\le 2^\alpha\(\frac{1+t}{1+s}\)^{-\alpha}\(1+\frac{A^2}{1+t}\)^{-\alpha},
$$
which gives \eqref{b}.
\end{proof}

\begin{lem}\label{couple1a} Assume $\alpha,\beta\ge 1,\gamma>1$, and $\lambda< \mu $. There exists a constant $C>0$ such that
\bma
 &\quad I^{\alpha,\beta,\gamma}(t,x;0,t;\lambda,\lambda;D)\nnm\\
  &\le C  \[(1+t)^{-\frac{\alpha}2}\Gamma_{\beta-1}(t)+(1+t)^{-\frac{\beta}2}\Gamma_{\alpha-1}(t)\]B_{\frac{\gamma}2}(t,x-\lambda t), \label{C}\\
   &\quad I^{\alpha,\beta,\gamma}(t,x;0,t;\lambda,\mu;D)\nnm\\
  &\le C\[(1+t)^{-\frac{\alpha}2}\Gamma_{\beta-1}(t)+(1+t)^{-\frac{\beta}2}\Gamma_{\alpha-1}(t)\]\nnm\\
  &\quad\times\(B_{\frac{\gamma}2}(t,x-\lambda t)+B_{\frac{\gamma}2}(t,x-\mu t)\)\nnm\\
  &\quad +C(1+t)^{-\frac{\alpha-1}2} (1+x-\lambda t)^{-\frac{\beta-1}2}1_{\{\lambda t+\sqrt{1+t}\le x\le \mu t-\sqrt{1+t}\}}\nnm\\
   &\quad +C(1+t)^{-\frac{\beta-1}2}(1+\mu t-x)^{-\frac{\alpha-1}2}1_{\{\lambda t+\sqrt{1+t}\le x\le \mu t-\sqrt{1+t}\}}, \label{C_2}
  \ema
where $I^{\alpha,\beta,\gamma}(t,x;t_1,t_2;\lambda,\mu;D)$ is defined by \eqref{Iab}.
\end{lem}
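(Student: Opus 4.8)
The plan is to reduce \eqref{C}--\eqref{C_2} to one-dimensional convolutions of a Gaussian heat kernel against a parabolic profile and then to carry out the $s$-integration. First I would change variables in \eqref{Iab} so that the heat kernel is centered at the origin: with $x'=x-\lambda t$ and $y'=y-\lambda s$ one has $x-y-\lambda(t-s)=x'-y'$, so
$$
I^{\alpha,\beta,\gamma}(t,x;0,t;\lambda,\mu;D)=\int_0^t\frac{ds}{(1+t-s)^{\frac{\alpha}2}(1+s)^{\frac{\beta}2}}\int_{\R}e^{-\frac{|x'-y'|^2}{D(1+t-s)}}\Big(1+\tfrac{|y'-(\mu-\lambda)s|^2}{1+s}\Big)^{-\frac{\gamma}2}dy'.
$$
For the inner integral I would use, in the spirit of Lemma \ref{p-1} but with the profile centered at a general point $a$ rather than the origin, the bound: for $\gamma>1$, the convolution of the un-normalized Gaussian of width $\sqrt{D(1+t-s)}$ with $B_{\gamma/2}(s,\cdot-a)$ is at most $C\min\{(1+t-s)^{1/2},(1+s)^{1/2}\}$ times $\big(1+|x'-a|^2/(2+t)\big)^{-\gamma/2}$, plus a remainder that is exponentially small in $|x'-a|^2/(1+t)$ and hence dominated by the stated right-hand sides. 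Lemma \ref{green3c} is then used to replace a profile at a time comparable with $t$ by $B_{\gamma/2}(t,\cdot)$ at the cost of a constant.

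For \eqref{C}, where $\mu=\lambda$ and hence $a=0$, I would split $\int_0^t=\int_0^{t/2}+\int_{t/2}^t$. On $[0,t/2]$ one has $1+t-s\sim 1+t\ge 1+s$, so the inner integral is bounded by $C(1+s)^{1/2}B_{\gamma/2}(t,x-\lambda t)$; pulling out $(1+t-s)^{-\alpha/2}\sim(1+t)^{-\alpha/2}$ and integrating $(1+s)^{-\beta/2}(1+s)^{1/2}=(1+s)^{-(\beta-1)/2}$ gives $(1+t)^{-\alpha/2}\Gamma_{\beta-1}(t)B_{\gamma/2}(t,x-\lambda t)$. On $[t/2,t]$ the roles of $1+s$ and $1+t-s$ are interchanged: the inner integral is bounded by $C(1+t-s)^{1/2}B_{\gamma/2}(t,x-\lambda t)$ and, after the substitution $r=t-s$, the $s$-integral produces $(1+t)^{-\beta/2}\Gamma_{\alpha-1}(t)B_{\gamma/2}(t,x-\lambda t)$. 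Adding the two pieces yields \eqref{C}.

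For \eqref{C_2}, with $c:=\mu-\lambda>0$ and $a=cs$, the new phenomenon is that the center $cs$ of the profile sweeps the whole interval $[0,ct]$, so for $x$ in the intermediate range $\lambda t+\sqrt{1+t}\le x\le\mu t-\sqrt{1+t}$ there is a critical time $s^{\ast}=(x-\lambda t)/c\in(0,t)$ at which $|x'-cs|$ vanishes. I would split $x$-space according to the sizes of $x-\lambda t$, $\mu t-x$ and $\sqrt{1+t}$ into three regions: near the $\lambda$-cone ($x-\lambda t\lesssim\sqrt{1+t}$), where $s^{\ast}$ is $O(\sqrt{1+t})$ and the argument of \eqref{C} applies, giving the $B_{\gamma/2}(t,x-\lambda t)$ term; near the $\mu$-cone, which is symmetric and gives the $B_{\gamma/2}(t,x-\mu t)$ term; and the intermediate region, where $|x'-cs|=c|s-s^{\ast}|$ is bounded below away from $s^{\ast}$, so the factor $e^{-c^{2}(s-s^{\ast})^{2}/(D(1+t-s))}$ together with the algebraic tail of the profile localizes the $s$-integral to a window of length $\sim\min\{(1+s^{\ast})^{1/2},(1+t-s^{\ast})^{1/2}\}$ around $s^{\ast}$; since $1+s^{\ast}\sim1+x-\lambda t$ and $1+t-s^{\ast}\sim1+\mu t-x$, integrating over this window produces exactly the two indicator terms $(1+t)^{-(\alpha-1)/2}(1+x-\lambda t)^{-(\beta-1)/2}$ and $(1+t)^{-(\beta-1)/2}(1+\mu t-x)^{-(\alpha-1)/2}$ appearing in \eqref{C_2}. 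The main obstacle is precisely this intermediate-region estimate: one must track carefully the competition between the Gaussian width $\sqrt{1+t-s}$, the profile width $\sqrt{1+s}$ and the separation $c|s-s^{\ast}|$ to recover the sharp exponents, and it is here that the hypothesis $\gamma>1$ enters, guaranteeing convergence of the spatial integral across the gap between the two characteristic cones. The contributions of $x$ lying far outside both cones are Gaussian in $|x|+t$ and absorbed by the $B_{\gamma/2}$ terms, and all remaining splittings are routine.
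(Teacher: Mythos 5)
Your proposal is correct and follows essentially the same route as the paper's proof: the same partition of $x$ into the two characteristic cones and the intermediate strip, the same splitting of the inner $y$-integral into a "Gaussian evaluated far from its center times profile mass" piece and a "Gaussian mass times profile evaluated far from its center" piece (which is what your $\min\{(1+t-s)^{1/2},(1+s)^{1/2}\}$ bound repackages), the same use of Lemma \ref{green3c} to pass to the time-$t$ profile, and the same localization of the $s$-integral to a window of width $\min\{(1+s^{\ast})^{1/2},(1+t-s^{\ast})^{1/2}\}$ around the critical time $s^{\ast}=(x-\lambda t)/(\mu-\lambda)$, with $\gamma>1$ guaranteeing the convergence of that localization integral. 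The only differences are presentational (centering by a change of variables and stating the inner-integral estimate as a single convolution bound), so no gap to report.
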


\begin{proof}
For $ |x-\lambda t|\le \sqrt{1+t}$, we have
\bmas
 &\quad I^{\alpha,\beta,\gamma}(t,x;0,t;\lambda,\lambda;D)\nnm\\
&=\intt\intra (1+t-s)^{-\frac{\alpha}2}e^{-\frac{|x-y-\lambda(t-s)|^2}{D(1+t-s)}}(1+s)^{-\frac{\beta}2}B_{\frac{\gamma}2}(s,y-\lambda s)dyds \\
&\le  C(1+t)^{-\frac{\alpha}2}\int^{t/2}_0  (1+s)^{-\frac{\beta-1}2}ds +C(1+t)^{-\frac{\beta}2}\int^t_{t/2} (1+t-s)^{-\frac{\alpha-1}2}ds  \\
&\le C  \[(1+t)^{-\frac{\alpha}2}\Gamma_{\beta-1}(t)+(1+t)^{-\frac{\beta}2}\Gamma_{\alpha-1}(t)\].
\emas
For $ |x-\lambda t|\ge \sqrt{1+t}$, we have
\bmas
&e^{-\frac{|x-y-\lambda(t-s)|^2}{D(1+t-s)}} B_{\frac{\gamma}2}(s,y-\lambda s)\\
\le& e^{-\frac{|x-\lambda t|^2}{4D(1+t-s)}}B_{\frac{\gamma}2}(s,y-\lambda s) + Ce^{-\frac{|x-y-\lambda(t-s)|^2}{D(1+t-s)}}B_{\frac{\gamma}2}(s,x-\lambda t) ,
\emas
which together with Lemma \ref{green3c} gives
\bmas
 &\quad I^{\alpha,\beta,\gamma}(t,x;0,t;\lambda,\lambda;D)\nnm\\
 &\le C(1+t)^{-\frac{1}2}e^{-\frac{|x-\lambda t|^2}{4D(1+t)}}\intt (1+t-s)^{-\frac{\alpha-1}2}(1+s)^{-\frac{\beta-1}2}ds\\
 &\quad + C(1+t)^{-\frac{1}2}B_{\frac{\gamma}2}(t,x-\lambda t)\intt (1+t-s)^{-\frac{\alpha-1}2}(1+s)^{-\frac{\beta-1}2} ds\\
&\le C  \[(1+t)^{-\frac{\alpha}2}\Gamma_{\beta-1}(t)+(1+t)^{-\frac{\beta}2}\Gamma_{\alpha-1}(t)\]B_{\frac{\gamma}2}(t,x-\lambda t).
\emas
Thus, we obtain \eqref{C}.

For $|x-\lambda t|\le \sqrt{1+t}$ or $|x-\mu t|\le \sqrt{1+t}$, we have
\bmas
&I^{\alpha,\beta,\gamma}(t,x;0,t;\lambda,\mu;D) \\
=&\intt\intra (1+t-s)^{-\frac{\alpha}2}e^{-\frac{|x-y-\lambda(t-s)|^2}{D(1+t-s)}}(1+s)^{-\frac{\beta}2}B_{\frac{\gamma}2}(s,y-\mu s)dyds \\
\le&  C(1+t)^{-\frac{\alpha}2}\int^{t/2}_0  (1+s)^{-\frac{\beta-1}2}ds +C(1+t)^{-\frac{\beta}2}\int^t_{t/2} (1+t-s)^{-\frac{\alpha-1}2}ds  \\
\le& C \[(1+t)^{-\frac{\alpha}2}\Gamma_{\beta-1}(t)+(1+t)^{-\frac{\beta}2}\Gamma_{\alpha-1}(t)\].
\emas

For $\lambda t+\sqrt{1+t}\le x\le \mu t-\sqrt{1+t}$, we write
\bmas
&\quad I^{\alpha,\beta,\gamma}(t,x;0,t;\lambda,\mu;D) \\
&=  \( \int^{\frac{|x-\lambda t|}{A}}_{0}+\int^{t- \frac{|x-\mu t|}{A}}_{\frac{|x-\lambda t|}{A}}+\int^t_{t- \frac{|x-\mu t|}{A}}\) \intra \\
&\qquad(1+t-s)^{-\frac{\alpha}2}e^{-\frac{|x-y-\lambda(t-s)|^2}{D(1+t-s)}}(1+s)^{-\frac{\beta}2} B_{\frac{\gamma}2}(s,y-\mu s)dyds\\
&=:I_1+I_2+I_3,
\emas
where $A= 4(\mu-\lambda) $ such that $ |x-\lambda t|/A\le t/4$ and $ |x-\mu t|/A\le t/4$.

By splitting $y$ into two case:
$$ \{|y-\mu s|\le \frac12|x-\lambda t+(\lambda-\mu)s|\}, \quad \{|y-\mu s|\ge  \frac12|x-\lambda t+(\lambda-\mu)s|\}, $$
we have
\bma
&e^{-\frac{|x-y-\lambda(t-s)|^2}{D(1+t-s)}} B_{\frac{\gamma}2}(s,y-\mu s)\nnm\\
\le& e^{-\frac{|x-\lambda t+(\lambda-\mu)s|^2}{4D(1+t-s)}}B_{\frac{\gamma}2}(s,y-\mu s)+Ce^{-\frac{|x-y-\lambda(t-s)|^2}{D(1+t-s)}}B_{\frac{\gamma}2}(s,x-\lambda t+(\lambda-\mu)s). \label{aa}
\ema
This together with Lemma \ref{green3c} implies that
\bmas
I_1&\le C(1+t)^{-\frac{1}2} e^{-\frac{|x-\lambda t|^2}{8D(1+t)}}\int^{\frac{x-\lambda t}{A}}_{0} (1+t-s)^{-\frac{\alpha-1}2}(1+s)^{-\frac{\beta-1}2}  ds\\
&\quad+C(1+t)^{-\frac{1}2}B_{\frac{\gamma}2}(t,x-\lambda t) \int^{\frac{x-\lambda t}{A}}_{0} (1+t-s)^{-\frac{\alpha-1}2}(1+s)^{-\frac{\beta-1}2} ds\\
&\le C(1+t)^{-\frac{\alpha}2}\Gamma_{\beta-1}(t)B_{\frac{\gamma}2}(t,x-\lambda t),\\
I_3&\le C(1+t)^{-\frac{1}2} e^{-\frac{|x-\mu t|^2}{ 8D(1+t)}}\int^t_{t-\frac{\mu t-x}{A}} (1+t-s)^{-\frac{\alpha-1}2}(1+s)^{-\frac{\beta-1}2} ds\\
&\quad+C(1+t)^{-\frac{1}2} B_{\frac{\gamma}2}(t,x-\mu t) \int^t_{t-\frac{\mu t-x}{A}} (1+t-s)^{-\frac{\alpha-1}2}(1+s)^{-\frac{\beta-1}2} ds\\
&\le C(1+t)^{-\frac{\beta}2}\Gamma_{\alpha-1}(t)B_{\frac{\gamma}2}(t,x-\mu t).
\emas
We decompose $I_2$ into
\bmas
I_2&\le  C\int^{t/2}_{\frac{x-\lambda t}{A}}  (1+t-s)^{-\frac{\alpha}2}  (1+s)^{-\frac{\beta-1}2}e^{-\frac{|x-\lambda t+(\lambda-\mu)s|^2}{4D(1+t-s)}}ds\\
 &\quad +C\int^{t/2}_{\frac{x-\lambda t}{A}}(1+t-s)^{-\frac{\alpha-1}2}  (1+s)^{-\frac{\beta}2}B_{\frac{\gamma}2}(s,x-\lambda t+(\lambda-\mu)s) ds\\
  &\quad +C\int^{t- \frac{\mu t-x}{A}}_{t/2}  (1+t-s)^{-\frac{\alpha}2}  (1+s)^{-\frac{\beta-1}2}e^{-\frac{|x-\lambda t+(\lambda-\mu)s|^2}{4D(1+t-s)}}ds\\
&\quad +C\int^{t- \frac{\mu t-x}{A}}_{t/2} (1+t-s)^{-\frac{\alpha-1}2}  (1+s)^{-\frac{\beta}2}B_{\frac{\gamma}2}(s,x-\lambda t+(\lambda-\mu)s) ds\\
&=:I_{2,1}+I_{2,2}+I_{2,3}+I_{2,4}.
 \emas
It holds that
\bmas
I_{2,1}&\le  C(1+t)^{-\frac{\alpha}2} (1+x-\lambda t)^{-\frac{\beta-1}2}\int^{t/2}_{\frac{x-\lambda t}{A}}  e^{-\frac{|x-\lambda t+(\lambda-\mu)s|^2}{4Dt}}ds\\
&\le C(1+t)^{-\frac{\alpha-1}2}(1+x-\lambda t)^{-\frac{\beta-1}2},\\
I_{2,4}&\le C(1+t)^{-\frac{\beta}2}(1+\mu t-x)^{-\frac{\alpha-1}2}\int^{t- \frac{\mu t-x}{A}}_{t/2}  B_{\frac{\gamma}2}(t,x-\lambda t+(\lambda-\mu)s) ds\\
&\le C(1+t)^{-\frac{\beta-1}2}(1+\mu t-x)^{-\frac{\alpha-1}2},
\emas
where we have used
\bma
\intt B_{\frac{\gamma}2}(t,x-\lambda t+(\lambda-\mu)s)ds&=\frac{ \sqrt{1+t}}{\lambda-\mu}\int^{\frac{x-\lambda t}{\sqrt{1+t}}+(\lambda-\mu)\sqrt{1+t}}_{\frac{x-\lambda t}{\sqrt{1+t}}}(1+z^2)^{-\frac{\gamma}2}dz \nnm\\
&\le C\sqrt{1+t}\Gamma_{2\gamma}(\sqrt{1+t}). \label{bg}
\ema

Suppose that $x-\lambda t\le \mu t-x$. For $t/2\le s\le t-(\mu t-x)/A$ ,
it holds that $ t-s \ge (\mu t-x)/A$. Hence
\bmas
I_{2,3}\le&C(1+t)^{-\frac{\beta-1}2}(1+\mu t-x)^{-\frac{\alpha}2}\int^{t- \frac{\mu t-x}{A}}_{t/2}  e^{-\frac{A|x-\lambda t+(\lambda-\mu)s|^2}{4D(1+\mu t-x)}}ds\\
\le &C(1+t)^{-\frac{\beta-1}2}(1+\mu t-x)^{-\frac{\alpha-1}2}.
\emas
Similarly,
\bmas
I_{2,2}\le&C(1+t)^{-\frac{\alpha-1}2}  (1+x-\lambda t)^{-\frac{\beta}2}\int^{t/2}_{\frac{x-\lambda t}{A}}B_{\frac{\gamma}2}(x-\lambda t,x-\lambda t+(\lambda-\mu)s) ds\\
\le &C(1+t)^{-\frac{\alpha-1}2} (1+x-\lambda t)^{-\frac{\beta-1}2}.
\emas
Thus
\be
I_2\le C(1+t)^{-\frac{\alpha-1}2} (1+x-\lambda t)^{-\frac{\beta-1}2}+C(1+t)^{-\frac{\beta-1}2}(1+\mu t-x)^{-\frac{\alpha-1}2}.
 \ee

For  $x\le \lambda t-\sqrt{1+t}$ or $x\ge \mu t+\sqrt{1+t}$, it holds that
\be \label{aa1}
|x-\lambda t+(\lambda-\mu)s|\ge
\left\{\bln
&|x-\lambda t| ,\quad x\le \lambda t-\sqrt{1+t},\\
&|x-\mu t|,  \quad x\ge \mu t+\sqrt{1+t}.
\eln\right.
\ee
Thus, it follows from \eqref{aa} that
\bmas
&e^{-\frac{|x-y-\lambda(t-s)|^2}{D(1+t-s)}} B_{\frac{\gamma}2}(s,y-\mu s)\\
\le& \(e^{-\frac{|x-\lambda t|^2}{4D(1+t-s)}}+e^{-\frac{|\mu t-x|^2}{4D(1+t-s)}}\)B_{\frac{\gamma}2}(s,y-\mu s)\\
&+Ce^{-\frac{|x-y-\lambda(t-s)|^2}{D(1+t-s)}}\(B_{\frac{\gamma}2}(s,x-\lambda t)+B_{\frac{\gamma}2}(s,x-\mu t)\).
\emas
This together with Lemma \ref{green3c} gives
\bmas
&I^{\alpha,\beta,\gamma}(t,x;0,t;\lambda,\mu;D) \\
\le& C \[(1+t)^{-\frac{\alpha}2}\Gamma_{\beta-1}(t)+(1+t)^{-\frac{\beta}2}\Gamma_{\alpha-1}(t)\]\(B_{\frac{\gamma}2}(t,x-\lambda t)+B_{\frac{\gamma}2}(t,x-\mu t)\).
\emas
This completes the proof of the lemma.
\end{proof}

 The following corollary is a direct consequence of the proof of the above lemma.

\begin{cor}\label{couple1b} Assume $\alpha,\beta\ge 1,\gamma>1$, and $\lambda< \mu $. There exists a constant $C>0$ such that
\bma
 &\quad I^{\alpha,\beta,\gamma}(t,x;0,t/2;\lambda,\lambda;D)
  \le C  (1+t)^{-\frac{\alpha}2}\Gamma_{\beta-1}(t) B_{\frac{\gamma}2}(t,x-\lambda t), \label{C1}\\
    &\quad I^{\alpha,\beta,\gamma}(t,x;t/2,t;\lambda,\lambda;D)
  \le C (1+t)^{-\frac{\beta}2}\Gamma_{\alpha-1}(t) B_{\frac{\gamma}2}(t,x-\lambda t), \label{C2}\\
   &\quad I^{\alpha,\beta,\gamma}(t,x;0,t/2;\lambda,\mu;D)\nnm\\
  &\le C (1+t)^{-\frac{\alpha}2}\Gamma_{\beta-1}(t) \(B_{\frac{\gamma}2}(t,x-\lambda t)+B_{\frac{\gamma}2}(t,x-\mu t)\)\nnm\\
  &\quad +C(1+t)^{-\frac{\alpha-1}2}(1+x-\lambda t)^{-\frac{\beta-1}2}1_{\{\lambda t+\sqrt{1+t}\le x\le \mu t-\sqrt{1+t}\}},  \\
   &\quad I^{\alpha,\beta,\gamma}(t,x;t/2,t;\lambda,\mu;D)\nnm\\
  &\le C (1+t)^{-\frac{\beta}2}\Gamma_{\alpha-1}(t) \(B_{\frac{\gamma}2}(t,x-\lambda t)+B_{\frac{\gamma}2}(t,x-\mu t)\)\nnm\\
  &\quad +C(1+t)^{-\frac{\beta-1}2}(1+\mu t-x)^{-\frac{\alpha-1}2}1_{\{\lambda t+\sqrt{1+t}\le x\le \mu t-\sqrt{1+t}\}}. \label{C_2b}
  \ema
\end{cor}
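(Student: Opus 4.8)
The plan is to read the corollary off from the proof of Lemma~\ref{couple1a}, keeping track of which portion of the $s$-integration over $[0,t]$ produces which term. In that proof the integral $I^{\alpha,\beta,\gamma}(t,x;0,t;\lambda,\lambda;D)$ is estimated by splitting $\int_0^t=\int_0^{t/2}+\int_{t/2}^t$: on $[0,t/2]$ one uses $1+t-s\sim 1+t$ to extract $(1+t-s)^{-\alpha/2}\le C(1+t)^{-\alpha/2}$ and is left with $\int_0^{t/2}(1+s)^{-(\beta-1)/2}\,ds=\Gamma_{\beta-1}(t)$ multiplied by $B_{\frac{\gamma}2}(t,x-\lambda t)$ (directly when $|x-\lambda t|\le\sqrt{1+t}$, and via the pointwise Gaussian--$B$ splitting together with Lemma~\ref{green3c} when $|x-\lambda t|\ge\sqrt{1+t}$); on $[t/2,t]$ one uses $1+s\sim 1+t$ to extract $(1+s)^{-\beta/2}\le C(1+t)^{-\beta/2}$ and is left with $\int_{t/2}^t(1+t-s)^{-(\alpha-1)/2}\,ds=\Gamma_{\alpha-1}(t)$ times $B_{\frac{\gamma}2}(t,x-\lambda t)$. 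Restricting the time integral to $[0,t/2]$ therefore yields exactly \eqref{C1}, and restricting it to $[t/2,t]$ yields \eqref{C2}.

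For the second pair of estimates, with $\lambda<\mu$, the same bookkeeping is applied to the three-piece decomposition $I_1+I_2+I_3$ used in the proof of Lemma~\ref{couple1a}, where $A=4(\mu-\lambda)$ is chosen so that $|x-\lambda t|/A\le t/4$ and $|x-\mu t|/A\le t/4$. Hence $I_1$, the integral over $s\in[0,|x-\lambda t|/A]$, lives on $[0,t/2]$; $I_3$, the integral over $s\in[t-|x-\mu t|/A,\,t]$, lives on $[t/2,t]$; and the middle piece $I_2$ is already split in the proof at $s=t/2$ into $I_{2,1}+I_{2,2}$ (on the left half) and $I_{2,3}+I_{2,4}$ (on the right half). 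Collecting the contributions living on $[0,t/2]$, namely $I_1$, $I_{2,1}$, $I_{2,2}$, together with the $x$-ranges $|x-\lambda t|\le\sqrt{1+t}$ or $|x-\mu t|\le\sqrt{1+t}$ (where $B_{\frac{\gamma}2}(t,x-\lambda t)+B_{\frac{\gamma}2}(t,x-\mu t)\sim1$) and the far ranges $x\le\lambda t-\sqrt{1+t}$, $x\ge\mu t+\sqrt{1+t}$, one obtains the stated bound for $I^{\alpha,\beta,\gamma}(t,x;0,t/2;\lambda,\mu;D)$; collecting the contributions on $[t/2,t]$, namely $I_3$, $I_{2,3}$, $I_{2,4}$, together with the corresponding pieces of the remaining $x$-ranges, gives \eqref{C_2b}.

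The only points that need a line of checking are the two elementary inequalities $(1+t-s)^{-\alpha/2}\le C_\alpha(1+t)^{-\alpha/2}$ for $0\le s\le t/2$ and $(1+s)^{-\beta/2}\le C_\beta(1+t)^{-\beta/2}$ for $t/2\le s\le t$, together with the bound $(1+t)^{-\alpha/2}\Gamma_{-1}(t)\le C_\alpha\Gamma_{\alpha-1}(t)$ already used in the proof of Lemma~\ref{couple1a} to absorb the $\sqrt{1+t-s}$ factors produced by integrating a Gaussian against $B_{\frac{\gamma}2}$. No estimate beyond those in the proof of Lemma~\ref{couple1a} is required, so there is no genuine obstacle: the corollary is a purely combinatorial re-reading of bounds already established, which is exactly why it is stated as a consequence of the proof rather than of the statement of the lemma.
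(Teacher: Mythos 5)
Your reading is correct and coincides with what the paper intends: the paper dismisses this corollary with the one-line remark that it is a direct consequence of the proof of Lemma~\ref{couple1a}, and your accounting shows exactly why, since the lemma's own proof already splits the time integral at $s=t/2$ (and the pieces $I_1$, $I_3$ respectively live in $[0,t/4]\subset[0,t/2]$ and $[3t/4,t]\subset[t/2,t]$ by the choice $A=4(\mu-\lambda)$, while $I_2$ is explicitly cut at $t/2$ into $I_{2,1},I_{2,2}$ and $I_{2,3},I_{2,4}$), so each half of the corollary is obtained by restricting the sum of bounds to the relevant sub-pieces. This is the same approach as the paper, not a different route, and no step beyond what is already carried out in the proof of Lemma~\ref{couple1a} is needed.
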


\begin{lem}\label{couple1} Assume $\alpha,\beta\ge 1$, $\gamma\ge 1$, and $\lambda<\mu $. There exists a constant $C>0$ such that
\bma
 &\quad J^{\alpha,\beta,\gamma}(t,x;0,t;\lambda,\lambda)\nnm\\
  &\le C \[(1+t)^{-\frac{\alpha}2}\Gamma_{\beta}(t)+(1+t)^{-\frac{\beta}2}\Gamma_{\alpha}(t)\]B_{\frac{\gamma}2}(t,x-\lambda t), \label{C_3}\\
   &\quad J^{\alpha,\beta,\gamma}(t,x;0,t;\lambda,\mu)\nnm\\
  &\le C \[(1+t)^{-\frac{\alpha}2}\Gamma_{\beta}(t)+(1+t)^{-\frac{\beta}2}\Gamma_{\alpha}(t)\]
\(B_{\frac{\gamma}2}(t,x-\lambda t)+B_{\frac{\gamma}2}(t,x-\mu t)\)\nnm\\
  &\quad +C(1+t)^{-\frac{\alpha-1}2}\Gamma_{2\gamma}(\sqrt{1+t}) (1+x-\lambda t)^{-\frac{\beta}2}1_{\{\lambda t+\sqrt{1+t}\le x\le \mu t-\sqrt{1+t}\}}\nnm\\
   &\quad +C(1+t)^{-\frac{\beta-1}2}\Gamma_{2\gamma}(\sqrt{1+t})(1+\mu t-x)^{-\frac{\alpha}2}1_{\{\lambda t+\sqrt{1+t}\le x\le \mu t-\sqrt{1+t}\}}, \label{C_4}
  \ema
where $J^{\alpha,\beta,\gamma}(t,x;t_1,t_2;\lambda,\mu)$ is defined by \eqref{Jab}.
\end{lem}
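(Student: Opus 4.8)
The plan is to follow the scheme used for Lemma~\ref{couple1a}: decompose the $(t,x)$-region according to the location of $x$ relative to the two rays $x=\lambda t$ and $x=\mu t$, and split the time integral $\int_0^t$ at well-chosen intermediate times. The only structural novelty compared with the Gaussian case is that the spatial kernel $e^{-\frac{\nu_0}{2}|x-y-\lambda(t-s)|}$ has $L^1_y$-norm of order $O(1)$ instead of $O((1+t-s)^{1/2})$; this is precisely why the decay indices $\beta-1,\alpha-1$ appearing in \eqref{C}--\eqref{C_2} get replaced by $\beta,\alpha$ in \eqref{C_3}--\eqref{C_4}, and why an extra factor $\Gamma_{2\gamma}(\sqrt{1+t})$ survives in the intermediate-region bound.

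For the diagonal estimate \eqref{C_3} ($\lambda=\mu$): if $|x-\lambda t|\le\sqrt{1+t}$, write $\int_0^t=\int_0^{t/2}+\int_{t/2}^t$; on $[0,t/2]$ bound $(1+t-s)^{-\alpha/2}\le C(1+t)^{-\alpha/2}$, carry out the $y$-integration by \eqref{wave3b} of Lemma~\ref{p-1} and then use $B_{\frac{\gamma}2}(s,x-\lambda t)\le C$, obtaining $C(1+t)^{-\alpha/2}\Gamma_{\beta}(t)$; on $[t/2,t]$ proceed symmetrically to get $C(1+t)^{-\beta/2}\Gamma_{\alpha}(t)$. If $|x-\lambda t|\ge\sqrt{1+t}$, apply the separation inequality $e^{-\frac{\nu_0}{2}|x-y-\lambda(t-s)|}B_{\frac{\gamma}2}(s,y-\lambda s)\le e^{-\frac{\nu_0}{4}|x-\lambda t|}B_{\frac{\gamma}2}(s,y-\lambda s)+Ce^{-\frac{\nu_0}{4}|x-y-\lambda(t-s)|}B_{\frac{\gamma}2}(s,x-\lambda t)$, integrate in $y$, and transfer the $(1+s)$-weights to $(1+t)$-weights via Lemma~\ref{green3c}; this gives \eqref{C_3}.

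For the off-diagonal estimate \eqref{C_4} ($\lambda<\mu$): near either characteristic ($|x-\lambda t|\le\sqrt{1+t}$ or $|x-\mu t|\le\sqrt{1+t}$) one repeats the diagonal argument. In the intermediate band $\lambda t+\sqrt{1+t}\le x\le\mu t-\sqrt{1+t}$, partition $[0,t]$ at $s=|x-\lambda t|/A$ and $s=t-|x-\mu t|/A$ with $A=4(\mu-\lambda)$, so that these cut-points lie in $[0,t/4]$ and $[3t/4,t]$ respectively, yielding $I_1+I_2+I_3$. On $I_1$, $I_3$ combine the one-sided decay of the kernel with the separation inequality $e^{-\frac{\nu_0}{2}|x-y-\lambda(t-s)|}B_{\frac{\gamma}2}(s,y-\mu s)\le e^{-\frac{\nu_0}{4}|x-\lambda t+(\lambda-\mu)s|}B_{\frac{\gamma}2}(s,y-\mu s)+Ce^{-\frac{\nu_0}{4}|x-y-\lambda(t-s)|}B_{\frac{\gamma}2}(s,x-\lambda t+(\lambda-\mu)s)$ and Lemma~\ref{green3c}, producing the genuine parabolic terms $C(1+t)^{-\alpha/2}\Gamma_{\beta}(t)B_{\frac{\gamma}2}(t,x-\lambda t)$ and $C(1+t)^{-\beta/2}\Gamma_{\alpha}(t)B_{\frac{\gamma}2}(t,x-\mu t)$. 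The central piece $I_2$ is the crux: there neither ray dominates, and after bounding $(1+t-s)^{-\alpha/2}\le C(1+x-\lambda t)^{-\alpha/2}$ and $(1+s)^{-\beta/2}\le C(1+x-\lambda t)^{-\beta/2}$ (together with the symmetric estimates in $\mu t-x$), the remaining $y$- and $s$-integrations reduce to $\int_0^tB_{\frac{\gamma}2}(s,x-\lambda t+(\lambda-\mu)s)\,ds\le C\sqrt{1+t}\,\Gamma_{2\gamma}(\sqrt{1+t})$ exactly as in \eqref{bg}; this is the source of both the $\Gamma_{2\gamma}(\sqrt{1+t})$ factor and the two non-parabolic box terms in \eqref{C_4}. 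Finally, for $x\le\lambda t-\sqrt{1+t}$ or $x\ge\mu t+\sqrt{1+t}$ the inequality $|x-\lambda t+(\lambda-\mu)s|\ge\max\{|x-\lambda t|,|x-\mu t|\}$, cf.\ \eqref{aa1}, reduces the estimate to the diagonal case with the better of the two decays, completing the proof.

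I expect the main obstacle to be the bookkeeping in the central interval $I_2$ of the off-diagonal case: one must simultaneously extract the optimal polynomial decay $(1+x-\lambda t)^{-\beta/2}$ (resp.\ $(1+\mu t-x)^{-\alpha/2}$) from the slowly decaying factors, keep the line integral of the moving parabolic profile $B_{\frac{\gamma}2}(\cdot,x-\lambda t+(\lambda-\mu)s)$ under control, and check that the resulting $\Gamma_{2\gamma}(\sqrt{1+t})$ growth is harmless (since $2\gamma\ge2$, it is at most $O(\ln(1+t))$). The Gaussian kernel in Lemma~\ref{couple1a} sidesteps this because its shrinking width supplies the missing time power directly, whereas here the flat exponential tail forces the extra $\Gamma_{2\gamma}(\sqrt{1+t})$.
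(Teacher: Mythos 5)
Your overall plan matches the paper's: same case split by distance of $x$ from the two characteristics, same time cut-points $s=(x-\lambda t)/A$ and $s=t-(\mu t-x)/A$, same separation inequality \eqref{bb}, same invocation of \eqref{bg} to produce the $\sqrt{1+t}\,\Gamma_{2\gamma}(\sqrt{1+t})$ factor, and the same far-field reduction via \eqref{aa1}. The diagonal estimate \eqref{C_3} is handled essentially as the paper does.

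The central piece $I_2$, however, is presented with an inequality that does not hold, and this is precisely the step where the two non-Gaussian box terms in \eqref{C_4} are born. You claim $(1+t-s)^{-\alpha/2}\le C(1+x-\lambda t)^{-\alpha/2}$ on the middle interval. Take $x$ near $\mu t-\sqrt{1+t}$ and $s$ near $t-(\mu t-x)/A$: then $t-s\sim \sqrt{1+t}$ so the left side is $\sim (1+t)^{-\alpha/4}$, while $x-\lambda t\sim (\mu-\lambda)t$ so the right side is $\sim (1+t)^{-\alpha/2}$ — the inequality fails for large $t$. The valid pointwise bounds on all of $I_2$ are only the one-sided ones,
\[
(1+s)^{-\beta/2}\le C(1+x-\lambda t)^{-\beta/2},\qquad
(1+t-s)^{-\alpha/2}\le C(1+\mu t-x)^{-\alpha/2},
\]
and using both simultaneously would give $(1+x-\lambda t)^{-\beta/2}(1+\mu t-x)^{-\alpha/2}\cdot\sqrt{1+t}\,\Gamma_{2\gamma}(\sqrt{1+t})$, which is not the announced right-hand side. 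The missing idea (which the paper supplies) is to cut $I_2$ once more at $s=t/2$: on $[(x-\lambda t)/A,\,t/2]$ one has $t-s\ge t/2$ so $(1+t-s)^{-\alpha/2}\le C(1+t)^{-\alpha/2}$ together with $(1+s)^{-\beta/2}\le C(1+x-\lambda t)^{-\beta/2}$, producing the term $(1+t)^{-(\alpha-1)/2}(1+x-\lambda t)^{-\beta/2}\Gamma_{2\gamma}(\sqrt{1+t})$; on $[t/2,\,t-(\mu t-x)/A]$ one symmetrically obtains $(1+t)^{-(\beta-1)/2}(1+\mu t-x)^{-\alpha/2}\Gamma_{2\gamma}(\sqrt{1+t})$. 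Without this further split the exponents do not match \eqref{C_4}. Also, a small sign slip: in \eqref{aa1} the lower bound $|x-\lambda t+(\lambda-\mu)s|$ is bounded below by the \emph{smaller} of $|x-\lambda t|,|x-\mu t|$ (one picks whichever ray is near), not by their maximum; the paper simply retains both $B_{\gamma/2}(t,x-\lambda t)$ and $B_{\gamma/2}(t,x-\mu t)$ in the conclusion, which absorbs this.
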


\begin{proof}Since
$$e^{-\frac{\nu_0|x-y-\lambda(t-s)|}{2}} B_{\frac{\gamma}2}(s,y-\lambda s)\le e^{-\frac{\nu_0|x-y-\lambda(t-s)|}{4}}\(e^{-\frac{\nu_0|x-\lambda t|}{8}}+  B_{\frac{\gamma}2}(t,x-\lambda t) \),$$
it follows that
\bmas
&J^{\alpha,\beta,\gamma}(t,x;0,t;\lambda,\lambda) \\
=&\intt\intra (1+t-s)^{-\frac{\alpha}2}e^{-\frac{\nu_0|x-y-\lambda(t-s)|}{2}}(1+s)^{-\frac{\beta}2}B_{\frac{\gamma}2}(s,y-\lambda s) dyds \\
\le& C \(e^{-\frac{\nu_0|x-\lambda t|}{8}}+  B_{\frac{\gamma}2}(t,x-\lambda t) \)\(\int^{t/2}_0+ \int^t_{t/2}\)(1+t-s)^{-\frac{\alpha}2}(1+s)^{-\frac{\beta}2}  ds\\
\le& C \[(1+t)^{-\frac{\alpha}2}\Gamma_{\beta}(t)+(1+t)^{-\frac{\beta}2}\Gamma_{\alpha}(t)\]\( B_{\frac{\gamma}2}(t,x-\lambda t) +e^{-\frac{|x|+\lambda_1 t}{C_1}}\),
\emas
where $\lambda_1=\max\{1,|\lambda|\}$ and $C_1=24/\nu_0$. This gives \eqref{C_3}.

For $|x-\lambda t|\le \sqrt{1+t}$ or $|x-\mu t|\le \sqrt{1+t}$, we have
\bmas
&J^{\alpha,\beta,\gamma}(t,x;0,t;\lambda,\mu) \\
=&\intt\intra (1+t-s)^{-\frac{\alpha}2}e^{-\frac{\nu_0|x-y-\lambda(t-s)|}{2}}(1+s)^{-\frac{\beta}2}B_{\frac{\gamma}2}(s,y-\mu s)dyds \\
\le&  C\(\int^{t/2}_0+ \int^t_{t/2}\) (1+t-s)^{-\frac{\alpha}2}(1+s)^{-\frac{\beta}2}  ds\\
\le& C \[(1+t)^{-\frac{\alpha}2}\Gamma_{\beta}(t)+(1+t)^{-\frac{\beta}2}\Gamma_{\alpha}(t)\].
\emas
For $\lambda t+\sqrt{1+t}\le x\le \mu t-\sqrt{1+t}$, we write
\bmas
&\quad J^{\alpha,\beta,\gamma}(t,x;0,t;\lambda,\mu;C,D) \\
& = \( \int^{\frac{x-\lambda t}{A}}_{0}+\int^{t- \frac{\mu t-x}{A}}_{\frac{x-\lambda t}{A}}+\int^t_{t- \frac{\mu t-x}{A}}\) \intra \\
&\qquad(1+t-s)^{-\frac{\alpha}2}e^{-\frac{\nu_0|x-y-\lambda(t-s)|}{2}}(1+s)^{-\frac{\beta}2} B_{\frac{\gamma}2}(s,y-\mu s)dyds\\
&=:I_1+I_2+I_3,
\emas
where $A= 3(\mu-\lambda) $, $ (x-\lambda t)/A\le t/3$ and $ (\mu t-x)/A\le t/3$.

Since
\bma
& e^{-\frac{\nu_0|x-y-\lambda(t-s)|}{2}} B_{\frac{\gamma}2}(s,y-\mu s)\nnm\\
\le &e^{-\frac{\nu_0|x-y-\lambda(t-s)|}{4}}\(e^{-\frac{\nu_0|x-\lambda t+(\lambda-\mu)s|}{8}}+ CB_{\frac{\gamma}2}(t,x-\lambda t+(\lambda-\mu)s) \),\label{bb}
\ema
 it follows that
\bmas
I_1&\le C(1+t)^{-\frac{\alpha}2}\(e^{-\frac{\nu_0(x-\lambda t)}{12}}+ B_{\frac{\gamma}2}(t,x-\lambda t) \)\int^{\frac{x-\lambda t}{A}}_{0}  (1+s)^{-\frac{\beta}2}  ds\\
&\le C(1+t)^{-\frac{\alpha}2}\Gamma_{\beta}(t)\(e^{-\frac{|x|+\lambda_1 t}{C_2}}+ B_{\frac{\gamma}2}(t,x-\lambda t) \),\\
I_2&\le  C\(\int^{t/2}_{\frac{x-\lambda t}{A}}+\int^{t- \frac{\mu t-x}{A}}_{t/2}\)  (1+t-s)^{-\frac{\alpha}2}  (1+s)^{-\frac{\beta}2} \\
&\qquad  \(e^{-\frac{\nu_0|x-\lambda t+(\lambda-\mu)s|}{8}}+ B_{\frac{\gamma}2}(t,x-\lambda t+(\lambda-\mu)s)  \)ds\\
&\le  C(1+t)^{-\frac{\alpha-1}2}\Gamma_{2\gamma}(\sqrt{1+t})(1+x-\lambda t)^{-\frac{\beta}2}\\
&\quad+ C(1+t)^{-\frac{\beta-1}2}\Gamma_{2\gamma}(\sqrt{1+t})(1+\mu t-x)^{-\frac{\alpha}2} ,\\
I_3&\le C(1+t)^{-\frac{\beta}2}\(e^{-\frac{\nu_0(\mu t-x)}{12}}+ B_{\frac{\gamma}2}(t,x-\mu t) \) \int^t_{t- \frac{\mu t-x}{A}}  (1+t-s)^{-\frac{\alpha}2} ds\\
&\le C(1+t)^{-\frac{\beta}2}\Gamma_{\alpha}(t)\(e^{-\frac{|x|+\lambda_1 t}{C_2}}+ B_{\frac{\gamma}2}(t,x-\mu t) \),
\emas
where $\lambda_1=\max\{1,|\lambda|\}$ and $C_2=36/\nu_0$, and we use \eqref{bg} in the estimate of $I_2$.

For $x\le \lambda t-\sqrt{1+t}$ or $x\ge \mu t+\sqrt{1+t}$,  it follow from \eqref{aa1} and \eqref{bb} that
\bmas
&e^{-\frac{\nu_0|x-y-\lambda(t-s)|}{2}} B_{\frac{\gamma}2}(s,y-\mu s) \\
\le&
e^{-\frac{\nu_0|x-y-\lambda(t-s)|}{4}}\(e^{-\frac{|x|+\lambda_1 t}{C_2}}+ B_{\frac{\gamma}2}(t,x-\lambda t)+ B_{\frac{\gamma}2}(t,x-\mu t) \),
\emas
which gives
\bmas
J^{\alpha,\beta,\gamma}(t,x;0,t;\lambda,\mu)
\le& C \[(1+t)^{-\frac{\alpha}2}\Gamma_{\beta}(t)+(1+t)^{-\frac{\beta}2}\Gamma_{\alpha}(t)\]\\
&\times \(B_{\frac{\gamma}2}(t,x-\lambda t)+ B_{\frac{\gamma}2}(t,x-\mu t)\)+Ce^{-\frac{|x|+\lambda_1 t}{C_2}}.
\emas
And this completes the proof of the lemma.
\end{proof}

\begin{lem}\label{couple2} Assume $\alpha,\beta\ge 1$, $\gamma\ge 1$, and $\lambda< \mu $. There exists a constant $C>0$ such that
\bma
 &\quad K^{\alpha,\beta,\gamma}(t,x;0,t;\lambda,\lambda)\nnm\\
  &\le C \[(1+t)^{-\frac{\alpha}2}\Gamma_{\beta}(t)+(1+t)^{-\frac{\beta}2}\Gamma_{\alpha}(t)\]B_{\frac{\gamma}2}(t,x-\lambda t) , \label{C_3b}\\
   &\quad K^{\alpha,\beta,\gamma}(t,x;0,t;\lambda,\mu)\nnm\\
  &\le C \[(1+t)^{-\frac{\alpha}2}\Gamma_{\beta}(t)+(1+t)^{-\frac{\beta}2}\Gamma_{\alpha}(t)\]\(B_{\frac{\gamma}2}(t,x-\lambda t)+B_{\frac{\gamma}2}(t,x-\mu t)\) \nnm\\
  &\quad +C(1+t)^{-\frac{\alpha-1}2}\Gamma_{2\gamma}(\sqrt{1+t}) (1+x-\lambda t)^{-\frac{\beta}2}1_{\{\lambda t+\sqrt{1+t}\le x\le \mu t-\sqrt{1+t}\}}\nnm\\
   &\quad +C(1+t)^{-\frac{\beta-1}2}\Gamma_{2\gamma}(\sqrt{1+t})(1+\mu t-x)^{-\frac{\alpha}2}1_{\{\lambda t+\sqrt{1+t}\le x\le \mu t-\sqrt{1+t}\}}, \label{C_4a}
  \ema
where $K^{\alpha,\beta,\gamma}(t,x;t_1,t_2;\lambda,\mu)$ is defined by \eqref{Kab}.
\end{lem}

\begin{proof}Since
$$ B_{\frac{\gamma}2}(s,y-\lambda s)\le C  B_{\frac{\gamma}2}(t,x-\lambda t) ,$$
for $|x-y-\lambda(t-s)|\le 1$, it follows that
\bmas
&K^{\alpha,\beta,\gamma}(t,x;0,t;\lambda,\lambda) \\
=&\intt\int_{|x-y-\lambda(t-s)|\le 1} \Pi_{\alpha}(t-s,x-y;\lambda) (1+s)^{-\frac{\beta}2}B_{\frac{\gamma}2}(s,y-\lambda s) dyds \\
\le& C B_{\frac{\gamma}2}(t,x-\lambda t)\(\int^{t/2}_0+ \int^t_{t/2}\)(1+t-s)^{-\frac{\alpha}2}(1+s)^{-\frac{\beta}2}  ds\\
\le& C \[(1+t)^{-\frac{\alpha}2}\Gamma_{\beta}(t)+(1+t)^{-\frac{\beta}2}\Gamma_{\alpha}(t)\] B_{\frac{\gamma}2}(t,x-\lambda t) .
\emas
This gives \eqref{C_3b}.

For $|x-\lambda t|\le \sqrt{1+t}$ or $|x-\mu t|\le \sqrt{1+t}$, we have
\bmas
&K^{\alpha,\beta,\gamma}(t,x;0,t;\lambda,\mu) \\
=&\intt\int_{|x-y-\lambda(t-s)|\le 1} \Pi_{\alpha}(t-s,x-y;\lambda) (1+s)^{-\frac{\beta}2}B_{\frac{\gamma}2}(s,y-\mu s)dyds \\
\le&  C\(\int^{t/2}_0+ \int^t_{t/2}\) (1+t-s)^{-\frac{\alpha}2}(1+s)^{-\frac{\beta}2}  ds\\
\le& C \[(1+t)^{-\frac{\alpha}2}\Gamma_{\beta}(t)+(1+t)^{-\frac{\beta}2}\Gamma_{\alpha}(t)\].
\emas

For $\lambda t+\sqrt{1+t}\le x\le \mu t-\sqrt{1+t}$, we write
\bmas
K^{\alpha,\beta,\gamma}(t,x;0,t;\lambda,\mu)
& = \( \int^{\frac{x-\lambda t}{A}}_{0}+\int^{t- \frac{\mu t-x}{A}}_{\frac{x-\lambda t}{A}}+\int^t_{t- \frac{\mu t-x}{A}}\) \int_{|x-y-\lambda(t-s)|\le 1} \\
&\qquad  \Pi_{\alpha}(t-s,x-y;\lambda) (1+s)^{-\frac{\beta}2} B_{\frac{\gamma}2}(s,y-\mu s)dyds\\
&=:I_1+I_2+I_3,
\emas
where $A= 3(\mu-\lambda) $, $ (x-\lambda t)/A\le t/3$ and $ (\mu t-x)/A\le t/3$.

Since
\be
  B_{\frac{\gamma}2}(s,y-\mu s) \le C B_{\frac{\gamma}2}(t,x-\lambda t+(\lambda -\mu)s), \label{bb1}
\ee
for $|x-y-\lambda(t-s)|\le 1$, it follows that
\bmas
I_1&\le C(1+t)^{-\frac{\alpha}2}B_{\frac{\gamma}2}(t,x-\lambda t) \int^{\frac{x-\lambda t}{A}}_{0}  (1+s)^{-\frac{\beta}2}  ds\\
&\le C(1+t)^{-\frac{\alpha}2}\Gamma_{\beta}(t)B_{\frac{\gamma}2}(t,x-\lambda t),\\
I_2&\le  C\(\int^{t/2}_{\frac{x-\lambda t}{A}}+\int^{t- \frac{\mu t-x}{A}}_{t/2}\)  (1+t-s)^{-\frac{\alpha}2}  (1+s)^{-\frac{\beta}2} \\
&\qquad  \times B_{\frac{\gamma}2}(t,x-\lambda t+(\lambda-\mu)s) ds\\
&\le  C(1+t)^{-\frac{\alpha-1}2}\Gamma_{2\gamma}(\sqrt{1+t})(1+x-\lambda t)^{-\frac{\beta}2}\\
&\quad+ C(1+t)^{-\frac{\beta-1}2}\Gamma_{2\gamma}(\sqrt{1+t})(1+\mu t-x)^{-\frac{\alpha}2} ,\\
I_3&\le C(1+t)^{-\frac{\beta}2}B_{\frac{\gamma}2}(t,x-\mu t) \int^t_{t- \frac{\mu t-x}{A}}  (1+t-s)^{-\frac{\alpha}2} ds\\
&\le C(1+t)^{-\frac{\beta}2}\Gamma_{\alpha}(t)B_{\frac{\gamma}2}(t,x-\mu t).
\emas

For $x\le \lambda t-\sqrt{1+t}$ or $x\ge \mu t+\sqrt{1+t}$, it follow from \eqref{bb1} that
$$
B_{\frac{\gamma}2}(s,y-\mu s)
\le
C\( B_{\frac{\gamma}2}(t,x-\lambda t)+ B_{\frac{\gamma}2}(t,x-\mu t) \),
$$
if $|x-y-\lambda(t-s)|\le 1$. Hence,
\bmas
K^{\alpha,\beta,\gamma}(t,x;0,t;\lambda,\mu)
\le& C \[(1+t)^{-\frac{\alpha}2}\Gamma_{\beta}(t)+(1+t)^{-\frac{\beta}2}\Gamma_{\alpha}(t)\]\\
&\times \(B_{\frac{\gamma}2}(t,x-\lambda t)+ B_{\frac{\gamma}2}(t,x-\mu t)\) .
\emas
This completes the proof of  the lemma.
\end{proof}

\begin{lem}\label{wc-1} For any $\beta,\gamma\ge 0$, there exists a constant $C>0$ such that
\be
  L^{ \beta,\gamma}(t,x;0,t;\lambda, D)  \le C(1+t)^{-\frac{\beta}2}B_{\frac{\gamma}2}(t,x-\lambda t),\label{wave1}
  \ee
  where $L^{ \beta,\gamma}(t,x;t_1,t_2;\lambda,D)$ is defined by \eqref{Lab}.
\end{lem}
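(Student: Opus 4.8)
The plan is to take advantage of the fact that, unlike the parabolic kernels appearing in Lemmas \ref{couple1a}--\ref{couple2}, the kernel $e^{-(|x-y|+t-s)/D}$ in $L^{\beta,\gamma}$ decays exponentially in \emph{both} the space variable (forcing $y$ near $x$) and the time variable (forcing $s$ near $t$); consequently \eqref{wave1} is the mildest of the coupling estimates and follows by a direct splitting argument. Throughout put $\lambda_1=\max\{1,|\lambda|\}$ and record the elementary inequality
\[
|x-y|+t-s\ \ge\ \frac1{\lambda_1}\bigl(|x-y|+|\lambda|(t-s)\bigr)\ \ge\ \frac1{\lambda_1}\bigl|(x-\lambda t)-(y-\lambda s)\bigr|,
\]
which says that the kernel controls the distance between the running wave coordinate $y-\lambda s$ and the target coordinate $x-\lambda t$.

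First I would handle the near region $|x-\lambda t|\le\sqrt{1+t}$, where $B_{\frac{\gamma}2}(t,x-\lambda t)\ge 2^{-\gamma/2}$, so that it is enough to bound $L^{\beta,\gamma}(t,x;0,t;\lambda,D)$ by $C(1+t)^{-\beta/2}$. Using $B_{\frac{\gamma}2}(s,y-\lambda s)\le1$ and $\int_{\R}e^{-|x-y|/D}dy\le 2D$ reduces the claim to $\int_0^t e^{-(t-s)/D}(1+s)^{-\beta/2}ds\le C(1+t)^{-\beta/2}$, which is proved by splitting at $s=t/2$: on $[t/2,t]$ one has $(1+s)^{-\beta/2}\le C(1+t)^{-\beta/2}$ and $\int_{t/2}^t e^{-(t-s)/D}ds\le D$, while on $[0,t/2]$ the factor $e^{-(t-s)/D}\le e^{-t/(2D)}$ decays faster than any power of $(1+t)$ and absorbs $\int_0^{t/2}(1+s)^{-\beta/2}ds$. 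The same computation shows $\int_0^t\!\!\int_{\R}e^{-(|x-y|+t-s)/D}(1+s)^{-\beta/2}\,dy\,ds\le C(1+t)^{-\beta/2}$, a bound I will reuse below.

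For the main region $|x-\lambda t|>\sqrt{1+t}$ I would split the $(y,s)$-integral according to whether $|(x-\lambda t)-(y-\lambda s)|\le\frac12|x-\lambda t|$ or not. On the near-diagonal piece one has $|y-\lambda s|\ge\frac12|x-\lambda t|$, hence, using only $1+s\le1+t$,
\[
B_{\frac{\gamma}2}(s,y-\lambda s)\le\Bigl(1+\tfrac{|x-\lambda t|^2}{4(1+s)}\Bigr)^{-\frac{\gamma}2}\le\Bigl(1+\tfrac{|x-\lambda t|^2}{4(1+t)}\Bigr)^{-\frac{\gamma}2}\le 2^{\gamma}B_{\frac{\gamma}2}(t,x-\lambda t),
\]
so this piece is $\le C\,B_{\frac{\gamma}2}(t,x-\lambda t)\int_0^t\!\!\int_{\R}e^{-(|x-y|+t-s)/D}(1+s)^{-\beta/2}\,dy\,ds\le C(1+t)^{-\beta/2}B_{\frac{\gamma}2}(t,x-\lambda t)$. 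On the remaining piece $|(x-\lambda t)-(y-\lambda s)|\ge\frac12|x-\lambda t|$, the displayed inequality gives $|x-y|+t-s\ge\frac1{2\lambda_1}|x-\lambda t|$, whence $e^{-(|x-y|+t-s)/D}\le e^{-|x-\lambda t|/(4D\lambda_1)}\,e^{-(|x-y|+t-s)/(2D)}$; pulling the factor that is constant in $(y,s)$ out and integrating the rest yields $\le C\,e^{-|x-\lambda t|/(4D\lambda_1)}(1+t)^{-\beta/2}$. Finally, since $|x-\lambda t|\ge\sqrt{1+t}\ge1$, exponential decay beats polynomial decay, so $e^{-|x-\lambda t|/(4D\lambda_1)}\le C|x-\lambda t|^{-\gamma}\le C\bigl(\tfrac{|x-\lambda t|^2}{1+t}\bigr)^{-\gamma/2}\le C\,B_{\frac{\gamma}2}(t,x-\lambda t)$. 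Adding the two regions gives \eqref{wave1}.

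The argument is elementary and presents no real obstacle; the only points needing a little care are the bookkeeping of constants in the spatial/temporal split — in particular retaining a residual factor $e^{-(|x-y|+t-s)/(2D)}$ for the $dy\,ds$ integration while spending the other half on decay in $x-\lambda t$ — and the observation that here, because the kernel already localizes in both variables, one does not even need Lemma \ref{green3c} to pass from $B_{\frac{\gamma}2}(s,\cdot)$ to $B_{\frac{\gamma}2}(t,\cdot)$; the trivial bound $1+s\le1+t$ suffices.
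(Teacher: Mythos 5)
Your proof is correct and follows essentially the same route as the paper's: both rest on the inequality $|x-y|+t-s\ge \lambda_1^{-1}|(x-\lambda t)-(y-\lambda s)|$, spend half of the exponential kernel on localizing $y-\lambda s$ near $x-\lambda t$ (the paper phrases this as the pointwise bound $e^{-|x-y-\lambda(t-s)|/D_1}B_{\gamma/2}(s,y-\lambda s)\le C(e^{-|x-\lambda t|/(2D_1)}+B_{\gamma/2}(t,x-\lambda t))$, you as a splitting of the integration region), keep the other half for the $dy\,ds$ integration giving $(1+t)^{-\beta/2}$, and absorb the residual exponential in $|x-\lambda t|$ into $B_{\gamma/2}(t,x-\lambda t)$.
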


\begin{proof}
Since
\bmas
e^{-\frac{|x-y|+t-s}D} B_{\frac{\gamma}2}(s,y-\lambda s)&\le e^{-\frac{|x-y|+t-s}{2D}}e^{-\frac{|x-y-\lambda (t-s)|}{D_1}} B_{\frac{\gamma}2}(s,y-\lambda s)\\
&\le Ce^{-\frac{|x-y|+t-s}{2D}}\(e^{-\frac{|x-\lambda t|}{2D_1}}  +  B_{\frac{\gamma}2}(t,x-\lambda t)\),
\emas
where $D_1=\max\{2D,2\lambda_1 D\}$ and $\lambda_1=\max\{1,|\lambda|\}$, we have
\bmas
&L^{ \beta,\gamma}(t,x;t_1,t_2;\lambda, D)\nnm\\
 =&\intt\intra e^{-\frac{|x-y|+t-s}D}(1+s)^{-\frac{\beta}2} B_{\frac{\gamma}2}(s,y-\lambda s)dyds\\
 \le& C\(e^{-\frac{|x-\lambda t|}{2D_1}}  +  B_{\frac{\gamma}2}(t,x-\lambda t)\)\intt\intra e^{-\frac{|x-y|+t-s}{2D}}(1+s)^{-\frac{\alpha}2}  dyds\\
 \le& C(1+t)^{-\frac{\alpha}2} \(B_{\frac{\gamma}2}(t,x-\lambda t)+e^{-\frac{|x|+\lambda_1 t}{6D_1}}\).
  \emas
 This gives \eqref{wave1} and completes the proof of the lemma.
\end{proof}

With Theorem \ref{green1} and Lemmas \ref{S_1}--\ref{wc-1}, we are ready  to prove Theorem \ref{thm1} as follows.

\begin{proof}[\underline{\textbf{Proof of Theorem \ref{thm1}}}]
Let $f_1(t)$ and $V(t)=(V^1,V^2,V^3)=(f_2,E,B)$ be a solution to the IVP problem \eqref{VMB3a}--\eqref{VMB3d} for $t>0$. We can represent this solution by
\bmas
f_1(t,x)&=G_b(t)\ast f_{1,0}+\intt G_b(t-s)\ast (H_1+H_2)(s) ds \nnm\\
&=:J_1+J_2,\\
V^{i}(t,x)&=\sum^3_{j=1}G^{ij}(t)\ast V^j_0+ \intt G^{i1}(t-s)\ast H_3(s) ds\nnm\\
&=:J^{i}_3+J^{i}_4,\quad i=1,2,3,
\emas
where $V_0=(V^1_0,V^2_0,V^3_0)=(f_{2,0},E_0,B_0)$, and $H_i$, $i=1,2,3$ are the nonlinear terms defined by
$$
\left\{\bln
H_1&=\frac12(v\cdot E)f_2-(E+v\times B)\cdot \Tdv f_2,\quad H_2=\Gamma(f_1,f_1),\\
H_3&=\frac12(v\cdot E)f_1-(E+v\times B)\cdot \Tdv f_1+\Gamma(f_2,f_1).
\eln\right.
$$
Let
$$
\Phi^{\alpha,\gamma}(t,x )=\sum_{-2\le i\le 2} (1+t)^{-\frac{\alpha}2}B_{\frac{\gamma}2}(t,x-\sigma_it) .
$$
Define
\bmas
 \mathcal{Q}(t)=\sup_{0\le s\le t, x\in \R }
 &\Big\{
    \(\| f_1\|_{L^\infty_{v,3}}+ \|\Tdv f_1\|_{L^\infty_{v,3}}+  | B|\)\Phi^{1 ,1}(t,x)^{-1}
   \nnm\\
&+\(\| f_2\|_{L^\infty_{v,3}}+\|\Tdv f_2\|_{L^\infty_{v,3}}+| E|+\|\dx f_1\|_{L^\infty_{v,3}}+|\dx B|\) \Phi^{2,1}(t,x)^{-1}\nnm\\
  & +\(\|\dx f_2\|_{L^\infty_{v,3}}+|\dx E|\)\Phi^{\frac52,1}(t,x)^{-1} +(1+s)^{-\frac34}  \mathbb{H}_{5,3}(U)^{\frac12}   \Big\}.
 \emas

From \cite{Ukai1,Ukai3}, it holds that for $\beta\in \N^3$ and $k\ge 0$,
\be
\|\dvb \Gamma(f,g)\|_{L^\infty_{v,k-1}}\le C\sum_{\beta_1+\beta_2\le  \beta}\|\dv^{\beta_1}f\|_{L^\infty_{v,k}}\|\dv^{\beta_2}g\|_{L^\infty_{v,k}}. \label{c1}
\ee
Thus
\bma
\|\dxa H_1(s,x)\|_{L^\infty_{v,2}}&\le C\sum_{\alpha'\le\alpha}\Big(|\dx^{\alpha'} E|\|\dx^{\alpha-\alpha'}f_2\|_{L^\infty_{v,3}}+|\dx^{\alpha'} E|\|\Tdv \dx^{\alpha-\alpha'}f_2\|_{L^\infty_{v,2}}\nnm\\
&\quad+|\dx^{\alpha'} B|\|\Tdv \dx^{\alpha-\alpha'}f_2\|_{L^\infty_{v,3}}\Big)\nnm\\
&\le C\mathcal{Q}^2(t)\Phi^{3+\frac{1}{10}\alpha,2-\frac{2}5\alpha}(s,x) ,  \label{h1}\\
\|\dxa H_2(s,x)\|_{L^\infty_{v,2}}&\le C\sum_{\alpha'\le\alpha}\|\dx^{\alpha'}f_1\|_{L^\infty_{v,3}}\|\dx^{\alpha-\alpha'}f_1\|_{L^\infty_{v,3}}
\le C\mathcal{Q}^2(t)\Phi^{2+\alpha,2}(s,x),  \label{h2}\\
\|\dxa H_3(s,x)\|_{L^\infty_{v,2}}&\le C\sum_{\alpha'\le\alpha}\Big(|\dx^{\alpha'} E|\|\dx^{\alpha-\alpha'}f_1\|_{L^\infty_{v,3}}+|\dx^{\alpha'} E|\|\Tdv \dx^{\alpha-\alpha'}f_1\|_{L^\infty_{v,2}}\nnm\\
&\quad+|\dx^{\alpha'} B|\|\Tdv \dx^{\alpha-\alpha'}f_1\|_{L^\infty_{v,3}}+\|\dx^{\alpha'}f_2\|_{L^\infty_{v,3}}\|\dx^{\alpha-\alpha'}f_1\|_{L^\infty_{v,3}}\Big)\nnm\\
&\le C\mathcal{Q}^2(t) \Phi^{2+\frac{4}5\alpha,2-\frac25\alpha}(s,x) ,  \label{h3}
\ema
where $\alpha=0,1$, $0\le s\le t$ and we have used Gagliardo-Nirenberg interpolation inequality to have
\bmas
\|\dx\Tdv f_1(s,x) \|_{L^\infty_{v,3}}&\le C\|\dx\Tdv^3 (w^3 f_1) \|^{2/5}_{L^2_{v}}\|\dx (w^3f_1) \|^{3/5}_{L^\infty_{v}}+C\|\dx (w^2f_1) \|_{L^\infty_{v}}
\\
&\le C\mathcal{Q}(t)\Phi^{\frac95,\frac35}(s,x),
\\
\|\dx\Tdv f_2(s,x) \|_{L^\infty_{v,3}}&\le C\|\dx\Tdv^3 (w^3 f_2) \|^{2/5}_{L^2_{v}}\|\dx (w^3f_2) \|^{3/5}_{L^\infty_{v}}+C\|\dx (w^2f_2) \|_{L^\infty_{v}}
\\
&\le C\mathcal{Q}(t) \Phi^{\frac{21}{10},\frac35}(s,x) .
\emas
By Theorems \ref{green1} and \ref{green2}, we decompose
\bma
\dxa J_1&=\dxa  G_{b,0}(t)\ast f_{1,0}+G_{b,1}(t)\ast \dxa  f_{1,0}+W_1(t)\ast\dxa f_{1,0}\nnm\\
&=:J^{\alpha}_{1,1}+J^{\alpha}_{1,2}+J^{\alpha}_{1,3}, \label{gt1}\\
\dxa J_2&=\sum^2_{i=1} \intt\dxa G_{b,0}(t-s)\ast H_i(s)ds+\sum^2_{i=1} \intt G_{b,1}(t-s)\ast \dxa H_i(s)ds\nnm\\
&\quad+\sum^2_{i=1}\intt W_{1}(t-s)\ast\dxa H_i(s)ds\nnm\\
&=:J^{\alpha}_{2,1}+J^{\alpha}_{2,2}+J^{\alpha}_{2,3},
\\
\dxa J^{i}_3&=\sum_{j=1,2,3} \dxa G^{ij}_{F,0}(t)\ast V^j_0+ \sum_{j=1,2,3}G^{ij}_R(t)\ast\dxa V^j_0\nnm\\
&\quad +\sum_{j=1,2,3}(G^{ij}_{F,1}+G^{ij}_2)(t)\ast \dxa V^j_0+\sum_{j=1,2,3}W^{ij}_{0,1}(t)\ast\dxa V^j_0\nnm\\
&=:J^{i,\alpha}_{3,1}+J^{i,\alpha}_{3,2}+J^{i,\alpha}_{3,3}+J^{i,\alpha}_{3,4}, \label{gt3}\\
\dxa J^{i}_4&=\intt \dxa G^{i1}_{F,0}(t-s)\ast H_3(s)ds+\intt G^{i1}_{R}(t-s)\ast \dxa H_3(s)ds\nnm\\
&\quad+\intt (G^{i1}_{F,1}+G^{i1}_2)(t-s)\ast \dxa H_3(s)ds+\intt W^{i1}_{0,1}(t-s)\ast\dxa H_3(s)ds\nnm\\
&=:J^{i,\alpha}_{4,1}+J^{i,\alpha}_{4,2}+J^{i,\alpha}_{4,3}+J^{i,\alpha}_{4,4}. \label{gt4}
\ema

First, we estimate the linear terms $\dxa J_1$ and $\dxa J^i_3$ for $\alpha=0,1$. By  Theorems \ref{green1}, \ref{green2}  and Lemma \ref{p-1}, we obtain
\bmas
\|J^{\alpha}_{1,k}\|
&\le C\delta_0 \sum_{l=-1,0,1}(1+t)^{-\frac{1+\alpha}2}B_{\frac{\gamma}2}(t,x-\mathbf{c} lt) ,  \\
\|J^{i,\alpha}_{3,k}\|
&\le C\delta_0  (1+t)^{-\frac{2+\alpha}2}B_{\frac{\gamma}2}(t,x)  ,\quad i=1,2, \\
\|J^{3,\alpha}_{3,k}\|
&\le C\delta_0 (1+t)^{-\frac{1+\alpha}2}B_{\frac{\gamma}2}(t,x) ,
\emas where $k=1,2$.
By Lemma \ref{gh1} and Theorem \ref{green1}, we have
$$
J^{i,\alpha}_{3,3}=\sum^3_{j=1}F^{ij}_{2}(t)\ast \dx^{2}\dxa V^j_0+\sum^3_{j=1}[\Lambda^{ij}_{2}(t)+\Omega^{ij}_{2}(t)]\ast (\nu_0+\dx)^{2}\dxa V^j_0,
$$
which together with Lemma \ref{p-1} implies that
\bmas
\|J^{1,\alpha}_{3,3}\|&\le C\sum_{l=\pm 1}(1+t)^{-\frac52}\ln^2(2+t)B_{\frac{\gamma}2}(t,x-l t),\\
\|J^{i,\alpha}_{3,3}\| &\le C\sum_{l=\pm 1}(1+t)^{-2}\ln^2(2+t)B_{\frac{\gamma}2}(t,x-l t),\quad i=2,3.
\emas
Moreover, by Lemmas  \ref{S_1} and \ref{S_2}, we obtain
$$
\|J^{\alpha}_{1,3}\|,\|J^{i,\alpha}_{3,4}\|\le C\delta_0e^{-\frac{\nu_0 t}{3}}(1+|x|^2)^{-\frac{\gamma}2}.
$$
Thus
\be \label{J1}
\left\{\bln
\|\dxa J_{1}\|&\le C\delta_0 \sum_{j=-2,0,2}(1+t)^{-\frac{1+\alpha}2}B_{\frac{\gamma}2}(t,x-\sigma_j t) ,\\
\|\dxa J^i_{3}\|&\le C\delta_0 \sum_{j=-1,0,1} (1+t)^{-\frac{2+\alpha}2}B_{\frac{\gamma}2}(t,x-\sigma_j t),\quad i=2,3, \\
\|\dxa J^3_{3}\|&\le C\delta_0 \sum_{j=-1,0,1} (1+t)^{-\frac{1+\alpha}2}B_{\frac{\gamma}2}(t,x-\sigma_j t) .
\eln\right.
\ee

Next, we estimate the nonlinear terms $\dxa J_2$ and $\dxa J^i_4$ for $\alpha=0,1$. Denote
\bmas
\mathcal{I}^{\alpha,\beta,\gamma} (t_1,t_2;\lambda,\theta)&=\int^{t_2}_{t_1} \intra (1+t-s)^{-\frac{\alpha}2}e^{-\frac{|x-y-\lambda(t-s)|^2}{\theta(1+t-s)}}\Phi^{\beta,\gamma}(s,y)dyds,\\
\mathcal{J}^{\alpha,\beta,\gamma} (t_1,t_2;\lambda)&=\int^{t_2}_{t_1} \intra (1+t-s)^{-\frac{\alpha}2}e^{-\frac{\nu_0|x-y-\lambda(t-s)|}{2}}\Phi^{\beta,\gamma}(s,y)dyds,\\
\mathcal{K}^{\alpha,\beta,\gamma} (t_1,t_2;\lambda)&=\int^{t_2}_{t_1} \intra \Pi_{\alpha}(t-s,x-y;\lambda)\Phi^{\beta,\gamma}(s,y)dyds,\\
\mathcal{L}^{ \beta,\gamma} (t_1,t_2;\theta)&=\int^{t_2}_{t_1} \intra  e^{-\frac{ |x-y|+t-s}{\theta}}\Phi^{\beta,\gamma}(s,y)dyds,
\emas
where $\alpha,\beta,\gamma\ge 0$, $\theta> 0$, $\lambda\in \R$, and $\Pi_{\alpha}(t,x;\lambda)$ is defined by \eqref{H-a}.

By Theorems  \ref{green1}--\ref{green2} and Lemma \ref{wc-1}, we have
\bma
\|J^{\alpha}_{2,2}\|\le& C\mathcal{Q}^2(t)  \[\mathcal{L}^{3,\frac85}(0,t;D)+ \mathcal{L}^{2+\alpha,2}(0,t;D)\]\le C\mathcal{Q}^2(t)\Phi^{2+\alpha,\frac85}(t,x),  \label{J22}\\
\|J^{i,\alpha}_{4,2}\|\le& C\mathcal{Q}^2(t)  \mathcal{L}^{2+\frac{4}5\alpha,\frac85}(0,t;D)\le C\mathcal{Q}^2(t)\Phi^{2+\frac{4}5\alpha,\frac85}(t,x),\quad i=1,2,3.\label{J42}
\ema
By Lemmas \ref{green3a} and \ref{green3b}, we have
\bma
\|J^{\alpha}_{2,3}\| &\le C\mathcal{Q}^2(t)\Phi^{2+ \alpha,\frac85}(t,x),\label{J23}\\
\|J^{i,\alpha}_{4,4}\|&\le C\mathcal{Q}^2(t)\Phi^{2+\frac{4}5\alpha,\frac85}(t,x), \quad i=1,2,3. \label{J44}
\ema

For $J^{\alpha}_{2,1}$, we first decompose it into
\bmas
J^{\alpha}_{2,1}&=\intt \dxa G_{b,0}(t-s)\ast P_0H_1ds+\intt \dxa G_{b,0}(t-s)\ast (P_1H_1+H_2)ds\\
&=:I^{\alpha}_1+I^{\alpha}_2.
\emas
By Theorem \ref{green2}, Corollary \ref{couple1b} and \eqref{h1}--\eqref{h3}, we have
\bmas
\|I^{\alpha}_2\|\le & \int^{t/2}_0\|\dxa G_{b,0}(t-s)P_1\|\ast \|(P_1H_1+H_2)\|ds\\
&+\int^{t}_{t/2}\|G_{b,0}(t-s)P_1\|\ast \|\dxa (P_1H_1+H_2)\|ds\\
\le &C\mathcal{Q}^2(t)\sum_{j=-2,0,2}\[\mathcal{I}^{2+\alpha,2,2}(0,t/2;\sigma_j, D)+\mathcal{I}^{2,2+\alpha,\frac85}(t/2,t;\sigma_j, D)\]\\
\le& C\mathcal{Q}^2(t)\Phi^{1+\alpha,1}(t,x), \quad \alpha=0,1.
\emas
Since
$$
 P_0H_1=(n_2 E+m_2\times B)\cdot v\chi_0+\sqrt{\frac23} E\cdot m_2 \chi_4,
$$
where $ n_2=(f_2,\chi_0)$ and $m_2=(f_2,v\chi_0),$ we write
\bmas
I^{\alpha}_1&=\intt \dxa G_{b,0}(t-s)\ast \bigg(n_2 E \cdot v\chi_0+\sqrt{\frac23} E\cdot m_2 \chi_4\bigg)ds\\
&\quad+\intt \dxa G_{b,0}(t-s)\ast (m_2\times B )\cdot v\chi_0 ds=:I^{\alpha}_{1,1}+I^{\alpha}_{1,2}.
\emas
For $I^{\alpha}_{1,1}$, it holds that for $\alpha=0,1$,
$$
\|I^{\alpha}_{1,1}\|\le C\mathcal{Q}^2(t)\sum_{j=-2,0,2 } \mathcal{I}^{1+\alpha,4,2}(0,t;\sigma_j, D)\le C\mathcal{Q}^2(t) \Phi^{1+\alpha,1}(t,x).
$$

To estimate $I^{\alpha}_{1,2}$, by \eqref{VMB3b} we have
\bmas
m_2\times B&=\dx \O B\times B-\dt E\times B\\
&=\frac12(\dx |B|^2,0,0)-\dt (E\times B)-E\times \dx\O E,
\emas
which gives
\bmas
I^{\alpha}_{1,2}&=\frac12 \intt \dx^{\alpha}G_{b,0}(t-s)\ast \dx |B|^2 v_1\chi_0 ds-\intt \dxa G_{b,0}(t-s)\ast \dt(E\times B)\cdot v\chi_0 ds\\
&\quad- \intt \dxa G_{b,0}(t-s)\ast (E\times \dx\O E)\cdot v\chi_0 ds\\
&=:I^{\alpha}_{3,1}+I^{\alpha}_{3,2}+I^{\alpha}_{3,3}.
\emas
By Theorem \ref{green2} and Lemmas \ref{couple1a}, \ref{couple1b}, we obtain that for $\alpha=0,1$,
\bmas
\|I^{\alpha}_{3,1}\|\le & \int^{t/2}_0 \|\dx^{\alpha+1}G_{b,0}(t-s)\|\ast |B|^2   ds + \int^{t}_{t/2} \|\dx^{\alpha}G_{b,0}(t-s)\|\ast |\dx |B|^2| ds\\
\le  &C\mathcal{Q}^2(t)\sum_{j=-2,0,2}\[\mathcal{I}^{2+\alpha,2,2}(0,t/2; \sigma_j, D)+\mathcal{I}^{1+\alpha,3,2}(t/2,t; \sigma_j, D)\]\\
\le& C\mathcal{Q}^2(t) \Phi^{1+\alpha,1}(t,x),\\
\|I^{\alpha}_{3,3}\|\le &C \intt \|\dx^{\alpha}G_{b,0}(t-s)\|\ast ( |E||\dx E|)  ds\\
\le &C\mathcal{Q}^2(t)\sum_{j=-2,0,2 } \mathcal{I}^{1+\alpha,4,2}(0,t;\sigma_j, D)\le C\mathcal{Q}^2(t)\Phi^{1+\alpha,1}(t,x).
\emas

For $I^{\alpha}_{3,2}$, by integration by part and noting that $ G_{b,0}(0,x)=0$ because $G_{b,0}=G_{l,0}1_{\{|x|\le 2\mathbf{c}t\}}$ is supported in $|x|\le 2\mathbf{c}t$, we have
$$
I^{\alpha}_{3,2}=-\dxa G_{b,0}(t)\ast (E_0\times B_0)\cdot v\chi_0+\intt \dt\dxa G_{b,0}(t-s)\ast(E\times B)\cdot v\chi_0 ds.
$$
By \eqref{GL0a}, we have
$$
\dt \hat{G}_{l,0}(t,\xi)=\sum^1_{j=-1}\eta_j(\xi)e^{\eta_j(\xi)t}\varphi_j(\xi)\otimes \langle \varphi_j(\xi)|.
$$
Thus, by using  a similar argument as for Lemma \ref{gl0}, we obtain
\be
 \|\dt\dxa G_{b,0}(t,x)\|\le C\bigg(\sum^1_{l=-1}(1+t)^{-\frac{2+\alpha}2}e^{-\frac{|x-l\mathbf{c}t|^2}{D(1+t)}}+e^{-\frac{|x|+t}D}\bigg) . \label{in5}
 \ee
 Then, it follows from Lemma \ref{couple1a} and \eqref{in5} that
\bmas
\|I^{\alpha}_{3,2}\|&\le C\sum_{j=-2,0,2 }\(\mathcal{Q}^2(t) \mathcal{I}^{2+\alpha,3,2}(0,t;\sigma_j, D) +\delta^2_0 (1+t)^{-\frac{1+\alpha}2}B_{\gamma}(t,x-\sigma_j t)\)\\
&\le C(\delta_0^2+\mathcal{Q}^2(t))\Phi^{1+\alpha,1}(t,x), \quad \alpha=0,1.
 \emas
 Thus
 \be
 \|J^{\alpha}_{2,1}\|\le C(\delta_0^2+\mathcal{Q}^2(t))\Phi^{1+\alpha,1}(t,x), \quad \alpha=0,1. \label{J21}
 \ee
By combining \eqref{J22}, \eqref{J23} and \eqref{J21}, we have
\be
 \|\dxa J_{2}\|\le C(\delta_0^2+\mathcal{Q}^2(t))\Phi^{1+\alpha,1}(t,x), \quad \alpha=0,1. \label{J21a}
 \ee

We now turn to estimate  $J^{i,\alpha}_{4,1}$, $i=1,2,3$.
By \eqref{G_L0} and the fact that $ \|\Tdv h_j(\xi)\|=O(|\xi|),$
 we obtain from Lemma \ref{gl0} that
$$
\left\{\bln
\|\dxa G^{i1}_{F,0}(t,x)\Tdv f_0\|&\le C\( (1+t)^{-\frac{3+\alpha}2}e^{-\frac{|x|^2}{D(1+t)}} +e^{-\frac{|x|+t}D}\)\|f_0\|,\quad i=1,2, \\
\|\dxa G^{31}_{F,0}(t,x)\Tdv f_0\|&\le C\( (1+t)^{-\frac{2+\alpha}2}e^{-\frac{|x|^2}{D(1+t)}} +e^{-\frac{|x|+t}D}\)\|f_0\| ,
\eln\right.
$$
for $\alpha\ge -1$, where $\dx^{-1}$ is defined by \eqref{dxa}.
Thus
\bma
J^{i,\alpha}_{4,1}=&\int^{t/2}_0 \dxa G^{i1}_{F,0}(t-s) \ast  H_3(s) ds+\int^t_{t/2}  \dx^{\alpha-1}G^{i1}_{F,0}(t-s) \ast  \dx H_3(s) ds\nnm\\
\le&C\int^{t/2}_0 \|\dxa G^{i1}_{F,0}(t-s)\|\ast \|H_4\|ds+C\int^t_{t/2}  \| \dx^{\alpha-1} G^{i1}_{F,0}(t-s)\|\ast\| \dx H_4\|ds, \label{J41}
\ema
where
$$H_4=\frac12(v\cdot E)f_1-(E+v\times B) f_1+\Gamma(f_2,f_1).$$
Since
$$
\|\dxa H_4(s,x)\|_{L^\infty_{v,2}}\le C\mathcal{Q}^2(t) \Phi^{2+\alpha,2}(t,x),\quad \alpha=0,1,
$$
it follows from \eqref{J41} that
 \bma
\|J^{i,\alpha}_{4,1}\|\le& C\mathcal{Q}^2(t)\[\mathcal{I}^{3+\alpha ,2,2}(0,t/2;0,D)+\mathcal{I}^{2+\alpha,3,2}(t/2,t;0,D)\]\nnm\\
\le& C\mathcal{Q}^2(t)\Phi^{2+\alpha,1}(t,x)\ln^{\alpha}(2+t),\quad i=1,2, \label{J41a}
\\
\|J^{3,\alpha}_{4,1}\|\le& C\mathcal{Q}^2(t)\[\mathcal{I}^{2+\alpha ,2,2}(0,t/2;0,D)+\mathcal{I}^{1+\alpha,3,2}(t/2,t;0,D)\]\nnm\\
\le& C\mathcal{Q}^2(t)\Phi^{1+\alpha,1}(t,x).\label{J41b}
 \ema

For $J^{i,\alpha}_{4,3}$, by Theorem \ref{green1}, Lemma \ref{gh1} and Lemmas \ref{couple1}, \ref{couple2} we have for a fixed constant $0<\eps\le 1/2$ and $\alpha=0,1$,
\bma
\|J^{i,\alpha}_{4,3}\|
&\le \intt (\|F^{i1}_1 \|+\|\Lambda^{i1}_1\|+\|\Omega^{i1}_1 \|)(t-s)\ast  (\|\dxa  H_3\|+\|\dx^{\alpha+1}  H_3\|)(s)ds\nnm\\
&\le C\mathcal{Q}^2(t)\sum_{j=-1,1}\[\mathcal{J}^{3-\eps,2+\frac{\alpha}2,1}(0,t;\sigma_j)+\mathcal{K}^{3-\eps,2+\frac{\alpha}2,1}(0,t;\sigma_j)\] \nnm\\
&\le C\mathcal{Q}^2(t) \Phi^{2+\frac{\alpha}2,1}(t,x),\quad i=1,2,3, \label{J43}
\ema
where we have used
\bma
\|\dx^2 H_3(s,x)\|_{L^2_{v}}&\le C\sum_{\alpha\le2}\Big(|\dx^{\alpha} E|\|w\dx^{2-\alpha}f_1\|_{L^2_{v}}+|\dx^{\alpha} E|\|\Tdv\dx^{2-\alpha}f_1\|_{L^2_{v}}\nnm\\
&\quad+|\dx^{\alpha} B|\| w\Tdv\dx^{2-\alpha}f_1\|_{L^2_{v}}
+\|w\dx^{\alpha}f_2\|_{L^2_{v}}\|w\dx^{2-\alpha}f_1\|_{L^2_{v}}\Big)\nnm\\
&\le C\mathcal{Q}^2(t)\Phi^{\frac52,1}(t,x). \label{H22}
\ema
In \eqref{H22}, we have used the Sobolev's embedding theorem for obtaining
$$\|w\dx^2 \dvb f_i\|_{L^2_v}+|\dx^2 E|+|\dx^2 B|\le C\mathbb{H}_{5,3}(U)^{\frac12}\le C(1+t)^{-\frac34}\mathcal{Q}(t) $$
for $i=1,2,\, |\beta|=0,1.$
By combining \eqref{J42}, \eqref{J44}, \eqref{J41a}, \eqref{J41b} and \eqref{J43}, we have
\be \label{J4}
\left\{\bln
\|\dxa J^i_{4}\|&\le C\mathcal{Q}^2(t) \Phi^{2+\frac{\alpha}2,1}(t,x),\quad i=1,2,\\
\|\dxa J^3_{4}\|&\le C\mathcal{Q}^2(t) \Phi^{1+\alpha,1}(t,x).
\eln\right.
\ee
Thus, it follows from \eqref{J1}, \eqref{J21a} and \eqref{J4} that for $\alpha=0,1$,
\be \label{density1}
\left\{\bln
\|\dxa f_1(t,x)\|_{L^2_{v}}+|\dxa B(t,x)|
&\le C(\delta_0+ \mathcal{Q}^2(t))\Phi^{1+\alpha,1} (t,x), \\
\|\dxa f_2(t,x)\|_{L^2_{v}}+|\dxa E(t,x)|
&\le C(\delta_0+ \mathcal{Q}^2(t))\Phi^{2+\frac{\alpha}{2},1} (t,x).
\eln\right.
\ee

By \eqref{VMB3a} and \eqref{VMB4a}, we have
\bmas
\dt f_1+v_1\dx f_1+\nu(v)f_1&=Kf_1 +H_1 +H_2,\\
\dt f_2+v_1\dx f_2+\nu(v)f_2&=K_1f_2+v\cdot E\chi_0+H_3.
\emas
Thus we can represent $f_1$ and $f_2$ by
\bma
f_1(t,x)&= S^tf_{1,0}+\intt  S^{t-s}(Kf_1+H_1 +H_2)ds,\label{s0}\\
f_2(t,x)&= S^tf_{2,0}+\intt  S^{t-s}(K_1f_2+v\cdot E \chi_0+H_3)ds.
\ema
By Lemma \ref{S_1}, it holds that
\be
\| S^tf_{j,0}(x)\|_{L^\infty_{v,3}} \le C\delta_0e^{-\frac{2\nu_0t}3}(1+|x|^2)^{-\frac{\gamma}2},\quad j=1,2.\label{s1}
\ee
By \eqref{density1}, we have for $\alpha=0,1,$
\bmas
\|\dxa Kf_1 \|_{L^\infty_{v,0}}&\le C\|\dxa f_1 \|_{L^2_{v}}\le C(\delta_0+\mathcal{Q}^2(t))\Phi^{1+\alpha,1}(t,x) ,\\
\|\dxa K_1f_2 \|_{L^\infty_{v,0}}&\le C\|\dxa f_2 \|_{L^2_{v}}\le C(\delta_0+\mathcal{Q}^2(t))\Phi^{2+\frac{\alpha}{2},1}(t,x),
\emas
which together with  \eqref{s4}, \eqref{h1}, \eqref{h2} and \eqref{h3} imply that
\bma
\bigg\|\intt  S^{t-s}\dxa(Kf_1+H_1 +H_2)ds\bigg\|_{L^\infty_{v,1}}
&\le C(\delta_0+\mathcal{Q}^2(t))\Phi^{1+\alpha,1}(t,x),
\\
\bigg\|\intt  S^{t-s}\dxa(K_1f_2+v\cdot E \chi_0+H_3)ds\bigg\|_{L^\infty_{v,1}}
&\le C(\delta_0+\mathcal{Q}^2(t))\Phi^{2+\frac{\alpha}{2},1}(t,x). \label{s2b}
\ema
Thus, it follows from \eqref{s0}--\eqref{s2b} that
\bmas
\|\dxa f_1(t,x)\|_{L^\infty_{v,1}}&\le  C(\delta_0+\mathcal{Q}^2(t))\Phi^{1+\alpha,1}(t,x),\\
\|\dxa f_2(t,x)\|_{L^\infty_{v,1}}&\le  C(\delta_0+\mathcal{Q}^2(t))\Phi^{2+\frac{\alpha}{2},1}(t,x).
\emas
By induction  and using
$$
\|Kf_i\|_{L^\infty_{v,k}},\|K_1f_i\|_{L^\infty_{v,k}}\le C_k\|f_i\|_{L^\infty_{v,k-1}},\,\,\,k\ge 1,\,\, i=1,2,
$$ we have
\be  \label{s1a}
\left\{\bln
\|\dxa f_1(t,x)\|_{L^\infty_{v,3}}&\le  C(\delta_0+\mathcal{Q}^2(t))\Phi^{1+\alpha,1}(t,x),\\
\|\dxa f_2(t,x)\|_{L^\infty_{v,3}}&\le  C(\delta_0+\mathcal{Q}^2(t))\Phi^{2+\frac{\alpha}{2},1}(t,x).
\eln\right.
\ee

Taking the derivative $\dvb$ of \eqref{VMB3a} and \eqref{VMB4a} with $|\beta|= 1$, we have
$$
\dt \dvb f_j+v_1\dx \dvb f_j+\nu(v)\dvb f_j=H^{\beta}_j,\quad j=1,2,\\
$$
where
$$
\left\{\bln
H^{\beta}_1=&-\dx^\beta f_1- \dv^{\beta}\nu(v)f_1+\dvb (Kf_1)
+\dvb H_1+\dvb H_2,\\
H^{\beta}_2=&-\dx^\beta f_2- \dv^{\beta}\nu(v)f_2+\dvb (K_1f_2)+\dvb(v \chi_0)\cdot E+\dvb H_3.
\eln\right.
$$
Thus, $\dvb f_j$, $j=1,2$ can be represented by
\be
 \dvb f_j(t,x)= S^t\dvb f_{j,0}+\intt  S^{t-s}H^{\beta}_j ds,\quad j=1,2. \label{s2}
\ee
It follows from  \eqref{s1a} that
\bmas
\|H^{\beta}_1(s,x)\|_{L^\infty_{v,2}}&\le C(\|\dx^\beta f_1\|_{L^\infty_{v,2}}+\|f_1\|_{L^\infty_{v,2}}) +C\sum_{k\le 2}(| E|+| B|) \|\Tdv^k f_2\|_{L^\infty_{v,3}}\\
&\quad+C\sum_{k \le 1} \|\Tdv^k f_1\|_{L^\infty_{v,3}} \| f_1\|_{L^\infty_{v,3}}\\
&\le C(\delta_0+\mathcal{Q}^2(t))\Phi^{1,1}(t,x),
\\
\| H^\beta_2(s,x)\|_{L^\infty_{v,2}}&\le C(\|\dx^\beta f_2\|_{L^\infty_{v,2}}+\|f_2\|_{L^\infty_{v,2}}+| E|)+C\sum_{k\le 2}(| E|+| B|) \|\Tdv^k f_1\|_{L^\infty_{v,3}}\\
&\quad+C\sum_{k+l\le 1} \|\Tdv^k f_1\|_{L^\infty_{v,3}} \|\Tdv^{l}f_2\|_{L^\infty_{v,3}}\\
& \le C(\delta_0+\mathcal{Q}^2(t))\Phi^{2,1}(t,x),
\emas
where we have used
\bmas
\|\Tdv^2 f_1 \|_{L^\infty_{v,3}}&\le C(\|P_0 f_1 \|_{L^2_v}+\| w^3\Tdv^2 P_1 f_1 \|_{H^2_v})\le C(1+t)^{-\frac12}(\delta_0+\mathcal{Q}^2(t)), \\
\|\Tdv^2 f_2 \|_{L^\infty_{v,3}}&\le C(\|P_d f_2 \|_{L^2_v}+\| w^3\Tdv^2 P_r f_2 \|_{H^2_v})\le C(1+t)^{-\frac34}(\delta_0+\mathcal{Q}^2(t)).
\emas
The above estimates  and \eqref{s2}, \eqref{wt1}, \eqref{s4} give
\be \label{v1}
\left\{\bln
\|\Tdv f_1(t,x)\|_{L^\infty_{v,3}}\le  C\Phi^{1,1}(t,x)(\delta_0+\mathcal{Q}^2(t)),\\
\|\Tdv f_2(t,x)\|_{L^\infty_{v,3}}\le  C\Phi^{2,1}(t,x)(\delta_0+\mathcal{Q}^2(t)).
\eln\right.
\ee

  Let $0 <l< 1$ and $n\ge 4$. Assume that $\mathcal{Q}(t)\le C\delta_0$. Multiplying \eqref{G_4b} by $(1 + t)^l$ and
then taking time integration over $[0, t]$ gives (cf. Theorem 1.3 in \cite{Duan4})
\bmas
&(1+t)^l\mathbb{E}_{n,k}(U)(t)+\mu \intt (1+s)^l\mathbb{D}_{n,k}(U)(s)ds
\nnm\\
\le &C\mathbb{E}_{n,k}(U_0)+Cl\intt (1+s)^{l-1}\mathbb{E}_{n,k}(U)(s)ds\\
&+C\intt (1+s)^l\mathbb{E}_{n}(U)\|P_0f_1(s)\|^2_{L^2_v(L^\infty_x)}ds\\
\le &C\mathbb{E}_{n,k}(U_0)+C(l+\delta_0^2)\intt (1+s)^{l-1}\mathbb{E}_{n,k}(U)(s)ds,
\emas
where we had used
$$\|P_0f_1(t)\|_{L^2_v(L^\infty_x)}\le C(\delta_0+\mathcal{Q}^2(t))(1+t)^{-\frac12}.$$
It follows from \eqref{energy3} and \eqref{energy3b} that
 \bma
&\quad (1+t)^l\mathbb{E}_{n,k}(U)(t)+\mu \intt (1+s)^l\mathbb{D}_{n,k}(U)(s)ds
\nnm\\
&\le C\mathbb{E}_{n,k}(U_0)+C(l+\delta_0^2)\intt (1+s)^{l-1}(\|P_0f_1(s)\|^2_{L^2_{x,v}}+\|B(s)\|^2_{L^2_x})ds\nnm\\
&\quad+C(l+\delta_0^2)\intt (1+s)^{l-1}\mathbb{D}_{n+1,k}(U) ds . \label{l5}
\ema
We use similar estimates from \eqref{G_4b} for $n + 1$  as above to obtain
\bmas
&\quad (1+t)^{l-1}\mathbb{E}_{n+1,k}(U)(t)+(1-l)\intt (1+s)^{l-2}\mathbb{E}_{n+1,k}(U)(s)ds\nnm\\
&\quad+\mu \intt (1+s)^{l-1}\mathbb{D}_{n+1,k}(U)(s)ds \nnm\\
&\le C\mathbb{E}_{n+1,k}(U_0)+C\intt (1+s)^{l-1}\mathbb{E}_{n+1}(U)\|P_0f_1 \|^2_{L^2_v(L^\infty_x)}ds\nnm\\
&\le C\mathbb{E}_{n+1,k}(U_0)+C\delta_0^2\intt (1+s)^{l-2}\mathbb{E}_{n+1}(U) ds. 
\emas
This implies that for $1-l\ge C\delta_0^2$,
\bma
(1+t)^{l-1}\mathbb{E}_{n+1,k}(U) +\mu \intt (1+s)^{l-1}\mathbb{D}_{n+1,k}(U) ds\le C\mathbb{E}_{n+1,k}(U_0). \label{l6}
\ema
By \eqref{l5} and \eqref{l6}, we obtain
 \bmas
&\quad (1+t)^l\mathbb{E}_{n,k}(U)(t)+\mu \intt (1+s)^l\mathbb{D}_{n,k}(U)(s)ds
\nnm\\
&\le C\mathbb{E}_{n+1,k}(U_0)+C\intt (1+s)^{l-1}(\|P_0f_1(s)\|^2_{L^2_{x,v}}+\|B(s)\|^2_{L^2_x})ds
\emas
for $0<l<1$ and $n\ge 4$. Taking $l=1/2+\epsilon$ for a fixed constant $0<\epsilon<1/2$  yields
\bmas
&(1+t)^{\frac12+\epsilon}\mathbb{E}_{n,k}(U)(t)+\mu \intt (1+s)^{\frac12+\epsilon}\mathbb{D}_{n,k}(U)(s)ds
\nnm\\
\le& C\mathbb{E}_{n+1,k}(U_0)+C\intt (1+s)^{-\frac12+\epsilon}(1+s)^{-\frac12}(\delta_0+\mathcal{Q}^2(t))^2 ds\nnm\\
\le& C\mathbb{E}_{n+1,k}(U_0)+C(1+t)^{\epsilon}(\delta_0+\mathcal{Q}^2(t))^2.
\emas  This gives
\be
\mathbb{E}_{7,3}(U)(t)\le C(1+t)^{-\frac12}(\delta_0+\mathcal{Q}^2(t))^2.\label{J_6z}
\ee

Then
\bma
\|\dx^5(E,B)(t)\|_{L^2_x}\le& C(1+t)^{-\frac34}(\|\dx^5V_{0}\|_{Z^2}+\|V_{0}\|_{Z^1}+\|\dx^{6}V_{0}\|_{Z^2})\nnm\\
&+C\intt (1+t-s)^{-\frac54}(\|\dx^5 H_3(s)\|_{L^2_{x,v}}+\|\dx H_3(s)\|_{L^{2,1}})ds\nnm\\
&+C\intt (1+t-s)^{-\frac32}\ln^2(1+t-s)\|\dx^{6}H_3(s)\|_{L^2_{x,v}}ds\nnm\\
\le &C\delta_0(1+t)^{-\frac34}+C(1+t)^{-\frac34}(\delta_0+\mathcal{Q}^2(t))^2.\label{mag_3z}
\ema
In \eqref{mag_3z}, we have used  the following estimates 
$$
\left\{\bln
\|\dx H_3\|_{L^{2,1} }& \le C \mathbb{E}_{7,3}(U)^{\frac12}\mathbb{H}_{5,3}(U)^{\frac12}\le  C(1+t)^{-1}(\delta_0+\mathcal{Q}^2(t))^2,
\\
\|\dx^k H_3\|_{L^2_{x,v} }& \le C \mathbb{E}_{7,3}(U)^{\frac12}(\mathbb{H}_{5,3}(U)^{\frac12}+\sup_x \|U\| )\le  C(1+t)^{-\frac34}(\delta_0+\mathcal{Q}^2(t))^2,
\eln\right.
$$ where $k=5,6$.

Then, by \eqref{G_4a} and \eqref{energy3b} there exists a constant $d_1>0$ so that
\bma
&\Dt \mathcal{H}_{5,3}(U) + d_1 \mathbb{H}_{5,3} (U) \nnm\\
\le& C(\|\dx P_0f_1 \|^2_{L^2_{x,v}}+\|\dx B\|^2_{L^2_{x}}+ \|\dx^5(E,B) \|^2_{L^2_{x}})+C\mathbb{E}_{5}(U)\|P_0f_1\|^2_{L^2_v(L^\infty_x)}.
\ema
This and \eqref{mag_3z} give
\bma
\mathcal{H}_{5,3}(U)(t)\le& e^{-d_1 t}\mathcal{H}_{5,3}(U_0)+C\intt e^{-d_1(t-s)} \|\dx^5(E,B)(s)\|^2_{L^2_{x}}ds\nnm\\
&+C\intt e^{-d_1(t-s)}(\|\dx P_0f_1(s)\|^2_{L^2_{x,v}}+\mathbb{E}_{5}(U)\|P_0f_1\|^2_{L^2_v(L^\infty_x)})ds\nnm\\
\le& C(1+t)^{-\frac32} (\delta_0+\mathcal{Q}^2(t))^2 .\label{other1}
\ema
Combining   \eqref{density1},  \eqref{v1}  and \eqref{other1}, we have
$$
\mathcal{Q}(t)\le C\delta_0+C\mathcal{Q}^2(t),
$$
which implies that  $\mathcal{Q}(t)\le C\delta_0$  if $\delta_0>0$ is small enough. The global existence of the solution can be proved by combining the local existence and the uniform energy estimate, the details are omitted. And this completes the proof of the theorem.
\end{proof}

\bigskip
\noindent {\bf Acknowledgements:}
 The first author was supported by the National Natural Science Foundation of China (Nos.
11931010, 12226326, 12226327), by the key research project of Academy for Multidisciplinary Studies,
Capital Normal University, and by the Capacity Building for Sci-Tech Innovation-Fundamental Scientific
Research Funds (No. 007/20530290068). 
The second author was supported by
a fellowship award from the Research Grants Council of the Hong Kong Special Administrative Region, China (Project no. SRF2021-1S01). And
the third author was supported by the National Science Fund for Excellent Young Scholars (No. 11922107), the National Natural Science Foundation of China  grants (No.  12171104),  Guangxi Natural Science Foundation (No. 2019JJG110010), and the special foundation for Guangxi Ba Gui Scholars.

\bigskip
\noindent {\bf Conflict of Interest Statement:}
The authors declared that they have no conflicts of interest to this work.

\bigskip
\noindent {\bf Data Availability Statement:}
The authors confirm that the data supporting the findings of this study are available within the article and its supplementary materials.


\end{document}